\newcommand\N{{\mathbb N}}
\newcommand\R{{\mathbb R}}
\renewcommand{\d}{\mathrm{d}}
\def\cN{\mathcal{N}}
\def\AA{{\mathcal A}}
\def\BB{{\mathcal B}}
\def\DD{{\mathcal D}}
\def\LL{{\mathcal L}}
\def\MM{{\mathcal M}}
\def\NN{{\mathcal N}}
\def\OO{{\mathcal O}}
\def\TT{{\mathcal T}}
\def\UU{{\mathcal U}}
\def\VV{{\mathcal V}}
\def\WW{{\mathcal W}}
\def\VV{{\mathcal V}}
\def\BBB{{\mathscr B}}
\def\CCC{{\mathscr C}}
\def\DDD{{\mathscr D}}
\def\EEE{{\mathscr E}}
\def\HHH{{\mathscr H}}
\def\LLL{{\mathscr L}}
\def\MMM{{\mathscr M}}
\def\RRR{{\mathscr R}}
\def\SSS{{\mathscr S}}
\def\TTT{{\mathscr T}}
\def\LLLL{{\mathfrak L}}
\def\widehatt{\widetilde}
\def\Wloc{W_{\rm loc}}
\def\Lloc{L_{\rm loc}}
\def\Hloc{H_{\rm loc}}
\newcommand{\wto}{\rightharpoonup}
\def\eps{{\varepsilon}}
\newcommand{\la}{\langle}
\newcommand{\ra}{\rangle}
\newcommand{\grad}{\nabla}
\newcommand{\Nt}{|\hskip-0.04cm|\hskip-0.04cm|}
\DeclareMathOperator{\Div}{div}
\DeclareMathOperator{\supp}{supp}
\newtheorem{theo}{Theorem}[section]
\newtheorem{prop}[theo]{Proposition}
\newtheorem{lem}[theo]{Lemma}
\newtheorem{cor}[theo]{Corollary}
\newtheorem*{thm*}{Theorem}
\theoremstyle{remark}
\newtheorem{rem}[theo]{Remark}
\newtheorem*{ex*}{Example}
\theoremstyle{definition}
\numberwithin{equation}{section}
\newcommand{\be}{\begin{equation}}
\newcommand{\ee}{\end{equation}}
\newcommand{\ba}{\begin{aligned}}
\newcommand{\ea}{\end{aligned}}
\newcommand{\beqn}{\begin{equation}}
\newcommand{\eeqn}{\end{equation}}
\newcommand{\bear}{\begin{eqnarray}}
\newcommand{\eear}{\end{eqnarray}}
\newcommand{\bean}{\begin{eqnarray*}}
\newcommand{\eean}{\end{eqnarray*}}
\newcommand{\bal}{\begin{aligned}}
\newcommand{\eal}{\end{aligned}}
\title[KFP equation in a domain]{Constructive Krein-Rutman result for 
\\ Kinetic Fokker-Planck equations in a domain}
\author[K. Carrapatoso]{K. Carrapatoso}%{Kleber Carrapatoso}
\author[P. Gabriel]{P. Gabriel}%Pierre Gabriel
\author[R. Medina]{R. Medina}%Richard Medina
\author[S. Mischler]{S. Mischler}%St\'ephane Mischler
\address[K.~Carrapatoso]{Centre de Math\'ematiques Laurent Schwartz, \'Ecole polytechnique, Institut Polytechnique de Paris, 91128 Palaiseau cedex, France}
\email{kleber.carrapatoso@polytechnique.edu}
\address[P.~Gabriel]{Institut Denis Poisson, Université de Tours, Université d'Orléans, CNRS, Parc de Grandmont, 37000 Tours, France}
\email{pierre.gabriel@univ-tours.fr}
\address[R.~Medina]{Centre de Recherche en Math\'ematiques de
  la D\'ecision (CEREMADE, CNRS UMR 7534),
  Universit\'es PSL \& Paris-Dauphine, Place de Lattre de
  Tassigny, 75775 Paris 16, France}
\email{richard.medina-rodriguez@dauphine.psl.eu}
\address[S.~Mischler]{Centre de Recherche en Math\'ematiques de
  la D\'ecision (CEREMADE, CNRS UMR 7534),
  Universit\'es PSL \& Paris-Dauphine, Place de Lattre de
  Tassigny, 75775 Paris 16, France \& Institut Universitaire de France (IUF)}
\email{mischler@ceremade.dauphine.fr}
\date{}
\subjclass[2020]{35Q84, 35B40, 47D06}
\keywords{Kinetic Fokker-Planck equation, Krein-Rutman theorem, ultracontractivity, long-time asymptotic behavior}
\begin{document}

\begin{abstract} 
We consider a general Kinetic Fokker-Planck (KFP) equation in a domain with Maxwell reflection condition on  the boundary, not necessarily with conservation of mass. 
 We establish the wellposedness in many spaces including Radon measures spaces, and in particular the existence and uniqueness of fundamental solutions. 
 We also establish a Krein-Rutman theorem with constructive rate of convergence in an abstract setting that we use for proving that  the solutions to the KFP equation converge toward the conveniently normalized first eigenfunction. 
 Both results use the  ultracontractivity of the associated  semigroup in a fundamental way. 

\end{abstract}

\maketitle

\tableofcontents
  
% {  modifications SM : 
% 
% - j'ai changé $d\ge3$ and $d\ge1$ (facile: il y a quasi rien à changer dans la section ultracontractivité)
% 
% - il me semble qu'il y avait une erreur dans la section "well-posedness $L^2$" dans la preuve de l'unicité et de la continuité. 
% Pour l'unicité j'ai repris les contraintes sur les expoxants pour assurer que $L^2_\omega \subset L^1_{\omega_1}$. 
% Pour la continuité, j'ai déplacé la preuve à la fin de la section.
%
% - j'ai aussi modifié la preuve de l'uncité dans le cadre mesure: c'est la même idée, mais il me semblait qu'il y avait quelques trous dans l'argument.   
% 
%  \Magenta
% - La question est de savoir traiter le cas $\Omega$ non borné pour les théorèmes d'existence et unicité et de régularité. Mais finalement non ! ... 
%
% 
% }

%%%%%%%%%%%%%%%%%%%%%%%%%%%
\section{Introduction}  
\label{sec:intro}

\subsection{The KFP equation in a domain}
\label{sec:intro-KFP}

In this paper, we consider the Kinetic Fokker-Planck (KFP) equation (also denominated sometimes as Kolmogorov equation or ultraparabolic equation) 
\beqn\label{eq:KFP} 
 \partial_t f + v \cdot \nabla_x f = \Delta_v f+ b\cdot \grad_v f+ cf \quad \hbox{on}\quad \UU  
\eeqn
on the function $f :  = f(t,x,v)$ depending on the time variable $t \ge 0$, the position variable $x \in  \Omega$, where $\Omega \subset \R^d$ is a suitably smooth bounded domain,  $d \ge 1$, 
and the velocity variable $v \in \R^d$. For $T \in (0,+\infty]$, we use the shorthands $ \UU := (0,T) \times \OO$, $\OO := \Omega \times \R^d$. We assume   that 
\beqn\label{eq:Assum-bc0}
b = b(x,v) \in \R^{d}, \quad c = c (x,v) \in \R, 
\eeqn
each of these functions being at least in $\Lloc^\infty(\OO)$. 
We complement the above KFP evolution equation with the Maxwell type reflection condition on  the boundary 
\beqn\label{eq:KolmoBdyCond}
\gamma_{\!-} f  = \RRR \gamma_{\!+}  f  = \iota_S \SSS  \gamma_{\!+}  f + \iota_D \DDD  \gamma_{\!+}  f     \quad\hbox{on}\quad \Gamma_-,
\eeqn
and with an initial condition 
\beqn\label{eq:initialDatum} 
f(0,x,v) = f_0(x,v) \quad \hbox{on}\quad \OO.
\eeqn

Here $\Gamma_-$ denotes the  incoming part of the boundary, $\SSS$ denotes the specular reflection operator, $\DDD$ denotes the diffusive reflection operator, $\iota_S$ and $\iota_D $ are nonnegative coefficients. 
More precisely, we assume that $\Omega := \{ x \in \R^d; \,  \delta(x) > 0 \}$  
for a $W^{2,\infty}(\R^d)$ function $\delta$ such that $\delta (x) := \mathrm{dist}(x,\partial\Omega)$ on a neighborhood of  the boundary set $\partial\Omega$ and thus $n_x = n(x) :=   - \nabla \delta(x)$  
coincides with the  unit normal outward vector field on $\partial\Omega$. We next define $\Sigma_\pm^x := \{ {v} \in \R^d; \pm \, {v} \cdot n_x > 0 \}$ the sets of outgoing ($\Sigma_+^x$) and incoming ($\Sigma_-^x$) velocities at point $x \in \partial\Omega$, then the sets
$$
 \Sigma_\pm := % \{ (x,{v}) \in \Sigma; \pm n_x \cdot {v} > 0 \} = 
 \{ (x,{v}); \, x \in \partial\Omega, \, {v} \in \Sigma^x_\pm \}, 
 \quad
 \Gamma_\pm := (0,T) \times \Sigma_\pm,
$$
and finally   the outgoing and incoming trace functions $\gamma_\pm f := \mathbf{1}_{\Gamma_\pm}  \gamma f$.
The specular reflection operator $\SSS$ is defined by 
\beqn
\label{eq:FPK-def_Gamma}
(\SSS g) (x,v)  :=  g (x , \VV_x v), \quad \VV_x v := v - 2 n(x) (n(x) \cdot v),
%\SSS (g (x,\cdot))(v)  =  g (x , \VV_x v), \quad \VV_x v = v - 2 n(x) (n(x) \cdot v),
\eeqn
and the diffusive operator $\DDD$ is defined by 
\beqn
\label{eq:FPK-def_D}
(\DDD g) (x,v)  := \MMM_x(v) \widetilde g (x), \quad \widetilde g (x) := \int_{\Sigma^x_+} g(x,w) \, n(x) \cdot w \, d w,
%\DDD (g (x,\cdot))(v) = \MMM(v) \widetilde g (x), \quad \widetilde g (x) = \int_{\Sigma^x_+} g(x,w) \, n(x) \cdot w \, d w,
\eeqn
where $\MMM_x$ stands for the Maxwellian function 
\beqn
\label{eq:FPK-def_M}
\MMM_x(v) := (2\pi \Theta_x)^{-(d-1)/2} \exp(-|v|^2/(2\Theta_x)) > 0,
\eeqn
associated to the wall temperature $\Theta_x$ which is assumed to satisfy  
\begin{equation}\label{eq:Assum-Theta}
  \Theta_x \in W^{1,\infty}(\Omega), \quad 0 < \Theta_* \le \Theta_x \le \Theta^* < \infty.
\end{equation}
It is worth observing that $\MMM_x$ is  conveniently normalized in such a way that $\widetilde \MMM_x=1$. Denoting the 
accommodation coefficient $\iota := \iota_S + \iota_D$, we assume 
$$
\iota_S, \iota_D, \iota :  \partial\Omega \to [0,1].
$$

\medskip

Let us introduce some notations and then discuss some particular cases. 
In view of \eqref{eq:KFP}, we define the interior collisional operator 
\beqn\label{eq:intro-defCCCf}
\CCC f :=  \Delta_v f+ b\cdot \grad_v f+ cf 
\eeqn
and next the (full) interior operator
\beqn\label{eq:intro-defLLLf}
\LLL := \TTT + \CCC, \quad \TTT := - v \cdot \nabla_x.
\eeqn
We name {\it microscopic or interior  mass conservative} case,  the case when  
$$
\LLL^*1 = \CCC^*1 = 0, \ \hbox{ or equivalently } \ c = \Div b, 
$$
and we name  {\it macroscopic or boundary mass conservative} case, the case when 
$$
\RRR^*1 = 1,  \ \hbox{ or equivalently } \ \iota \equiv 1.
$$
Here and below, the operators $\CCC^*$, $\LLL^*$ and $\RRR^*$ denote the (formal) dual operators. 
  It is worth emphasizing that we always have $\RRR^* 1 \le 1$ from the very definition \eqref{eq:KolmoBdyCond}, \eqref{eq:FPK-def_Gamma}, \eqref{eq:FPK-def_D} and the assumption $\iota\le1$, so that mass is never added  from the boundary, it is only (possibly partially) returned. The boundary condition \eqref{eq:KolmoBdyCond} corresponds to the \emph{pure specular reflection} boundary condition when  $\iota = \iota_S \equiv 1$
and it corresponds to the \emph{pure diffusive} boundary condition when $\iota = \iota_D \equiv 1$.
When both mass conservation conditions are fulfilled then equation \eqref{eq:KFP}--\eqref{eq:KolmoBdyCond} is mass conservative, meaning that any solution (at least formally) satisfies 
$$
\int_\OO f(t,x,v) \, dxdv = 
\int_\OO f_0(x,v) \, dxdv, \quad \forall \, t \ge 0. 
$$
We name {\it equilibrium} or {\it detailed balance condition} the case when  the maxwellian $\MMM_x=\MMM$ with constant temperature $\Theta_x=\Theta$ is a stationary state   for each operator separately, namely 
%makes vanish each operator separately $= \MMM$ (with space independent temperature $\Theta$)}
$$
\LLL \MMM =  \CCC \MMM = 0, \quad \RRR \MMM = \MMM.
$$
When $\Theta \equiv 1$, that corresponds to the situation when $\iota \equiv 1$ ($\iota_S$ and $\iota_D$ can be space dependent) and $\CCC$ is the usual harmonic Fokker-Planck operator with  %$A=I$,  
$b(x,v)=v$, $c(x,v)=d$,
that is  
$$
\CCC f := \Delta f + \Div( v f). 
$$
This very specific but physically motivated situation has been studied in the recent paper \cite{CM-KFP**} where, in particular, a constructive exponential stability result is established.

On the other hand, in the general situation when at least one of the two above conservations fails, we rather look for an eigentriplet $(\lambda_1, f_1, \phi_1)$ satisfying $\lambda_1 \in \R$, $f_1\geq0$, $\phi_1\geq0$ and
\beqn\label{eq:1stEVP}
\LLL f_1 = \lambda_1 f_1, \quad \gamma_{_-} f_1 = \RRR \gamma_{_+} f_1, 
\quad \LLL^* \phi_1 = \lambda_1 \phi_1, \quad\gamma_{_+} \phi_1 = \RRR^*  \gamma_{_-} \phi_1. 
\eeqn
This issue has been tackled recently in \cite{sanchez:hal-04093201} with the restriction $\Theta_x = \Theta$ is a constant, where the existence and uniqueness of such eigenelements have been established as well a non-constructive exponential asymptotical stability of the associated eigenfunction $F = e^{\lambda_1 t} f_1$. 

We refer to \cite{MR1503147,MR222474,MR771811,MR875086,MR1200643,MR1634851,MR1949176,MR2721875,MR2562709,MR4253803}  for a general discussion and mathematical analysis of the kinetic Fokker-Planck equation set in the whole space or in a domain and to related problems.

\medskip

In the present paper, we carry on the analysis made in \cite{sanchez:hal-04093201,CM-KFP**} by establishing   the following results.

\smallskip
(1) We prove the existence and uniqueness of solutions in many weighted Lebesgue spaces by establishing dissipativity estimates on the associated operator and next growth bound on the corresponding semigroup. %\medskip  We establish the well-posedness of the KFP equation in a $L^p$ framework, $1 \le p \le \infty$,  and we associate a semigroup in any $L^p$ space. 
We also establish the existence and uniqueness of a fundamental solution. 
 
\smallskip
(2) We establish the ultracontractivity of the above semigroup associated to the evolution problem
\eqref{eq:KFP}--\eqref{eq:KolmoBdyCond}--\eqref{eq:initialDatum}, that is  some immediate gain of stronger Lebesgue integrability  and even  immediate gain of uniform bound. 

\smallskip
(3) We prove a constructive version of a Krein-Rutman-Doblin-Harris theorem providing existence, uniqueness and exponential asymptotic stability with constructive rate of the first eigentriplet for a general class of positive semigroup in an abstract framework. 

\smallskip
(4) We show that the KFP model addressed here satisfies the requirement of the above  Krein-Rutman-Doblin-Harris theorem and thus give a clear and constructive understanding of the large time behavior of the solutions.

\smallskip

These results generalize or make more accurate  some previous similar  known results.

\subsection{Confinement in the velocity variable and admissible weight functions}
\label{subsec:confinement_v} 
 
We introduce additional assumptions on $b$ and $c$ in order that the  interior collisional operator $\CCC$ provides a convenient  velocity confinement mechanism. 
We first assume  
\beqn\label{eq:Assum-bc1}
\liminf_{|v| \to \infty} \inf_\Omega b \cdot \frac{v }{ |v|} = + \infty, 
\quad \frac{ |b|  }{ \langle v \rangle}, \Div_v b, c = \OO \Bigl(b \cdot \frac{v }{ |v|^2} \Bigr), 
\eeqn
where here and below we use the notation $\langle v\rangle =\sqrt{1+|v|^2}$.
These two conditions are fundamental in order that first the interior collisional operator $\CCC$ and next the full operator are dissipative and even have a discrete spectrum in convenient functional spaces. 
In order to identify these spaces and to make the discussion simpler, we make more precise the confinement conditions by assuming that there exist $R_0, b_0,b_1 > 0$, $\gamma > 1$ and for any $p \in [1,\infty]$ there exists {$k^*_p \ge 0$} such that 
\beqn\label{eq:Assum-bc2}
\forall \, x \in \Omega, \, v \in B_{R_0}^c, \quad  b_0 |v|^\gamma \le b \cdot v \le b_1 |v|^\gamma, \quad c - \frac1p \Div_v b \le k^*_p \, b \cdot \frac{v }{ |v|^2}. 
\eeqn

\smallskip
We now introduce the class of the so-called {\it  admissible} weight functions $\omega : \R^d \to (0,\infty)$ we will work with, which will be either a polynomial weight function 
\beqn\label{eq:omega-poly}
\omega = \la v \ra^k := (1 + |v|^2)^{k/2}, \quad k >  k_* :=    \max( k'_1,k^*_\infty), 
\eeqn
 $k'_1:= k_1 +d/2 + \max(1, \gamma/2-1)$, $k_1:= \max(k^*_1,d+2)$,  %  $k'_1:= \max(k^*_1+(d+\gamma-1)/2, k^*_1+ d/2 +1,d+2)$
and we set $s := 0$ in that case, or either an exponential weight function 
\beqn\label{eq:omega-expo}
\omega = \exp(\zeta \la v \ra^s), % \text{ with }  s \in (0,2) \text{ and } \zeta > 0, \text{ or } s=2 \text{ and }  \zeta \in (0,b_0/2), 
\eeqn
with the restrictions
\be\label{eq:cond-zeta-s}
\ba
 &s < \min (\gamma,2), \ \zeta > 0; \quad s=\gamma < 2, \ \zeta \in (0,b_0/2); \\
 & s=\gamma = 2, \ \zeta \in (0,\min(1/\Theta^*,b_0)/2) ; \quad s=2 < \gamma , \ \zeta \in (0,1/(2\Theta^*)).
\ea
\ee
In order to explain that choice of weight functions, we introduce  the function
\beqn\label{def:varpi}
\varpi = \varpi^\CCC_{\omega,p}(x,v) := 2 \left(1 - \frac 1p \right) \frac{|\nabla_v\, \omega|^2 }{ \omega^2}  + \left(\frac 2p -1\right)\frac{ \Delta_v \, \omega } \omega   %\frac{d}{p'} 
- b  \cdot \frac{\nabla_v \omega}{\omega}
+  c  - \frac{1 }{ p}  \Div_v b,
\eeqn
which is the key quantity in order to  reveal the  velocity confinement mechanism. We may   notice that for  $\omega := \langle v \rangle^\ell e^{\zeta |v|^s}$, with $\ell \in \R$ and $s,\zeta \ge 0$, and because of the second condition in \eqref{eq:Assum-bc1}, we have 
\bear
\label{eq:varpi-expo}
&&\varpi^\CCC_{\omega,p} 
 \underset{|v|\to \infty}{\sim} (s\zeta)^2 |v|^{2s-2}   -  s \zeta b \cdot v |v|^{s-2}  \quad \hbox{if} \quad s > 0, 
\\
&&\varpi^\CCC_{\omega,p}   \underset{|v|\to \infty}{\sim}   c - \frac{1 }{ p}  \Div_v b  -\ell b \cdot v |v|^{-2}  \quad \hbox{if} \quad s = 0. 
\eear
As a consequence, whatever is the value $\gamma > 1$, we have $(\varpi^\CCC_{\omega,p})_+ \in L^\infty(\OO)$ for any admissible weight function, what is the key information in order to establish a growth estimate in the corresponding weighted $L^p$ space.
Setting $\varsigma:=\gamma+s-2$, we will call {\it  strongly confining} any admissible weight function satisfying $\varsigma>0$ and {\it weakly confining} the admissible weight functions for which $\varsigma\leq0$.
In the strongly confining case, we have
\beqn\label{eq:strongCadmissWeight}
\limsup_{|v| \to \infty}\sup_\Omega \varpi^\CCC_{\omega,p} = - \infty,
\eeqn
which gives a key information on the spectrum of $\CCC$ in the corresponding functional space.
Notice that we are in this strongly confining case when $\gamma > 2$ 
and for any exponential weight function with exponent $s \in (2-\gamma,\gamma]$ when $\gamma \in (1,2]$.
%Moreover, we have 
%\beqn\label{eq:strongCadmissWeight}
%\limsup_{|v| \to \infty}\sup_\Omega \varpi^\CCC_{\omega,p} = - \infty
%\eeqn
%for any admissible weight function when {\Red $\varsigma:=\gamma+s-2>0$.
%This holds when} $\gamma > 2$ %, for any exponential weight function when $\gamma=2$ 
%and for any exponential weight function with exponent $s \in (2-\gamma,\gamma]$ when $\gamma \in (1,2]$, what gives a key information on the spectrum of $\CCC$ in the corresponding functional space.
%We will call  {\it  strongly confining} any admissible weight function satisfying \eqref{eq:strongCadmissWeight}
%{\Red and {\it weakly confining} the admissible weight functions that do not satisfy \eqref{eq:strongCadmissWeight}}.
For further references, we also notice that %we always have 
\beqn
\label{eq:cond-Lpomega}
\sup_{\Omega} \MMM_x\, \omega\langle v \rangle  \in (L^1 \cap L^\infty)(\R^d) , \quad 
 \omega^{-1} \langle v \rangle \in (L^1 \cap L^2)(\R^d), 
\eeqn
for any admissible weight function because of the restrictions $k_* \ge d+1$, $s \le 2$ and $\zeta < 1 / (2\Theta^*)$ when $s = 2$. The bound \eqref{eq:cond-Lpomega}  provides the compatibility of the weight function  with the boundary condition.

\subsection{The main results}
\label{sec:intro-MainResults} 

In order to state our main results, we need to introduce some functional spaces. 
For a given measure space $(E,\EEE,\mu)$, a weight function $\rho : E \to (0,\infty)$ and an exponent $p \in [1,\infty]$, we define the weighted Lebesgue space $L^p_\rho$ 
associated to the norm 
$$
\| g \|_{L^p_\rho} = \| \rho g \|_{L^p}. 
$$
We also define  $M^1_{\omega,0}$ as the space of Radon measures $g$ on $\OO$  with vanishing mass at the boundary, that is such that $|g\omega|(\OO \backslash \OO_\eps) \to 0$ as $\eps \to 0$, where, for any $\eps > 0$, 
\beqn\label{def:OmegaEpsOeps}
\Omega_{\eps} :=\{x\in \Omega  \cap B_{\eps^{-1}}, \delta(x)   >\eps\},
%\Omega_{\eps} :=\{x\in \Omega { \cap B_{\eps^{-1}}?}, \delta(x)   >\eps\},
\quad \OO_{\eps} := \Omega_{\eps}\times B_{\eps^{-1}}. 
\eeqn

We first state a general existence and uniqueness result for the kinetic
Fokker-Planck equation \eqref{eq:KFP}, \eqref{eq:KolmoBdyCond}, \eqref{eq:initialDatum}. 

\begin{theo}[Existence and uniqueness]\label{theo:existsLpM1} 
 We make the above assumptions on $\Omega$, $\Theta$, $b$ and $c$, in particular \eqref{eq:Assum-bc0}, \eqref{eq:Assum-Theta}, \eqref{eq:Assum-bc1} and \eqref{eq:Assum-bc2} hold.
For  any admissible weight function $\omega$  and any initial datum $f_0 \in L^p_{\omega}$, $p \in [1,\infty]$, or  $f_0 \in M^1_{\omega,0}$, there exists a unique global weak solution $f$ to the kinetic
Fokker-Planck equation \eqref{eq:KFP}--\eqref{eq:KolmoBdyCond}--\eqref{eq:initialDatum}. In particular, for any $(x_0,v_0) \in \OO$, there exists a unique fundamental solution associated to the
initial datum $f_0 := \delta_{(x_0,v_0)}$. 

\end{theo}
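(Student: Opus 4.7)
The plan is to construct solutions via a priori dissipativity estimates in weighted $L^p$ spaces, combined with a regularization scheme, and then to extend to the endpoint cases $L^\infty_\omega$ by duality and $M^1_{\omega,0}$ by density. First I would establish the energy identity for smooth solutions: multiplying \eqref{eq:KFP} by $p\,\omega^p |f|^{p-2} f$ and integrating over $\OO$, integration by parts in $v$ produces precisely the interior weight function $\varpi^\CCC_{\omega,p}$ defined in \eqref{def:varpi}, whose positive part is bounded in $L^\infty(\OO)$ for any admissible weight. The transport term $v\cdot\nabla_x f$ generates the boundary contribution $\int_{\Sigma_+}|\gamma_+ f|^p \omega^p (n\cdot v) - \int_{\Sigma_-} |\gamma_- f|^p \omega^p |n\cdot v|$, which must be handled through the reflection condition \eqref{eq:KolmoBdyCond}.

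The specular operator $\SSS$ preserves the weighted trace norm on $\Sigma_\pm$ (with measure $|n\cdot v|\omega^p \, d\sigma dv$) because $\VV_x$ preserves $|v|$, while the diffusive operator $\DDD$ is controlled by Hölder's inequality applied to $\widetilde{(\gamma_+ f)}(x)=\int_{\Sigma^x_+} \gamma_+f\,(n\cdot w)\,dw$, combined with the integrability condition \eqref{eq:cond-Lpomega} on $\MMM\omega$ that is precisely built into the notion of admissible weight. Since $\iota_S+\iota_D=\iota\le 1$, the incoming trace is majorized by the outgoing one (up to a lower-order term absorbable by the interior positive part of $\varpi^\CCC_{\omega,p}$), so the net boundary contribution has the correct sign and one obtains
$$
\tfrac{1}{p}\tfrac{d}{dt}\|f\|_{L^p_\omega}^p \le \lambda_p \|f\|_{L^p_\omega}^p,
$$
hence a growth bound $\|f(t)\|_{L^p_\omega}\le e^{\lambda_p t}\|f_0\|_{L^p_\omega}$.

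Existence is then obtained via a regularization scheme: add a small horizontal diffusion $\eps\Delta_x$ and truncate the coefficients $b,c$, so that the resulting problem is uniformly parabolic; treat the Maxwell-type boundary condition by Lions' variational method (the reflection $\RRR$ being a contraction on the natural weighted trace space), and pass to the limit using the uniform estimate above and the kinetic trace framework of Ukai/DiPerna-Lions-Mischler. Uniqueness in $L^p_\omega$ for $p<\infty$ follows by applying the same estimate to the difference of two solutions. The $L^\infty_\omega$ case is then obtained by duality against $L^1_{\omega^{-1}}$, the backward dual problem being a KFP equation of the same type with a compatible dual weight. The measure case $M^1_{\omega,0}$ is handled by approximating $f_0$ by a sequence in $L^1_\omega$, extracting a weak-$\ast$ limit via the $L^1_\omega$ bound, and using the vanishing-at-boundary property built into the definition of $M^1_{\omega,0}$ to rule out concentration of mass on $\partial\OO$. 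The fundamental solution is the special case $f_0=\delta_{(x_0,v_0)}\in M^1_{\omega,0}$.

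The main obstacle will be the rigorous trace theory needed to legitimize the $L^p$ energy computation for merely weak solutions and to handle the mixed specular/diffusive reflection at the boundary: one has to show that $\gamma_\pm f$ is well-defined in the weighted trace space, that the contraction estimate for $\RRR$ survives the regularization without deterioration of constants, and that the $M^1_{\omega,0}$ condition is preserved along the flow. The endpoints $p=\infty$ and measure-valued data require additional care, since the standard renormalization and trace arguments do not apply verbatim there and must be supplemented by a duality/approximation step that matches the admissibility of $\omega$ with that of $\omega^{-1}$ through \eqref{eq:cond-Lpomega}.
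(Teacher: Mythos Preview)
Your outline captures the broad architecture, but it misses the central technical ingredient that the paper relies on: the \emph{ultracontractivity} estimate (Theorem~\ref{theo:ultra}). The paper explicitly states that ``the general Lebesgue framework and the Radon measures framework are more involved and are also based on the ultracontractivity theorem below.'' Concretely, for $f_0\in L^1_\omega$ or $f_0\in M^1_{\omega,0}$ the paper uses the instantaneous smoothing $S_\LLL(t):L^1_\omega\to L^\infty_\omega$ for $t>0$ to place the solution in $L^2_\omega$ at any positive time. This is what makes the uniqueness argument work: one writes $\int f(T)g_T=\int f(t)g(t)$ for the dual backward solution $g$ and any $t\in(0,T)$, which is justified \emph{only} because $f(t)\in L^2$ for $t>0$, and then passes to the limit $t\to0$. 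Your claim that ``uniqueness in $L^p_\omega$ for $p<\infty$ follows by applying the same estimate to the difference of two solutions'' does not work for $p=1$: in the renormalized framework you cannot simply subtract two solutions and run the $L^1$ energy identity, and even in the $L^2$ case the paper's uniqueness (Step~4 of Proposition~\ref{prop-KFP-L2primal}) proceeds via an $L^1$ renormalization argument with the weight $\widetilde\omega_1$, not by the naive $L^2$ estimate. For measure data, extracting a weak-$\ast$ limit gives existence but says nothing about uniqueness; the paper's uniqueness (Theorem~\ref{theo:E-UinM1}) again hinges on the duality identity \eqref{eq:uniqueM1-identityTt}, valid for $t>0$ thanks to ultracontractivity, together with a delicate limit $t\to0$ using the vanishing-mass-at-boundary condition and the estimate \eqref{eq:MeasureSolBdd1}.

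A second, smaller gap concerns the boundary term. Your assertion that the diffusive reflection is ``controlled by H\"older's inequality'' and that the net boundary contribution has the right sign ``up to a lower-order term absorbable by the interior'' is too optimistic: with the bare weight $\omega$ the diffusive operator $\DDD$ does \emph{not} contract on $L^p_\omega(\Sigma,d\xi_1)$. The paper introduces a two-layer modification $\widetilde\omega^p=(1+\tfrac12\tfrac{n_x\cdot v}{\langle v\rangle^4})\omega_A^p$ with $\omega_A^p=\MMM_x^{1-p}\chi_A+\omega^p(1-\chi_A)$; the inner cutoff $\omega_A$ makes the constants $K_1,K_2$ in the boundary computation converge to $1$ as $A\to\infty$, and the twisting factor $1+\tfrac12\tfrac{n_x\cdot v}{\langle v\rangle^4}$ supplies an additional negative term $-K_0$ that absorbs the remainder. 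This device is essential, not a technicality. Finally, the paper does not regularize by adding $\eps\Delta_x$ (which would create its own boundary issues in $x$); it works directly via Lions' Lax--Milgram variant on the inflow problem, a fixed-point for $\alpha\RRR$ with $\alpha<1$, and a passage $\alpha\nearrow1$ using the trace stability Theorem~\ref{theo:Kolmogorov-stability}.
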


The precise sense of solution will be given in Proposition~\ref{prop-KFP-L2primal} (see also Theorem~\ref{theo:WP-KFP&KFP*}) in a $L^2$ framework, in Theorem~\ref{theo:E-UinLp} in a general $L^p$ framework, and in Theorem~\ref{theo:E-UinM1} in a Radon measures framework. This result extends the existence and unique result of \cite[Theorem~11.5]{sanchez:hal-04093201} stated in a more restrictive  $L^2$ framework (see also \cite{MR875086,AAMN2021} for further previous results). The $L^2$ framework is mainly based on Lions' variant of the Lax-Milgram theorem \cite[Chap~III,  \textsection 1]{MR0153974}, as used in \cite{MR875086,AAMN2021},  a trace theory developed in \cite{MR1776840,MR2721875,sanchez:hal-04093201,CM-Landau**} and boundary estimates in the spirit of \cite{MR1301931,MR2721875,MR4179249}. The growth estimate is obtained by cooking up a modified but equivalent weight function for which the dissipativity of the full operator can be established. 
On the other hand,  the general Lebesgue framework and the Radon measures framework are more involved and are also based on the ultracontractivity theorem below as well as some arguments adapted from the parabolic equation as developed in \cite{MR1025884,MR1489429,MR1876648}.
It is worth mentioning that  the well-posedness and some regularity issues for the KFP equation set in the torus have been obtained in \cite{AAMN2021}. 
For the whole space setting, we refer to the recent works \cite{AIN1,AIN2} and the references therein. Finally, the KFP equation in a bounded domain has been considered in \cite{MR4253803,MR4527757,MR3778533}.

\smallskip

We next consider the first eigenvalue problem and the longtime behavior providing a quantitative answer to the first eigenelements issue.

\begin{theo}[Long time asymptotic]\label{theo:KR}
Under the assumptions of Theorem~\ref{theo:existsLpM1},
there exist two weight functions $\omega_1,m_1$ and an exponent $r > 2$  with $L^r_{\omega_1} \subset (L^2_{m_1})' $ %L^2_{m_1^{-1}}$ 
such that there exists a unique eigentriplet $(\lambda_1, f_1,\phi_1) \in \R \times L^r_{\omega_1} \times L^2_{m_1}$ satisfying the first eigenproblem  \eqref{eq:1stEVP} 
together with the normalization condition  $\| \phi_1 \|_{L^2_{m_1}} =1$, $\langle \phi_1, f_1 \rangle  = \langle \phi_1, f_1 \rangle_{L^2_{m_1},(L^2_{m_1})'} = 1$. These eigenfunctions are continuous functions and they also satisfy
\beqn\label{eq:theoKR-strictpo&Linftybound}
  0 < f_1 \lesssim \omega^{-1}, \quad %\in L^\infty_\omega, \quad 0 < \phi \i, % \le C_0e^{-\kappa_0\langle v \rangle^{s_0}}, \quad 0 < 
0 < \phi_1 \lesssim \omega   \quad\hbox{on}\quad \OO, 
\eeqn
for any admissible weight function $\omega$. Furthermore, there exist some constructive constants $C \ge 1$ and $\lambda_2 < \lambda_1$ such that for any  strongly confining admissible weight function $\omega$, any exponent $p \in [1,\infty]$ and any 
initial datum  $f_0 \in L^p_\omega$, the associated solution $f$ to the kinetic
Fokker-Planck equation \eqref{eq:KFP}, \eqref{eq:KolmoBdyCond}, \eqref{eq:initialDatum} satisfies 
\beqn\label{eq:KRTh}
\| f(t) - \langle f_0,\phi_1 \rangle f_1 e^{\lambda_1 t}  \|_{L^p_\omega} \le C e^{\lambda_2 t} 
\|  f_0 - \langle f_0,\phi_1 \rangle f_1 \|_{L^p_\omega}, 
\eeqn
for any $t \ge 0$.
 \end{theo}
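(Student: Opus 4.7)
The plan is to derive Theorem~\ref{theo:KR} from an abstract Krein-Rutman-Doblin-Harris theorem (item~(3) of the introduction) applied to the positive semigroup $S_\LLL(t)$ associated to \eqref{eq:KFP}--\eqref{eq:KolmoBdyCond}, working in the Hilbert space $L^2_{m_1}$ as a reference space for the eigentriplet and then transporting the spectral gap estimate to all admissible weighted Lebesgue spaces via an enlargement-of-functional-space argument.

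First, Theorem~\ref{theo:existsLpM1} furnishes $S_\LLL$ as a positive $C_0$-semigroup on every $L^p_\omega$ together with its dual $S_{\LLL^*}$. A quantitative growth estimate $\|S_\LLL(t)\|_{L^p_\omega\to L^p_\omega} \le e^{\kappa_{p,\omega}t}$ follows from a pointwise dissipation identity: the interior contribution is governed by the function $\varpi^\CCC_{\omega,p}$ from~\eqref{def:varpi}, which by~\eqref{eq:varpi-expo} is uniformly bounded above for any admissible $\omega$ and tends to $-\infty$ at infinity when $\omega$ is strongly confining, cf.~\eqref{eq:strongCadmissWeight}; the boundary contribution is nonpositive thanks to a Darroz\`es-Guiraud-type inequality together with the normalization~\eqref{eq:cond-Lpomega}. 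The weights $\omega_1$ and $m_1$ and the exponent $r$ are chosen so that the pairing $L^r_{\omega_1}\subset (L^2_{m_1})'$ is well defined and so that the above estimate produces comparable growth bounds for $S_\LLL$ and $S_{\LLL^*}$.

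Next, we verify the hypotheses of the abstract theorem. The \emph{ultracontractivity} announced as item~(2) of the introduction says that $S_\LLL(t_0)$ sends $L^1_\omega$ continuously into $L^\infty_{\widetilde\omega}$ for some $t_0>0$ and some $\widetilde\omega\le\omega$; combined with the velocity confinement~\eqref{eq:strongCadmissWeight}, this renders $S_\LLL(t_0)$ compact on $L^2_{m_1}$. The \emph{Doblin-Harris minorization}, $S_\LLL(T)f \ge \psi\,\langle \chi, f\rangle$ for every $f\ge 0$ and some nontrivial positive $\psi,\chi$, is obtained from the strict positivity of the fundamental solution produced in Theorem~\ref{theo:existsLpM1}, together with a hypoelliptic propagation argument that spreads positivity along the free flow $(x+tv,v)$ and reinjects it into the domain via the reflection operator $\RRR = \iota_S\SSS+\iota_D\DDD$. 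The abstract statement then yields a unique dominant eigentriplet $(\lambda_1, f_1, \phi_1)$ solving~\eqref{eq:1stEVP}, with $\lambda_1$ simple and isolated and a constructive $\lambda_2<\lambda_1$ such that
$$
\|S_\LLL(t)f_0 - e^{\lambda_1 t}\Pi f_0\|_{L^2_{m_1}} \le C e^{\lambda_2 t}\|f_0-\Pi f_0\|_{L^2_{m_1}}, \qquad \Pi f_0 := \langle \phi_1, f_0\rangle f_1.
$$
The pointwise bounds~\eqref{eq:theoKR-strictpo&Linftybound} follow by applying the ultracontractivity to the identity $f_1 = e^{-\lambda_1 t}S_\LLL(t)f_1$ and its dual version for $\phi_1$, while continuity comes from the local H\"older regularity of hypoelliptic kinetic equations.

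It remains to propagate~\eqref{eq:KRTh} from $L^2_{m_1}$ to arbitrary $L^p_\omega$ with strongly confining admissible $\omega$ and $p\in[1,\infty]$. Splitting $\LLL = \AA+\BB$ with $\AA$ a regularizing operator bounded from $L^p_\omega$ into $L^2_{m_1}$ and $\BB$ dissipative in $L^p_\omega$ with spectral bound strictly below $\lambda_2$, the iterated Duhamel representation of $S_\LLL$ in terms of $S_\BB$ and $\AA$, combined with the ultracontractivity to exit and reenter $L^p_\omega$ in finite time, transports the $L^2_{m_1}$ spectral gap to $L^p_\omega$ without loss of rate, yielding~\eqref{eq:KRTh}. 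The main obstacle in the whole scheme is the verification of the Doblin-Harris minorization in the pure specular regime $\iota_D\equiv 0$: the diffusive mixing of velocities is absent, and one must connect any two phase points using only free transport, specular bounces, and the interior hypoelliptic spreading in $v$—a quantitative geometric controllability problem which governs both the time $T$ in the minorization step and ultimately the size of the constructive gap $\lambda_1-\lambda_2$.
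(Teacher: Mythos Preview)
Your overall architecture matches the paper: apply an abstract constructive Krein--Rutman--Doblin--Harris theorem to $S_\LLL$, then transfer the resulting decay to every $L^p_\omega$ by the $\AA/\BB$ splitting and iterated Duhamel (``enlargement''). The growth bounds, the ultracontractivity input, the identification $f_1=e^{-\lambda_1 t}S_\LLL(t)f_1$ for the $L^\infty$ bounds, and the final extension step are all as in the paper.

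The substantive divergence is in how the Doblin--Harris minorization is obtained. You propose using the positivity of the fundamental solution from Theorem~\ref{theo:existsLpM1} together with propagation along free transport and reinjection through the boundary operator $\RRR$, and you correctly flag the pure specular case $\iota_D\equiv 0$ as a geometric controllability obstacle. The paper sidesteps this issue entirely: it verifies the Doblin--Harris condition using only an \emph{interior} Harnack inequality (Theorem~\ref{theo:Harnack}), namely $\sup_{\OO_\eps} f_{T_0}\le C\inf_{\OO_\eps} f_{T}$ on compact subsets $\OO_\eps\Subset\OO$, which yields directly
\[
S_\LLL(T)f_0\ \ge\ \frac{1}{C\,|\OO_\eps|}\,\bigl\langle S_\LLL(T_0)f_0,\mathbf{1}_{\OO_\eps}\bigr\rangle\,\mathbf{1}_{\OO_\eps},
\]
with $g_\eps=\psi_\eps=\mathbf{1}_{\OO_\eps}$. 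Because this Harnack estimate is purely interior (it follows from the local results of Golse--Imbert--Mouhot--Vasseur and Guerand--Mouhot plus a chaining argument inside $\OO$), the boundary reflection plays no role whatsoever, and the specular obstacle you isolate simply does not arise. The price is that the minorization compares $S_\LLL(T)f_0$ to $[S_\LLL(T_0)f_0]_{\psi_\eps}$ rather than to $[f_0]_{\psi_\eps}$; the abstract theorem (Theorem~\ref{theo:KRDoblinHarris}) is formulated to accommodate precisely this \emph{relaxed} version with $T_0>0$, and compensates for the time shift via the supereigenvector condition~\eqref{eq:NEWH2f0}.

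A smaller point: the paper's abstract theorem does not rely on compactness of $S_\LLL(t_0)$. The existence of the eigentriplet is obtained constructively from the strong-dissipativity splitting \eqref{eq:NEWHarris-Dual-LyapunovCondBIS1}--\eqref{eq:NEWHarris-Dual-LyapunovCondBIS2} together with the positivity and interpolation conditions, rather than from a compact-operator Krein--Rutman argument; this is part of what makes the rate $\lambda_1-\lambda_2$ fully explicit. Correspondingly, the paper's working space is $X_1=L^r_{\omega_r}$ (with $r>2$ coming from Proposition~\ref{prop:EstimL1Lr-omega}) rather than the Hilbert space $L^2_{m_1}$ you take as reference.
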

 This result  improves the recent work \cite[Section~11]{sanchez:hal-04093201} (see  in particular \cite[Theorem 11.6]{sanchez:hal-04093201}, \cite[Theorem 11.8]{sanchez:hal-04093201} and \cite[Theorem 11.11]{sanchez:hal-04093201}) by slightly generalizing the framework to a position dependent wall temperature and by providing a  fully constructive approach for the exponential stability of the first eigenfunction. 
We refer to the previous works \cite{MR4347490} (partially based on  \cite{MR2248986,MR3382587,MR4412380}) and \cite{MR4556285,Bernard2023} where similar results are established for the same kind of equation 
 in a domain with no-flow boundary condition. 
 We also emphasize that in the conservative case, many works have been done related to hypocoercivity and constructive rate of convergence to the steady state in  \cite{MR1787105,MR1946444,MR1969727,MR2034753,MR2130405,MR2562709} or more recently in 
\cite{MR3324910,MR3488535,MR4069622,MR4113786,AAMN2021,CM-KFP**}.   From a technical point of view, this result is a consequence of the abstract version of the Krein-Rutman-Doblin-Harris theorem that will be  presented in section~\ref{subsec:KR-statement} (and which is really in the spirit of the recent work \cite{sanchez:hal-04093201}) together with the ultracontractivity property stated below and the Harnack estimates established in \cite{MR3923847}.

\medskip 

Both the above well-posedness and the longtime behavior results are based on the following ultracontractivity property.  
 
\begin{theo}[Ultracontractivity]\label{theo:ultra}
There exist $\Theta, C > 0$ and $\kappa \ge 0$ such that any solution $f$ to the KFP equation  
\eqref{eq:KFP}--\eqref{eq:KolmoBdyCond}--\eqref{eq:initialDatum}  satisfies  
\beqn\label{eq:theo-ultra}
 \| f(T,\cdot) \|_{L^\infty_{\omega}} \le C \frac{e^{\kappa T} }{ T^\Theta} \|f_0 \|_{L^1_\omega}, \quad \forall \, T > 0, 
\eeqn
for any  strongly confining admissible weight function $\omega$. 
\end{theo}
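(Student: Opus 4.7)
The plan is to deduce the $L^1_\omega \to L^\infty_\omega$ ultracontractivity from two intermediate smoothing estimates, composed via the semigroup identity $S_\LLL(T) = S_\LLL(T/2)\circ S_\LLL(T/2)$. Specifically, I would establish separately
\[
\|S_\LLL(t)\|_{L^2_\omega \to L^\infty_\omega} \lesssim \frac{e^{\kappa t}}{t^{\Theta_1}},
\qquad
\|S_\LLL(t)\|_{L^1_\omega \to L^2_\omega} \lesssim \frac{e^{\kappa t}}{t^{\Theta_2}},
\]
and then compose at time $T/2$ to obtain \eqref{eq:theo-ultra} with $\Theta := \Theta_1 + \Theta_2$. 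Both estimates will rest on the basic weighted growth bound $\|S_\LLL(t)\|_{L^p_\omega \to L^p_\omega} \le e^{\kappa_p t}$ for every $p \in [1,\infty]$, which I would obtain by testing \eqref{eq:KFP} against $\omega^p |f|^{p-2}f$: the interior term is bounded by $(\varpi^\CCC_{\omega,p})_+ \in L^\infty(\OO)$ via \eqref{eq:varpi-expo}, while the boundary contribution is non-positive thanks to the Maxwell contraction $\RRR^\ast 1 \le 1$ together with \eqref{eq:cond-Lpomega}.

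The main estimate is the smoothing $L^2_\omega \to L^\infty_\omega$, which I would prove by a De Giorgi--Moser iteration adapted to the hypoelliptic KFP structure. Setting $g := \omega f$, the energy identity from the $L^p$ test actually produces the stronger inequality
\[
\tfrac{d}{dt}\|g\|_{L^p}^p + c_p \int |\nabla_v |g|^{p/2}|^2 \,dxdv + c_p \int |\varpi^\CCC_{\omega,p}| \, |g|^p \chi_{\{|v| \ge R_0\}} \,dxdv \le \kappa_p \|g\|_{L^p}^p,
\]
where the third term captures the far-field velocity-confinement gain made available precisely by the strongly confining hypothesis \eqref{eq:strongCadmissWeight}. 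Combining this Dirichlet-type form with the local hypoelliptic Sobolev/velocity-averaging embedding that underlies the Harnack theory of \cite{MR3923847} yields a global kinetic Nash-type inequality
\[
\|g\|_{L^{p(1+\eta)}}^{p(1+\eta)} \lesssim \|g\|_{L^p}^{p\eta} \bigl( \|\nabla_v |g|^{p/2}\|_{L^2}^2 + \|g\|_{L^p}^p \bigr)
\]
for some fixed $\eta > 0$, with the confinement term playing the role of an effective ellipticity in the far field that permits patching the local embeddings globally on $\OO$. The standard Moser iteration with exponents $p_n = 2(1+\eta)^n$ on the shrinking time slices $[T(1-2^{-n}),T]$ then produces $\|g(T)\|_{L^\infty} \lesssim T^{-\Theta_1} e^{\kappa T} \|g_0\|_{L^2}$, i.e.\ the desired bound.

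The $L^1_\omega \to L^2_\omega$ half is handled by duality: $S_\LLL(t)^\ast = S_{\LLL^\ast}(t)$ solves a KFP-type equation with reversed transport, interior operator $\Delta_v - b \cdot \nabla_v + (c - \Div_v b)$, and boundary condition $\gamma_+ \phi = \RRR^\ast \gamma_- \phi$. Applying the same Moser iteration to $S_{\LLL^\ast}$ with an admissible dual weight $\tilde\omega$ chosen so that $L^2_{\tilde\omega} \hookrightarrow (L^1_\omega)^\ast = L^\infty_{\omega^{-1}}$, and then dualizing, delivers the required estimate. The main obstacle throughout is the execution of the Moser iteration up to the Maxwell-reflecting boundary $\partial\Omega$: the hypoelliptic gain in $x$ uses averaging lemmas that are non-local in $v$ and must be localized compatibly with \eqref{eq:KolmoBdyCond}. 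I would handle this by flattening $\partial\Omega$ locally, extending subsolutions of $|f|^p$-type quantities across the boundary via the specular part $\SSS$, and treating the residual diffusive source $\iota_D \DDD \gamma_+ f$ as a trace term controlled by the Maxwellian decay and \eqref{eq:cond-Lpomega}. The strongly confining property is essential in this step to dominate the far-field behaviour of the extended subsolution and to prevent blow-up in the iteration constants.
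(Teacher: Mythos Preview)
Your high-level architecture --- compose $S_\LLL(T/2)\circ S_\LLL(T/2)$ and obtain one half by duality from the adjoint problem --- is exactly what the paper does. However, two of the supporting claims contain genuine gaps. First, your $L^p_\omega$ growth bound is not obtained simply by testing against $\omega^p|f|^{p-2}f$ and invoking $\RRR^*1\le1$: for $p>1$ with an exponential weight the boundary term is \emph{not} automatically non-positive, because the specular and diffusive parts of $\RRR$ interact differently with $\omega^p$. The paper repairs this by replacing $\omega$ near $v=0$ with a modified weight $\omega_A$ involving $\MMM_x^{1-p}$ and adding a twisting factor $(1+\tfrac12\, n_x\cdot v/\langle v\rangle^4)$ (Lemma~\ref{lem:EstimKFPLp12}); without this the sign argument fails. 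Second, and more seriously, your plan for running the Moser iteration up to the reflecting boundary --- flatten, extend subsolutions by specular reflection, treat the diffusive part as a trace source --- is not justified. Extending $|f|^p$ across $\partial\Omega$ via $\SSS$ does not produce a subsolution of a KFP equation in the extended domain when $\iota_D>0$, and the averaging-lemma step that provides the $x$-regularity does not localize neatly against such an extension.

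The paper avoids both difficulties by not iterating at all. It proves a \emph{single-step} gain $L^2_\omega\to L^r_\omega$ (some fixed $r>2$) as follows: (i) an interior estimate from Bouchut's hypoelliptic regularity \cite{MR1949176} gives $\|f\varphi\,\omega\langle v\rangle^{-\nu}\delta^{\alpha/p}\|_{L^p}\lesssim\|f\omega\|_{L^2}$ (Proposition~\ref{prop:EstimLploc}); (ii) a second twisted weight, now containing the term $\delta(x)^{1/2}\,n_x\cdot v/\langle v\rangle^2$, yields $\|f\omega\,\delta^{-\beta/2}\|_{L^2}\lesssim\|f\omega\|_{L^2}$ globally up to the boundary (Proposition~\ref{prop:TheLqEstim} and Corollary~\ref{cor:EstimBord}); (iii) interpolating between the $\delta^{\alpha/p}$-loss and the $\delta^{-\beta/2}$-gain cancels the boundary degeneration and gives $\|f\omega\|_{L^r}\lesssim\|f\omega\|_{L^2}$ (Proposition~\ref{prop:EstimL1Lr-omega}). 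A further H\"older interpolation against $L^1$ turns this into the $L^1_\omega\to L^2_\omega$ smoothing; the same argument for the dual problem gives, after duality, $L^2_\omega\to L^\infty_\omega$. The strongly-confining hypothesis enters in step~(ii): the $\langle v\rangle^\varsigma$-gain from $\varpi^\sharp$ is what allows the interpolation of Lemma~\ref{lem:EstimL2poids} to trade velocity decay for the $\delta^{-\beta}$ weight. This weight-function route to the boundary is the idea you are missing; it replaces your proposed reflection/extension scheme entirely.
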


This result slightly improves and generalizes \cite[Theorem~1.1]{CM-KFP**} which establishes a similar result in the conservative case.
A variant in the weakly confining case will be given in Corollary~\ref{prop:EstimL1L2-omegaWeak-final}.
The proof is very alike the one of \cite[Theorem~1.1]{CM-KFP**} although some steps are slightly simplified. 
The strategy is based on Nash's gain of integrability argument~\cite{MR0100158} which is performed however on a time  integral inequality as in  Moser's work~\cite{MR159139}, and is then more convenient in order to use the interior gain of  integrability deduced from Bouchut's regularity result  \cite{MR1949176}
following the way paved by \cite[Theorem 6]{MR3923847} for proving a  somehow similar local version. Contrary to the last reference, the gain of integrability is not formulated locally in $x,v$ and integrated in time but globally in $x,v$ and pointwisely in time as in the the ultracontractivity theory of Davies and Simon \cite{MR0712583,MR0766493}. 
%as in  \cite{MR3923847}. 
Exactly as  in  \cite{CM-KFP**}, the key argument consists in exhibiting a suitable  twisted weight function which is  somehow slightly more elaborated than the one used during the proof of the growth estimates in Theorem~\ref{theo:existsLpM1}. 
  In Theorem~\ref{theo:existsLpM1} and Theorem~\ref{theo:ultra}, the boundedness assumption on $\Omega$ is really only needed in the proof of the uniqueness of the solution in the $L^p_\omega$ framework and it is likely that it can be removed. Here, we do not try to generalize these results to the case of an unbounded domain, %follow that line of research, 
 see however %Remark~\ref{rem:E&U-unboundedOmega} and Theorem~\ref{} 
\cite{BMM-HalfSpace**} for partial results in that direction.

\subsection{Organization of the paper} 
Section~\ref{sec:WeightLp} is dedicated to the proof of some weighted $L^p$ a priori growth bounds for the primal and the dual problems. These estimates and the well-posedness in a $L^2$ framework for the same problems 
 (and thus part of Theorem~\ref{theo:existsLpM1}) are
 then established rigorously in Section~\ref{sec-Wellpodeness}. Section~\ref{sec:ultra} is devoted to the proof of the ultracontractivity property as stated in Theorem~\ref{theo:ultra}. 
 In section~\ref{sec-WellposednessGal} we come back to the  well-posedness in a general framework and we end the proof of Theorem~\ref{theo:existsLpM1}.
 Section~\ref{sec-Harnack} is dedicated to the proof of the Harnack inequality associated to our equations. 
 In Section~\ref{sec:KR} we state and prove a constructive version of the Krein-Rutman theorem and deduce Theorem~\ref{theo:KR}.

%%%%%%%%%%%%%%%%%%%%%%%%%%%%%%%%%%%%%%%%%%%%%%%%%%%%%
\section{Weighted $L^p$ a priori growth estimates}  \label{sec:WeightLp}

 This section is devoted to the proof of some a priori growth estimates in weighted $L^p$ spaces  for solutions
 to the KFP equation \eqref{eq:KFP}--\eqref{eq:KolmoBdyCond}--\eqref{eq:initialDatum} and its formal adjoint. 
The formal computations that we use for the sake of clarity can be rigorously justified for the solutions we build in Theorem~\ref{theo:WP-KFP&KFP*}.

\subsection{A priori estimates for the primal problem}

We recall that for two functions $f,\omega : \R^d \to \R_+$ and $p \in [1,\infty)$, we have 
\beqn\label{eq:identCCCffp}
\int_{\R^d} (\CCC f) f^{p-1} \omega^p dv =
 - \frac{4 (p-1)}{ p^2}  \int_{\R^d} |\nabla_v (f\omega)^{p/2} |^2  + \int f^p \omega^p \varpi^{\CCC}_{\omega,p},
\eeqn
with $\varpi^{\CCC}_{\omega,p}$ defined in \eqref{def:varpi}, 
what can be established by mere repeated  integrations by part, see for instance \cite[Lemma~7.8]{sanchez:hal-04093201} and the references therein. 
From the definition of the admissible weight functions in Section~\ref{subsec:confinement_v} and for further references, we may observe that the large velocity asymptotic of $\varpi^{\CCC}_{\omega,p}$ is controlled by 
\beqn\label{eq:varpi-asymptotic}
\limsup _{|v| \to \infty} \bigl( \sup_\Omega \varpi^{\CCC}_{\omega,p} - \varpi^\sharp_{\omega,p}) \le 0, 
  \quad \varpi^\sharp_{\omega,p} := - b_0^\sharp \langle v \rangle^\varsigma, \quad \varsigma := {\gamma+s-2}, 
\eeqn
with  $b^\sharp_0 > 0$ given by 
\be\label{eq:b0sharp}
\ba
 &b_0^\sharp := (k-k_p) b_0 \ \hbox{ if } \ s=0, \\
 &b_0^\sharp := b_0 s \zeta  \ \hbox{ if } \ s \in (0,\gamma), \\
 &b_0^\sharp := b_0 s \zeta - (s\zeta)^2  \ \hbox{ if } \ s = \gamma. 
 \ea
\ee
In a more quantitative way, for any $\vartheta \in (0,1)$, there exists $\kappa',R' > 0$ such that 
\beqn\label{eq:varpiC-varpisharp}
\sup_\Omega \varpi^{\CCC}_{\omega,p} \le \kappa' \chi_{R'} + \chi_{R'}^c \vartheta \varpi^\sharp_{\omega,p},
\eeqn
where $\chi_R(v) := \chi(|v|/R)$, $\chi \in C^2(\R_+)$, $\mathbf{1}_{[0,1]} \le \chi \le \mathbf{1}_{[0,2]}$, and $\chi^c_R : 1 - \chi_R$.

\begin{lem}\label{lem:EstimKFPLp12}
For any admissible weight function $\omega$, there exist $\kappa \ge 0$ and $C \ge 1$ such that for both exponents $p = 1, 2$, any solution $f$ to the KFP equation \eqref{eq:KFP}--\eqref{eq:KolmoBdyCond}--\eqref{eq:initialDatum} satisfies, at least formally,  
\beqn\label{eq:lem:EstimKFPLp12}
\| f_t \|_{L^p_\omega} \le C e^{\kappa t} \| f_0 \|_{L^p_\omega} , \quad \forall \, t \ge 0.
\eeqn
\end{lem}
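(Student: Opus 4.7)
The plan is a standard weighted energy estimate. Test \eqref{eq:KFP} against $\omega^p |f|^{p-1}\mathrm{sgn}(f)$ when $p=2$, and against $\omega\,\mathrm{sgn}(f)$ when $p=1$, and integrate over $\OO$. Since $\omega$ does not depend on $x$, integration by parts in $x$ converts the transport contribution into a pure boundary term and produces, on the left, the time derivative $\tfrac{1}{p}\tfrac{d}{dt}\|f\|_{L^p_\omega}^p$. One is thus left with an interior collisional contribution and a boundary contribution to control.

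For the interior collisional term, I would invoke the identity \eqref{eq:identCCCffp}, discard the nonpositive Dirichlet piece $-\tfrac{4(p-1)}{p^2}\int|\nabla_v(|f|\omega)^{p/2}|^2$, and control the remainder through the pointwise estimate \eqref{eq:varpiC-varpisharp} on $\varpi^{\CCC}_{\omega,p}$. Since by \eqref{eq:varpi-asymptotic} and the admissibility conditions on $\omega$ the function $\sup_\Omega \varpi^{\CCC}_{\omega,p}$ tends to $-\infty$ as $|v|\to\infty$ and is locally bounded by the $L^\infty_{\rm loc}$ assumption on $b,c$, one has $\kappa_1 := \sup_\OO \varpi^{\CCC}_{\omega,p} < \infty$, yielding the bound $\kappa_1 \|f\|_{L^p_\omega}^p$ on this contribution. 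The boundary contribution equals
$$\frac{1}{p}\int_{\Sigma_-}|v\cdot n_x|\,|\gamma_- f|^p\, \omega^p - \frac{1}{p}\int_{\Sigma_+}(v\cdot n_x)\,|\gamma_+ f|^p\, \omega^p.$$
Substituting the reflection condition $\gamma_- f = \iota_S \SSS \gamma_+ f + \iota_D \DDD \gamma_+ f$, I would handle the specular piece exactly via the involution $v\mapsto \VV_x v$, using the radial symmetry $\omega(\VV_x v)=\omega(v)$, and the diffusive piece by Jensen's inequality (for $p=1$) or Cauchy--Schwarz (for $p=2$) applied to $\widetilde{\gamma_+ f}$, the two integrals $\int_{\Sigma_-^x}|v\cdot n|\MMM_x^p\omega^p\,dv$ and $\int_{\Sigma_+^x}\omega^{-p'}(n\cdot w)\,dw$ being finite by \eqref{eq:cond-Lpomega}.

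The main obstacle is that the constant produced by the diffusive part of the boundary term, call it $C_\omega$, need not satisfy $\iota_S^p + \iota_D^p C_\omega \le 1$, so the incoming boundary flux cannot be absorbed by the outgoing dissipation directly in the weight $\omega$. Following the strategy alluded to in the introduction I would replace $\omega$ by an equivalent weight $\widetilde\omega := m\,\omega$, where $m(x,v)$ is bounded above and below, radial in $v$, and of the form $m = 1 + \eps\, \psi(v\cdot n_x/\la v\ra)\,\varphi(\delta(x))$ with $\psi$ odd and positive on $\{t>0\}$ and $\varphi$ a smooth cutoff supported near the boundary. For $\eps$ small enough this enforces $m \ge 1 + \eps_0$ on $\Sigma_+$ and $m \le 1 - \eps_0$ on $\Sigma_-$, which is enough to render the boundary contribution in the $\widetilde\omega$-weighted energy nonpositive. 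The modification shifts $\varpi^{\CCC}_{\widetilde\omega,p}$ from $\varpi^{\CCC}_{\omega,p}$ by a bounded perturbation, absorbed into a possibly larger constant $\kappa$, and induces a multiplicative constant $C \ge 1$ in \eqref{eq:lem:EstimKFPLp12} coming from the equivalence of norms $\|\cdot\|_{L^p_\omega}\simeq \|\cdot\|_{L^p_{\widetilde\omega}}$. A Gronwall argument applied to the resulting differential inequality in $\widetilde\omega$-norm concludes.
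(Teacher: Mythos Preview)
Your overall strategy---modify $\omega$ to an equivalent weight so that the boundary contribution becomes nonpositive---is the right one, but the twist $m=1+\eps\,\psi(v\cdot n_x/\langle v\rangle)\varphi(\delta(x))$ alone is not strong enough. For $p=2$, Cauchy--Schwarz together with the normalization $\widetilde\MMM_x=1$ gives
\[
\int_{\R^d}\MMM_x^2\,\omega^2\,(n_x\cdot v)_-\,dv\;\cdot\;\int_{\R^d}\omega^{-2}\,(n_x\cdot v)_+\,dv\;\ge\;\Bigl(\int_{\R^d}\MMM_x\,(n_x\cdot v)_-\,dv\Bigr)^2=1,
\]
with equality only when $\omega^2\propto\MMM_x^{-1}$; for a generic admissible weight this product (which is precisely the constant deciding whether the diffusive incoming flux is dominated by the outgoing one) can be arbitrarily large. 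Your multiplicative twist can shift the incoming/outgoing balance only by a factor $\bigl((1+O(\eps))/(1-O(\eps))\bigr)^p\approx 1$, since $\eps$ must stay small to keep $m>0$. (Your uniform bound $m\ge 1+\eps_0$ on $\Sigma_+$ also fails: $\psi$ odd and continuous forces $\psi(0)=0$, so $m\to 1$ at grazing.)

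The paper's cure is a \emph{prior} modification: replace $\omega$ by the equivalent weight $\omega_A$ defined by $\omega_A^p:=\MMM_x^{1-p}\chi_A+\omega^p(1-\chi_A)$. On $\{|v|\le A\}$ this is exactly the weight saturating the Cauchy--Schwarz inequality above, so the boundary constants satisfy $K_1(\omega_A)-K_2(\omega_A)\to 0$ as $A\to\infty$; only then does a small twist, taken in the paper as $1+\tfrac12\,n_x\cdot v/\langle v\rangle^4$, supply the residual strict negativity through an extra term $K_0(\omega_A)$ bounded away from zero. A secondary gap: since your $m$ depends on $x$, the transport produces a term $\tfrac1p\,v\cdot\nabla_x\widetilde\omega^p$ that you call ``a bounded perturbation'', but with $\psi=\psi(v\cdot n_x/\langle v\rangle)$ this contribution grows like $|v|$. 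The paper's specific choice $n_x\cdot v/\langle v\rangle^4$ is designed so that this term is $O(\langle v\rangle^{-2})$ and hence genuinely absorbable into $\kappa$.
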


The proof is based on moment estimates introduced in \cite[Proposition~3.3]{CM-Landau**} for the case $p=2$ and in \cite[Lemma~2.3]{CM-KFP**} for the case $p=1$, which are reminiscent of $L^1$ hypodissipativity techniques, see e.g.\ \cite{MR3489637,MR3779780,MR4179249}, and 
which are based on the usual multiplier used in order to control the diffusive reflection operator in previous works on the Boltzmann equation, see e.g.\ \cite{MR1301931,MR1776840,MR2721875,MR4179249}. 
For further references, we define the formal adjoints
\beqn\label{def:CCC*}
\LLL^* := v \cdot \nabla_x + \CCC^*, \quad \CCC^* g:=   \Delta_v g - b \cdot \nabla_v g + (c -\Div b) g.
\eeqn

\begin{proof}[Proof of Lemma \ref{lem:EstimKFPLp12}] 
%  Without loss of generality we may suppose that $\omega \ge 1$, otherwise we replace $\omega$ by $C \omega$ where $C>0$ is such that $\omega \ge C^{-1}$. 
Consider $ 0 \le f_0 \in L^p(\omega)$ and  $f = f(t,x,v) \ge 0$ a  solution to the Cauchy problem \eqref{eq:KFP}--\eqref{eq:KolmoBdyCond}--\eqref{eq:initialDatum}.
We introduce the modified weight functions $\omega_A$ and $\widetilde \omega$ defined by 
\beqn\label{def:omegaA}
 \omega_A^p := \MMM_x^{1-p} \chi_A + \omega^p (1-\chi_A),
 \qquad  \widetilde \omega^p :=   \left( 1 + \frac{ 1  }{ 2}  \frac{ n_x \cdot   v}{\la v \ra^4} \right) \omega_A^p,
\eeqn
with $A \ge 1$ to be chosen later, 
%{  with $\chi_A(v) := \chi(|v|/A)$, $A \ge 1$ to be chosen later and $\chi \in C^2(\R_+)$, $\mathbf{1}_{[0,1]} \le \chi \le \mathbf{1}_{[0,2]}$, }
 $\hat v := v/\langle v \rangle$ and $\widetilde{v} :=  \hat v/\langle v \rangle $. It is worth emphasizing that 
\beqn\label{eq:omega&omegatilde}
c_A^{-1} \omega \le \tfrac12\omega_A \le \widetilde \omega  \le \tfrac32 \omega_A \le c_A \omega, 
\eeqn
with $c_A  \in (0,\infty)$. 
We then write 
\beqn\label{eq1:SLestimL1}
\frac1p\frac{d}{dt} \int_{\OO} f^p \, \widetilde \omega^p 
%= \int_{\OO} f \, \LLL^* \widetilde \omega %\, \widetilde \varpi 
= \int_{\OO} (\CCC f)  f^{p-1} \widetilde \omega^p  %\, \widetilde \varpi 
+   \frac1p  \int_{\OO} f^p \, \TTT^* \widetilde \omega^p %\, \widetilde \varpi 
- \frac1p  \int_\Sigma (\gamma f)^p \, \widetilde \omega^p   (n_x \cdot v),
\eeqn
and we estimate each term separately below.

\medskip\noindent
\textit{Step 1.} 
We first compute separately each contribution of the boundary term in \eqref{eq1:SLestimL1}, namely we write
$$
- \int_\Sigma (\gamma f)^p \, \widetilde \omega^p   (n_x \cdot v) = B_1 + B_2
$$
with
\bean
B_1 :=  - \int_\Sigma (\gamma f)^p \omega_A^p  \, n_x \cdot v, \quad
B_2 :=  -  \frac{ 1  }{ 2}  \int_\Sigma (\gamma f)^p  {(n_x \cdot \tilde v)^2 } \,  \omega_A^p. 
\eean

On the one hand, we have 
$$
\begin{aligned}
&B_1 = - \int_{\Sigma_{+}} (\gamma_+ f)^p \omega^p_A  |n_x \cdot v| 
+ \int_{\Sigma_{-}}  \{ \iota_S \SSS \gamma_+ f + \iota_D \DDD \gamma_{+} f   \}^p  \omega^p_A |n_x \cdot v| 
 \\
&\qquad \le  - \int_{\Sigma_{+}} (\gamma_+ f)^p \omega^p_A  (n_x \cdot v)_+ +
 \int_{\Sigma_-}  \iota_S(\SSS \gamma_{+} f)^p\omega_A^p (n_x \cdot v)_-
+ \int_{\Sigma_-} \iota_D ( \widetilde{\gamma_{+} f})^p \MMM_x^p \omega_A^p (n_x \cdot v)_-
\\
&\qquad \le -  \int_{\Sigma_+}  
\iota_D(  \gamma_{+} f)^p\omega_A^p (n_x \cdot v)_+
+ \int_{\partial\Omega} \iota_D ( \widetilde{\gamma_{+} f})^p K_1(\omega_A), 
\end{aligned}
$$
where we have used the convexity of the mapping $s \mapsto s^p$ in the first line, we have made the change of variables $  v \mapsto \VV_x v$ in the second integral in the second line
and we have set 
\begin{equation}\label{eq:K1omega}
K_1(\omega_A ) := \int_{\R^d}  \MMM_x^p \, \omega^p_A  \, (n_x \cdot v)_{-} \, dv < \infty.
%\quad   \text{Need }\MMM^p \omega^{p}\langle v \rangle \in L^1% \omega \lesssim \la v \ra^{-\frac{(d+1)}{p}-0} e^{\frac{|v|^2}{2p \Theta^*}} 
\end{equation}
For $p=1$, we observe that $\omega_A \ge 1$ and we set $K_2(\omega_A) := 1$.
For $p = 2$, using the Cauchy-Schwarz inequality, we have 
 $$
(\widetilde{\gamma_{+} f})^2 \le K_2(\omega_A)^{-1}  \int_{\R^d} ( \gamma_{+} f)^2\omega_A^2 (n_x \cdot v)_+, 
$$
with 
$$
K_2 (\omega_A)^{-1} := \int_{\R^d}   \omega_A^{-2}  (n_x \cdot v)_+ \, dv < \infty.
%\quad  \text{Need }  \omega^{-2} \langle v \rangle \in L^1%\omega \gtrsim \la v \ra^{(d+1)/2 + 0} 
$$
In both case we deduce  
\beqn\label{eq:lemL12-estimB1}
 B_1 
\le  \int_{\partial\Omega}  \iota_D ( K_1(\omega_A)-K_2(\omega_A)) (\widetilde{\gamma_+ f})^p.
\eeqn

On the other hand, using the boundary condition \eqref{eq:KolmoBdyCond} and making change of variables $  v \mapsto \VV_x v$, there holds
$$
\begin{aligned}
B_2 
&= -\frac12 \int_{\Sigma_+} (\gamma_+ f)^p \omega_A^p (n_x \cdot \tilde v)^2
 -\frac12 \int_{\Sigma_-} (\iota_S \SSS\gamma_+ f + \iota_D \DDD \gamma_+ f)^p \omega_A^p (n_x \cdot \tilde v)^2 \\
&= -\frac12 \int_{\Sigma_+} (\gamma_+ f)^p \omega_A^p (n_x \cdot \tilde v)^2
 -\frac12 \int_{\Sigma_+} (\iota_S \gamma_+ f + \iota_D \DDD \gamma_+ f)^p \omega_A^p (n_x \cdot \tilde v)^2.
\end{aligned}
$$
When $p=1$, denoting
\begin{equation}\label{eq:K0}
K_0 (\omega_A) := \frac12 \int_{\R^d} \MMM_x (n_x \cdot \tilde v)_+^2 \, \omega_A \, dv <\infty ,
%\quad   \text{Need } \MMM  \omega  \in L^1% \omega \lesssim \la v \ra^{-d-0} e^{\frac{|v|^2}{2 \Theta^*}} 
\end{equation}
we therefore have 
\bean
B_2 %- \int_{\Sigma} \gamma f  (n_x \cdot \hat v)^2
&\le& -\frac12 \int_{\Sigma_{+}} \iota_D (\DDD \gamma_{+} f )   (n_x \cdot \tilde v)^2  \omega_A
= -    \int_{\partial\Omega} \iota_D  K_0 (\omega_A)   (\widetilde{\gamma_{+} f}).
\eean
On the other hand,  when $p=2$, denoting 
\begin{equation}\label{eq:K0bis}
K_0 (\omega_A)^{-1} :=  2 \int_{\Sigma_+^x}  \, \langle v \rangle^4 \, \omega_A^{-2} \, dv < \infty ,
%\quad   k_2 \ge 1+d/2 %  \text{Need }  \omega \gtrsim \la v \ra^{d/2+1+0} 
\end{equation}
and thanks to the Cauchy-Schwarz inequality, we have 
\bean
B_2 %- \int_{\Sigma} \gamma f  (n_x \cdot \hat v)^2
\le - \frac12 \int_{\Sigma_{+}}  (\gamma_{+} f)^2    (n_x \cdot \tilde v)^2  \omega_A^2
\le
-   \int_{\partial\Omega}  K_0 (\omega_A)   (\widetilde{\gamma_{+} f})^2.
\eean
 In both cases $p=1$ and $p=2$, we have established
\beqn\label{eq:lemL12-estimB}
 B \le  \int_{\partial\Omega}  \iota_D \left[ K_1(\omega_A)-K_2(\omega_A)-K_0(\omega_A) \right] (\widetilde{\gamma_+ f})^p ,
\eeqn
and we observe that 
\bean
\lim_{A \to \infty} K_1(\omega_A) = \lim_{A \to \infty}  K_2(\omega_A) = 1, 
\eean
thanks to the dominated convergence theorem, the normalization condition on $\MMM_x$  and the condition \eqref{eq:cond-Lpomega}.
We similarly have 
$$
\lim_{A \to \infty} K_0 (\omega_A)  = \frac12\int_{\R^d}\MMM_x  (n_x \cdot \tilde v)_+^2 \, dv \ge C_1(\Theta_*,\Theta^*) > 0,
$$
when $p=1$, and 
$$
\lim_{A \to \infty} K_0 (\omega_A)^{-1}  =  2 \int_{\Sigma_-^x} \MMM_x \la v \ra^4 \, dv \le C_2(\Theta_*,\Theta^*) < \infty,
$$
when $p=2$. All these convergences being uniform in $x \in \partial\Omega$,  we can choose $A > 0$ large enough in such a way that 
$$
K_1(\omega_A)-K_2(\omega_A)-K_0(\omega_A) \le 0,
$$
and thus 
$$
- \int_\Sigma (\gamma f)^p \, \widetilde \omega^p   (n_x \cdot v) \le 0.
$$ 

\medskip\noindent
\textit{Step 2.} 
We now deal with the first term at the right-hand side of \eqref{eq1:SLestimL1}. 
On the one hand from \eqref{eq:identCCCffp}, we have 
$$
\int_{\R^d} (\CCC f)  f^{p-1} \widetilde\omega^p   =
 -\frac{4(p-1)}{p}  \int_{\R^d} |\nabla_v (f^{p/2} \widetilde\omega^{p/2} ) |^2  + \int_{\R^d} f^p \widetilde\omega^p  \varpi^\CCC_{\tilde \omega,p},
$$
with 
\bean
\varpi^\CCC_{\tilde \omega,p}
:=  2\left(1-\frac{1}{p}\right) \frac{|\nabla_v \widetilde \omega|^2 }{ \widetilde\omega^2}  
+ \left(\frac 2p -1\right)\frac{ \Delta_v  \widetilde \omega }{\widetilde\omega}  
- b  \cdot \frac{\nabla_v \widetilde\omega}{\widetilde\omega}
+  c  - \frac{1}{p}  \Div_v b.
\eean
Defining, $\wp^p := 1 + \tfrac12 \tfrac{n_x \cdot v}{\la v \ra^4}$ and $\wp_A^p := 1 + \chi_A (\MMM^{1-p} \omega^{-p} - 1)$ so that $\widetilde\omega = \wp \omega_A$ and $\omega_A =   \wp_A \omega$, 
we compute 
\begin{equation}\label{eq:tildevarpi}
\varpi^\CCC_{\tilde \omega,p}
= \varpi_{A,p}   + 2 \frac{\nabla_v \omega_A}{ \omega_A}\cdot \frac{\nabla_v \wp  }{ \wp}
 + 2\left(1-\frac{1}{p}\right) \frac{|\nabla_v \wp |^2 }{ \wp^2}  
 +\left(\frac{2}{p}-1\right) \frac{\Delta_v \wp}{\wp} 
 - b  \cdot \frac{\nabla_v \wp}{\wp},
\end{equation}
with 
\bean
 \varpi_{A,p} :=    2\left(1-\frac{1}{p}\right) \frac{|\nabla_v \omega_A|^2 }{ \omega_A^2} 
+\left(\frac{2}{p}-1\right) \frac{\Delta_v \omega_A}{\omega_A}
 - b  \cdot \frac{\nabla_v \omega_A}{\omega_A} +  c  - \frac{1}{p}  \Div_v b , 
\eean
and next 
\begin{equation}\label{eq:varpi_Ap}
 \varpi_{A,p} =  \varpi_{\omega, p}^\CCC   + 2 \frac{\nabla_v \omega}{ \omega}\cdot \frac{\nabla_v \wp_A  }{ \wp_A}
 + 2\left(1-\frac{1}{p}\right) \frac{|\nabla_v \wp_A |^2 }{ \wp_A^2}  
 +\left(\frac{2}{p}-1\right) \frac{\Delta_v \wp_A}{\wp_A} 
 - b  \cdot \frac{\nabla_v \wp_A}{\wp_A} .
\end{equation}  
%where $\varpi_{\omega, p}^\CCC$ is given by \eqref{def:varpi}.
Because $\chi_A$ has compact support in the velocity variable, the same holds for all the terms except the first one at the right-hand side of \eqref{eq:varpi_Ap}, and thus 
 $$ 
 \bigl|  \varpi_{A,p} - \varpi^\CCC_{\omega,p}    \bigr|  
 \lesssim  \frac{1 }{ \langle v \rangle^4} \lesssim \frac{|\varpi^\sharp_{\omega,p}| }{ \langle v \rangle^{2+\gamma+s}}. 
 $$
Similarly,  observing that 
$$
 \frac{\nabla_v \omega_A  }{ \omega_A}\cdot \frac{\nabla_v \wp  }{ \wp} = \frac{\nabla_v \omega  }{ \omega}\cdot \frac{\nabla_v \wp  }{ \wp} +  \frac{\nabla_v  \wp_A  }{ \wp_A}\cdot \frac{\nabla_v \wp  }{ \wp},
 $$
 where $|\frac{\nabla_v \wp  }{ \wp}| \lesssim \la v \ra^{-4}$ and the second term is compactly supported, we have
 $$
 \bigl| \varpi^\CCC_{\tilde \omega,p} -  \varpi_{A,p}  \bigr| 
 \lesssim \left(1 + |b| + \frac{| \nabla \omega| }{ \omega} \right) \frac{1 }{ \langle v \rangle^4} \lesssim  \frac{|\varpi^\sharp_{\omega,p}| }{ \langle v \rangle^2}. 
 $$
 Combining the last two estimates together with \eqref{eq:varpiC-varpisharp}, we deduce that for any $\vartheta \in (0,1)$, there exists $\tilde\kappa,\tilde R > 0$ such that 
 \beqn\label{eq:varpiCtilde-varpisharp}
\sup_\Omega \varpi^{\CCC}_{\tilde\omega,p} \le \tilde\kappa  \chi_{\tilde R} + \chi_{\tilde R}^c \vartheta \varpi^\sharp_{\omega,p}. 
\eeqn

\smallskip\noindent
\textit{Step 3.}  
We finally deal with the second term at the right-hand side of \eqref{eq1:SLestimL1}. When $p=1$ we have
$$
v \cdot \nabla_x \widetilde \omega
= \frac12 ( \hat v \cdot D_x n_x \hat v) \frac{\omega_A}{\la v \ra^2} \lesssim  \frac{1 }{ \langle v \rangle^2} \widetilde \omega %\le  \frac{C }{ \langle v \rangle^2} \widetilde \omega,
 \lesssim \frac{|\varpi^\sharp_{\omega,p}| }{ \langle v \rangle^{s+\gamma}} \widetilde \omega,
$$
%for some constant $C>0$, 
 thanks to the regularity assumption on $\Omega$.  
On the other hand, when $p=2$, we first compute
$$
v \cdot \nabla_x (\widetilde \omega^2)
= \frac12 ( \hat v \cdot D_x n_x \hat v) \frac{\omega_A^2}{\la v \ra^2}
+ \left( 1 + \frac12 \frac{n_x \cdot v}{\la v \ra^4} \right) v \cdot \nabla_x (\omega_A^2).
$$
Since 
$$
\nabla_x (\omega_A^2) 
=  \chi_A \MMM_x^{-1} \left[ \frac{(d-1)}{2} \frac{\nabla_x \Theta_x}{\Theta_x} - \frac{|v|^2}{2} \frac{\nabla_x \Theta_x}{\Theta_x^2} \right],
$$
assumption \eqref{eq:Assum-Theta} together with the fact that $\chi_A$ is compactly supported and the regularity assumption on $\Omega$ as above imply
$$
v \cdot \nabla_x (\widetilde \omega^2) \lesssim \frac1{ \langle v \rangle^2} \widetilde \omega^2
\lesssim \frac{|\varpi^\sharp_{\omega,p}| }{ \langle v \rangle^{s+\gamma}} \widetilde \omega.
$$

\medskip\noindent
\textit{Step 4.} 
Coming back to \eqref{eq1:SLestimL1} and using Step~1, we deduce that 
\beqn\label{eq:disssipSL-Lp}
\frac1p\frac{d}{dt} \int_\OO f^p \widetilde \omega^p \le  -\frac{4(p-1)}{p}    \int_\OO |\nabla_v (f\widetilde \omega)^{p/2}|^2 + \int_\OO f^p \widetilde \omega^p \varpi^{\LLL}_{\tilde\omega,p}
\eeqn
for both $p=1,2$ and with 
\beqn\label{eq:disssipSL-LpBIS}
\varpi^\LLL_{\tilde\omega,p} := \varpi^\CCC_{\tilde\omega,p} +  \frac{1 }{ \widetilde \omega^p} v \cdot \nabla_x \widetilde \omega^p. 
% \le {  \widetilde\Psi^\LLL - \widetilde\Upsilon^\LLL } \le \kappa , %C_0 - C_1 \varsigma, 
\eeqn
Gathering the estimates \eqref{eq:varpiC-varpisharp} and those established in Step~2 and Step~3, we deduce that for any $\vartheta \in (0,1)$, there exists $\kappa,R > 0$ such that 
\beqn\label{eq:varpiL-varpisharp}
  \varpi^{\LLL}_{\tilde\omega,p} \le \kappa \chi_{R} + \chi_{R}^c \vartheta \varpi^\sharp_{\omega,p}.
\eeqn
%for two functions $\widetilde \Phi^\LLL, \widetilde \Upsilon^\LLL$ having the same properties as $ \Phi ,  \Upsilon $ in \eqref{eq:borne-varpi} and for some constant $\kappa \in \R$. 
In particular, $  \varpi^{\LLL}_{\tilde\omega,p}  \le \kappa$ and we immediately conclude thanks to Gr\"onwall's lemma and the comparison \eqref{eq:omega&omegatilde} between $\omega$ and $\widetilde\omega$. 
\end{proof}

\begin{rem}\label{rem:lem:EstimKFPLp12}
 For later references, we emphasize that the same proof works for establishing \eqref{eq:lem:EstimKFPLp12} with $p:=1$ and $\omega := \langle v \rangle^k$, $k \ge k_1 := \max(k^*_1,d+2)$. 
\end{rem}

\subsection{A priori estimates for the dual problem}

We establish now a similar exponential growth a priori estimate in a  general weighted $L^q$ framework, $q=1,2$,  for the dual  backward problem associated to 
\eqref{eq:KFP}--\eqref{eq:KolmoBdyCond}--\eqref{eq:initialDatum}. More precisely we consider the equation
\begin{equation}\label{eq:dualKFP}
\left\{
\begin{aligned}
- \partial_t g &=  v \cdot \nabla_x g + \CCC^* g    \quad&\text{in} \quad (0,T) \times \OO ,\\
\gamma_+ g &=  \RRR^* \gamma_- g   \quad &\text{on} \quad (0,T) \times \Sigma_+ , \\ 
 g(T) &= g_T \quad &\text{in} \quad \OO,  
\end{aligned}
\right.
\end{equation}
for any $T \in (0,\infty)$ and any final datum $g_T$.
The adjoint Fokker-Planck operator $\CCC^*$ is defined in \eqref{def:CCC*}, and the adjoint reflection operator $\RRR^*$ is defined by 
$$
\RRR^* g (x,v) = \iota_S  \SSS g (x,v) + \iota_D \DDD^* g (x), $$
with
$$ 
 \DDD^* g (x) = \widehatt{\MMM_x g}(x) := \int_{\R^d} g(x,w) \MMM_x(w) (n_x \cdot w)_{-} \, dw, 
$$ 
for any function $g$ with support on $\Sigma_-$.

For two solutions $f$ to the forward Cauchy problem \eqref{eq:KFP}--\eqref{eq:KolmoBdyCond}--\eqref{eq:initialDatum} and $g$ to the  dual problem  \eqref{eq:dualKFP},   the usual identity
\begin{equation}\label{eq:identite-dualite}
\int_\OO f(T) g_T   
= \int_\OO f_0 g(0)
\end{equation}
then holds at least formally, see also Theorem~\ref{theo:WP-KFP&KFP*} below. We may indeed formally compute
\bean
\int_\OO f(T) g_T  
&=& \int_\OO f_0 g(0) + \int_0^T \int_\OO (\partial_t f g + f \partial_t g  )  \, ds \\
&=& \int_\OO f_0 g(0) - \int_0^T \!\!\! \int_\OO (v \cdot \nabla_x f  g + f v \cdot \nabla_x g ) \, ds
\\
&=& \int_\OO f_0 g(0) - \int_0^T \!\!\! \int_\Sigma (v \cdot n) \gamma f \gamma g \, ds
\\
&=& \int_\OO f_0 g(0) - \int_0^T \!\!\! \int_{\Sigma_+} (v \cdot n) (\gamma_+ f)  (\RRR^* \gamma_- g) \, ds
\\
&& \quad +  \int_0^T \!\!\! \int_{\Sigma_-} |v \cdot n| (\RRR \gamma_+ f) ( \gamma_- g)   \, ds,
\eean 
by using the Green-Ostrogradski formula and the reflection conditions at the boundary. From the very definition of $\RRR$ and $\RRR^*$, we then deduce \eqref{eq:identite-dualite}.

\begin{lem}\label{lem:EstimKFP*Lq12}
For any admissible weight function $\omega$ and any exponent $q=1$ or $q=2$, there exist $\kappa \in \R$ and $C \ge 1$ such that for any $T > 0$ and any $g_T \in L^q_m$ with $m:=\omega^{-1}$, the associated solution $g$ to the backwards dual problem \eqref{eq:dualKFP} satisfies 
\begin{equation}\label{eq:g0&g-<gtC}
\| g(0) \|_{L^q_m}  
\le C e^{\kappa T} \| g_T \|_{L^q_m}. 
\end{equation}
\end{lem}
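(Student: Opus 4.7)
The plan is to mimic the proof of Lemma~\ref{lem:EstimKFPLp12} after a preliminary time reversal. Setting $\tilde g(\tau,x,v) := g(T-\tau,x,v)$ for $\tau\in[0,T]$, the function $\tilde g$ satisfies the forward problem
\begin{equation*}
\partial_\tau\tilde g = v\cdot\nabla_x\tilde g + \CCC^*\tilde g,\qquad \gamma_+\tilde g = \RRR^*\gamma_-\tilde g,\qquad \tilde g(0,\cdot) = g_T,
\end{equation*}
and the target estimate \eqref{eq:g0&g-<gtC} becomes $\|\tilde g(T)\|_{L^q_m}\le Ce^{\kappa T}\|\tilde g(0)\|_{L^q_m}$. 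I would then introduce a modified dual weight of the form
\begin{equation*}
\tilde m_A^q := \MMM_x^{q-1}\chi_A + m^q(1-\chi_A),\qquad \widetilde m^q := \left(1-\tfrac{1}{2}\tfrac{n_x\cdot v}{\la v\ra^4}\right)\tilde m_A^q,
\end{equation*}
with $A\ge 1$ large, so that $c_A^{-1}m \le \widetilde m \le c_A m$. Two features distinguish this from $\widetilde\omega$: the sign of the boundary-layer correction is \emph{reversed}, because after time reversal the transport is $+v\cdot\nabla_x$ and the boundary term coming from integration by parts has opposite sign; and the low-velocity profile now blends $m$ with $\MMM_x^{(q-1)/q}$ rather than $\MMM_x^{(1-p)/p}$, being tailored to the adjoint diffusive operator $\DDD^*$.

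Multiplying the equation by $\tilde g^{q-1}\widetilde m^q$ (resp.\ by $\sgn(\tilde g)\widetilde m$ when $q=1$) and integrating by parts in $\OO$ yields the identity analogous to \eqref{eq1:SLestimL1}. For the interior part, the adjoint analogue of \eqref{eq:identCCCffp} applied to $\CCC^*$ introduces a coefficient $\varpi^{\CCC^*}_{\widetilde m,q}$. Using $m = \omega^{-1}$ together with the elementary relations $\nabla_v m/m = -\nabla_v\omega/\omega$ and $\Delta_v m/m = -\Delta_v\omega/\omega + 2|\nabla_v\omega|^2/\omega^2$, one checks that its large-velocity asymptotic is again dominated by $-b\cdot\nabla_v\omega/\omega$ and is therefore controlled by the same profile $\varpi^\sharp_{\omega,q}$ from \eqref{eq:varpi-asymptotic}. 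In particular $\varpi^{\CCC^*}_{\widetilde m,q}$ is bounded from above, and the transport contribution from the weight is $O(\la v\ra^{-2})$ exactly as in Step~3 of Lemma~\ref{lem:EstimKFPLp12}.

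The boundary contribution now carries a $+$ sign. Using $\gamma_+\tilde g = \iota_S\SSS\gamma_-\tilde g + \iota_D\DDD^*\gamma_-\tilde g$, the convexity of $s\mapsto s^q$, the change of variables $v\mapsto \VV_xv$ for the specular part, and either Cauchy--Schwarz ($q=2$) or the pointwise bound on $\DDD^*$ ($q=1$), one reduces to checking a pointwise inequality on $\Sigma_-$ of the form $\iota_S\widetilde m^q(x,\VV_xv) + \iota_D K^*(x)\MMM_x(v)\le \widetilde m^q(x,v)$ for a suitable constant $K^*(x)$. The sign-reversed twist absorbs the specular part, while the choice of $\tilde m_A$ near $v=0$ ensures, by $\widetilde\MMM_x = 1$ and \eqref{eq:cond-Lpomega}, that the relevant boundary constants converge to $1$ as $A\to\infty$, making the total boundary term non-positive for $A$ large enough. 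Gr\"onwall's lemma combined with the equivalence $\widetilde m\simeq m$ then yields \eqref{eq:g0&g-<gtC}. The main obstacle is the joint calibration of $\widetilde m$ to simultaneously handle the specular and diffusive reflection and to work for both $q=1$ and $q=2$; alternatively, for $q=2$ one could also argue by duality with the primal $L^2_\omega$ bound via \eqref{eq:identite-dualite}, but for $q=1$ the direct argument above is necessary since Lemma~\ref{lem:EstimKFPLp12} does not provide a primal $L^\infty_\omega$ estimate.
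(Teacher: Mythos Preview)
Your overall strategy---modified weight, separation into boundary/interior/transport contributions, Gr\"onwall---is exactly the paper's, and the time reversal is purely cosmetic (the paper just computes $-\frac{d}{dt}$ directly). However, your choice of inner weight is wrong for $q=1$ and the boundary step would fail. With $\tilde m_A^q := \MMM_x^{q-1}\chi_A + m^q(1-\chi_A)$ you get $\tilde m_A = \chi_A + m(1-\chi_A)$ when $q=1$, so the constant governing the diffusive part of the $\Sigma_+$ contribution,
\[
K_1(\tilde m_A) := \int_{\R^d}\tilde m_A^q\,(n_x\cdot v)_+\,dv,
\]
diverges like $A^{d+1}$ as $A\to\infty$, and you cannot absorb $\int_{\partial\Omega}\iota_D(\widehatt{\MMM_x\gamma_-g})^q K_1$ into the $\Sigma_-$ term. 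The point is that, unlike the primal operator $\DDD$, the adjoint $\DDD^*$ does \emph{not} carry an $\MMM_x$ prefactor, so the weight itself must supply the Gaussian decay: the paper takes $m_A^q := \chi_A\MMM_x + (1-\chi_A)m^q$ (the same exponent of $\MMM_x$ for both $q=1$ and $q=2$), which makes $K_1(m_A)\to\int\MMM_x(n_x\cdot v)_+\,dv = 1$. Your claim that ``the relevant boundary constants converge to $1$'' is precisely what breaks.

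Two smaller comments. First, the boundary control is not a pointwise inequality on $\Sigma_-$ as you describe; the diffusive operator is nonlocal in $v$, and the argument proceeds integrally via the constants $K_0,K_1,K_2$ exactly as in Step~1 of Lemma~\ref{lem:EstimKFPLp12}. Second, the interior step is cleaner than you indicate: one has the exact identity $\varpi^{\CCC^*}_{m,q}=\varpi^{\CCC}_{\omega,p}$ with $\tfrac1p+\tfrac1q=1$, so the relevant profile is $\varpi^\sharp_{\omega,p}$ (not $\varpi^\sharp_{\omega,q}$), though this distinction is harmless for the upper bound you need.
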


\begin{proof}[Proof of Lemma~\ref{lem:EstimKFP*Lq12}]  
 Without loss of generality we may suppose that $m \ge \MMM_x$, otherwise we replace $m$ by $c m$ where $c>0$ is such that $ m \ge c^{-1}  \MMM_x$.
 
Consider a final time $T \in (0,\infty)$, a final datum $0 \le g_T \in L^q_m$ and $g = g(t,x,v) \ge 0$ a solution  to the backward dual Cauchy problem \eqref{eq:dualKFP}. 
For $A\geq 1$, we introduce the weight functions 
\be\label{def:mA}
m_A^q := \chi_A \MMM_x + (1-\chi_A)  m^q,
\quad
\widetilde m^q   :=  \left( 1 -  \frac12 \frac{n_x \cdot  v}{\la v \ra^4}  \right) m_A^q,  
\ee
with the notations introduced in \eqref{eq:varpiC-varpisharp}. %of  Lemma \ref{lem:EstimKFPLp12}.  
It is worth emphasizing that 
\beqn\label{eq:m&mtilde} 
\MMM_x \le m_A \le m \quad\hbox{and}\quad
c_A^{-1} m \le \frac{1}{ 2}m_A \le \widetilde m  \le \frac{3}{ 2} m_A \le \frac{3}{ 2}  m,
\eeqn
with $c_A \in (0,\infty)$. Similarly as in the proof of Lemma \ref{lem:EstimKFPLp12}, we compute
\be\label{eq:dualGrowthL12}
- \frac{1}{ q}\frac{d}{dt} \int_\OO g^q \, \widetilde m = \int_{\OO}  g^{q-1}  \, (\CCC^* g)\, \widetilde m^q   +  \frac{1}{ q}\int_\Sigma (\gamma g)^q  \, \widetilde m^q \, (n_x \cdot v) - \frac{1}{ q}\int_\OO g^q\, \left( v\cdot \grad_x \widetilde m^q\right) ,
\ee
and we estimate each term separately.

\medskip\noindent
\emph{Step 1.} 
In order to estimate the boundary term in \eqref{eq:dualGrowthL12}, we split it into
\bean
\int_\Sigma (\gamma g)^q  \, \widetilde m^q \, (n_x \cdot v) &=& B_1 - B_2
\eean
with
$$
B_1= \int_\Sigma (\gamma g)^q  \, m_A^q \, (n_x \cdot v) 
\quad\text{and}\quad
B_2= \frac{1}{ 2}\int_\Sigma (\gamma g)^q  \, m_A^q \, (n_x \cdot \hat v)^2 .
$$

For the first term, we have
\bean
B_1 %&=& \int_{\Sigma_+} \gamma_+ g^q  \, m_A^q \, (n_x \cdot v)_+ -  \int_{\Sigma_-} (\gamma_- g)^q  \, m_A^q \, (n_x \cdot v)_-\\
&=&\int_{\Sigma_+}\left(\iota_S \SSS\gamma_- g + \iota_D \DDD^*\gamma_- g\right)^q   \, m_A^q \, (n_x \cdot v)_+ -  \int_{\Sigma_-}  (\gamma_- g)^q \, m_A^q \, (n_x \cdot v)_-\\
&\leq& 
\int_{\Sigma_+}\iota_S \left(\SSS\gamma_- g\right)^q  \, m_A^q \, (n_x \cdot v)_+ + \int_{\Sigma_+}\iota_D \left(\DDD^*\gamma_- g\right)^q   \, m_A^q \, (n_x \cdot v)_+ 
-  \int_{\Sigma_-}  (\gamma_- g)^q \, m_A^q \, (n_x \cdot v)_-\\
&\leq&  -\int_{\Sigma_-} \iota_D (\gamma_- g)^q  \, m_A^q \, (n_x \cdot v)_- + \int_{\Sigma_+} \iota_D \left(\widehatt{\MMM_x\gamma_- g}\right)^q  \, m_A^q \, (n_x \cdot v)_+ ,
\eean
where we have used the boundary condition in \eqref{eq:dualKFP} on the first line, the convexity of the mapping $s\mapsto s^q$ on the second line, and the change of variables $v\to \VV_x \,v$ on the third one. Defining
$$
K_1(m_A) := \int_{\R^d} m_A^q (n_x \cdot v)_+ \, dv < \infty, 
%\quad   \text{Need }  m^q \lesssim \langle v \rangle^{-d-1-0}
$$
we equivalently have 
$$
\begin{aligned}
B_1 \le -\int_{\Sigma_-} \iota_D (\gamma_- g)^q  \, m_A^q \, (n_x \cdot v)_- + \int_{\partial\Omega} \iota_D K_1(m_A) (\widehatt{\MMM_x \gamma_- g})^q.
\end{aligned}
$$
When $q=1$, we set $K_2(m_A):=1$ and use the fact $m_A \ge \MMM_x$ in order to obtain
$$
\begin{aligned}
B_1 \le \int_{\partial\Omega} \iota_D \left\{ K_1(m_A) -1 \right\} \widehatt{\MMM_x \gamma_- g} .
\end{aligned}
$$
On the other hand, when  $q=2$, the Cauchy-Schwarz inequality yields
$$
\begin{aligned}
(\widehatt{\MMM_x \gamma_- g})^2 \le K_2(m_A)^{-1} \int_{\R^d} (\gamma_- g)^2 m_A^2 (n_x \cdot v)_- \, dv, 
\end{aligned}
$$
where we have set 
$$
K_2(m_A)^{-1} := \int_{\R^d} \MMM_x^2 m_A^{-2} (n_x \cdot v)_- \, dv < \infty,
%\quad   \text{Need }  m \gtrsim \langle v \rangle^{\frac{(d+1)}{2}+0} e^{-\frac{|v|^2}{2\Theta^*}}
$$ 
and we thus obtain %therefore we get
$$
\begin{aligned}
B_1 \le \int_{\partial\Omega} \iota_D \left\{ K_1(m_A) -K_2(m_A) \right\} (\widehatt{\MMM_x \gamma_- g})^2 .
\end{aligned}
$$

We now deal with the second term $B_2$, observing first that
$$
\begin{aligned}
B_2 
%&= \frac{1}{ 2}\int_{\Sigma_-} (\gamma_- g)^q  \, m_A^q \, (n_x \cdot \hat v)^2
%+ \frac{1}{ 2}\int_{\Sigma_+} \left\{ \iota_S \SSS \gamma_- g + \iota_D \DDD^* \gamma_- g \right\}^q  \, m_A^q \, (n_x \cdot \hat v)^2 \\
&= \frac{1}{ 2}\int_{\Sigma_-} (\gamma_- g)^q  \, m_A^q \, (n_x \cdot \hat v)^2
+ \frac{1}{ 2}\int_{\Sigma_-} \left\{ \iota_S \gamma_- g + \iota_D \DDD^* \gamma_- g \right\}^q  \, m_A^q \, (n_x \cdot \hat v)^2 ,
\end{aligned}
$$
where we have used the boundary condition in \eqref{eq:dualKFP} and the change of variables $v\to \VV_x v$. %on the second one. 
If $q=1$, we through away the two first terms and we have 
$$
\begin{aligned}
B_2 
&\ge \frac{1}{ 2}\int_{\Sigma_-}  \iota_D (\widehatt{\MMM_x \gamma_- g})   \, m_A \, (n_x \cdot \hat v)^2 
%\\&
\ge \frac{1}{ 2}\int_{\partial\Omega}  \iota_D K_0(m_A) (\widehatt{\MMM_x \gamma_- g})   ,
\end{aligned}
$$
where we have set
$$
K_0 (m_A) := \int_{\R^d} m_A (n_x \cdot \hat v)_+^2 \, dv < \infty .
%\quad   \text{Need }  m \lesssim \langle v \rangle^{-d-0} 
$$
Otherwise if $q=2$, we through away the last integral and using the Cauchy-Schwarz inequality and $\iota_D \le 1$, we obtain
$$
\begin{aligned}
B_2 
&\ge \frac{1}{ 2}\int_{\Sigma_-} (\gamma_- g)^q  \, m_A^q \, (n_x \cdot \hat v)^2 
%&\\&
\ge \frac{1}{ 2} \int_{\partial\Omega}  \iota_D K_0(m_A) (\widehatt{\MMM_x \gamma_- g})^2 , 
\end{aligned}
$$
where we have set
$$
K_0(m_A)^{-1} := \int_{\Sigma_-^x} \MMM_x^2 m_A^{-2} \la v \ra^2 \, dv < \infty.
%\quad   \text{Need }  m \gtrsim \langle v \rangle^{\frac{(d+2)}{2}+0} e^{-\frac{|v|^2}{2\Theta^*}}
$$
In both cases, gathering previous estimates yields
\begin{equation}
\begin{aligned}
\int_\Sigma (\gamma g)^q  \, \widetilde m^q \, (n_x \cdot v)
& \le \int_{\partial\Omega} \iota_D \left\{ K_1(m_A) - K_2(m_A) - \frac12 K_0(m_A) \right\} (\widehatt{\MMM_x \gamma_- g})^q.
\end{aligned}
\end{equation}
We observe that in both cases $q=1,2$, we have
\bean
\lim_{A \to \infty} K_1(m_A) = \lim_{A \to \infty}  K_2(m_A) = 1, 
\eean
thanks to the dominated convergence theorem, the normalization condition on $\MMM_x$  and the condition \eqref{eq:cond-Lpomega}.
We similarly have 
$$
\lim_{A \to \infty} K_0 (m_A)  = \int_{\R^d}\MMM_x  (n_x \cdot \hat v)_+^2 \, dv \ge C_1(\Theta_*,\Theta^*) > 0,
$$
when $q=1$, and 
$$
\lim_{A \to \infty} K_0 (m_A)^{-1}  =  \int_{\Sigma_-^x} \MMM_x \la v \ra^2 \, dv \le C_2(\Theta_*,\Theta^*) < \infty,
$$
when $q=2$. All these convergences being uniform in $x \in \partial\Omega$,  we can choose $A > 0$ large enough in such a way that 
$$
K_1(m_A)-K_2(m_A)-\frac12 K_0(m_A) \le 0,
$$
which implies
\begin{equation}
\int_\Sigma (\gamma g)^q  \, \widetilde m^q \, (n_x \cdot v) \le 0 .
\end{equation}

\smallskip\noindent
\textit{Step 2.} 
We now estimate the first term at the right-hand side of \eqref{eq:dualGrowthL12}. 
First of all, from \eqref{eq:identCCCffp}-\eqref{def:varpi}, we have
%from \eqref{eq:CCC*ggp} we have 
$$
\int_{\R^d} (\CCC^* g)  g^{q-1} \widetilde m^q   =
 -\frac{4(q-1)}{q}  \int_{\R^d} |\nabla_v (g^{q/2} \widetilde m^{q/2} ) |^2  + \int g^q \widetilde m^q  \varpi^{\CCC^*}_{\tilde m, q},
$$
with 
\bean
 \varpi^{\CCC^*}_{\varphi,q}
=  2\left(1-\frac{1}{q}\right) \frac{|\nabla_v \varphi|^2 }{ \varphi^2}  
+ \left(\frac 2q -1\right)\frac{ \Delta_v  \varphi }{\varphi}  
+ b  \cdot \frac{\nabla_v \varphi}{\varphi}
+  c  + \left(\frac{1}{q} -1\right) \Div_v b.
\eean
Arguing exactly as in Step~2 of the proof of Lemma~\ref{lem:EstimKFPLp12}, we can write
$$
 \varpi^{\CCC^*}_{\tilde m, q} = \varpi^{\CCC^*}_{ m, q} + \mathfrak W, 
 $$
 with 
$$
\varpi^{\CCC^*}_{m, q}  = \varpi^{\CCC}_{\omega,p}, \quad \frac{1 }{ p} + \frac{1 }{ q} =1, \quad  \mathfrak W = o (|\varpi^{\sharp}_{\omega,p}|).
%o (\varpi^{\CCC}_{\omega,p}).
$$

\medskip\noindent
\textit{Step 3.}  
We finally deal with the second term at the right-hand side of \eqref{eq:dualGrowthL12}. We compute
$$
v \cdot \nabla_x (\widetilde m^q)
= \left( 1 - \frac12 \, \frac{n_x \cdot v}{\la v \ra^4} \right) v \cdot \nabla_x (m_A^q)
-\frac12 ( \hat v \cdot D_x n_x \hat v) \frac{m_A^q}{\la v \ra^2}
$$
and observe that
$$
\nabla_x (m_A^q) 
=  \chi_A  \MMM_x \left[ \frac{|v|^2}{2} \frac{\nabla_x \Theta_x}{\Theta_x^2} - \frac{(d-1)}{2} \frac{\nabla_x \Theta_x}{\Theta_x} \right].
$$
Hence assumption \eqref{eq:Assum-Theta} together with the fact that $\chi_A$ is compactly supported and the regularity assumption on $\Omega$ as above imply
$$
\frac{1}{q} v \cdot \nabla_x (\widetilde m^q)   \lesssim \frac{1}{\langle v \rangle^2} \widetilde m^q \lesssim \frac{|\varpi^{\sharp}|_{\omega,p} }{ \langle v \rangle^{s+\gamma} }  \widetilde m^q.
% \le \frac{C}{\langle v \rangle^2} \widetilde m^q, 
$$
%for some constant $C>0$. 

\medskip\noindent
\textit{Step 4.} 
Coming back to \eqref{eq:dualGrowthL12} and gathering previous estimates, we deduce that 
\beqn\label{eq:disssipSL*-Lq}
-\frac1q\frac{d}{dt} \int_\OO g^q \widetilde m^q \le  -\frac{4(q-1)}{q}    \int_\OO |\nabla_v (g\widetilde m)^{q/2}|^2 + \int_\OO g^q \widetilde m^q \varpi^{\LLL^*}_{\tilde m,q}
\eeqn
for both $q=1,2$ and with 
\beqn\label{eq:disssipSL*-LqBIS}
 \varpi^{\LLL^*}_{\tilde m,q} := \varpi^{\CCC^*}_{\tilde m,q}   +  \frac{1 }{ \widetilde m^q} v \cdot \nabla_x \widetilde m^q.
\eeqn
Arguing as in the end of the proof of Lemma~\ref{lem:EstimKFPLp12}, we obtain that for any $\vartheta \in (0,1)$, there exists $\kappa,R > 0$ such that 
\beqn\label{eq:varpiL*-varpisharp}
  \varpi^{\LLL^*}_{\tilde m,q} \le \kappa \chi_{R} + \chi_{R}^c \vartheta \varpi^\sharp_{\omega,p},
\eeqn
in particular, $  \varpi^{\LLL^*}_{\tilde m,q}  \le \kappa$, and we  
immediately conclude thanks to   Gr\"onwall's lemma and the fact that $m \lesssim \widetilde m \lesssim m$.  
\end{proof}

%%%%%%%%%%%%%%%%%%%%%%%%%%%%%%%%%%%%%%%%%%%%%%%%%%%%%%%
\section{Well-posedness in a weighted $L^2$ framework} \label{sec-Wellpodeness}

We briefly discuss the  well-posedness in a weighted $L^2$  framework for both the primal and the dual Cauchy problems,
using  some material developed in \cite{sanchez:hal-04093201,CM-Landau**}.

%%%%%%%%%%%%%%%%%%%%%%%%%%%%%%%%%%%%%%%%%%%
\subsection{Trace results in a $L^2$ framework}\label{subsec:trace}

In this section, we consider the kinetic Fokker-Planck  equation
\beqn\label{eq:KolmogorovTrace}
\partial_t g + v \cdot \nabla_x g =  \mathbf{L} g +  G, \quad   \mathbf{L} g :=  \Delta_v g  + b_i \partial_{v_i} g + \eta g   
\eeqn
for a    vector field $b=b(x,{v})$, a function $\eta = \eta(x,v)$ and a source term $G = G(t,x,{v})$.  
We formulate some trace results for solutions to  the Vlasov-Fokker-Planck equation developed in  \cite[Sec.~11]{sanchez:hal-04093201}
and \cite[Sec.~2]{CM-Landau**} (see also  \cite[Section~4.1]{MR2721875}) and  and which are mainly a consequence of
the two following facts:

 \smallskip
 (1)  If $g \in L^2_{tx}H^1_{v}$ is a weak solution to \eqref{eq:KolmogorovTrace},  then it is a renormalized solution; % of the Vlasov-Fokker-Planck equation; 
 
 \smallskip
 (2) If $g \in L^\infty_{txv}$ and $\nabla_v g \in L^2_{txv}$ is a weak solution to \eqref{eq:KolmogorovTrace}, then it admits a trace $\gamma g \in L^\infty$  in a renormalized sense.
 
\medskip

We introduce some notations. We denote 
\beqn\label{eq:defdxi1&2}
 d\xi_1 :=  |n_x \cdot v| \, dv \, d\sigma_{\!x} \ \hbox{ and } \
  d\xi_2 := (n_x \cdot   \hat v)^2 \, dv \, d\sigma_{\!x} 
\eeqn
the measures on the boundary set $\Sigma$. We denote %by $\BBB_0$ the class of functions $\beta \in W^{2,\infty}(\R)$ such that $\beta'$ has a compact support, 
by $\BBB_1$ the class of renormalized functions $\beta \in
\Wloc^{2,\infty}(\R)$ such that $\beta''$ has a compact support,  by $\BBB_2$ the class of functions $\beta \in
\Wloc^{2,\infty}(\R)$ such that $\beta'' \in L^\infty(\R)$ and by $ \DD_0(\bar \UU)$ the space of test functions $\varphi \in \DD(\bar
\UU)$ such that $\varphi = 0$ on $\Gamma_0$. 
We finally define the operators 
\bean
{\bf M}^*_0 \varphi := -\partial_t \varphi - v \cdot \nabla_x \varphi, 
\quad
{\bf M}^*_i \varphi :=  \partial_{v_i} \varphi -  b_i  \varphi, 
\eean
and we assume
\beqn\label{eq:Kolmogorov-hyp}
b_i,\eta    \in L^\infty_{tx}L^\infty_{{\rm loc},v}.
%\sigma_{ij}, \, \partial_{v_i}\sigma_{ij},  \partial_{v_i, v_j}\sigma_{ij} \in L^\infty_{tx}L^\infty_{{\rm loc},v},
%\quad  \nu_i   ,  \partial_{v_i}\nu_i \in L^\infty_{tx}L^\infty_{{\rm loc},v}.
%,\quad  \eta \in L^\infty_{tx}L^\infty_{{\rm loc},v}. 
\eeqn

\begin{theo} 
\label{theo:Kolmogorov-trace}
We consider $g \in  L^2((0,T) \times \Omega;\Hloc^1(\R^d))  $, $G \in L^{2}_{tx}  (H^{-1}_{{\rm loc},v})$, $b_i,\eta$  satisfying \eqref{eq:Kolmogorov-hyp} and we assume that $g$ is a solution to the  kinetic Fokker-Planck equation \eqref{eq:KolmogorovTrace} in the sense of distribution $\DD'(\UU)$.  

(1)  There exists $\gamma g \in \Lloc^2(\Gamma,d\xi_2dt)$, $ g  \in C([0,T];\Lloc^2(\OO))
$
and the following Green renormalized formula
\bear\label{eq:FPK-traceL2}
&& \int_{\UU} \bigl( \beta(g) \, {\bf M}^*_0 \varphi  + \partial_{v_i}  \beta(g)   {\bf M}^*_i \varphi + \beta''(g) \, |\nabla_v g|^2     \varphi) \, d{v} dx dt
 \\ \nonumber
&&\quad
+  \int_{\Gamma} \beta(\gamma \, g) \, \varphi \,\,   n_x \cdot {v} \,  d{v} d\sigma_{\! x}dt 
+ \Bigl[ \int_\OO (\beta(g) \varphi)(t,\cdot) dxdv \Bigr]_0^T =  \langle G + \eta g , \beta'(g)   \varphi \rangle
\eear
holds for any renormalized function $\beta \in \BBB_1$ and any test functions  $\varphi \in \DD(\bar \UU)$, 
as well as  for any  renormalized function  $\beta \in \BBB_2$ and any test functions
$\varphi \in \DD_0(\bar \UU)$.   It is worth emphasizing that  $\beta'(g) \varphi \in L^2_{tx}H^1_v$ so that the duality product 
 $\langle G  , \beta'(g)   \varphi \rangle$ is well defined. 

(2)  If furthermore $g_0 \in \Lloc^2(\bar\OO)$ and $\gamma_- g  \in \Lloc^2( \Gamma; d\xi_1 dt)$, then $g  \in C([0,T];\Lloc^2(\bar\OO))$,  $\gamma_+ g  \in \Lloc^2( \Gamma; d\xi_1dt)$ and \eqref{eq:FPK-traceL2}
holds  for any renormalized function $\beta \in \BBB_2$ and any test functions
$\varphi \in \DD(\bar \OO)$.

(3) Alternatively to point (2), if furthermore $g \in \Lloc^\infty(\bar\UU)$, then $\gamma g \in \Lloc^\infty(\Gamma)$ and \eqref{eq:FPK-traceL2}
holds  for any renormalized function $\beta \in \BBB_2$ and any test functions
$\varphi \in \DD(\bar \OO)$.  

 \end{theo}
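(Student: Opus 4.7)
The plan is to reduce the trace problem for the Fokker-Planck equation to the well-understood trace problem for the free transport operator $\partial_t + v \cdot \nabla_x$ via a renormalization argument, then to read off the three conclusions from the class of admissible renormalizations allowed by each set of hypotheses.

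First I would establish the chain rule at the distributional level: since $g \in L^2_{tx}H^1_{\mathrm{loc},v}$ and $\beta \in W^{2,\infty}_{\mathrm{loc}}$, a DiPerna--Lions commutator argument with mollification in the $(t,x)$-variables only (legitimate because $\nabla_v g$ already lies in $L^2_{\mathrm{loc}}$, so the $v$-derivatives commute with the mollifier without producing an error) yields, in $\DD'(\UU)$,
\[
\partial_t \beta(g) + v\cdot\nabla_x \beta(g) = \Delta_v \beta(g) - \beta''(g) |\nabla_v g|^2 + b_i \, \partial_{v_i} \beta(g) + \beta'(g)(\eta g + G).
\]
For $\beta \in \BBB_1$ every term on the right-hand side makes sense: $\beta(g)$ and $\beta'(g)$ are bounded, $\beta''(g)|\nabla_v g|^2 \in L^1_{\mathrm{loc}}$ because $\beta''$ is compactly supported, and $\beta'(g)(G+\eta g) \in L^2_{tx}H^{-1}_{\mathrm{loc},v}$. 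Rewriting the equation as $(\partial_t + v\cdot\nabla_x) \beta(g) = \Div_v \Phi + S$ with $\Phi := \nabla_v \beta(g)$ and $S$ locally integrable modulo a distribution in $v$, the free-transport operator acts on a bounded function whose source is well behaved.

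Next I would invoke the trace theory for kinetic transport equations developed in~\cite{MR1776840,MR2721875} and refined in~\cite{sanchez:hal-04093201,CM-Landau**}. Applied to $\beta(g)$ with $\beta \in \BBB_1$, this yields $\beta(g) \in C([0,T]; L^2_{\mathrm{loc}}(\OO))$, the existence of $\gamma \beta(g)$ as a measurable function on $\Gamma$, and the Green identity~\eqref{eq:FPK-traceL2} after one integration by parts in $(t,x)$ (producing the boundary term) and two integrations by parts in $v$ against $\Delta_v$ and $\partial_{v_i}$ (producing the terms involving $\mathbf{M}^*_i \varphi$). Specializing to the sequence $\beta_n(s) = s^2 \chi(s/n)$ and using the Green identity as an a priori estimate, the outgoing quadratic contribution $(\gamma g)^2 (n_x\cdot v)$ is controlled; with only an $H^1_v$ bound on $g$, the information one can actually extract is exactly that encoded by the weaker measure $d\xi_2 = (n_x\cdot \hat v)^2 dv d\sigma_x dt$, which is the natural trace measure whenever the transport operator degenerates at the grazing set $\{n_x\cdot v = 0\}$.

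For parts~(2) and (3), the additional hypotheses enlarge the class of admissible renormalizations to $\BBB_2$. Under (2), the a priori $L^2_{\mathrm{loc}}(d\xi_1 dt)$ bound on $\gamma_- g$ combined with the Green identity applied to $\beta(s) = s^2$ balances incoming and outgoing contributions and transfers the bound to $\gamma_+ g \in L^2_{\mathrm{loc}}(d\xi_1 dt)$, simultaneously upgrading the time-continuity from $L^2_{\mathrm{loc}}(\OO)$ to $L^2_{\mathrm{loc}}(\bar\OO)$. Under (3), an $L^\infty_{\mathrm{loc}}$ bound on $g$ passes directly to $\gamma g$ by localizing $\beta \in \BBB_2$ on the range of $g$, and a dominated convergence argument extends~\eqref{eq:FPK-traceL2} to all $\varphi \in \DD(\bar\OO)$. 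The main obstacle throughout is precisely the degeneracy of the natural trace measure on the grazing set: it is what forces the $\BBB_1$-renormalization in (1) and explains why the stronger $d\xi_1$-trace and the continuity up to the boundary require the a priori information supplied by (2) or (3). Controlling this degeneracy rigorously near $\partial\Omega$, in particular justifying the commutation of the boundary integration with the mollifier, is the most delicate technical point, and is handled by mollifying only in $(t,x)$ and exploiting the $H^1_v$-regularity to treat $\Delta_v$ exactly.
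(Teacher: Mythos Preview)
Your proposal is essentially correct and mirrors the strategy the paper indicates. Note that the paper does not actually prove Theorem~\ref{theo:Kolmogorov-trace}: it states the result and refers to \cite[Sec.~11]{sanchez:hal-04093201}, \cite[Sec.~2]{CM-Landau**}, and \cite[Section~4.1]{MR2721875}, summarizing the argument as the combination of two facts --- (1) an $L^2_{tx}H^1_v$ weak solution is automatically a renormalized solution, and (2) an $L^\infty$ bound on $g$ together with $\nabla_v g \in L^2$ yields an $L^\infty$ trace in the renormalized sense. Your outline reproduces exactly this structure: the DiPerna--Lions commutator step gives fact~(1), and the reduction to the free-transport trace theory for $\beta(g)$ then produces the Green formula and the $d\xi_2$-trace; parts~(2) and~(3) follow by enlarging the renormalization class once additional a~priori information is available.

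One small point worth tightening: you write that the mollification is performed ``in the $(t,x)$-variables only''. In the interior of $\UU$ this is harmless, but near $\partial\Omega$ convolution in $x$ is not available, and the cited trace papers instead mollify in $v$ (where the domain is the whole space) and use the $H^1_v$ regularity to control the commutator with the second-order operator $\Delta_v$; the transport part $\partial_t + v\cdot\nabla_x$ is then handled by the renormalized trace theory for bounded solutions of free transport (applied to $\beta(g)$, which is bounded for $\beta\in\BBB_1$). Your identification of the degeneracy at the grazing set as the reason for the appearance of $d\xi_2$ rather than $d\xi_1$ in part~(1) is exactly right, and your explanation of how the extra hypotheses in~(2) and~(3) lift this restriction is correct.
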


 \smallskip
  We will also use the following stability result in the spirit of  \cite[Theorem 5.2]{MR2721875}. 

 \begin{theo}\label{theo:Kolmogorov-stability}   
Let us consider four  sequences $(g^k)$,  $(b^k)$, $(\eta^k)$ and  $(G^k)$  and four functions  $g$, $b$, $\eta$, $G$ which all  satisfy the requirements of Theorem~\ref{theo:Kolmogorov-trace}. 

(1) If    $g^k  \wto g$  weakly in $\Lloc^2(\bar\UU)$, $b^k \to b$  strongly  in $\Lloc^2(\bar\OO)$, $\eta^k \to \eta$  strongly  in $\Lloc^2(\bar\OO)$ and  $G^k \wto G$ weakly in $L^{2}_{{\rm loc},x}H^{-1}_{{\rm loc},v}$, then  $g$ satisfies \eqref{eq:KolmogorovTrace} so that it admits a trace function $ \gamma g \in \Lloc^2 (\Gamma; d\xi_2dt)$ and $\gamma g^k \wto  \gamma g$ weakly in $\Lloc^2 (\Gamma; d\xi_2 dt)$.

(2) If  $g^k  \to g$  strongly in $\Lloc^2(\bar\UU)$,  $b^k \wto b$ weakly  in $\Lloc^2(\bar\OO)$, $\eta^k \wto \eta$  weakly  in $\Lloc^2(\bar\OO)$ and  $G^k \to G$ weakly in $L^{2}_{{\rm loc},x}H^{-1}_{{\rm loc},v}$, 
then  $g$ satisfies \eqref{eq:KolmogorovTrace} so that it admits a trace function $ \gamma g \in \Lloc^2 (\Gamma; d\xi_2dt)$ and $\gamma g^k \wto  \gamma g$ weakly in $\Lloc^2 (\Gamma; d\xi_2dt)$.

\end{theo}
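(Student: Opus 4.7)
The plan has three stages. First, pass to the limit in the PDE \eqref{eq:KolmogorovTrace} for $g^k$ to show that the limit $g$ also solves it. Second, invoke Theorem~\ref{theo:Kolmogorov-trace}(1) to deduce $\gamma g \in L^2_{\rm loc}(\Gamma;d\xi_2 dt)$. Third, identify $\gamma g$ as the weak $L^2_{\rm loc}(\Gamma;d\xi_2 dt)$ limit of $\gamma g^k$ via the renormalized Green formula \eqref{eq:FPK-traceL2} applied to $g^k$.

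For stage one, I test the equation satisfied by each $g^k$ against $\varphi \in \DD(U)$. The linear terms $\partial_t g^k + v \cdot \nabla_x g^k - \Delta_v g^k$ and the source $G^k$ converge in $\DD'(U)$ directly from the hypotheses. The nontrivial terms are $\eta^k g^k$ and $b^k_i \partial_{v_i} g^k$, and both are treated by strong--weak duality. In case (1), the strong $L^2_{\rm loc}$ convergence of $\eta^k$ and $b^k$, combined with the weak convergence of $g^k$ and of $\partial_{v_i} g^k$ (the latter bounded in $L^2_{\rm loc}$ via a uniform $L^2_{tx} H^1_{{\rm loc},v}$ estimate extracted from a local energy identity for \eqref{eq:KolmogorovTrace} together with the hypothesis on $G^k$), yields the convergence of the products. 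Case (2) reverses the roles: for $b^k_i \partial_{v_i} g^k$ one rewrites the product in the distributional sense as $\partial_{v_i}(b^k_i g^k) - (\partial_{v_i} b^k_i)\, g^k$, uses $b^k_i g^k \wto b_i g$ in $L^1_{\rm loc}$ by strong--weak pairing, and handles $(\partial_{v_i} b^k_i) g^k$ in $\DD'(U)$ by pairing the strong $L^2_{\rm loc}$ convergence of $g^k$ with $\partial_{v_i} b^k_i \wto \partial_{v_i} b_i$ in $H^{-1}_{{\rm loc},v}$.

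Stage two is an immediate application of Theorem~\ref{theo:Kolmogorov-trace}(1) to the limit function $g$. For stage three, I write the Green formula \eqref{eq:FPK-traceL2} for $g^k$ with the renormalizer $\beta(s) = s \in \BBB_2$ and a test function $\varphi \in \DD_0(\bar U)$. All terms except the boundary integral $\int_\Gamma \gamma g^k\, \varphi\, (n_x \cdot v)\, dv\, d\sigma_{\!x}\, dt$ converge by the analysis of stage one, together with the convergence of the initial and final slices $g^k(0),\, g^k(T) \wto g(0),\, g(T)$ in $L^2_{\rm loc}(\OO)$. The boundary integral therefore converges to a limit, which upon applying the Green formula directly to $g$ must equal $\int_\Gamma \gamma g\, \varphi\, (n_x \cdot v)\, dv\, d\sigma_{\!x}\, dt$. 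Since the map $\varphi \mapsto \varphi\, \la v \ra / (n_x \cdot \hat v)$ sends $\DD_0(\bar U)$ onto a dense subspace of the space of compactly supported elements of $L^2(\Gamma;d\xi_2 dt)$, the claimed weak convergence $\gamma g^k \wto \gamma g$ in $L^2_{\rm loc}(\Gamma;d\xi_2 dt)$ follows.

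The main obstacle is the product $b^k_i \partial_{v_i} g^k$ in case (2), where neither factor converges strongly. The commutator-type rewriting above is the key device, and it depends on the uniform $L^\infty$ bound on $b^k$ from \eqref{eq:Kolmogorov-hyp} and on a uniform $L^2_{tx} H^1_{{\rm loc},v}$ bound for $g^k$; both of these must be obtained (up to subsequences, via Banach--Steinhaus and a local energy estimate deduced from the equation) before any limits can be taken. Once this is in place, the remainder of the proof is a linear bookkeeping exercise combining the convergences of stage one with the Green formula \eqref{eq:FPK-traceL2}.
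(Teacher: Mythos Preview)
The paper does not supply a proof of this theorem; it is stated as a result ``in the spirit of \cite[Theorem~5.2]{MR2721875}'' and then used without argument, so there is no paper proof to compare against. I comment on your sketch directly. Note first that the hypothesis already asks that the limit functions $g,b,\eta,G$ satisfy the requirements of Theorem~\ref{theo:Kolmogorov-trace}, which includes $g$ solving \eqref{eq:KolmogorovTrace} with coefficients $(b,\eta,G)$; your stage one is therefore redundant (the clause ``$g$ satisfies \eqref{eq:KolmogorovTrace}'' in the conclusion only sets up the invocation of the trace result), and the substantive work lies entirely in stage three.

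There, your handling of $b^k_i\,\partial_{v_i} g^k$ in case~(2) has a genuine gap. You propose to treat $(\partial_{v_i} b^k_i)\, g^k$ by pairing weak $H^{-1}_{{\rm loc},v}$ convergence of $\partial_{v_i} b^k_i$ against strong $L^2_{\rm loc}$ convergence of $g^k$. But the dual of $H^{-1}_v$ is $H^1_v$, not $L^2$: what is actually needed is $g^k\varphi\to g\varphi$ strongly in $H^1_v$, i.e.\ $\nabla_v g^k\to\nabla_v g$ strongly in $L^2_{\rm loc}$, and this is not delivered by the hypotheses (strong $L^2$ convergence plus a uniform $H^1_v$ bound yields only weak $H^1_v$ convergence). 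Unwinding your rewriting, the obstruction is the integral $\int b^k_i\,(\partial_{v_i}g^k)\,\varphi$ with both $b^k_i\rightharpoonup b_i$ and $\partial_{v_i}g^k\rightharpoonup\partial_{v_i}g$ converging only weakly in $L^2$; such products need not pass to the limit (take $b^k=\sin(kv)$ and $g^k=-k^{-1}\cos(kv)$ in one velocity variable). Exactly the same term reappears in the Green formula \eqref{eq:FPK-traceL2} through $\int \partial_{v_i}g^k\,(\partial_{v_i}\varphi - b^k_i\varphi)$, so the difficulty persists in stage three regardless of stage one. Some further input---an upgrade of $\nabla_v g^k$ to strong convergence via the equation, or a compensated-compactness mechanism---is required; the commutator rewriting alone does not close the argument.
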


%%%%%%%%%%%%%%%%%%%%%%%%%%%%%%%%%%%%%%%%%%%
\subsection{Well-posedness for the primal equation}
\label{subsec:Well-posednessKolmogrov}

For further reference, for an admissible weight function $\omega$, we define 
the Hilbert norm $\| \cdot \|_\HHH = \| \cdot \|_{\HHH_\omega}$   by 
$$
\| f \|^2_\HHH := \| f \|^2_{L^2_\omega} +  \| f \|^2_{H^{1,\dagger}_\omega}, \quad   \| f \|^2_{H^{1,\dagger}_\omega}  :=  \int_\UU  \bigl\{   |\nabla_v f |^2 +   \langle \varpi^\sharp_{\omega,2} \rangle f^2 \bigr\} \omega^2 \, d v \, d x \, d t, 
$$
with $ \varpi^\sharp_{\omega,2}$ defined in \eqref{eq:varpi-asymptotic},
and we denote by $\HHH =\HHH_\omega$ the associated  Hilbert space. 
We now state the well-posedness result for the primal problem which is nothing but  \cite[Theorem~2.12]{CM-Landau**}.

\begin{prop}\label{prop-KFP-L2primal}
We make the  regularity assumptions on $\Omega$, $\Theta$, $b$ and $c$ as presented in  Section~\ref{sec:intro},  in particular \eqref{eq:Assum-bc0}, \eqref{eq:Assum-Theta}, \eqref{eq:Assum-bc1} and \eqref{eq:Assum-bc2} hold. % { and we assume that $\Omega$ is bounded.}
%Under the regularity   conditions \eqref{eq:Assum-bc0}-\eqref{eq:Assum-bc1}-\eqref{eq:Assum-bc2}, 
For any admissible weight function $\omega$ and  any $f_0 \in L^2_\omega(\OO)$, there exists a  unique global solution $f \in  X_T := L^\infty(0,T;L^2_\omega)\cap C([0,T];\Lloc^2) \cap  \HHH $, $\forall \, T > 0$,  to the kinetic Fokker-Planck equation \eqref{eq:KFP} complemented with the Maxwell reflection boundary condition \eqref{eq:KolmoBdyCond} and associated to the initial datum $f_0$. 
 More precisely, the function $f$ satisfies  equation \eqref{eq:KolmogorovTrace} in the sense of distributions in $\DD'(\UU)$ with trace functions, defined thanks to Theorem~\ref{theo:Kolmogorov-trace}, satisfying $\gamma f \in L^2_\omega(\Gamma,d\xi_2)$ as well as  the Maxwell reflection boundary condition \eqref{eq:KolmoBdyCond} pointwisely and $f(t,\cdot) \in L^2_\omega$, $\forall \, t \in [0,T]$, and   the  initial condition $f(0,\cdot) = f_0$ pointwisely. 
\end{prop}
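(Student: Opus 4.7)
The plan is to establish existence by a Lions-type Lax–Milgram argument applied to a suitably twisted bilinear form, recover the boundary condition via the trace theory of Theorem~\ref{theo:Kolmogorov-trace}, and obtain uniqueness from the $L^2_\omega$ a priori estimate of Lemma~\ref{lem:EstimKFPLp12}. The modified weight $\widetilde\omega$ constructed in the proof of Lemma~\ref{lem:EstimKFPLp12} is the central tool: it is equivalent to $\omega$ by \eqref{eq:omega&omegatilde}, absorbs the Maxwell reflection boundary term when $A$ is large, and yields the bound $\varpi^{\LLL}_{\tilde\omega,2}\le \kappa$ together with the sharp dissipation on $\{|v|\ge R\}$ given by \eqref{eq:varpiL-varpisharp}.

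First, I would set up the weak formulation on the test function space
\[
\XXX_T := \bigl\{ \varphi \in C^1_c([0,T]\times\bar\OO) \cap L^2(0,T;H^1_{v,\mathrm{loc}}) \,:\, \varphi(T,\cdot)=0,\ \gamma_+\varphi = \RRR^*\gamma_-\varphi \text{ on } \Gamma_+ \bigr\},
\]
and consider, after multiplying \eqref{eq:KFP} by $\varphi\,\widetilde\omega^2$ and integrating by parts using the Green formula \eqref{eq:FPK-traceL2}, the bilinear form $a(f,\varphi)$ obtained from the interior terms together with the linear form $L(\varphi)=\int_\OO f_0 \varphi(0,\cdot)\,\widetilde\omega^2$. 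Lions' theorem (as in \cite{MR0153974,MR875086,AAMN2021}) reduces existence to proving (i) continuity of $a(\cdot,\cdot)$ on $\HHH_\omega\times\XXX_T$ and (ii) a coercivity-type estimate $a(\varphi,\varphi)\ge c\|\varphi\|_{\HHH_\omega}^2-\kappa\|\varphi\|_{L^2_\omega}^2$ for $\varphi\in\XXX_T$. The latter is exactly the computation carried out in Steps~1–4 of the proof of Lemma~\ref{lem:EstimKFPLp12} in the case $p=2$: the choice of $\widetilde\omega$ kills the outgoing boundary contribution, the integration-by-parts identity \eqref{eq:identCCCffp} produces the $\int|\nabla_v(f\widetilde\omega)|^2$ dissipation needed to control $\|f\|_{H^{1,\dagger}_\omega}$, and \eqref{eq:varpiL-varpisharp} converts the zeroth-order defect into a sharp negative weight of order $\langle v\rangle^{\gamma+s-2}$ outside a compact set, up to a bounded remainder absorbed in $\kappa$. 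Continuity is straightforward from the admissibility of $\omega$ and the bounds on $b,c,\Theta$.

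Once Lions provides a solution $f\in L^2(0,T;\HHH_\omega)$ to the variational problem, I would show it is a distributional solution to \eqref{eq:KFP} with the correct initial datum, then invoke Theorem~\ref{theo:Kolmogorov-trace} applied locally (after a cutoff in $v$, which is justified since $b,c\in L^\infty_{\mathrm{loc}}$) to produce a trace $\gamma f\in L^2_{\mathrm{loc}}(\Gamma,d\xi_2)$ and a continuous representative $f\in C([0,T];L^2_{\mathrm{loc}}(\OO))$. Testing against functions in $\XXX_T$ supported near $\Gamma_-$ and comparing with the Green formula forces the reflection identity $\gamma_- f = \RRR\gamma_+ f$ to hold pointwise a.e.\ on $\Gamma_-$. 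The a priori estimate of Lemma~\ref{lem:EstimKFPLp12} then upgrades the local trace to $\gamma f\in L^2_\omega(\Gamma,d\xi_2)$ and yields $f\in L^\infty(0,T;L^2_\omega)$; together with the $H^{1,\dagger}_\omega$ control already extracted from coercivity, this gives $f\in X_T$. Uniqueness follows by linearity: the difference of two solutions has zero initial data and lies in $X_T$, so Lemma~\ref{lem:EstimKFPLp12} with $p=2$ applied in its dissipative form \eqref{eq:disssipSL-Lp} gives $\|f(t)\|_{L^2_\omega}\equiv 0$.

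The main obstacle is the coupling between the boundary condition and the weighted framework: both the variational coercivity and the recovery of \eqref{eq:KolmoBdyCond} depend on the subtle construction of $\widetilde\omega$, where the perturbation $\tfrac12\frac{n_x\cdot v}{\langle v\rangle^4}$ produces the extra quadratic boundary term $K_0(\omega_A)(\widetilde{\gamma_+ f})^2$ needed to dominate the diffusive reflection contribution uniformly on $\partial\Omega$. A secondary difficulty is to choose the truncation parameter $A$ and the test space $\XXX_T$ so that the specular component $\iota_S\SSS\gamma_+f$ disappears in the boundary integral (after the change of variables $v\mapsto\VV_x v$) without destroying either continuity of $a$ or the coercivity margin. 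All of this is precisely arranged by the construction in Lemma~\ref{lem:EstimKFPLp12}, so the argument closes.
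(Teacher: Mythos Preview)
Your existence outline is in the right spirit but takes a genuinely different route from the paper. The paper does \emph{not} apply Lions' theorem directly with test functions satisfying the dual reflection condition $\gamma_+\varphi=\RRR^*\gamma_-\varphi$. Instead it proceeds in three stages: (i) solve the \emph{inflow} problem $\gamma_-f=\mathfrak f$ with $\mathfrak f\in L^2_{\tilde\omega}(\Gamma_-;d\xi_1)$ given, using Lions' theorem with test functions $\varphi\in C^1_c([0,T)\times\OO\cup\Gamma_-)$ that vanish on $\Gamma_+$; (ii) for fixed $\alpha\in(0,1)$, show the map $h\mapsto f$ with $\gamma_-f=\alpha\RRR\gamma_+h$ is an $\alpha$-contraction in a norm built from the energy estimate, hence has a fixed point by Banach; (iii) let $\alpha_k\nearrow 1$ and pass to the limit using a uniform $L^2_\omega(\Gamma;d\xi_2)$ trace bound together with the stability Theorem~\ref{theo:Kolmogorov-stability}. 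This detour matters because the inflow solution has $\gamma f\in L^2_{\tilde\omega}(\Gamma;d\xi_1)$, which is what makes the energy identity with $\beta(s)=s^2$ and the contraction step rigorous; for the full Maxwell problem you only end up with $\gamma f\in L^2_\omega(\Gamma;d\xi_2)$ after the limit, and that is strictly weaker.

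This is exactly where your uniqueness argument breaks. You invoke Lemma~\ref{lem:EstimKFPLp12} with $p=2$ ``in its dissipative form \eqref{eq:disssipSL-Lp}'', but that computation needs to integrate $(\gamma f)^2\widetilde\omega^2\,(n_x\cdot v)$ over $\Sigma$, i.e.\ it requires $\gamma f\in L^2_{\tilde\omega}(\Gamma;d\xi_1)$. A solution in $X_T$ only comes with $\gamma f\in L^2_\omega(\Gamma;d\xi_2)$ (cf.\ \eqref{eq:boundL2dxi2}), and $d\xi_1=|n_x\cdot v|\,dv\,d\sigma_x$ is not controlled by $d\xi_2=(n_x\cdot\hat v)^2\,dv\,d\sigma_x$ near grazing velocities, so the $p=2$ estimate cannot be justified for the difference of two $X_T$ solutions. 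The paper's Step~4 avoids this by running the dissipation argument in $L^1$ with a \emph{strictly weaker} polynomial weight $\omega_1=\langle v\rangle^{k_1}$: it takes $\beta\in C^2$ with $0\le\beta(s)\le|s|$, $|\beta'|\le 1$, passes to $\beta(s)\nearrow|s|$, and checks that
\[
|\beta(\gamma f)\,\widetilde\omega_1\,(n_x\cdot v)|\lesssim |\gamma f|\,\omega_1\,|n_x\cdot v|\in L^1(\Gamma)
\]
precisely because $\gamma f\,\omega\in L^2(\Gamma;d\xi_2)$ and $\omega_1\langle v\rangle/\omega\in L^2(\R^d)$ (the condition $k_*>k_1+d/2+1$). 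So the gap in your proposal is real: uniqueness in this framework cannot be closed by the $L^2$ estimate alone and needs the $L^1$ renormalization trick with a sacrificed weight.
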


Because we will need to adapt it in the next section, we  allude the proof and we refer to   \cite{CM-Landau**} for details. 

\begin{proof}[Proof of Proposition~\ref{prop-KFP-L2primal}] 
We split the proof into four steps.

\smallskip\noindent
\textit{Step 1.} Given $\mathfrak f \in L^2_\omega(\Gamma_-; d\xi_1)$, we solve the inflow problem
\begin{equation}\label{eq:linear_primal_inflow}
\left\{
\begin{aligned}
& \partial_t f + v \cdot \nabla_x f =\CCC f  \quad &\text{in}&\quad (0,\infty) \times \OO \\
& \gamma_{-} f  = \mathfrak f \quad &\text{on}&\quad (0,\infty) \times \Sigma_{-} \\
& f_{| t=0} = f_0  \quad &\text{in}&\quad   \OO,
\end{aligned}
\right.
\end{equation}
thanks to Lions' variant of the Lax-Milgram theorem \cite[Chap~III,  \textsection 1]{MR0153974}. More precisely, we define  $\widetilde\omega$ as during the proof of  Lemma~\ref{lem:EstimKFPLp12} 
and  the bilinear form
$\EEE : \HHH \times C_c^1([0,T) \times \OO \cup \Gamma_-) \to \R$ by %{  $\omega \to \widetilde\omega$???}
\bean
\EEE(f,\varphi) 
&:=& \int_\UU (\lambda f - \CCC f) \varphi \widetilde\omega^2   -   \int_\UU f ( \partial_t   + v \cdot \nabla_x   ) (\varphi\widetilde\omega^2) . 
\eean
Using the Green-Ostrogradski formula, we observe that 
\bean
\EEE(\varphi,\varphi) 
&=&  \int_\UU (\lambda \varphi - \CCC \varphi) \varphi \widetilde\omega^2  +  \frac12 \int_\OO \varphi(0,\cdot)^2 \widetilde\omega^2  
\\
&& - \frac12 \int_\UU \varphi^2 v \cdot \nabla_x \widetilde\omega^2   + \frac12 \int_{\Gamma_-} ( \gamma_- \varphi)^2 \widetilde\omega^2 \, d\xi_1 , 
\eean
 for any $\varphi \in C_c^1([0,T) \times \OO \cup \Gamma_-)$. The same computations as presented during the proof of  Lemma~\ref{lem:EstimKFPLp12} imply that 
 \beqn\label{eq:EEE-coercive}
\EEE(\varphi,\varphi) 
\ge (\lambda-\lambda_0) \| \varphi \|^2_{L^2_{\tilde\omega}} +  \| \varphi \|^2_{H^{1,\dagger}_{\tilde\omega}} 
% \| \varphi \|^2_{\dot H^1_{\sqrt{\sigma}\omega}} %+ \lambda_1 \| \varphi \|^2_{L^2_{\sqrt{ \varsigma}\omega}}   
+ \tfrac12 \| \varphi(0) \|^2_{L^2_{\tilde\omega}} + \tfrac12 \| \gamma_- \varphi \|^2_{L^2_{\tilde\omega}(\Gamma_-; d\xi_1)}, 
\eeqn
for some $\lambda_0 \in \R$. 
%
%thanks %and \eqref{eq:varpiL_omega_2bdd} 
%there holds
%\bean
%\EEE(\varphi,\varphi) 
%&=& \int_\UU (\lambda \varphi - \mathbf{L} \varphi) \varphi \widetilde\omega^2  +  \frac12 \int_\OO \varphi(0,\cdot)^2 \widetilde\omega^2  + \frac12 \int_{\Gamma_-} (\gamma_- \varphi)^2 \, d\xi^1_{\tilde\omega}
%\\
%&\ge& (\lambda-\lambda_0) \| \varphi \|^2_{L^2_{\tilde\omega}} +  \| g \|^2_{H^{1,\dagger}_{\tilde\omega}} 
%% \| \varphi \|^2_{\dot H^1_{\sqrt{\sigma}\omega}} %+ \lambda_1 \| \varphi \|^2_{L^2_{\sqrt{ \varsigma}\omega}}   
%+ \tfrac12 \| \varphi(0) \|^2_{L^2_{\tilde\omega}} + \tfrac12 \| \varphi \|^2_{L^2(\Gamma_-; d\xi^1_{\tilde\omega})}, 
%\eean
%
For $\lambda>\lambda_0$, the  bilinear form $\EEE$ is thus coercive and %with $\kappa := \min (\lambda-C,1)>0$, for $\lambda > 0$ large enough. 
the above mentioned Lions' theorem implies the existence of a function $f_\lambda \in \HHH$
which satisfies the variational equation 
\beqn\label{eq:EEE-cequation}
\EEE(f_\lambda,\varphi) = \int_{\Gamma_-} \mathfrak{f} e^{-\lambda t} \varphi \widetilde\omega^2 \, d\xi_1  + \int_\OO f_0 \varphi(0,\cdot) \widetilde\omega^2  \, d v \, d x, \quad \forall \, \varphi \in C_c^1([0,T) \times \OO \cup \Gamma_-). 
\eeqn
Defining $f := f_\lambda e^{\lambda t}$ and using Theorem~\ref{theo:Kolmogorov-trace}, we deduce that $f \in \HHH \cap C([0,T];L^2_\omega(\OO))$ is a renormalized solution to the inflow problem \eqref{eq:linear_primal_inflow} and that $\gamma f \in L^2_{\tilde\omega}(\Gamma;   d\xi_1)$. From the renormalization formulation, we have the uniqueness of such a solution. 
Because of the trace Theorem~\ref{theo:Kolmogorov-trace}-(2), we can take $\beta(s) = s^2$ in \eqref{eq:FPK-traceL2} and we get 
\bean
&& \int_{\UU} \bigl(  f^2 (- v \cdot \nabla_x \varphi^2 - 2 \varphi^2 \varpi^\CCC_{\varphi,2}) + 2  |\nabla_v (f \varphi)|^2 ) \, d{v} dx dt
\\&&\quad
+  \int_{\Gamma} (\gamma \, f)^2 \, \varphi^2 \,\,   n_x \cdot {v} \,  d{v} d\sigma_{\! x}dt 
+ \Bigl[ \int_\OO (f^2 \varphi^2 )(t,\cdot) dxdv \Bigr]_0^T = 0, 
\eean
for any $\varphi \in \DD(\bar\OO)$. Taking $\varphi := \widetilde \omega$ in that last identity thanks to an approximation procedure and next using \eqref{eq:EEE-cequation}, the same computations as presented during the proof of  Lemma~\ref{lem:EstimKFPLp12}  and  the Gronwall lemma, 
%eq:KolmogorovWP-hypKKK}, %\eqref{eq:KolmogorovWP-hyp1}, 
we also deduce the energy estimate 
\bean
&&\| f_t \|^2_{L^2_{\tilde\omega}} + \int_0^t \left( \| \gamma_+ f_s \|^2_{L^2_{\tilde\omega}(\Gamma_+; d\xi_1)} +  2  \| f \|^2_{H^{1,\dagger}_{\tilde\omega}} \right) \, e^{\lambda_0(t-s)} \, d s 
\\
&&\qquad\le
 \| f_0 \|^2_{L^2_{\tilde\omega}} e^{\lambda_0 t} + \int_0^t  \| \mathfrak{f}_s \|^2_{L^2_{\tilde\omega}(\Gamma_-; d\xi_1)}   \, e^{\lambda_0(t-s)} \, d s. 
\eean

\medskip\noindent
\textit{Step 2.} For any $\alpha \in (0,1)$ and $ h \in \HHH \cap C([0,T];L^2_\omega(\OO))$ solution to the problem \eqref{eq:linear_primal_inflow} for some $\mathfrak h \in L^2_{\tilde\omega}(\Gamma_-; d\xi_1)$, and thus  $\gamma h \in L^2_{\omega}(\Gamma; d\xi_1)$,  we then consider the modified Maxwell reflection boundary condition problem
 \begin{equation}\label{eq:linear_gGk}
\left\{
\begin{aligned}
& \partial_t f + v \cdot \nabla_x f = \CCC f \quad &\text{in}&\quad (0,T) \times \OO \\
& \gamma_{-} f   =  \alpha \RRR \gamma_{+}  h  \quad &\text{on}&\quad (0,T) \times \Sigma_{-} \\
& f(t=0, \cdot) = f_0  \quad &\text{in}&\quad   \OO, 
\end{aligned}
\right.
\end{equation}
for which a solution $f \in \HHH \cap C([0,T];L^2_\omega(\OO))$ such that $\gamma f \in L^2_{\omega}(\Gamma; d\xi_1)$ is given by the first step. 
Repeating the arguments of Step~1 in the proof of  Lemma~\ref{lem:EstimKFPLp12},  we have 
%\bean
%\| \gamma_+ f \|^2_{L^2_{\tilde\omega}(\Gamma_+; d\xi_1)} 
%&\ge&   \int_{\Sigma_{+}} (1-\iota_D)  (\gamma_+ f)^2 \omega^2_A  d\xi_1 +   \int_{\partial\Omega}  \iota_D  K_2(\omega_A) (\widetilde{\gamma_+ f})^2 
%%
%\eean
%and
\bean
\| \RRR \gamma_+ h \|^2_{L^2_{\tilde\omega}(\Sigma_-; d\xi_1)} 
&\le&   \int_{\Sigma_{+}} \iota_S  (\gamma_+ h)^2 \omega^2_A  d\xi_1 
+   \int_{\partial\Omega}  \iota_D  (K_1(\omega_A) - K_0(\omega_A)) (\widetilde{\gamma_+ h})^2  
\\
&\le&   \int_{\Sigma_{+}} (1- \iota_D)  (\gamma_+ h)^2 \omega^2_A  d\xi_1 
+   \int_{\partial\Omega}  \iota_D K_2(\omega_A) (\widetilde{\gamma_+ h})^2  
\\
&\le&  \| \gamma_+ h \|^2_{L^2_{\tilde\omega}(\Gamma_+; d\xi_1)}.
\eean
Thanks to the energy estimate stated in the first step, we immediately deduce that 
the mapping $h \mapsto f$ is $\alpha$-Lipschitz for the norm defined by 
$$
%\| f \|^2 := 
\sup_{t \in [0,T]} \left\{ \| f_t \|^2_{L^2_{\tilde\omega}}  e^{- \lambda_0 t }+ \int_0^t  \| \gamma f_s \|^2_{L^2_{\tilde\omega}(\Gamma_+; d\xi_1)}  \, e^{- \lambda_0s} \, \d s \right\}.
$$
From the Banach fixed point theorem, we deduce the existence of a unique fixed point, and that provides a solution to \eqref{eq:linear_gGk}.

\medskip\noindent
\textit{Step 3.} For a sequence $\alpha_k \in (0,1)$, $\alpha_k \nearrow 1$, we next consider the sequence $(f_k)$ obtained in Step~2 as the solution to the modified Maxwell reflection boundary condition problem
\begin{equation}\label{eq:linear_gak}
\left\{
\begin{aligned}
& \partial_t f_k + v \cdot \nabla_x f_k = \CCC f_k \quad &\text{in}&\quad (0,T) \times \OO \\
& \gamma_{-} f_k   =  \alpha_k \RRR \gamma_{+} f_k  \quad &\text{on}&\quad (0,T) \times \Sigma_{-} \\
& f_k(t=0, \cdot) = f_0  \quad &\text{in}&\quad   \OO.
\end{aligned}
\right.
\end{equation}
From the fact that $\RRR: L^2_{\tilde\omega}(\Sigma_+;d\xi_1) \to   L^2_{\tilde\omega}(\Sigma_+;d\xi_1)$ with norm less than $1$ as established in Step~2 and the energy estimate stated at the end of Step~1, $f_k$ satisfies 
\bear\label{eq:linear_gak_bdd}
&&\quad \| f_{kt} \|^2_{L^2_{\tilde\omega}} + \int_0^t \left\{ (1-\alpha^2_k) \| \gamma f_{ks} \|^2_{L^2_{\tilde\omega}(\Gamma_+; \d\xi_1)}
+  2
 \| f_{ks} \|^2_{H^{1,\dagger}_{\tilde\omega}} 
\right\} \, e^{\lambda_0(t-s)} \, \d s 
\le
 \| f_0 \|^2_{L^2_{\tilde\omega}} e^{\lambda_0 t}  , 
\eear
for any $t \in (0,T)$ and any $k \ge 1$. Choosing $\beta(s) := s^2$ and $\varphi := (n_x \cdot v) \la v \ra^{-2} \omega^2(v)$ in the Green formula \eqref{eq:FPK-traceL2}, we additionally have 
\beqn\label{eq:boundL2dxi2}
\int_{\Gamma} (\gamma f_k)^2 \omega^2 \, \d\xi_2  \, \d t %\hat \omega^2 (n_x \cdot v)^2 dvd\sigma_{\!x}dt 
\lesssim   \| f_0 \|^2_{L^2_\omega} e^{\lambda_0 T}. %  \| g_k \|_{\HHH}^2.
\eeqn
%for any $0 < \tau < T/2$, $\Gamma_\tau := (\tau,T-\tau) \times \Sigma$. 
From the above estimates, we deduce that, up to the extraction of a subsequence, there exist $f \in \HHH \cap L^\infty(0,T;L^2_\omega(\OO))$ and $\mathfrak{f}_\pm \in L^2_{\omega}(\Gamma_{\pm}; \d\xi_2 \d t)$ such that 
$$
f_k \wto f \hbox{ weakly in } \ \HHH \cap L^\infty(0,T;L^2_\omega(\OO)), 
\quad
\gamma_\pm f_k \wto \mathfrak{f}_\pm \hbox{ weakly in } \ L^2_{\omega}(\Gamma; \d\xi_2 \d t). 
$$
Because $\langle v \rangle  \omega^{-1} \in L^2(\R^d)$, 
% From the condition \eqref{eq:sigma_nu_eta_KKK_omega}, %$\langle v \rangle  \omega^{-1} \in L^2(\R^3)$}, 
 we have $L^2_{\omega}(\Gamma; \d\xi_2) \subset L^1(\Gamma; \d\xi_1)$. 
 On the other hand, we recall that from the very definition \eqref{eq:KolmoBdyCond}, we have 
\beqn\label{eq:KolmogorovWP-hypL1}
\RRR : L^1(\Sigma_+;d\xi_1) \to L^1(\Sigma_-;d\xi_1), \quad   \| \RRR \|_{L^1(\Sigma;  d\xi_1 )} \le 1.
\eeqn
These three pieces of information together imply 
%Together with the property~\eqref{eq:KolmogorovWP-hypL1},  we deduce 
that $\RRR(\gamma f_{k+})  \wto \RRR(\mathfrak{f}_+)$   weakly in $L^1(\Gamma_-; \d\xi_1)$. 
On the other hand, from Theorem~\ref{theo:Kolmogorov-stability}, we have $\gamma f_k \wto \gamma f$ weakly in $\Lloc^2 (\Gamma; \d\xi_2)$. %so that ${\mathfrak g} = \gamma g$. 
Using both convergences in the boundary condition $\gamma_- f_k = \RRR(\gamma_+ f_k)$, we obtain $\gamma_- f = \RRR(\gamma_+ f)$.
We may thus pass to the limit in equation \eqref{eq:linear_gak} and we obtain that $f \in  X_T$ % L^\infty(0,T; L^2_\omega) \cap C([0,T];\Lloc^2(\OO))} \cap \HHH$ 
is a renormalized solution to the KFP equation \eqref{eq:KFP} complemented with the Maxwell reflection boundary condition \eqref{eq:KolmoBdyCond} and associated to the initial datum $f_0$. Passing to the limit in \eqref{eq:linear_gak_bdd}, we also  have 
\beqn\label{eq:bddL2&HHH}
 \| f_{t} \|^2_{L^2_{\tilde\omega}} + 2 \int_0^t  \| f_{ks} \|^2_{H^{1,\dagger}_{\tilde\omega}}  \, e^{\lambda_0(t-s)} \, \d s 
\le
 \| f_0 \|^2_{L^2_{\tilde\omega}} e^{\lambda_0 t}  , 
\eeqn
for any $t \in (0,T)$.

\medskip\noindent
\textit{Step 4.} We consider now two solutions $f_1$ and $f_2 \in   X_T $ to the KFP equation   \eqref{eq:KFP}-\eqref{eq:KolmoBdyCond}  associated to the same initial datum $f_0$, so that the function $f := f_2 - f_1 \in  X_T$  is  a solution to the KFP equation   \eqref{eq:KFP}-\eqref{eq:KolmoBdyCond} associated to the initial datum $f(0) = 0$.
 We write  \eqref{eq:FPK-traceL2} with the choice $\varphi := \widetilde\omega_1\chi_R$, with $\widetilde\omega_1$ given by \eqref{def:omegaA} associated to $p=1$, $A > 0$ large enough, $\omega_1 := \langle v \rangle^{k_1}$ {defined in Remark~\ref{rem:lem:EstimKFPLp12}}, with $\chi_R(v) := \chi(v/R)$, $\mathbf{1}_{B_1} \le \chi \in \DD(\R^d)$,  and with the choice $\beta \in C^2(\R)$, $\beta(0) = 0$, $\beta''  $ with compact support. 
We get
\bean
&&  \int_\OO \beta(f_T)  \varphi \, \d v \, \d x  +  \int_{\Gamma} \beta(\gamma f) \,  \varphi  \,  (n_x \cdot v) \,  \d v \, \d\sigma_{\! x} \, \d t +  \int_{\UU}  \beta''(f) \, | \nabla_v f |^2  \varphi  \, \d v \, \d x \, \d t 
\\
 &&\qquad =  
 \int_{\UU}  \left\{ \beta(f) \, \TTT^*  \varphi 
 %+ \int_{\UU}  
+  \beta(f) \left(  \Delta_v \varphi   - \partial_{v_i} (b_i   \varphi ) \right)  +  cf   \beta'(f)  \varphi  \right\} \d v \, \d x \, \d t. 
\eean
We assume $0 \le \beta (s) \le |s|$, $|\beta'(s)| \le 1$, and $\beta'' \ge 0$ so that we may get rid of the last term at the left-hand side of the above identity. 
Recalling that $\Omega$ is bounded, we observe that 
\bean
|\beta(f) \left(  \Delta_v \varphi   - \partial_{v_i} (b_i   \varphi ) \right)  +  cf   \beta'(f)  \varphi |
&\le& |f| \bigl( | \Delta_v \varphi| + | \partial_{v_i} (b_i   \varphi )| + |c\varphi| \bigr)
\\
&\lesssim& |f| \omega_1 (1 + |v|^{\gamma-2}) \in  L^1(\UU),
\eean
because of the condition $ k_* > k_1 + (d+\gamma-2)/2$ and the bound $(1+\langle v \rangle^{\varsigma/2}) \, \omega f \in L^2(\UU)$. The same uniform estimate holds for the term
$\beta(f) \, \TTT^*  \varphi$.  We also observe that 
$$
| \beta(\gamma f) \,  \varphi  \,  (n_x \cdot v)|   \lesssim |\gamma f | \omega_1 |n_x \cdot v| \in  L^1(\Gamma),
$$
because of the condition $k_* > k_1 + d/2 + 1$ and the bound $\gamma f \omega \in L^2 (\Gamma; \d\xi_2 \d t)$ provided by \eqref{eq:boundL2dxi2}. 
We may thus pass to the   limit $R \to \infty$
and  $\beta(s) \nearrow |s|$ such that $0 \le s \beta'(s) \nearrow |s|$, and we deduce 
\bean
 \int_\OO |f_T| \, \widetilde \omega_1 \d v \, \d x + \int_{\Gamma} |\gamma f| \widetilde \omega_1  \, (n_x \cdot v) \,  \d v \, \d\sigma_{\! x} \, \d t
 \le 
\int_{\UU}    |f|  \varpi^\LLL_{\tilde\omega,1} \widetilde \omega_1 \d v \, \d x \, \d t .  
\eean
Using finally the estimate of Step~1 in the proof of  Lemma \ref{lem:EstimKFPLp12} in order to get rid of the boundary term as well as   the estimates obtained in Step~2 and Step~3
in  the  proof of  Lemma \ref{lem:EstimKFPLp12} in order to deal with the RHS term, we get 
$$
\int_\OO |f_T|  \widetilde \omega_1 \, \d v \, \d x 
\le \kappa \int_0^T \!\! \int_\OO   |f|  \widetilde \omega_1 \, \d v \, \d x \, \d t, 
$$
and   we conclude to $f = 0$ thanks to Gr\"onwall's lemma.
\end{proof}

%%%%%%%%%%%%%%%%%%%%%%%%%%%%%%%%%%%%%%%%%%%
\subsection{Well-posedness for the   dual equation and conclusions}
\label{subsec:Well-posednessKFP}

We first establish the well-posedness of the dual KFP equation in a $L^2$ framework.

\begin{prop}\label{prop-KFP-L2dual}
 Under the assumptions of  Proposition~\ref{prop-KFP-L2primal}, for any admissible weight function $\omega$, any final time $T > 0$ and any final datum $g_T \in L^2_m(\OO)$, $m := \omega^{-1}$, there exists a  unique   $g\in  Y_T :=  L^\infty(0,T;L^2_m) \cap C([0,T];\Lloc^2) \cap  \HHH_m $ solution to the backward dual kinetic Fokker-Planck equation \eqref{eq:dualKFP} in a similar sense as stated in Proposition~\ref{prop-KFP-L2primal}.  
\end{prop}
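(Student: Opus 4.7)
The plan is to mirror the four-step proof of Proposition~\ref{prop-KFP-L2primal}, with the time direction reversed, the roles of $\Sigma_-$ and $\Sigma_+$ interchanged, and the weight $\omega$ replaced by its reciprocal $m = \omega^{-1}$. The a priori growth estimate from Lemma~\ref{lem:EstimKFP*Lq12} (together with the twisted weight $\widetilde m$ introduced in \eqref{def:mA} and the associated dissipation estimate \eqref{eq:disssipSL*-Lq}--\eqref{eq:varpiL*-varpisharp}) plays exactly the role that Lemma~\ref{lem:EstimKFPLp12} played for the primal problem.

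\smallskip\noindent
\emph{Step 1 (dual inflow problem).} For given $\mathfrak h \in L^2_{\tilde m}(\Gamma_+;d\xi_1)$, solve the backward Cauchy problem $-\partial_t g = v \cdot \nabla_x g + \CCC^* g$ with $\gamma_+ g = \mathfrak h$ on $\Sigma_+$ and $g(T)=g_T$. After the change of variable $s := T - t$ this becomes a forward problem whose inflow boundary is $\Sigma_+$. Applying Lions' variant of the Lax--Milgram theorem to the bilinear form
\[
\EEE^*(g,\psi) := \int_\UU (\lambda g - \CCC^* g)\,\psi\,\widetilde m^2 - \int_\UU g\,( -\partial_t - v\cdot\nabla_x)(\psi \widetilde m^2),
\]
on $\HHH_m \times C^1_c(\,(0,T]\times\OO \cup \Gamma_+)$, one obtains coercivity (for $\lambda$ large enough) by repeating the computations of Steps~1--3 of Lemma~\ref{lem:EstimKFP*Lq12} with $q=2$: the boundary term produced by Green--Ostrogradski is $\tfrac12\int_{\Gamma_+}(\gamma_+\psi)^2\widetilde m^2\,d\xi_1$, and the bulk term is controlled by $\varpi^{\LLL^*}_{\tilde m,2} \le \kappa$. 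The resulting solution $g$ lies in $\HHH_m \cap C([0,T];L^2_m)$, has a trace $\gamma g \in L^2_{\tilde m}(\Gamma; d\xi_1)$ by Theorem~\ref{theo:Kolmogorov-trace}(2), and satisfies the natural energy identity obtained by testing against $\widetilde m^2$.

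\smallskip\noindent
\emph{Step 2 (Banach fixed point).} For $\alpha \in (0,1)$ and $h$ a solution to the Step~1 problem with some inflow $\mathfrak h$, define $\Phi(h)$ as the solution to the Step~1 problem with inflow $\alpha \RRR^* \gamma_- h$. The boundary computation carried out in Step~1 of Lemma~\ref{lem:EstimKFP*Lq12} (with $q=2$) shows that $\RRR^* : L^2_{\tilde m}(\Sigma_-;d\xi_1) \to L^2_{\tilde m}(\Sigma_+;d\xi_1)$ has norm $\le 1$. Hence $\Phi$ is an $\alpha$-contraction in the norm analogous to the one used in the primal case, and the Banach fixed point theorem yields the existence and uniqueness of a solution to the scaled Maxwell problem $\gamma_+ g = \alpha \RRR^* \gamma_- g$.

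\smallskip\noindent
\emph{Step 3 (limit $\alpha_k \nearrow 1$).} From the energy estimate \eqref{eq:disssipSL*-Lq}--\eqref{eq:disssipSL*-LqBIS} and the boundary trace bound obtained by choosing $\varphi := (n_x\cdot v)\langle v\rangle^{-2} m^2$ in \eqref{eq:FPK-traceL2}, the family $(g_k)$ is uniformly bounded in $\HHH_m \cap L^\infty(0,T;L^2_m)$ with $\gamma g_k$ uniformly bounded in $L^2_m(\Gamma;d\xi_2\,dt)$. Extract a weakly convergent subsequence; Theorem~\ref{theo:Kolmogorov-stability} gives the weak convergence of traces in $L^2_{\rm loc}(\Gamma;d\xi_2\,dt)$. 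To pass to the limit in $\gamma_+ g_k = \alpha_k \RRR^* \gamma_- g_k$ one uses that $\RRR^* : L^1(\Sigma_-;d\xi_1) \to L^1(\Sigma_+;d\xi_1)$ has norm $\le 1$ (the dual statement of \eqref{eq:KolmogorovWP-hypL1}) together with $\langle v\rangle m^{-1} \in L^2(\R^d)$, so that $L^2_m(\Gamma;d\xi_2) \hookrightarrow L^1(\Gamma;d\xi_1)$, exactly as in Step~3 of Proposition~\ref{prop-KFP-L2primal}.

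\smallskip\noindent
\emph{Step 4 (uniqueness).} For two solutions with the same final datum, the difference $g \in Y_T$ solves the dual equation with $g(T)=0$. Apply the Green renormalized formula \eqref{eq:FPK-traceL2} for the adjoint equation with $\beta \in \BBB_2$, $0 \le \beta(s)\le |s|$, $\beta'' \ge 0$, and test function $\widetilde m_1 \chi_R$ where $\widetilde m_1$ is built from $m_1 := \langle v\rangle^{-k_1}$ as in \eqref{def:mA}. The integrability conditions on admissible weights ensure all terms pass to the limit as $R \to \infty$ and $\beta(s) \nearrow |s|$. Using the boundary control and the bulk bound $\varpi^{\LLL^*}_{\tilde m_1,1}\le \kappa$ (Step~1 of Lemma~\ref{lem:EstimKFP*Lq12} with $q=1$), one obtains $\int_\OO |g(0)|\,\widetilde m_1 \le \kappa \int_0^T\!\!\int_\OO |g|\,\widetilde m_1$, and Gr\"onwall's lemma forces $g \equiv 0$.

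\smallskip

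The principal obstacle is the boundary analysis at Steps~2 and~3: the contraction of $\RRR^*$ in $L^2_{\tilde m}$ and the weak compactness of traces under the weight $m$ must be argued carefully because $m$ decays at infinity instead of growing, reversing the integrability roles played by $\MMM_x$ and $\omega$ in the primal case. All of this is, however, already encoded in the dual moment computations of Lemma~\ref{lem:EstimKFP*Lq12} (in particular the choice of modified weight $\widetilde m$ and the constants $K_0,K_1,K_2$ with $m_A$ in place of $\omega_A$), so the proof reduces to careful bookkeeping.
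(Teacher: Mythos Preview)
Your overall strategy mirrors the paper's proof, and Steps~1--3 are essentially the same (the paper merges your Steps~2 and~3). The genuine gap is in Step~4, where your choice $m_1:=\langle v\rangle^{-k_1}$ transcribes the primal choice $\omega_1:=\langle v\rangle^{k_1}$ with the direction reversed incorrectly. In the primal uniqueness argument the point of taking $\omega_1$ \emph{smaller} than $\omega$ is that a solution in $\HHH_\omega$ then controls bulk terms like $|f|\,\omega_1\langle v\rangle^{\gamma-2}$ in $L^1(\UU)$ by Cauchy--Schwarz, because $\omega_1\langle v\rangle^{\gamma-2}/(\omega\langle v\rangle^{\varsigma/2})\in L^2$. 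For the dual you correspondingly need the test weight $m_1$ to decay \emph{faster} than $m$, so that $m_1\langle v\rangle^{\gamma-2}/(m\langle v\rangle^{\varsigma/2})\in L^2$. With your choice, $m_1/m=\langle v\rangle^{k-k_1}$ \emph{grows} (every admissible polynomial weight has $k>k_*\ge k_1$), so the integrability needed to pass to the limit $R\to\infty$ fails; if $\omega$ is exponential the failure is even more dramatic since $m_1/m$ then grows exponentially. The paper remedies this by taking $m_1:=e^{-\zeta_1\langle v\rangle^{s_*}}$ with $s_*=\min(2,\gamma)$ and $\zeta_1\in(\zeta,\zeta^*)$ strictly larger than the parameter~$\zeta$ for which $\omega\lesssim e^{\zeta\langle v\rangle^{s_*}}$: then $\omega_1:=m_1^{-1}$ is still an admissible exponential weight, so the $L^1_{m_1}$ dissipation argument of Lemma~\ref{lem:EstimKFP*Lq12} with $q=1$ applies, while $m_1$ decays faster than \emph{any} admissible $m$, restoring the required integrability at the interior and at the boundary.

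A secondary issue: in Step~3, the embedding you invoke via ``$\langle v\rangle m^{-1}\in L^2(\R^d)$'' is false as written, since $m^{-1}=\omega$ grows at infinity. The passage to the limit in the boundary condition $\gamma_+ g_k=\alpha_k\RRR^*\gamma_- g_k$ cannot be justified by a direct transcription of \eqref{eq:KolmogorovWP-hypL1}; one must instead exploit that the diffusive part of $\RRR^*$ carries the Gaussian factor $\MMM_x$, which supplies the decay that the weight $m$ does not.
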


\begin{proof}[Sketch of the proof of Proposition~\ref{prop-KFP-L2dual}] 
We follow the same strategy as during the proof of Pro-position~\ref{prop-KFP-L2primal}.

\smallskip\noindent
\textit{Step 1.} Given $\mathfrak g \in L^2_m(\Gamma_-; d\xi_1)$, we consider the backward inflow problem
\begin{equation}\label{eq:linear_gGinflow}
\left\{
\begin{aligned}
& -\partial_t g -v \cdot \nabla_x g =\CCC^* g  \quad &\text{in}&\quad (0,T) \times \OO \\
& \gamma_{+} g  = \mathfrak g \quad &\text{on}&\quad (0,T) \times \Sigma_{+} \\
&g_{| t=T} = g_T  \quad &\text{in}&\quad   \OO,
\end{aligned}
\right.
\end{equation}
We define  $\widetilde m$ as during the proof of  Lemma~\ref{lem:EstimKFP*Lq12} 
and  the bilinear form
$\EEE : \HHH_m \times C_c^1((0,T] \times \OO \cup \Gamma_+) \to \R$,  by \bean
\EEE(g,\varphi) 
&:=& \int_\UU (\lambda g - \CCC^* g) \varphi \widetilde m^2   + \int_\UU g ( \partial_t   + v \cdot \nabla_x   ) (\varphi\widetilde m^2) , 
\eean
which is coercive for $\lambda$ large enough thanks to  Lemma~\ref{lem:EstimKFP*Lq12} (and more precisely \eqref{eq:disssipSL*-Lq}-\eqref{eq:disssipSL*-LqBIS}). 
Using Lions' variant of the Lax-Milgram theorem \cite[Chap~III,  \textsection 1]{MR0153974}, we obtain a variational solution $g \in \HHH_m$ to \eqref{eq:linear_gGinflow}, and more precisely
$$
\EEE(g,\varphi) = \int_{\Gamma_+} \mathfrak{g}  \varphi \widetilde m^2 \, d\xi_1  + \int_\OO g_T \varphi(T,\cdot) \widetilde m^2  \, d v \, d x, \quad \forall \, \varphi \in C_c^1((0,T] \times \OO \cup \Gamma_+).
$$
Thanks to the trace Theorem~\ref{theo:Kolmogorov-trace} and the dissipativity property \eqref{eq:disssipSL*-Lq} of $\LLL^*$, we deduce that $g\in C([0,T];L^2_m) \cap  \HHH_m $.

\medskip\noindent
\textit{Step 2.} For a sequence $\alpha_k \in (0,1)$, $\alpha_k \nearrow 1$, we build a  sequence $(g_k)$ of solutions to the modified Maxwell reflection boundary condition problem
\begin{equation}\label{eq:linearDual_gak}
\left\{
\begin{aligned}
& -\partial_t g_k - v \cdot \nabla_x g_k = \CCC^* g_k \quad &\text{in}&\quad (0,T) \times \OO \\
& \gamma_{+} g_k   =  \alpha_k \RRR^* \gamma_{-} f_k  \quad &\text{on}&\quad (0,T) \times \Sigma_{+} \\
&g_k(t=T, \cdot) = g_T \quad &\text{in}&\quad   \OO, 
\end{aligned}
\right.
\end{equation}
by using Step~1, the fact that $\RRR^* : L^2_{\tilde m} (\Sigma_-;d\xi_1) \to L^2_{\tilde m} (\Sigma_-;d\xi_1) $ from Step~1 in  Lemma~\ref{lem:EstimKFP*Lq12} and the Banach fixed point Theorem. 
This sequence satisfies  
$$
\sup_{[0,T]} \| g_{kt} \|^2_{L^2_{\tilde m}} + \int_0^T \left\{ \| \gamma g_{ks} \|^2_{L^2_m(\Gamma_+; \d\xi_2)}
+   
 \| g_{ks} \|^2_{H^{1,\dagger}_{\tilde m}} 
\right\} \,  \d s 
\le
C_T  \| g_T \|^2_{L^2_{\tilde m}}  
$$
for some constant $C_T$ and  any $k \ge 1$. We may extract converging subsequences $(g_{k'})$ and $(\gamma g_{k'})$ with associated limits $g$ and $\bar\gamma$, and passing to the limit in \eqref{eq:linearDual_gak} with the help of Theorem~\ref{theo:Kolmogorov-stability}, we deduce that $\bar\gamma = \gamma g$ and that $g$ is a renormalized solution to \eqref{eq:dualKFP}. 

\medskip\noindent
\textit{Step 3.} We may assume $\omega \lesssim e^{\zeta\langle v \rangle^{s_*}}$ with $s_* = \min(2,\gamma)$ and $\zeta \in (0,\zeta^*)$, defining $\zeta^* := b_0/2$ if $\gamma < 2$, 
$\zeta^* := \min (b_0,1/\Theta^*)/2$ if $\gamma = 2$, $\zeta^*  := 1/(2\Theta^*)$ if $\gamma > 2$. We set $m_1 := e^{-\zeta_1\langle v \rangle^{s_*}}$, with $\zeta_1 \in (\zeta,\zeta_*)$. 
We consider two solutions $g_1$ and $g_2 \in  C([0,T];L^2_m) \cap \HHH_m$ to the backward dual KFP equation \eqref{eq:dualKFP} associated to the same final datum $g_T$, so that the function $g := g_2 - g_1 \in  C([0,T]; L^2_m) \cap \HHH_m$  is  a solution to the KFP equation   \eqref{eq:KFP}-\eqref{eq:KolmoBdyCond} associated to the initial datum $g(T) = 0$.
Choosing $\varphi := \widetilde m_1\chi_R$  in \eqref{eq:FPK-traceL2}, with the notations of  Step~4 of the proof of Proposition~\ref{prop-KFP-L2primal}, and proceeding similarly, we get 
\bean
&&  \int_\OO \beta(g(0))  \varphi \, \d v \, \d x  +  \int_{\Gamma} \beta(\gamma g) \,  \varphi  \,  (n_x \cdot v) \,  \d v \, \d\sigma_{\! x} \, \d t +  \int_{\UU}  \beta''(f) \, | \nabla_v f |^2  \varphi  \, \d v \, \d x \, \d t 
\\
 &&\qquad =  
 \int_{\UU} \beta(g) \, \TTT  \varphi + 
 \int_{\UU}  \left\{  \beta(g) \left(  \Delta_v \varphi  + \Div_v (b  \varphi ) \right)  +  (c-\Div_v b)f   \beta'(f)  \varphi  \right\} \d v \, \d x \, \d t. 
\eean
Taking advantage of the fact that all the terms in the interior and at the boundary are now well defined, we may argue as for the proof of the $L^1_{\widetilde m_1}$ estimate performed in Lemma~\ref{lem:EstimKFP*Lq12}, 
and we conclude that $g \equiv 0$ as in Step~4 of the proof of Proposition~\ref{prop-KFP-L2primal}.
 \end{proof}

We conclude the section by reformulating and slightly improving the two previous well-posedness results.

\begin{theo}\label{theo:WP-KFP&KFP*}
Consider an admissible weight function $\omega$ and set $m:=\omega^{-1}$.  

(1) There exists a semigroup $ S_{\LLL}$ on $ L^2_\omega(\OO)$ such that for any  $f_{0} \in L^2_\omega(\OO)$, the function $f_t := S_{\LLL}(t) f_{0}$  is the  unique solution in  $C([0,T];L^2_\omega) \cap  \HHH$, $\forall \, T >0$, to the KFP equation \eqref{eq:KFP}--\eqref{eq:KolmoBdyCond}--\eqref{eq:initialDatum}. 
Furthermore \eqref{eq:lem:EstimKFPLp12} holds if additionally $f_0 \in L^p_\omega$ for some $p \in [1,\infty]$. 

(2) Similarly, there exists a semigroup $ S_{\LLL^*}$ on $ L^2_m(\OO)$ such that for any  $g_{T} \in L^2_m(\OO)$, the function $g_t := S_{\LLL^*}(T-t) g_{T}$  is the  unique solution in  $C([0,T];L^2_m) \cap  \HHH_m$, $\forall \, T >0$, to the dual KFP problem \eqref{eq:dualKFP}.  
Furthermore \eqref{eq:g0&g-<gtC} holds if additionally $g_T \in L^q_m$ for some $q \in [1,\infty]$. 

(3) The semigroups $ S_{\LLL}$ and $ S_{\LLL^*}$ are dual one toward the other. In other words, the equation \eqref{eq:identite-dualite} holds for any $f_0 \in L^2_\omega(\OO)$ and $g_T \in L^2_m(\OO)$.
\end{theo}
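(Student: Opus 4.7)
The plan has three stages corresponding to the three parts of the statement, together with a nontrivial extension of the $L^p_\omega$ estimates from the cases $p=1,2$ covered by Lemma~\ref{lem:EstimKFPLp12} to general $p \in [1,\infty]$.

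\textbf{Step 1 (Semigroup construction).} For $f_0 \in L^2_\omega$, Proposition~\ref{prop-KFP-L2primal} yields a unique solution $f \in X_T$ for every $T > 0$, so I would set $S_{\LLL}(t)f_0 := f(t,\cdot)$. The semigroup property follows from uniqueness applied to the time-translated solution starting from $f(s,\cdot)$. To upgrade the continuity from $C([0,T]; L^2_{\rm loc})$ to $C([0,T]; L^2_\omega)$, I would combine the energy estimate \eqref{eq:bddL2&HHH}, which yields $\limsup_{s \to t} \|f_s\|_{L^2_\omega} \le \|f_t\|_{L^2_\omega}$, with the weak-$\ast$ continuity coming from the local strong continuity and the $L^\infty_t L^2_\omega$ bound; upper-semicontinuity of the norm plus weak convergence delivers strong continuity. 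The dual semigroup $S_{\LLL^*}$ on $L^2_m$ is built identically from Proposition~\ref{prop-KFP-L2dual}.

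\textbf{Step 2 ($L^p_\omega$ growth).} For $p=2$ the bound is already contained in \eqref{eq:bddL2&HHH}. For $p=1$ with $f_0 \in L^1_\omega \cap L^2_\omega$, I would apply the renormalized Green identity \eqref{eq:FPK-traceL2} with a smooth approximation of $\beta(s)=|s|$ tested against $\widetilde\omega_1 \chi_R$, then pass to the limit exactly as in Step~4 of the proof of Proposition~\ref{prop-KFP-L2primal}, using the boundary cancellations of Step~1 and the confinement estimates of Steps~2--3 of Lemma~\ref{lem:EstimKFPLp12}. For $p=\infty$, I would use duality: assuming point~(3), for $f_0 \in L^2_\omega \cap L^\infty_\omega$ and $\phi \in L^1_m \cap L^2_m$,
\[
|\langle S_{\LLL}(t)f_0, \phi\rangle| = |\langle f_0, S_{\LLL^*}(t)\phi\rangle| \le \|f_0\|_{L^\infty_\omega}\, \|S_{\LLL^*}(t)\phi\|_{L^1_m} \le C e^{\kappa t}\|f_0\|_{L^\infty_\omega}\|\phi\|_{L^1_m},
\]
by Lemma~\ref{lem:EstimKFP*Lq12}; density of $L^1_m \cap L^2_m$ in $L^1_m$ together with the duality $(L^1_m)' = L^\infty_\omega$ concludes. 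The remaining cases $p \in (1,2) \cup (2,\infty)$ follow from Riesz--Thorin interpolation between the established endpoint estimates, and the argument for $S_{\LLL^*}$ is symmetric.

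\textbf{Step 3 (Duality identity).} Fix $f_0 \in L^2_\omega$, $g_T \in L^2_m$, and set $f := S_{\LLL}(\cdot) f_0$, $g := S_{\LLL^*}(T-\cdot) g_T$. The traces satisfy $\omega\,\gamma f,\, m\,\gamma g \in L^2(\Gamma; d\xi_2)$, so $(\gamma f)(\gamma g) \in L^1(\Gamma; d\xi_1)$ by Cauchy--Schwarz combined with $d\xi_1 \lesssim \langle v \rangle\, d\xi_2$ and \eqref{eq:cond-Lpomega}. After a velocity mollification of $g$ producing an admissible test function for the equation satisfied by $f$, I would sum the two renormalized Green identities of Theorem~\ref{theo:Kolmogorov-trace}. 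The interior terms cancel because $\LLL$ and $\LLL^*$ are formal adjoints, leaving only
\[
\int_0^T \!\! \int_\Sigma (\gamma f)(\gamma g)(n_x\cdot v)\,d\sigma_x\,dv\,dt,
\]
which vanishes after splitting over $\Sigma_\pm$ and inserting $\gamma_- f = \RRR \gamma_+ f$ together with $\gamma_+ g = \RRR^* \gamma_- g$ and the $L^2(\Sigma; |n_x\cdot v|d\sigma_x dv)$-adjointness of $\RRR$ and $\RRR^*$.

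\textbf{Main obstacle.} The delicate point is making Step~3 rigorous: Theorem~\ref{theo:Kolmogorov-trace} provides the Green formula only for renormalizations in $\BBB_1$ or $\BBB_2$ paired with a single smooth test function, whereas here I need a bilinear identity on the product $fg$, each factor having only $\HHH$ regularity and a trace in $L^2(\Gamma; d\xi_2)$. The plan is to approximate $g$ by a velocity mollification $g_\varepsilon$, use it as test function in the equation for $f$, and then control the commutator and boundary error terms using the weighted $L^2(\Sigma;d\xi_2)$ bounds on both traces together with the contractivity of $\RRR$ and $\RRR^*$; once this bilinear identity is established, the $L^\infty$ bound from Step~2 and the Riesz--Thorin interpolation close the proof.
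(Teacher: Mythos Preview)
Your overall structure matches the paper, and Steps~1 and~2 are essentially the paper's Steps~1, 2 and~4 (with the paper handling the $C([0,T];L^2_\omega)$ continuity by an approximation argument in its Step~5 rather than by your semicontinuity trick, but that is a minor stylistic difference).

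The genuine gap is in Step~3. Your claimed comparison $d\xi_1 \lesssim \langle v\rangle\, d\xi_2$ is false: since $d\xi_1 = |n_x\cdot v|\,dv\,d\sigma_x$ and $d\xi_2 = (n_x\cdot v)^2\langle v\rangle^{-2}\,dv\,d\sigma_x$, the ratio $d\xi_1/d\xi_2 = \langle v\rangle^2/|n_x\cdot v|$ blows up at grazing incidence. Consequently, from $\omega\gamma f, m\gamma g \in L^2(\Gamma;d\xi_2)$ alone you \emph{cannot} conclude that $(\gamma f)(\gamma g)\in L^1(\Gamma;d\xi_1)$, and without this the boundary term in your bilinear Green identity is not known to make sense, so the mollification plan as written does not close.

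The paper resolves this by a different device: it replaces the accommodation coefficients by $\iota_{S,n}:=(1-1/n)\iota_S$, $\iota_{D,n}:=(1-1/n)\iota_D$, so that $\iota_n\le 1-1/n<1$. For the corresponding approximate problems, the energy estimate in Step~2 of the proof of Proposition~\ref{prop-KFP-L2primal} retains the term $(1-\alpha_k^2)\|\gamma_+ f\|^2_{L^2_{\tilde\omega}(\Gamma_+;d\xi_1)}$ with a strictly positive prefactor, yielding the stronger trace regularity $\gamma f_n\in L^2_\omega(\Gamma;d\xi_1)$ and $\gamma g_n\in L^2_m(\Gamma;d\xi_1)$. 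With these bounds the bilinear Green formula is justified for each $n$, giving $\int f_n(T)g_T=\int f_0 g_n(0)$; one then passes to the limit $n\to\infty$ using the uniform $L^\infty_t L^2$ and $W^{1,\infty}_t\DD'$ bounds to obtain \eqref{eq:identite-dualite}. This approximation of the reflection operator, not a mollification of one of the two solutions, is the missing idea in your proposal.
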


\begin{proof}[Proof of Theorem~\ref{theo:WP-KFP&KFP*}] The proof is split into four steps. 

\smallskip\noindent
{\sl Step~1.} We may define the semigroup $S_\LLL$ by setting $S_{\LLL}(t) f_{0} := f_t $ for any $f_0 \in L^2_\omega$ and $t \ge 0$, where $f_t$ is the   unique solution in   $X_T$, $\forall \, T >0$,  to the KFP equation \eqref{eq:KFP}--\eqref{eq:KolmoBdyCond}--\eqref{eq:initialDatum} provided by  Proposition~\ref{prop-KFP-L2primal}. In  particular  \eqref{eq:lem:EstimKFPLp12} holds for $p=2$.  Proceeding as in Step~4 during the proof of Proposition~\ref{prop-KFP-L2primal}, we may justify the computations performed during the proof of Lemma~\ref{lem:EstimKFPLp12} and we get that  \eqref{eq:lem:EstimKFPLp12} holds for $p=1$ when $f_0 \in L^2_\omega \cap L^1_{\omega}$. By interpolation, we obtain that  \eqref{eq:lem:EstimKFPLp12} holds for any $p \in [1,2]$ when $f_0 \in L^2_\omega \cap L^p_{\omega}$. 

\smallskip\noindent
{\sl Step~2.} From the well-posedness result of Proposition~\ref{prop-KFP-L2dual}, we may define the semigroup $S_{\LLL^*}$ by setting $S_{\LLL^*}(t) g_T := g(T-t)$ for any $g_T \in L^2_m$ and any $0 \le t \le T$, where $g$ is unique solution in  $ Y_T$ %$C([0,T];L^2_m) \cap  \HHH_m$  
to the KFP problem  \eqref{eq:dualKFP}. We obtain as in Step 1 that furthermore \eqref{eq:g0&g-<gtC} holds for any $q \in [1,2]$ if $g_0 \in L^2_m \cap L^q_m$. 

\smallskip\noindent
{\sl Step~3.} We change $\iota$ by $\iota_n : = \iota_{S,n} + \iota_{D,n} \le 1- 1/n$, with $ \iota_{S,n} :=  \iota_S (1-1/n)$, $ \iota_{D,n} :=  \iota_D (1-1/n)$, and we denote by $\RRR_n$ and $\RRR_n^*$ the corresponding reflection operators. 
Denoting by $f_n$ the solution associated to the KFP equation, the reflection operator $\RRR_n$  and the initial datum $f_0$ given by Step~1 (or Proposition~\ref{prop-KFP-L2primal}), we have the additional property 
$\gamma f_n \in L^2_\omega(\Gamma;d\xi_1)$. The solution $g_n$ associated  to the dual problem \eqref{eq:dualKFP} for the reflection operator $\RRR^*_n$  and the final datum $g_T$ given by Step~2 (or Proposition~\ref{prop-KFP-L2dual}) also satisfies the additional property $\gamma g_n \in L^2_m(\Gamma;d\xi_1)$. Because of these additional estimates on the boundary, we may justify the computations leading  to the identity \eqref{eq:identite-dualite} by starting from \eqref{eq:FPK-traceL2} applied with $g := f_n$, $\varphi := g_n$ and $\beta(s) := s$ or by applying (a variant of) Theorem~\ref{theo:Kolmogorov-trace} to the function $\beta(f_ng_n)$, noticing that 
$$
\partial_t(f_ng_n) + v \cdot \nabla_x (f_ng_n) = \Delta_v f_ng_n - f_n \Delta_v g_n + \Div_v (bf_ng_n) \  \hbox{ in } \ \DD'(\UU).
$$
In that way, we obtain 
\begin{equation}\label{eq:identite-dualite-fngn}
\int_\OO f_n(T) g_T   
= \int_\OO f_0 g_n(0), \quad \forall \, n \ge 1.
\end{equation}
Because $(f_n)$ is bounded in $L^\infty(0,T;L^2_\omega)$ and in $W^{1,\infty}(0,T;\DD'(\OO))$, we deduce that $f_n(T) \wto f (T) := S_\LL(T) f_0$ weakly in $L^2_\omega$. Similarly, we have 
$g_n(0) \wto g(0) = S_{\LLL^*}(T) g_T$ weakly in $L^2_m$. We may thus pass to the limit $n\to\infty$ in \eqref{eq:identite-dualite-fngn} and we deduce that \eqref{eq:identite-dualite} holds, which 
exactly means that $(S_{\LLL^*})^* = S_\LLL$. 

\smallskip\noindent
{\sl Step~4.} For any $p \in (2,\infty]$, we know from Step~2 that for any $g_T \in L^2_m \cap L^{p'}_m$, $T > 0$, there holds 
\beqn\label{eq:cor:LpwellposednessWithReflectionDual}
\| g (t, \cdot) \|_{L^{p'}_m} \le C_1 e^{C_2(T-t)} \| g_T \|_{L^{p'}_m}, 
\quad \forall \, t \in [0,T].
\eeqn
Now, for $f_0 \in L^p_\omega$, we have 
 \bean
 \| f(T) \|_{L^p_\omega} 
 &=& \sup_{g_T \in L^2_m, \| g_T \|_{L^{p'}_m} \le 1} \int f(T) g_T
 \\
 &=&\sup_{g_T \in L^2_m, \| g_T \|_{L^{p'}_m} \le 1} \int f_0 g(0)
 \\
&\le& \| f_0 \|_{L^p_\omega} \sup_{g_T \in L^2_{m}, \| g_T \|_{L^{p'}_m} \le 1}  \| g(0) \|_{L^{p'}_m} 
 \\
&\le&\| f_0 \|_{L^p_\omega}  \sup_{g_T \in L^2_{m}, \| g_T \|_{L^{p'}_m} \le 1} C_1 e^{C_2T}  \| g_T  \|_{L^{p'}_m}
=  C_1 e^{C_2T}\| f_0 \|_{L^p_\omega}, 
\eean
where we have successively used the Riesz representation theorem, the duality identity  \eqref{eq:identite-dualite}, 
the Holder inequality, the estimate \eqref{eq:cor:LpwellposednessWithReflectionDual} and  the Riesz representation theorem again.
We have thus established that  \eqref{eq:lem:EstimKFPLp12} holds for any $f_0 \in L^p_\omega \cap L^2_\omega$,  $p \in [1,\infty]$. 
We establish in the same way that \eqref{eq:g0&g-<gtC} holds for any $g_T \in L^q_m \cap L^2_m$,  $q \in [1,\infty]$. 

\smallskip\noindent
{\sl Step~5.} For $f_0 \in L^2_\omega$, let us introduce a sequence $f_{0,n} \in L^2_{\omega^\sharp} \cap L^\infty_\omega$, such that $\omega^\sharp$ is an admissible weight function satisfying $\omega^\sharp/\omega \to \infty$ and such that $f_{0,n} \to f_0$ in $L^2_\omega$. From the previous analysis, the solution $f_n$ to the KFP equation \eqref{eq:KFP}--\eqref{eq:KolmoBdyCond} associated to the initial datum $f_{0,n}$   satisfies $f_{n} \in L^\infty(0,T;L^2_{\omega^\sharp} \cap L^\infty_\omega) \cap C([0,T];\Lloc^2) \subset C([0,T];L^2_\omega)$. 
From \eqref{eq:lem:EstimKFPLp12}, $(f_n)$ is a Cauchy sequence in $C([0,T];L^2_\omega)$ and thus converges to the solution $f$  to the KFP equation \eqref{eq:KFP}--\eqref{eq:KolmoBdyCond} associated to the initial datum $f_0$.  We have established that $f \in C([0,T];L^2_\omega)$ and the same argument holds for the dual problem.
\end{proof}

%%%%%%%%%%%%%%%%%%%%%%%%%%%%%%%%%%%%%%%%%%%%%%%%%%%%%%%
\section{Ultracontractivity}  \label{sec:ultra}

In this section, we explain how to adapt to  the  KFP equation in a domain the 
 De Giorgi-Nash-Moser theory developed   for parabolic equations, in particular in \cite{MR0093649,MR0100158,MR0170091,MR159139}, and generalized recently to the KFP equation in 
 the whole space, in particular in  \cite{MR2068847,MR3923847}. The gain of integrability $L^1 \to L^2$ essentially follows and slightly simplifies the proofs presented in \cite{CM-KFP**,CM-Landau**} which are very in the spirit of  Nash approach \cite{MR0100158}. %The gain of uniform bound $L^2 \to L^\infty$ is more in the spirit of  De Giorgi iterated scheme \cite{MR0093649,MR3923847}.
When necessary, we will deal with the strongly and weakly confining cases separately.

\subsection{An improved weighted $L^2$ estimate at the boundary}

Let us observe that for  a  solution $f$ to the  KFP equation \eqref{eq:KFP}, we may write 
$$
\TT \frac{f^2 }{ 2} = f  \, \TT f = f  \, \CCC f,
$$
where we define 
\beqn\label{def:TT}
\TT := \partial_t + v \cdot \nabla_x, %\quad \TT^* := - \TT, 
\eeqn
and we recall  that $\CCC$ has been defined in \eqref{eq:intro-defCCCf}. Multiplying that equation by $\Phi^2 := \varphi^2 \widetilde\omega^2$ with a time truncation function $\varphi \in \DD(0,T)$ and a weight function $\widetilde\omega : \OO \to (0,\infty)$, and next integrating in all the variables with the help of  \eqref{eq:identCCCffp}, 
we obtain
\beqn\label{eq:weakKolmogorovLq}
\frac12\int_\Gamma (\gamma f)^2 \Phi^2  n_x \cdot v - \frac12 \int_\UU f^2 \TT \Phi^2
= -  \int_\UU |\nabla_v (f\Phi)|^2 + \int_\UU f^2 \Phi^2  \widetilde\varpi,  
\eeqn
with $\UU := (0,T) \times \OO$, $\Gamma := (0,T) \times \Sigma$, $T \in (0,\infty)$ and $ \widetilde\varpi  := \varpi_{\tilde\omega,2}^\CCC$
is defined in  \eqref{def:varpi}, or equivalently by 
\beqn\label{def:varpiBIS}
\widetilde\varpi%\varpi_{\widetilde\omega,2} 
=  \frac{1 }{ 4}  \frac{|\nabla_v \widetilde \omega^2|^2 }{ \widetilde \omega^4}  %+  {2-q \over q^2}  \frac{ \Delta_v \widetilde \omega^2 } {\widetilde \omega^2}  
- \frac12 b   \cdot \frac{\nabla_v \widetilde \omega^2 }{ \widetilde \omega^2}
+  c  - \frac{1 }{ 2}  \Div_v b . 
\eeqn

 \smallskip
 We first establish a key estimate on  the KFP equation \eqref{eq:KFP}--\eqref{eq:KolmoBdyCond}--\eqref{eq:initialDatum} which makes possible to control a solution near the boundary. 
 The proof is based on the introduction of an appropriate weight function which combines the twisting term used in the previous section and the  twisting term used in \cite[Section~11]{sanchez:hal-04093201}, that last one being in the spirit of  moment arguments used in \cite{MR1166050,MR3591133}.

\begin{prop}\label{prop:TheLqEstim}
For any admissible weight function $\omega$ there exists  $C = C(\omega,\Omega)>0$  such that for any solution $f$  to the KFP equation \eqref{eq:KFP}--\eqref{eq:KolmoBdyCond}, any $T>0$ and any smooth function $0 \le \varphi \in \DD((0,T))$,  there holds
$$
\int_\UU f^2  \omega^2 \left\{  \frac{(n_x \cdot \hat v)^{2}}{\delta^{1/2}} + \la v \ra^\varsigma \right\} \varphi^2+  
\int_\UU  |\nabla_v(f \omega )|^2  \varphi^2
\le 
C \int_\UU  f^2 \omega^2 \left[ |\partial_t \varphi^2| +  \varphi^2 \right],
$$
with  $\varsigma := \gamma+s-2$. % > 0$ is defined in \eqref{eq:conf-hyp}. 
%where the constant $\varsigma := \gamma+s-2$. % > 0$ is defined in \eqref{eq:conf-hyp}. 
%and $\varpi := \varpi_{ \omega,2}$ is defined in \eqref{def:varpi}.
%  It is worth emphasizing that $\omega$ is strongly confining if and only if $\varsigma > 0$.
\end{prop}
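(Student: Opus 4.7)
The plan is to test the weak identity \eqref{eq:weakKolmogorovLq} against a weight tailored to produce the announced boundary-layer factor $\delta^{-1/2}(n_x\cdot\hat v)^2$. I would take $\Phi^2 := \varphi^2\,\omega_\sharp^2$ with
\[
\omega_\sharp^2 := \omega_A^2\Bigl(1+\tfrac12\tfrac{n_x\cdot v}{\la v\ra^4}\Bigr) + \alpha\,\omega^2\,(n_x\cdot\widetilde v)\,\eta(\delta),
\]
where $\omega_A$ is the Maxwellian-adapted weight of \eqref{def:omegaA}, $\widetilde v=v/\la v\ra^2$, $\eta(\delta)=\delta^{1/2}\chi(\delta)$ for a smooth cutoff $\chi$ equal to $1$ on a neighbourhood of $0$, and $\alpha>0$ a small parameter. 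The first piece is exactly the twisted weight used in Lemma~\ref{lem:EstimKFPLp12} to handle the Maxwell reflection; the second piece vanishes on $\Sigma$ (so it does not affect the boundary analysis) and is designed so that $v\cdot\nabla_x\eta(\delta)$ generates the desired $\delta^{-1/2}$ factor. For $\alpha$ small enough one also has $\omega_\sharp\simeq\omega$ uniformly.

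The proof would then follow the four-step pattern of Lemma~\ref{lem:EstimKFPLp12}. For the boundary term $\int_\Gamma(\gamma f)^2\Phi^2\,n_x\cdot v$, only the $\omega_A$-piece contributes and, for $A$ large enough, the computations of Step~1 of Lemma~\ref{lem:EstimKFPLp12} show that this integral is nonpositive and can be discarded. For the transport term, the key computation is
\[
v\cdot\nabla_x\bigl[(n_x\cdot\widetilde v)\,\eta(\delta)\bigr]
=-\tfrac12\tfrac{(n_x\cdot\hat v)^2}{\delta^{1/2}}\chi(\delta)+O(1),
\]
using $v\cdot\nabla\delta=-v\cdot n_x$ and $(n_x\cdot\widetilde v)(v\cdot n_x)=(n_x\cdot\hat v)^2$. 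The $\la v\ra^{-1}$ hidden in $\widetilde v$ is precisely what cancels the $\la v\ra$ coming from $v\cdot\nabla\delta^{1/2}$, so the new term carries no extra velocity growth, and the $O(1)$ remainder, stemming from $\nabla n_x$ (controlled by $\delta\in W^{2,\infty}$) and from $\chi'$, is absorbed into $\int f^2\omega^2\varphi^2$.

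For the right-hand side of \eqref{eq:weakKolmogorovLq}, the Fisher term yields $-\int|\nabla_v(f\omega)|^2\varphi^2$ up to lower-order corrections, while the potential term $\int f^2\omega_\sharp^2\varphi^2\,\varpi^\CCC_{\omega_\sharp,2}$ is estimated by noting that $\omega_\sharp=\omega_A+O(\alpha\,\delta^{1/2}\omega)$. The perturbation analysis of Step~2 of Lemma~\ref{lem:EstimKFPLp12}, combined with \eqref{eq:varpiC-varpisharp} and the asymptotics \eqref{eq:varpi-asymptotic}, produces, for $\alpha$ small and $R$ large, a confinement gain of the form $-c\int f^2\omega^2\la v\ra^\varsigma\varphi^2$. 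Assembling the three contributions, absorbing the $O(1)$-errors and using $\omega_\sharp\simeq\omega$ then delivers the claimed inequality. The main obstacle is the bookkeeping of these lower-order corrections: one must verify that the new $\delta^{1/2}$-twist destroys neither the boundary nonnegativity of the $\omega_A$-piece nor the asymptotic sign of $\varpi^\CCC_{\omega_\sharp,2}$ at large velocities. Both issues are handled by taking $\alpha$ sufficiently small, and the use of $\widetilde v$ rather than $\hat v$ in the twist is essential to prevent the extra $\la v\ra$ produced by $v\cdot\nabla\delta$ from surfacing in the final estimate.
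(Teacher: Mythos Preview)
Your proposal is correct and follows essentially the same route as the paper: the paper introduces the weight $\widetilde\omega^2 = \omega_A^2\bigl(1+\tfrac14\tfrac{n_x\cdot v}{\la v\ra^4}+\tfrac{1}{4D^{1/2}}\delta^{1/2}\tfrac{n_x\cdot v}{\la v\ra^2}\bigr)$, which is your $\omega_\sharp^2$ up to cosmetic differences (the paper uses the fixed constant $1/(4D^{1/2})$ rather than a small $\alpha$, multiplies the twist by $\omega_A^2$ rather than $\omega^2$, and dispenses with the cutoff $\chi$ since $\Omega$ is bounded). The four steps you outline --- boundary nonnegativity from Step~1 of Lemma~\ref{lem:EstimKFPLp12} because the $\delta^{1/2}$-twist vanishes on $\Sigma$, the transport computation producing $\tfrac12\delta^{-1/2}(n_x\cdot\hat v)^2$, and the perturbation analysis of $\varpi^\CCC_{\omega_\sharp,2}$ around $\varpi^\CCC_{\omega,2}$ to recover the $\la v\ra^\varsigma$ gain --- match the paper's argument step by step.
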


 It is worth emphasizing that an admissible weight function $\omega$ is strongly confining if and only if  the associated parameter $\varsigma \in \R$ satisfies $\varsigma > 0$.

\begin{proof}[Proof of Proposition~\ref{prop:TheLqEstim}] 
We introduce the function 
\bean
\Phi^2 := \varphi^2 \,  \widetilde \omega^2, \quad   \widetilde \omega^2 :=   \left( 1 + \frac{1}{4}\, \frac{ n_x \cdot v}{\la v \ra^4}+ \frac{1}{4 D^{1/2}} \, \delta(x)^{1/2}  \, \frac{ n_x \cdot v}{\la v \ra^2} \right) \omega_A^2,
\eean
where the weight function $\omega_A$ is defined in \eqref{def:omegaA} for $A \ge 1$ large enough (to be fixed below) and 
where  $D= \sup_{x\in \Omega}\delta (x)$ is half the diameter of $\Omega$, so that in particular an estimate similar to \eqref{eq:m&mtilde} holds. From \eqref{eq:weakKolmogorovLq} we have 
\begin{equation}\label{eq:integrate_fq}
\begin{aligned}
 2  \int_\UU |\nabla_v (f\Phi)|^2
 &+ \int_\Gamma (\gamma f)^2 \Phi^2  (n_x \cdot v )
 - \int_\UU f^2 v \cdot \nabla_x \Psi_2
 \\
&=  \int_\UU f^2 v \cdot \nabla_x \Psi_{1} 
+ 2  \int_\UU f^2 \Phi^2 \widetilde \varpi
+ \int_\UU f^2 \partial_t (\Phi^2), 
\end{aligned}
\end{equation}
for $ \widetilde \varpi$ as defined on \eqref{def:varpiBIS} and we denote 
\bean
 \Psi_{1} := \varphi^2 \omega_A^2 \left( 1  +  \frac{1}{4}\,   \frac{ n_x \cdot v}{\la v \ra^4} \right), \quad
\Psi_2 :=  \frac{\varphi^2 \omega_A^2}{4 D^{1/2}} \, \delta(x)^{1/2}  \, \frac{ n_x \cdot v}{\la v \ra^2}.
\eean
We now compute each term separately.

\smallskip\noindent
\textit{Step 1.} 
Observing that $\widetilde \omega^2 = \left( 1 + \frac{1}{4}\,  n_x \cdot \frac{v}{\la v \ra^4}\right) \omega_A^2$ on the boundary $\Gamma=(0,T)\times \Sigma \times \R^d$, we can argue as in Step~1 in the proof of Lemma~\ref{lem:EstimKFPLp12} to deduce that, choosing $A\ge 1$ large enough, the contribution of the boundary term in \eqref{eq:integrate_fq} is nonnegative, that is
$$
\int_\Gamma (\gamma f)^2 \Phi^2  (n_x \cdot v ) \ge 0.
$$

\smallskip\noindent
\textit{Step 2.}
In order to deal with the third term at the left-hand side of \eqref{eq:integrate_fq}, we define  $\psi := \delta(x)^{1/2}  (n_x \cdot   v) \la v \ra^{-2} $. Observing that 
 $\langle v \rangle \psi \in L^\infty(\OO)$, $ \nabla_v \psi \in L^\infty(\OO)$ and 
$$
- v \cdot \nabla_x \psi = \frac12 \frac{1}{\delta(x)^{1/2}} (n_x \cdot v)^2 \la v \ra^{-2} -  \delta(x)^{1/2} ( v \cdot D_x n_x  v) \la v \ra^{-2}, 
$$
we have
\bean
-\int_\UU f^2 v \cdot \nabla_x \Psi_2  =
\frac{1}{  4 D^{1/2}}  \int_\UU f^2 \varphi^2 \omega_A^2 \left\{ \frac12 \frac1{\delta(x)^{1/2}} (n_x \cdot \hat v)^2 -  \delta(x)^{1/2} (\hat v \cdot D_x n_x \hat v) \right\}.
\eean

For the first term at the right-hand side of \eqref{eq:integrate_fq}, a direct computation gives
$$
\int_{\UU} f^2 v \cdot \nabla_x \Psi_{1}
= \frac14\int_{\UU} f^2 \varphi^2 \omega_A^2 \la v \ra^{-2} (\hat v \cdot D_x n_x \hat v).
$$

\smallskip\noindent
\textit{Step 3.} 
Writing $\widetilde \varpi \le 2 \langle \widetilde \varpi_+ \rangle - \langle \widetilde \varpi_- \rangle$ and gathering previous estimates yields
$$
\begin{aligned}
&2  \int_\UU |\nabla_v (f \widetilde \omega)|^2 \varphi^2
+\frac{1}{8 D^{1/2}}  \int_\UU f^2 \varphi^2 \omega_A^2 \frac{(n_x \cdot \hat v)^2}{\delta(x)^{1/2}} 
+2\int_{\UU} f^2 \varphi^2 \widetilde \omega^2   \langle \widetilde \varpi_- \rangle
 \\
&\le  \int_{\UU} f^2 \varphi^2 \omega_A^2 (\hat v \cdot D_x n_x \hat v) \left\{ \frac{1}{4 \la v \ra^2} + \frac{\delta^{1/2}}{4 D^{1/2}}\right\}
+ 4  \int_\UU f^2 \varphi^2 \omega_A^2 \langle \widetilde \varpi_+ \rangle
+ \int_\UU f^2 \widetilde \omega^2 \partial_t (\varphi^2) \\
&\le C_{\Omega,A}   \int_\UU  f^2  \omega^2 \langle \widetilde \varpi_+ \rangle \varphi^2  + C_A \int_\UU f^2  \omega^2  | \partial_t \varphi^2| ,
\end{aligned}
$$
where we recall that $\delta \in W^{2,\infty}(\Omega)$. %{\Green and $\Omega $ is bounded}.  
Using that $\langle \widetilde \varpi_- \rangle  \ge \kappa_0 \langle v \rangle^\varsigma$ with $\kappa_0 > 0$, because of \eqref{eq:varpi-asymptotic}-\eqref{eq:disssipSL-LpBIS}, and also that 
$\widetilde\varpi_+ $ is bounded, we deduce
$$
\begin{aligned}
&2  \int_\UU |\nabla_v (f \widetilde \omega)|^2 \varphi^2
+\frac{1}{8 D^{1/2}}  \int_\UU f^2 \varphi^2 \omega_A^2 \frac{(n_x \cdot \hat v)^2}{\delta(x)^{1/2}} 
+2\kappa_0 \int_{\UU} f^2 \varphi^2 \widetilde \omega^2 \la v \ra^\varsigma\\
&\le C_{\Omega,A}   \int_\UU  f^2  \omega^2  \varphi^2  + C_A \int_\UU f^2  \omega^2  | \partial_t \varphi^2| .
\end{aligned}
$$
We then conclude by observing that
$$
\int_{\OO} |\nabla_v (f \omega)|^2 + f^2 \omega^2
\lesssim \int_{\OO} |\nabla_v (f \widetilde \omega)|^2 + f^2 \widetilde \omega^2
$$
and using that $\omega \lesssim \omega_A \lesssim \widetilde \omega \lesssim \omega_A \lesssim \omega$.
\end{proof}

\medskip
We may write the above weighted $L^2$ estimate in a more convenient way, where the penalization of a neighborhood of the boundary is made clearer. 
For that purpose we state the following interpolation result which formalizes and improves some estimates used during the proof of  \cite[Lemma~11.9]{sanchez:hal-04093201} (see also \cite[Lemma~3.2]{CM-KFP**}). 
 
\begin{lem}\label{lem:EstimL2poids} For $d \ge 1$, for any $\varsigma > 0$ and any function $g : \OO \to \R$, there holds 
\beqn\label{eq:EstimL2poids}
\int_{\OO}  \frac{g^2 }{ \delta^{\beta}} \lesssim
\int_\OO |\nabla_v g|^2  + \int_\OO g^2 \left(   \frac{(n_x \cdot \hat v)^2 }{   \delta^{1/2} }  +   \langle v \rangle^\varsigma \right), 
\eeqn
 with $\beta = [2(d'+1 + 2d'/\varsigma)]^{-1}$, $d' := \max(d,3)$. 
\end{lem}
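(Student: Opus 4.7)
The plan is to establish the inequality pointwise in $x \in \Omega$ and then integrate. Fix $x$, write $\delta := \delta(x)$, $H(v) := g(x,v)$ and $\mu := n_x \cdot \hat v$; it suffices to prove
\begin{equation*}
\delta^{-\beta}\int_{\R^d} H^2\,\d v \lesssim \int_{\R^d} |\nabla_v H|^2\,\d v + \delta^{-1/2}\int_{\R^d} H^2 \mu^2\,\d v + \int_{\R^d} H^2 \la v\ra^\varsigma\,\d v
\end{equation*}
with a constant depending only on $d$ and $\varsigma$. Since $\delta$ is bounded above by the diameter of $\Omega$, I may assume $\delta \le 1$, the complementary range being trivial thanks to $\la v\ra^\varsigma \ge 1$.

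The proof splits $\R^d$ into three regions controlled by parameters $R \ge 1$ and $\tau \in (0,1)$ to be tuned as powers of $\delta$: the tail $\{|v|>R\}$, the transverse set $\{|v|\le R,\ |\mu|>\tau\}$, and the grazing slab $A := \{|v|\le R,\ |\mu|\le\tau\}$. The first two pieces are immediate:
\begin{equation*}
\int_{|v|>R} H^2 \le R^{-\varsigma}\!\int\! H^2 \la v\ra^\varsigma, \qquad \int_{|\mu|>\tau} H^2 \le \tau^{-2}\!\int\! H^2 \mu^2 .
\end{equation*}

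The heart of the argument is the slab estimate. Decomposing $v = s\,n_x + v_\perp$ with $s \in \R$ and $v_\perp \in n_x^\perp$, the constraints $|v| \le R$ and $|\mu| \le \tau$ force $|s| \le \tau R$, so $A$ sits inside an infinite slab of thickness $2\tau R$. Applying the one–dimensional Sobolev inequality $\|h\|_{L^\infty(\R)}^2 \le 2\|h\|_{L^2}\|h'\|_{L^2}$ to each slice $s \mapsto H(sn_x + v_\perp)$, integrating in $s \in [-\tau R, \tau R]$, and then using Cauchy–Schwarz in $v_\perp$ via Fubini yield the dimension–free bound
\begin{equation*}
\int_A H^2\,\d v \le 4\tau R\, \|H\|_{L^2(\R^d)} \|n_x \cdot \nabla_v H\|_{L^2(\R^d)} \le 4\tau R\, \|H\|_{L^2}\|\nabla_v H\|_{L^2}.
\end{equation*}
Young's inequality then gives $\int_A H^2 \le \tfrac12\|H\|_{L^2}^2 + 8(\tau R)^2\,\|\nabla_v H\|_{L^2}^2$, and absorbing $\tfrac12\|H\|^2$ into the left-hand side of the summed estimate produces
\begin{equation*}
\int H^2 \lesssim R^{-\varsigma}\!\int\! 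H^2 \la v\ra^\varsigma + \tau^{-2}\!\int\! H^2 \mu^2 + (\tau R)^2\!\int\! |\nabla_v H|^2.
\end{equation*}

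To close the argument, I would optimize by setting $R := \delta^{-\beta/\varsigma}$ and $\tau := \delta^{(1/2-\beta)/2}$, so that multiplication by $\delta^{-\beta}$ turns the first two coefficients into constants of size $1$ and $\delta^{-1/2}$ respectively. The gradient coefficient becomes $\delta^{-\beta}(\tau R)^2 = \delta^{1/2 - 2\beta - 2\beta/\varsigma}$, which is bounded as soon as $\beta \le \varsigma/(4(\varsigma+1))$. A direct algebraic check shows that the stated value $\beta = [2(d'+1+2d'/\varsigma)]^{-1}$ with $d' \ge 3$ satisfies this constraint (it reduces to $1 \le d'$), and integration in $x$ then yields the lemma. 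The main obstacle is the dimension-free slab bound above: the 1D Sobolev embedding plus Fubini along the normal direction $n_x$ is the decisive trick, after which the rest is elementary bookkeeping in the choice of $R$ and $\tau$.
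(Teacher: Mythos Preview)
Your proof is correct and takes a genuinely different route for the key step. Both your argument and the paper's split $\R^d$ into the same three regions---large $|v|$, large $|\mu|$, and the grazing slab---and handle the first two identically. The difference is in the slab: the paper invokes the full $d'$-dimensional Sobolev embedding $H^1\hookrightarrow L^{2^*}$ (hence the need for $d':=\max(d,3)$) and balances the resulting H\"older bound against the thinness of the slab in velocity space. You instead use only the one-dimensional Gagliardo--Nirenberg inequality along the normal direction $n_x$, which is more elementary, works uniformly in $d\ge1$, and yields the dimension-free constraint $\beta\le \varsigma/(4(\varsigma+1))$. Since the stated value $\beta=[2(d'+1+2d'/\varsigma)]^{-1}$ satisfies this (as you checked, it reduces to $1\le d'$), the lemma follows; in fact your argument shows that the exponent $\beta$ can be taken strictly larger and independent of $d$. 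The paper's approach, by contrast, is what produces exactly the specific value of $\beta$ in the statement, which is then used downstream only through $\beta>0$.
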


\begin{proof}[Proof of Lemma~\ref{lem:EstimL2poids}] 
For $\alpha,\beta,\eta > 0$,  we start   writing
\bean
\int_{\OO}  \frac{g^2 }{ \delta^{\beta}}
&=& 
\int_{\OO} \frac{ g^2 } {  \delta^{ \beta}}    \mathbf{1}_{\langle v \rangle \delta^{\alpha} \ge 1 }
+
\int_{\OO} \frac{ g^2 }{  \delta^{ \beta}}    \mathbf{1}_{(n_x \cdot v)^{2} > \delta^{2\eta}}  \mathbf{1}_{\langle v \rangle \delta^{\alpha} < 1 }
\\
&&
+ \int_{\OO}   \frac{ g^2 }{  \delta^{ \beta}}  \mathbf{1}_{|n_x \cdot v|\le \delta^{\eta}}   \mathbf{1}_{\langle v \rangle \delta^{\alpha} < 1 } = : T_1 + T_2 + T_3.
\eean
For the first term, we have 
\bean
T_1 
\le
\int_{\OO}  \frac{ g^2 } {  \delta^{ \beta}}    {\langle v \rangle^\varsigma  \delta^{\varsigma\alpha}}
=
\int_{\OO} g^2   \langle v \rangle^\varsigma ,
\eean
by choosing $\alpha\varsigma = \beta$. 
For the second term,   we have 
\bean
T_2 
\le
\int_{\OO} \frac{ g^2 }{  \delta^{ \beta}}  \frac{1 }{ \langle v \rangle^2 \delta^{2\beta/\varsigma}}   \frac{(n_x \cdot v)^{2} }{ \delta^{2\eta}}
= \int_{\OO} g^2   \frac{(n_x \cdot \hat v)^{2} }{   \delta^{1/2}} ,
\eean
by choosing   $2\eta + \beta (1+2/\varsigma) = 1/2$.
For the third term, we define $2^* := 2d'/(d'-2)$  the  Sobolev exponent in dimension $d'   \ge 3$, 
and we compute
\bean
T_3
&\le&  
\int_\Omega  \delta^{-\beta}  \Bigl( \int_{\R^d}  |g|^{2^*}  \Bigr)^{2/2^*}  
\Bigl( \int_{\R^d}     \mathbf{1}_{ |n_x \cdot v | \le \delta^{\eta}}    \mathbf{1}_{\langle v \rangle < \delta^{-\beta/\varsigma} }  \Bigr)^{2/d}
\\
&\lesssim&  
  \int_{\Omega} \delta^{-\beta} (\delta^\eta \delta^{-(d-1)\beta/\varsigma})^{2/d}    \int_{\R^d} (|\nabla_v g|^2 + g^2)
\eean
where we have used the H\"older inequality in the first line and the Sobolev inequality in the second line together with the observation that 
 $\langle v' \rangle \le \langle v \rangle$ in the orthonormal representation $v = v_1 n_x + v'$.  
Choosing $\eta$   such that 
$$
- \beta -  2  (1-1/d) \beta/\varsigma + \eta2/d = 0, 
$$
we deduce that $\eta = (d/2 + (d-1)/\varsigma) \beta$,  then $\beta = [2(d+1 + 2d/\varsigma)]^{-1}$  and we conclude to \eqref{eq:EstimL2poids}. 
\end{proof}

As an immediate consequence of Proposition~\ref{prop:TheLqEstim} and the interpolation inequality \eqref{eq:EstimL2poids}, we get the following estimate
which holds for strongly confining weight functions.

\begin{cor}\label{cor:EstimBord}  Consider a strongly confining weight function $\omega$ and recall that $\varsigma := \gamma + s -2 > 0$ and 
   $\beta  := [2(d'+1 + 2d'/\varsigma)]^{-1}  $. 
Under the conditions of Proposition~\ref{prop:TheLqEstim}, there exists $C_\Omega$ such that 
$$
\int_\UU \frac{f^2 }{ \delta^{\beta}} \, \omega^2    \varphi^2
+ \int_\UU f^2 \la v \ra^{\varsigma} \omega^2  \varphi^2
+ \int_\UU  |\nabla_v(f \omega )|^2  \varphi^2 \lesssim
\int_\UU  f^2  \omega^2 [|\partial_t \varphi^2| + C_\Omega \varphi^2]. 
$$
\end{cor}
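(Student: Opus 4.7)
The plan is that this corollary follows by directly combining the two previous results, with no new analytic input required. First, Proposition~\ref{prop:TheLqEstim} already provides
\[
\int_\UU f^2 \omega^2 \Bigl\{ \tfrac{(n_x \cdot \hat v)^2}{\delta^{1/2}} + \la v \ra^\varsigma \Bigr\} \varphi^2 + \int_\UU |\nabla_v(f\omega)|^2 \varphi^2 \lesssim \int_\UU f^2 \omega^2 \bigl[|\partial_t \varphi^2| + \varphi^2\bigr],
\]
so the second and third integrals on the left-hand side of the target estimate are immediately controlled, and in addition we gain control of the boundary quantity $(n_x\cdot\hat v)^2/\delta^{1/2}$ weighted by $f^2\omega^2$.

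To produce the missing $\delta^{-\beta}$ term, I apply Lemma~\ref{lem:EstimL2poids} pointwise in $t$ to the function $g(x,v) := f(t,x,v)\,\omega(v)\,\varphi(t)$; this is legitimate because $\varsigma > 0$ by the strongly confining assumption, so $\beta \in (0,1/2)$ is well-defined. Since $\varphi$ depends only on time, $\nabla_v g = \varphi\,\nabla_v(f\omega)$, and the lemma yields
\[
\int_\OO \frac{f^2 \omega^2}{\delta^\beta}\,\varphi^2 \lesssim \varphi^2 \int_\OO |\nabla_v(f\omega)|^2 + \varphi^2 \int_\OO f^2 \omega^2 \left( \frac{(n_x\cdot\hat v)^2}{\delta^{1/2}} + \la v\ra^\varsigma \right).
\]
Integrating this in $t\in(0,T)$ expresses the new term as controlled by precisely the three quantities that Proposition~\ref{prop:TheLqEstim} already bounds by the right-hand side.

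Adding this new inequality to the estimate of Proposition~\ref{prop:TheLqEstim} (up to absorbing a multiplicative constant into $C_\Omega$) produces the claimed bound. There is no real obstacle: the one minor point to verify is just that $\varphi = \varphi(t)$ factors out of $\nabla_v$, so that the RHS of the interpolation lemma, after time integration, matches the terms of Proposition~\ref{prop:TheLqEstim} without producing any extra $|\partial_t\varphi|$ contribution beyond what is already present. The constant $C_\Omega$ comes from the constants of Proposition~\ref{prop:TheLqEstim} (which depend on $\Omega$ through $\delta$ and the diameter $D$) together with the purely dimensional constants of Lemma~\ref{lem:EstimL2poids}.
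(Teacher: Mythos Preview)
Your proof is correct and matches the paper's approach exactly: the corollary is stated as an immediate consequence of Proposition~\ref{prop:TheLqEstim} and the interpolation inequality~\eqref{eq:EstimL2poids} of Lemma~\ref{lem:EstimL2poids}, applied with $g = f\omega\varphi$. The details you spell out (applying the lemma pointwise in $t$, using that $\varphi$ is independent of $v$, and then integrating in time) are precisely what is implicit in the paper's one-line justification.
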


\smallskip

For a weakly confining admissible weight function, we obtain the following weaker estimate which is similar to  \cite[Proposition 3.3]{CM-KFP**} and \cite[Proposition 5.3]{CM-Landau**}.

\begin{cor}\label{cor:EstimBordWeak}  
Let us consider a weakly confining admissible weight function $\omega$ corresponding to the case when  $\varsigma := \gamma + s -2 \le 0$.
 We set $\beta  := [2(d'+1)]^{-1}  $, recalling that $d' := \max(d,3)$.
Under the conditions of Proposition~\ref{prop:TheLqEstim}, there holds 
$$
\int_\UU \frac{f^2 }{ \delta^{\beta}} \, \frac{\omega^2  }{ \langle v \rangle^2}   \varphi^2
%+ \int_\UU f^2 \la v \ra^{\varsigma} \omega^2  \varphi^2
+ \int_\UU  |\nabla_v(f \omega )|^2  \varphi^2 \lesssim
\int_\UU  f^2  \omega^2 [|\partial_t \varphi^2| +   \varphi^2]. 
$$
\end{cor}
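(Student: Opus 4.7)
The plan is to follow the same two-step template as for the strongly confining case (Corollary~\ref{cor:EstimBord}): first apply Proposition~\ref{prop:TheLqEstim}, then combine it with an interpolation inequality. What must be adapted is the interpolation step, since now $\varsigma \le 0$ so the factor $\langle v \rangle^\varsigma$ appearing on the left-hand side of Proposition~\ref{prop:TheLqEstim} is harmless but useless (it is bounded by $1$ and can simply be discarded). This loss must be compensated by inserting a $\langle v \rangle^{-2}$ factor on the left-hand side of the corollary, and accordingly I would establish the following variant of Lemma~\ref{lem:EstimL2poids}: for every $g:\OO\to\R$,
\beqn\label{eq:plan-variant}
\int_{\OO}\frac{g^2}{\delta^{\beta}\langle v\rangle^2} \lesssim \int_\OO |\nabla_v g|^2 + \int_\OO g^2 \Bigl(\frac{(n_x\cdot \hat v)^2}{\delta^{1/2}} + 1\Bigr),
\eeqn
with $\beta = [2(d'+1)]^{-1}$. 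Once this is proved, the corollary follows by applying \eqref{eq:plan-variant} to $g=(f\omega)(t,\cdot)$, multiplying by $\varphi^2$, integrating in time, and invoking Proposition~\ref{prop:TheLqEstim} to control the resulting right-hand side.

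To prove \eqref{eq:plan-variant}, I would follow the splitting strategy used in the proof of Lemma~\ref{lem:EstimL2poids}: split $\OO = \OO_1 \cup \OO_2 \cup \OO_3$ along the same thresholds $\langle v\rangle\delta^\alpha \gtrless 1$ and $(n_x\cdot v)^2 \gtrless \delta^{2\eta}$, and call $T_i$ the restriction of the left-hand side of \eqref{eq:plan-variant} to $\OO_i$. The re-tuning of the exponents is straightforward: taking $\alpha := \beta/2$ gives $\delta^{-\beta} \le \langle v\rangle^2$ on $\OO_1$ and hence $T_1 \lesssim \int_\OO g^2$, while taking $\eta := (1-2\beta)/4$ makes $\beta + 2\eta = 1/2$ and yields $T_2 \lesssim \int_\OO g^2(n_x\cdot\hat v)^2/\delta^{1/2}$ directly from the defining inequality of $\OO_2$.

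The hard part will be the estimate on $\OO_3$, where the $\langle v\rangle^{-2}$ weight on the left of \eqref{eq:plan-variant} must be used sharply and dictates the value of $\beta$. The plan is to apply H\"older in $v$ with exponents $2^*/2$ and $d'/2$ (with $2^* := 2d'/(d'-2)$ the Sobolev exponent in dimension $d'$) together with the Sobolev embedding $H^1 \hookrightarrow L^{2^*}$ in dimension $d'$ to obtain
$$
\int_{\R^d} \frac{g^2 \mathbf{1}_{A_x}}{\langle v\rangle^2}\, d v \lesssim \Bigl(\int_{A_x} \langle v\rangle^{-d'}\, d v\Bigr)^{\!2/d'} \int_{\R^d}(|\nabla_v g|^2 + g^2)\, d v,
$$
where $A_x := \{|n_x\cdot v|\le \delta^\eta,\,\langle v\rangle \le \delta^{-\alpha}\}$. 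The crucial computation is the elementary bound $\int_{A_x}\langle v\rangle^{-d'}\, d v \lesssim \delta(x)^\eta$, obtained by decomposing $v = (n_x\cdot v)n_x + v_\perp$ and using the integrability of $(1+|v_\perp|^2)^{-d'/2}$ over $\R^{d-1}$, which holds since $d' \ge d$. This is precisely the ingredient that replaces the large-velocity weight $\langle v\rangle^\varsigma$ available in the strongly confining case. Plugging in yields $T_3 \lesssim \int_\Omega \delta^{-\beta + 2\eta/d'} \int(|\nabla_v g|^2 + g^2)\, d v\, d x$, and the condition $-\beta + 2\eta/d' \ge 0$ combined with $\eta = (1-2\beta)/4$ forces exactly $\beta \le [2(d'+1)]^{-1}$, matching the value in the statement.
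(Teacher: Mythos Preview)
Your proposal is correct and follows precisely the paper's approach: the paper's proof of this corollary applies the interpolation inequality \eqref{eq:plan-variant} (quoted there verbatim and attributed to \cite[Lemma~3.2]{CM-KFP**}) with $g=f\omega$, then invokes Proposition~\ref{prop:TheLqEstim}. Your derivation of \eqref{eq:plan-variant} via the three-region splitting is exactly the adaptation of Lemma~\ref{lem:EstimL2poids} that the paper has in mind, and your exponent bookkeeping is correct.
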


\begin{proof}[Proof of Corollary~\ref{cor:EstimBordWeak}] 
We just use the inequality 
$$
\int_{\OO}  \frac{g^2 }{  \langle v \rangle^2 \delta^{\beta}} \lesssim
\int_\OO  ( |\nabla_v g|^2 + g^2)  + \int_\OO g^2   \frac{(n_x \cdot \hat v)^2 }{    \delta^{1/2} }  , 
$$
 that we may establish proceeding exactly as in %established in  
\cite[Lemma~3.2]{CM-KFP**}, with $g := f   \omega$ and the conclusion of Proposition~\ref{prop:TheLqEstim}. 
\end{proof}

%%%%%%%%%%%%%%%%%%%%%%%%%%%%%%%%%%%%%%%%%%%%%%%%%%%%%%%%%%%%%%%%%%%%%%%%%% 
\subsection{A downgraded weighted $L^2-L^p$ estimate}
Taking advantage of a known $L^2-L^p$ estimate available for the KFP equation set in the whole space, and thus in the interior of the domain, we deduce a downgrade weighted $L^2-L^p$ estimate.

\begin{prop}\label{prop:EstimLploc}
 We set $\nu := \max(2,\gamma-1)$.
There exists $p > 2$,  $\alpha > p$  and $C  \in (0,\infty)$ such that any solution $f$ to the KFP equation \eqref{eq:KFP}--\eqref{eq:KolmoBdyCond} satisfies 
\beqn\label{eq:EstimLploc}
 \left\| f \varphi  \frac{\omega}{\langle v \rangle^\nu}  \delta^{\alpha/p} \right\|_{L^p(\UU)} 
\le C  \| (\varphi + |\varphi'|) f \omega \|_{L^2(\UU)},
\eeqn
for any $0 \le \varphi \in \DD((0,T))$, any $T>0$ and any admissible weight function $\omega$.
\end{prop}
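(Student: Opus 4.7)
Plan. The approach combines the interior $L^2\to L^p$ hypoelliptic gain of integrability for KFP equations, due to Bouchut~\cite{MR1949176} (restated in a local form in~\cite[Theorem~6]{MR3923847}), with the weighted $L^2$ control up to the boundary provided by Corollaries~\ref{cor:EstimBord}--\ref{cor:EstimBordWeak}. The spatial factor $\delta^{\alpha/p}$ on the left-hand side of~\eqref{eq:EstimLploc} plays the role of a cutoff vanishing on $\partial\Omega$, enabling the interior regularity statement to be applied globally in $\OO$.

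The first step is to set $h := f\omega/\la v\ra^\nu$ and check that $h$ satisfies a KFP-type equation
$$\TT h = \Delta_v h + B\cdot\nabla_v h + Ch,$$
with coefficients $B, C$ built from $b, c, \omega$ and $\la v\ra^{-\nu}$. By \eqref{eq:Assum-bc2} one has $|b|\lesssim\la v\ra^{\gamma-1}$, so the choice $\nu\ge\gamma-1$ keeps $B$ essentially bounded, while the condition $\nu\ge 2$ is tailored to the $\la v\ra^{-2}$ factor appearing in the weakly confining estimate of Corollary~\ref{cor:EstimBordWeak}. The second step is to multiply $h$ by $\xi(t,x) := \varphi(t)\chi(x)$ with $\chi(x) := \delta(x)^{\alpha/p}$, and apply Bouchut's interior gain of integrability: for some $p > 2$ depending only on the dimension,
$$\|\xi h\|_{L^p(\UU)} \lesssim \|\xi h\|_{L^2(\UU)} + \|\xi \nabla_v h\|_{L^2(\UU)} + \|S\|_{L^2 + \partial_v L^2},$$
where $S$ comprises the commutator terms $\chi\varphi' h + \varphi h\,v\cdot\nabla_x\chi$ together with the first-order contributions generated by $B, C$; in particular $|v\cdot\nabla_x\chi|\lesssim |v|\delta^{\alpha/p-1}$.

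The final step is to bound every term on the right-hand side by $\|(\varphi + |\varphi'|)f\omega\|_{L^2}$: the $L^2$ norms of $\xi h$ and $\xi\nabla_v h$ are controlled directly by Proposition~\ref{prop:TheLqEstim}, while the most delicate source term $\varphi h\,v\cdot\nabla_x\chi$ contributes
$$\int_\UU \varphi^2 (f\omega)^2 \la v\ra^{2-2\nu}\delta^{2\alpha/p - 2} \lesssim \int_\UU \varphi^2 (f\omega)^2 \la v\ra^{-2}\delta^{-\beta}$$
provided $\nu \ge 2$ and $2\alpha/p \ge 2 - \beta$, both satisfied for $\alpha > p$; the right-hand side is then absorbed by Corollary~\ref{cor:EstimBordWeak} (or directly by Corollary~\ref{cor:EstimBord} in the strongly confining case). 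The main obstacle is the weight bookkeeping that forces the specific expressions $\nu = \max(2,\gamma-1)$ and $\alpha > p$: the power of $\la v\ra$ has to simultaneously tame the growth $|b|\sim\la v\ra^{\gamma-1}$ and the lack of strong velocity confinement, while the power of $\delta$ must dominate both the $\delta^{-\beta/2}$ singularity of the boundary $L^2$ estimates and the $\delta^{-1}$ loss produced by differentiating the spatial cutoff in the transport operator.
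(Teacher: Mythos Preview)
Your approach is correct in spirit but differs from the paper's, and there are a couple of bookkeeping slips worth noting.

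\textbf{Comparison.} The paper does \emph{not} use $\chi=\delta^{\alpha/p}$ directly. Instead it takes a dyadic family of smooth cutoffs $\chi_k\in\DD(\Omega)$ with $\mathbf{1}_{\Omega_{k+1}}\le\chi_k\le\mathbf{1}_{\Omega_k}$, $\Omega_k:=\{\delta>2^{-k}\}$, and $\|\nabla_x\chi_k\|_{L^\infty}\lesssim 2^k$. Applying Bouchut's estimate to $\bar f=\varphi\chi_k\omega_0 f$ (with $\omega_0=\omega\la v\ra^{-\nu}$) gives
\[
\|f\varphi\omega_0\|_{L^p(\UU_{k+1})}\lesssim \|\varphi'f\omega\|_{L^2(\UU)}+2^k\|\varphi f\omega\|_{L^2(\UU)},
\]
and summing $\sum_k 2^{-k\alpha}\|f\varphi\omega_0\|_{L^p(\UU_{k+1}\setminus\UU_k)}^p$ yields~\eqref{eq:EstimLploc}; the condition $\alpha>p$ arises from the convergence of $\sum_k 2^{k(p-\alpha)}$. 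Your single-cutoff route is more direct and avoids the dyadic sum; the paper's route avoids checking that the extension of $\varphi\delta^{\alpha/p}\omega_0 f$ by zero across $\partial\Omega$ is a legitimate distributional solution on $\R^{1+2d}$ (it is, since $\delta^{\alpha/p}$ vanishes on $\partial\Omega$ and the trace theory kills the boundary term, but this deserves a sentence).

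\textbf{Two slips in your writeup.} First, Bouchut's estimate as used here (see the paper's invocation of \cite[Theorem~1.3]{MR1949176}) requires $\la v\ra F_0$ and $\la v\ra^2 F_1$ in $L^2$, not $F_0$ itself. With this extra $\la v\ra$, the commutator term $\varphi h\,v\cdot\nabla_x\chi$ contributes $\la v\ra^{4-2\nu}$ rather than $\la v\ra^{2-2\nu}$ in the squared integrand; since $\nu\ge2$ this is still $\le1$, so the argument survives. Second, for $\alpha>p$ the factor $\delta^{2\alpha/p-2}$ is already bounded, so invoking Corollary~\ref{cor:EstimBordWeak} for this particular term is unnecessary; you do still need Proposition~\ref{prop:TheLqEstim} (or the corollaries) to control $\|\varphi\nabla_v(f\omega)\|_{L^2}$, which enters through the $F_1$ and drift terms. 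In your framework, the genuine role of $\alpha>p$ is simply that $\nabla_x\delta^{\alpha/p}\in L^\infty$, not the inequality $2\alpha/p\ge 2-\beta$.
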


We follow the proof of \cite[Lemma~10]{MR3923847} and of Step 2 in the proof of \cite[Proposition 3.5]{CM-KFP**}.

\begin{proof}[Proof of Proposition~\ref{prop:EstimLploc}] 
We split the proof into two steps.

\smallskip\noindent
{\sl Step 1.} 
Consider a subset $\Omega'$ of $\Omega$ such that $\overline{\Omega'} \subset \Omega$.
We introduce a truncation function $\chi \in \DD(\Omega)$ such that $\mathbf{1}_{\Omega'} \le \chi \le 1$, and the function $\omega_0 := \langle v \rangle^{-\nu} \omega$. % with $\nu \ge 2$ to be chosen later. 
We define the function $\bar f := \varphi \chi \omega_0 f$ which satisfies
$$
\partial_t \bar f + v \cdot \nabla_x \bar f =  F \quad\hbox{in}\quad \R   \times \R^{d} \times \R^d, 
$$
where $F := F_0 + \Div_v F_1$ with 
$$
\begin{aligned}
F_0 
&:=  f \omega_0 (\varphi' \chi + \varphi v \cdot \nabla_x \chi) + \varphi \chi  (\omega_0 b - \nabla_v \omega_0) \cdot \nabla_v f + c \bar f 
\end{aligned}
$$
and
$$
F_1 :=  \varphi \chi \omega_0 \nabla_v f  .
$$
From \cite[Theorem 1.3]{MR1949176} with $p = 2$, $r = 0$, $\beta = 1$, $m = 1$, $\kappa =1$ and $\Omega = 1$, we have 
$$
\begin{aligned}
\| D^{1/3}_t \bar f \|_{L^2(\R^{1+2d})}^2 + \| D^{1/3}_x \bar f \|_{L^2(\R^{1+2d})}^2 
&\lesssim
\| \bar f \|_{L^2(\R^{1+2d})}^2 + \| \nabla_v \bar f \|_{L^2(\R^{1+2d})}^2\\
&\quad + \| \langle v \rangle F_0 \|_{L^2(\R^{1+2d})}^2 + \| \langle v \rangle^2 F_1 \|_{L^2(\R^{1+2d})}^2 .
\end{aligned}
$$
On the one hand, a straightforward computation gives
$$
\| \bar f \|_{L^2(\R^{1+2d})} \lesssim \| \varphi \omega \langle v \rangle^{-\nu}  f  \|_{L^2(\UU)}
$$
and
$$
\| \nabla_v \bar f \|_{L^2(\R^{1+2d})} \lesssim \| \varphi \omega \langle v \rangle^{-1-\nu}  f  \|_{L^2(\UU)} + \| \varphi \la v \ra^{-\nu} \nabla_v ( f \omega) \|_{L^2(\UU)}.
$$
On the other hand, we have 
$$
\begin{aligned}
\| \langle v \rangle F_0 \|_{L^2(\R^{1+2d})}
&\lesssim \| \varphi' \omega \langle v \rangle^{1-\nu} f  \|_{L^2(\UU)} + \| \nabla \chi \|_{L^\infty (\Omega)} \| \varphi \omega \langle v \rangle^{2-\nu }  f  \|_{L^2(\UU)} \\
&\quad
+   \| \varphi \omega \langle v \rangle^{ \gamma - 2 - \nu}  f  \|_{L^2(\UU)}
+  \| \varphi \omega \langle v \rangle^{ \max(\gamma-1,s)-\nu} \nabla_v f \|_{L^2(\UU)}
\end{aligned}
$$
using the growth conditions  \eqref{eq:Assum-bc1}-\eqref{eq:Assum-bc2},  and
$$
\| \langle v \rangle^2 F_1 \|_{L^2(\R^{1+2d})}
\lesssim \| \varphi \omega \langle v \rangle^{2-\nu } \nabla_v f  \|_{L^2(\UU)}.
$$
Observing that $|\omega \nabla_v f| \lesssim |\nabla_v (f \omega)| + \la v \ra^{s-1} \omega |f|$, it follows
$$
\begin{aligned}
&\| \langle v \rangle F_0 \|_{L^2(\R^{1+2d})}
+\| \langle v \rangle^2 F_1 \|_{L^2(\R^{1+2d})} \\
&\qquad
\lesssim \| \varphi' \omega \langle v \rangle^{1-\nu} f  \|_{L^2(\UU)} + \| \nabla_x \chi \|_{L^\infty (\Omega)} \| \varphi \omega \langle v \rangle^{2-\nu }  f  \|_{L^2(\UU)} \\
&\qquad\quad
%+   \| \varphi \omega \langle v \rangle^{1+\max(\varsigma,s)-\nu }  f  \|_{L^2(\UU)}
%+  \| \varphi \omega \langle v \rangle^{s+\max(\varsigma,s)-\nu-1}  f \|_{L^2(\UU)}\\
%%&\qquad\quad
+   \|   \varphi \langle v \rangle^{ \max(2,\gamma-1,s)-\nu}  f  \|_{L^2(\UU)}
+  \| \varphi \langle v \rangle^{ \max(2,\gamma-1,s)-\nu} \nabla_v (f \omega) \|_{L^2(\UU)}.
\end{aligned}
$$
As $s \le 2$, we have   $\max(2,\gamma-1,s)=\max(2,\gamma-1) = \nu$.
%We now choose $\nu >0$ large enough such that
%$$
%\nu \ge \max(2,\varsigma,s), \quad s+\max(\varsigma,s)-\nu-1 \le \frac{\varsigma}{2}, \quad 1+\max(\varsigma,s)-\nu \le \frac{\varsigma}{2}.
%$$
Therefore, Corollary~\ref{cor:EstimBord} and the Sobolev embedding $H^{1/3}(\R^{1+2d}) \subset L^p(\R^{1+2d})$, with $p := 6d/(3d-2)> 2$, yield
\bean 
\| \bar f \|_{L^p(\R^{1+2d})}
&\lesssim& \| D^{1/3}_t \bar f \|_{L^2(\R^{1+2d})} + \| D^{1/3}_x \bar f \|_{L^2(\R^{1+2d})} + \| \nabla_v \bar f \|_{L^2(\R^{1+2d})} + \|  \bar f \|_{L^2(\R^{1+2d})} 
\\
&\lesssim&
 \| \varphi' \omega  f  \|_{L^2(\UU)} + \| \nabla_x \chi \|_{L^\infty(\Omega)} \| \varphi \omega   f  \|_{L^2(\UU)} +  \| \varphi \omega   f  \|_{L^2(\UU)}.
 \eean

\smallskip\noindent
{\sl Step 2.} Choosing  $\chi_k \in \DD(\Omega)$ such that $\mathbf{1}_{\Omega_{k+1}} \le \chi_k \le \mathbf{1}_{\Omega_k}$, with  $\Omega_k := \{ x \in \Omega \mid \delta > 2^{-k} \}$, and  $2^{-k} \| \nabla_x \chi_k \|_{L^\infty} \lesssim 1$ uniformly in $k \ge 1$,
we deduce from the last estimate  that 
$$
\| f \varphi  \omega_0 \|_{L^p(\UU_{k+1})}  \lesssim      \|  \varphi' f \omega \|_{L^2(\UU)} + 2^k    \| \varphi   f \omega \|_{L^2(\UU)}, 
$$
for any $ k \ge 1$, where we denote $\UU_k := (0,T) \times \Omega_k \times \R^d$.
%or
%$$
%\| g \omega    \|_{L^q(\UU_k)}  \lesssim \| g \omega \langle v \rangle^2  \|_{L^2(\UU)} + 2^k   \| g \omega \langle v \rangle \|_{L^2(\UU)}.
%$$
Summing up, we obtain 
\bean
\int_\UU \delta^{\alpha}  (\varphi f\omega_0) ^{p} 
&=& \sum_k \int_{\UU_{k+1} \backslash \UU_k} \delta^{\alpha}  (\varphi  f \omega_0)^p
\\
&\lesssim& \sum_k  2^{-k   \alpha} \int_{\UU_{k+1}}  (\varphi f\omega_0)^p
\\
&\lesssim&  \sum_k 2^{ k(p-  \alpha)}   (  \|  \varphi' f \omega \|_{L^2(\UU)} +   \| \varphi   f \omega \|_{L^2(\UU)})^p
\\
&\lesssim& ( \|  \varphi' f \omega \|_{L^2(\UU)} +    \| \varphi   f \omega \|_{L^2(\UU)})^p
\eean
because $\alpha > p$, what is nothing but \eqref{eq:EstimLploc}.
\end{proof}

%%%%%%%%%%%%%%%%%%%%%%%%%%%%%%%%%%%%%%%%%%%%%%%%%%%%%%
\subsection{The $L^1-L^r$ estimate up to the boundary}

We are now in position for stating our weighted $L^1-L^r$ estimate up to the boundary which is the well-known cornerstone step in the proof of De Giorgi-Nash-Moser gain of integrability estimate.
 
\begin{prop}\label{prop:EstimL1Lr-omega} 
There exists an exponent $r > 2$ such that any solution $f$ to the KFP equation \eqref{eq:KFP}--\eqref{eq:KolmoBdyCond} satisfies 
\beqn\label{eq:EstimL1LrStrong}
 \left\| f \varphi  \omega \right\|_{ L^2(\UU)} 
\lesssim    \|   (\varphi + |\varphi'|)^{2 \tfrac{r-1}{r-2}} \varphi^{- \tfrac{r}{r-2}} f \omega   \|_{L^1(\UU)}, 
\eeqn
for any $\varphi \in C^1_c((0,T))$, any $T > 0$ and any  strongly confining admissible weight function $\omega$.
\end{prop}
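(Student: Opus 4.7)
The approach combines three ingredients: the weighted $L^p$ gain of Proposition~\ref{prop:EstimLploc}, the compensating $L^2$ bounds of Corollary~\ref{cor:EstimBord} (available precisely under the strong-confinement hypothesis $\varsigma=\gamma+s-2>0$), and an algebraic identity linking the peculiar weight $W := (\varphi+|\varphi'|)^{2(r-1)/(r-2)}\varphi^{-r/(r-2)}$ on the right-hand side to $(\varphi+|\varphi'|)^2$. Two successive H\"older inequalities then close the estimate.

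\emph{First step: an unweighted $L^r$ gain.} The plan is to show that for some $r\in(2,p)$ (where $p$ comes from Proposition~\ref{prop:EstimLploc}),
\[
\|f\varphi\omega\|_{L^r(\UU)} \lesssim \|f\omega(\varphi+|\varphi'|)\|_{L^2(\UU)}.
\]
With $m:=\langle v\rangle^{-\nu}\delta^{\alpha/p}$ the degenerating weight of Proposition~\ref{prop:EstimLploc}, the splitting
\[
f\varphi\omega = (f\varphi\omega\,m)^\theta \cdot \bigl(f\varphi\omega\,m^{-\theta/(1-\theta)}\bigr)^{1-\theta}
\]
and H\"older's inequality with exponents $p/\theta$ and $2/(1-\theta)$ (yielding $1/r=\theta/p+(1-\theta)/2$) gives
\[
\|f\varphi\omega\|_{L^r} \le \|f\varphi\omega\,m\|_{L^p}^\theta \,\|f\varphi\omega\,m^{-\theta/(1-\theta)}\|_{L^2}^{1-\theta}.
\]
Proposition~\ref{prop:EstimLploc} dominates the first factor by $\|f\omega(\varphi+|\varphi'|)\|_{L^2}^\theta$. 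For the second, $m^{-\theta/(1-\theta)}=\langle v\rangle^{\nu\theta/(1-\theta)}\delta^{-\alpha\theta/(p(1-\theta))}$; choosing $\theta>0$ small enough (a threshold that is strictly positive precisely because $\varsigma>0$), a further H\"older split of the resulting integrand between the Corollary~\ref{cor:EstimBord} weights $\langle v\rangle^\varsigma$ and $\delta^{-\beta}$ bounds it by $\|f\omega(\varphi+|\varphi'|)\|_{L^2}^{1-\theta}$.

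\emph{Second step: algebraic identity and closure.} A direct exponent computation gives
\[
(\varphi+|\varphi'|)^{2} \;=\; W^{(r-2)/(r-1)}\,\varphi^{r/(r-1)}.
\]
Consequently $(f\omega)^2(\varphi+|\varphi'|)^2 = (f\omega W)^{(r-2)/(r-1)}(f\omega\varphi)^{r/(r-1)}$, and H\"older with conjugate exponents $(r-1)/(r-2)$ and $r-1$ yields
\[
\|f\omega(\varphi+|\varphi'|)\|_{L^2}^2 \;\le\; \|f\omega W\|_{L^1}^{(r-2)/(r-1)}\,\|f\omega\varphi\|_{L^r}^{r/(r-1)}.
\]
Plugging the first step into the last factor and noting $2-r/(r-1)=(r-2)/(r-1)>0$, the inequality can be divided and raised to the power $(r-1)/(r-2)$ to give $\|f\omega(\varphi+|\varphi'|)\|_{L^2}\lesssim\|f\omega W\|_{L^1}$, whence the conclusion follows from the trivial bound $\|f\varphi\omega\|_{L^2}\le\|f\omega(\varphi+|\varphi'|)\|_{L^2}$.

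\emph{Main obstacle.} The delicate point is the first step: the interpolation parameter $\theta$ must be small enough for the compensating weight $m^{-\theta/(1-\theta)}$ to be controlled simultaneously by both Corollary~\ref{cor:EstimBord} weights $\langle v\rangle^{\varsigma/2}$ and $\delta^{-\beta/2}$, which is exactly where the strong-confinement hypothesis $\varsigma>0$ becomes indispensable. In the weakly confining regime of Corollary~\ref{cor:EstimBordWeak}, this step would fail (the threshold on $\theta$ collapses), which explains why the proposition is stated only for strongly confining admissible weights.
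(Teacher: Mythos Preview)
Your proof is correct and follows essentially the same route as the paper. The first step is the paper's interpolation between Proposition~\ref{prop:EstimLploc} and Corollary~\ref{cor:EstimBord} (the paper packages the degenerating factor as $\Delta := \delta^{\alpha/(p\nu)}\langle v\rangle^{-1}$ and picks the interpolation parameter by an explicit formula rather than ``$\theta$ small enough'', but the content is identical), and your second step, via the algebraic identity for $W$, is exactly the paper's final H\"older inequality with $\theta_2 = r/(2r-2)$ rewritten in a more transparent form.
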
 

The proof is a variant of the proof of \cite[Proposition 3.7]{CM-KFP**}. 

\begin{proof}[Proof of Proposition~\ref{prop:EstimL1Lr-omega}]
We set $\Delta := \delta^{\alpha/(p\nu)} \langle v \rangle^{-1}$ and we observe that Proposition~\ref{prop:EstimLploc}  writes 
\beqn\label{eq:EstimLplocBIS}
 \left\| f \varphi  \omega \Delta^\nu   \right\|_{L^p(\UU)} 
\lesssim  \| (\varphi + |\varphi'|) f \omega \|_{L^2(\UU)}.
\eeqn
From Corollary~\ref{cor:EstimBord} and H\"older's inequality, we have next
\bean
\left\| f \varphi \omega \langle v \rangle^{\theta \varsigma/2}  \delta^{-\beta (1-\theta)/2}  \right\|_{L^2(\UU)} 
&\le& \left\| f \varphi \omega \langle v \rangle^{\varsigma/2} \right\|_{L^2(\UU)}^\theta  \left\| f \varphi \omega   \delta^{-\beta /2}  \right\|_{L^2(\UU)}^{1-\theta}
\\
&\lesssim& \left\|  (\varphi + \sqrt{\varphi|\varphi'|}) f \omega   \right\|_{L^2(\UU)}, 
\eean
for any $\theta \in (0,1)$. Choosing $\theta = \theta_0$ such that 
$$
\frac{\beta}{\varsigma} \left( \frac{1}{\theta_0}-1 \right) = \frac{\alpha}{p\nu}, 
$$
and setting $\mu := \theta_0\varsigma/2$, we thus deduce 
\beqn\label{eq:EstimL2strongBIS}
 \left\| f \varphi  \omega \Delta^{-\mu }  \right\|_{L^2(\UU)} 
\lesssim \|  (\varphi + \sqrt{\varphi|\varphi'|}) f \omega   \|_{L^2(\UU)}.  
\eeqn
The H\"older inequality 
\beqn\label{eq:HolderIneq}
\| h \|_{L^r(\UU)} \le \| h  \|_{L^{p}_{\sigma_p}(\UU)}^{1-\theta} \| h\|_{L^{2}_{\sigma_2}(\UU)}^\theta, 
\eeqn
with $1/r := (1-\theta)/p + \theta/2$ and $1 = \sigma_p^{1-\theta} \sigma_2^\theta$ for any $\theta \in (0,1)$, implies
\bean
 \left\| f \varphi  \omega \right\|_{L^r(\UU)} 
&\lesssim& 
 \| f \varphi  \omega \Delta^{\mu (\theta /(1-\theta))}  \|_{L^p(\UU)}^{1-\theta} 
\| f \varphi  \omega \Delta^{-\mu }   \|_{L^2(\UU)}^\theta
\\
&\lesssim& 
 \| (\varphi + |\varphi'|) f \omega \|_{L^2(\UU)}^{1-\theta_1}
 \|  (\varphi + \sqrt{\varphi|\varphi'|}) f \omega   \|_{L^2(\UU)}^{\theta_1}  , 
\\
&\lesssim& 
 \| (\varphi + |\varphi'|) f \omega \|_{L^2(\UU)}, 
\eean
where we have chosen $\theta = \theta_1$ such that   $\mu (\theta_1/(1-\theta_1)) = \nu$, we have used the two inequalities \eqref{eq:EstimLplocBIS} and \eqref{eq:EstimL2strongBIS} in the second line, and finally the most classical form of the Young inequality in the last line.   We now use the  H\"older inequality
 $$
 \|  (\varphi + |\varphi'|) f \omega   \|_{L^2(\UU)} \le 
 \|  \varphi f \omega   \|_{L^r(\UU)}^{\theta_2}  \|   (\varphi + |\varphi'|)^{2 \tfrac{r-1}{r-2}} \varphi^{- \tfrac{r}{r-2}} f \omega   \|_{L^1(\UU)}^{1-\theta_2}
 $$
 with $\theta_2 := r/(2r-2)$ at the RHS of  the last estimate. After simplification of the terms involving the $L^r$ norm and taking the power $(1-\theta_2)^{-1}$ of the resulting inequality, we conclude to \eqref{eq:EstimL1LrStrong}.
\end{proof}
 
For a weakly confining weight function $\omega$, we have the following slightly weaker estimate.

\begin{prop}\label{prop:EstimL1Lr-omegaWeak} 

Let $T>0$ and $\omega$ a weakly confining admissible weight function, then there is $r>2$ such that any solution $f$ to the KFP equation \eqref{eq:KFP}--\eqref{eq:KolmoBdyCond} satisfies 
\beqn\label{eq:EstimL1LrWeak}
 \|  (\varphi + |\varphi'|) f \omega   \|_{L^2(\UU)} 
\lesssim    \|   (\varphi + |\varphi'|)^{2 \tfrac{r-1}{r-2}} \varphi^{- \tfrac{r}{r-2}} f \omega \langle v \rangle^K  \|_{L^1(\UU)}, 
\eeqn
for any $\varphi \in C^1_c((0,T))$ and $K := 2(3d+1)(2d+3)$.
\end{prop}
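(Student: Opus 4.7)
\textbf{Proof plan for Proposition \ref{prop:EstimL1Lr-omegaWeak}.}

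The plan is to follow \emph{verbatim} the four-step strategy of the strongly confining case (Proposition \ref{prop:EstimL1Lr-omega}), but with Corollary \ref{cor:EstimBordWeak} replacing Corollary \ref{cor:EstimBord}. The key consequence of this substitution is that we no longer gain a positive velocity moment $\langle v\rangle^{\varsigma/2}$ on the $L^2$ side; instead, we pay a factor $\langle v\rangle^{-1}$. Hence the boundary twist $\Delta=\delta^{\alpha/(p\nu)}\langle v\rangle^{-1}$ of the strong case cannot be absorbed cleanly in the velocity interpolation, and an uncancelled $\langle v\rangle$-weight must be kept track of throughout. This excess weight is what ultimately produces the extra factor $\langle v\rangle^K$ in the $L^1$ norm on the right-hand side.

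Setting $g:=f\varphi\omega$, the two ingredients we have are
\[
  \|g\,\delta^{\alpha/p}\langle v\rangle^{-\nu}\|_{L^p(\UU)} \lesssim \|(\varphi+|\varphi'|)f\omega\|_{L^2(\UU)}
  \quad\text{(Proposition \ref{prop:EstimLploc})}
\]
and
\[
  \|g\,\delta^{-\beta/2}\langle v\rangle^{-1}\|_{L^2(\UU)} \lesssim \|(\varphi+\sqrt{|\varphi\varphi'|})f\omega\|_{L^2(\UU)}
  \quad\text{(Corollary \ref{cor:EstimBordWeak})},
\]
with $\nu=\max(2,\gamma-1)=2$ in the weakly confining regime. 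The first step is a Hölder/interpolation between these two bounds: writing $h=h_1^{1-\theta}h_2^\theta$ with $h_1,h_2$ equal to the two weighted quantities above, and choosing $\theta\in(0,1)$ so that the $\delta$-exponent vanishes, namely $\theta=2\alpha/(2\alpha+p\beta)$, yields an estimate of the form
\[
  \|g\,\langle v\rangle^{-K'}\|_{L^r(\UU)} \lesssim \|(\varphi+|\varphi'|)f\omega\|_{L^2(\UU)},
  \qquad r=\frac{2\alpha+p\beta}{\alpha+\beta}\in(2,p),
\]
with an explicit $K'=K'(\alpha,\beta,p,\nu)>0$ (using that $\sqrt{|\varphi\varphi'|}\le\varphi+|\varphi'|$).

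The second step is the same Hölder trick used at the end of Proposition \ref{prop:EstimL1Lr-omega}: expanding $\|(\varphi+|\varphi'|)f\omega\|_{L^2}^2$ as $\int[\varphi f\omega\langle v\rangle^{-K'}]^a\cdot[f\omega\,W]^b$ with $a=r/(r-1)$, $b=(r-2)/(r-1)$, where
\[
  W \;=\; (\varphi+|\varphi'|)^{\frac{2(r-1)}{r-2}}\,\varphi^{-\frac{r}{r-2}}\,\langle v\rangle^{K'r/(r-2)},
\]
and applying Hölder with conjugate exponents $r/a$ and $1/b$, we get
\[
  \|(\varphi+|\varphi'|)f\omega\|_{L^2}^2 \;\le\; \|\varphi f\omega\langle v\rangle^{-K'}\|_{L^r}^a\cdot\|f\omega\,W\|_{L^1}^b.
\]
Substituting the estimate from Step 1 and absorbing the factor $\|(\varphi+|\varphi'|)f\omega\|_{L^2}^a$ (which is legitimate because $a<2$ precisely when $r>2$) yields
\[
  \|(\varphi+|\varphi'|)f\omega\|_{L^2}\;\lesssim\;\|f\omega\,W\|_{L^1},
\]
which is the desired inequality with velocity weight exponent $K'r/(r-2)$ in front of $f\omega$.

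The last, essentially bookkeeping step is to select the parameters $\alpha>p$ from Proposition \ref{prop:EstimLploc} and $\beta=[2(d'+1)]^{-1}$ from Corollary \ref{cor:EstimBordWeak} (with $d'=\max(d,3)$) and verify that the resulting exponent $K'r/(r-2)=2(\alpha+p\beta)/(\beta(p-2))$ can be taken no larger than $K=2(3d+1)(2d+3)$, possibly after using a crude upper bound since the estimate is monotone in $K$. This is where the main \emph{technical} obstacle lies: the velocity exponent must be tracked through two Hölder interpolations and the Sobolev exponent $p>2$ produced by Proposition \ref{prop:EstimLploc} (which itself depends on $d$ through the Sobolev embedding underlying Bouchut's regularity estimate). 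No conceptual difficulty arises, but the constants have to be reconciled carefully. The proof proper requires no new analytic ingredient beyond Proposition \ref{prop:EstimLploc} and Corollary \ref{cor:EstimBordWeak}; the absence of the positive moment $\langle v\rangle^\varsigma$ is compensated for by transferring this velocity loss to the $L^1$ norm.
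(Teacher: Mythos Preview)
Your proof plan is correct and follows essentially the same approach as the paper: interpolate between Proposition~\ref{prop:EstimLploc} and Corollary~\ref{cor:EstimBordWeak} (choosing $\theta$ so that the $\delta$-exponent cancels) to obtain an $L^r$ bound carrying a residual negative velocity weight $\langle v\rangle^{-\mu}$, then close via the same H\"older splitting of the $L^2$ norm into an $L^r$ and an $L^1$ factor. The paper does the final bookkeeping you describe explicitly with the concrete choices $1/p=1/2-1/(2(2d+1))$ and $\alpha/p=1+1/p$, which produces the stated value $K=\tfrac{r}{r-2}(1+\theta_1)=2(3d+1)(2d+3)$.
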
 

\begin{proof}[Proof of Proposition~\ref{prop:EstimL1Lr-omega}]
We observe that for a  weakly confining weight function $\omega$, there always holds $\gamma \le 2$, $s \le 2-\gamma$, and thus $\nu =  \max(2,\gamma-1) = 2$. 
The estimates established in Corollary~\ref{cor:EstimBordWeak} and Proposition~\ref{prop:EstimLploc} then write for instance
$$
 \left\| f \varphi  \omega \langle v \rangle^{-1}  \delta^{-\beta/2} \right\|_{L^2(\UU)} 
\le C  \| (\varphi + |\varphi'|) f \omega \|_{L^2(\UU)},
$$
with $\beta  := [2(d+1)]^{-1}  $, and
$$
 \left\| f \varphi   \omega \langle v \rangle^{-2}  \delta^{1+1/p} \right\|_{L^p(\UU)} 
\le C  \| (\varphi + |\varphi'|) f \omega \|_{L^2(\UU)},
$$
with $1/p = 1/2 - 1/(2(2d+1))$. Using an interpolation argument, we get
\be \label{eq:HolderIneqWeakly}
 \left\| f \varphi   \omega \langle v \rangle^{-\mu} \right\|_{L^r(\UU)} 
\le C  \| (\varphi + |\varphi'|) f \omega \|_{L^2(\UU)},
\ee
by choosing $r \in (2,p)$ and $\theta_1 \in (0,1)$ in such a way that 
$$
\frac{1 }{ r} = \frac{\theta_1}{p} +\frac{1-\theta_1}2,
\quad
(1-\theta_1) \beta/2 = \theta_1 (1+1/p)
$$
and thus $\mu := 2 \theta_1 + (1-\theta_1) = \theta_1 +1$.
From the Holder inequality, we also have 
 \be \label{eq:InterpolationIneqWeakly}
 \|  (\varphi + |\varphi'|) f \omega   \|_{L^2(\UU)} \le 
 \|  \varphi f \omega \langle v \rangle^{-\mu} \|_{L^r(\UU)}^{\theta_2}  \|   (\varphi + |\varphi'|)^{2 \tfrac{r-1}{r-2}} \varphi^{- \tfrac{r}{r-2}} f \omega \langle v \rangle^K  \|_{L^1(\UU)}^{1-\theta_2}
 \ee
 with $\theta_2 := r/(2r-2)$ and $K= \frac{\theta_2}{ 1-\theta_2} \mu $. We now compute
$$
 p= \frac{2d+1}{ d}, \quad \theta_1 = \frac{2d+1}{ 12d^2+20d+5}, \quad r =  \frac{12d^2+20d+5 }{ 6d^2+10d+2},
$$
 so that
 \bean
 K &=& \frac{\theta_2}{ 1-\theta_2} \mu = \frac{r}{ r-2} (1+\theta_1)  = (12d^2+20d+5) \left( 1 + \frac{2d+1 }{ 12d^2+20d+ 5} \right) \\
 &=& 12d^2+22d+6 = 2(3d+1)(2d+3).
 \eean
The both last estimates together imply \eqref{eq:EstimL1LrWeak}. 
\end{proof}

%%%%%%%%%%%%%%%%%%%%%%%%%%%%%%%%%%%%%%
\subsection{The $L^1-L^p$ estimate on the dual problem} 

We consider the dual  backward problem \eqref{eq:dualKFP} for which we establish the same kind of estimate as for the forward KFP  problem  \eqref{eq:KFP}--\eqref{eq:KolmoBdyCond}.
 In order to make the discussion simpler, we   separate the analysis for strongly and weakly confining admissible weight function in this section.

\begin{prop}\label{prop:EstimL1Lr-m} 
There exist some exponent $r > 2$ such that any solution $g$ to the  dual  backward problem \eqref{eq:dualKFP} satisfies 
\beqn\label{eq:EstimDualL1LrStrong}
 \left\| g \varphi  m \right\|_{L^r(\UU)} 
\lesssim    \|   (\varphi + |\varphi'|)^{2 \tfrac{r-1}{r-2}} \varphi^{- \tfrac{r}{r-2}}g m    \|_{L^1(\UU)}, 
\eeqn
for any test function $0 \le \varphi \in \DD((0,T))$, any $T>0$ and any function $m = \omega^{-1}$ which is the inverse of a strongly confining admissible weight function $\omega$.
\end{prop}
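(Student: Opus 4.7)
\medskip
\noindent\textbf{Proof proposal.} The plan is to mimic the proof of Proposition~\ref{prop:EstimL1Lr-omega} (the primal estimate), replacing the role of $\CCC, \RRR, \omega$ by their dual counterparts $\CCC^*, \RRR^*, m$ and reversing the time orientation. All the ingredients already have their ``dual analogs'' in Lemma~\ref{lem:EstimKFP*Lq12}, so what remains is to add the boundary localization $\delta^{1/2}$--twist and the interior Bouchut step.

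\medskip
\noindent\emph{Step 1 (boundary-aware weighted $L^2$ estimate).} I would establish a dual analog of Proposition~\ref{prop:TheLqEstim}. Using the same twisted weight as in Lemma~\ref{lem:EstimKFP*Lq12}, but augmented by a ``boundary term'' as in the proof of Proposition~\ref{prop:TheLqEstim}, namely
\[
\widetilde m^2 := \Bigl(1 - \tfrac14 \tfrac{n_x\cdot v}{\la v\ra^4} - \tfrac{1}{4D^{1/2}} \delta(x)^{1/2} \tfrac{n_x\cdot v}{\la v\ra^2}\Bigr) m_A^2,
\]
(note the opposite signs compared to the primal case, to absorb the negative part of $v\cdot\nabla_x$ after time reversal), I would multiply the dual equation \eqref{eq:dualKFP} by $g\varphi^2 \widetilde m^2$ and integrate. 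The boundary term is handled as in Step~1 of the proof of Lemma~\ref{lem:EstimKFP*Lq12} (large $A$), the interior diffusive term gives $-\int |\nabla_v(gm)|^2 \varphi^2 + \int g^2 \widetilde m^2 \varpi^{\CCC^*}_{\tilde m,2}\varphi^2$ where $\varpi^{\CCC^*}_{\tilde m,2}$ has the same strongly-confining asymptotic $\sim -b_0^\sharp\la v\ra^\varsigma$, and the transport term $v\cdot\nabla_x\widetilde m^2$ produces the key $\tfrac{1}{\delta^{1/2}}(n_x\cdot\hat v)^2$ contribution. Combining with Lemma~\ref{lem:EstimL2poids} applied to $gm$ yields, analogously to Corollary~\ref{cor:EstimBord},
\[
\int_\UU \tfrac{g^2}{\delta^{\beta}} m^2 \varphi^2 + \int_\UU g^2 \la v\ra^\varsigma m^2\varphi^2 + \int_\UU |\nabla_v(gm)|^2\varphi^2
\lesssim \int_\UU g^2 m^2 [|\partial_t\varphi^2| + \varphi^2].
\]

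\medskip
\noindent\emph{Step 2 (interior $L^2$--$L^p$ gain via Bouchut).} I would establish a dual analog of Proposition~\ref{prop:EstimLploc}. Setting $\tilde g(t,x,v) := g(T-t,x,v)$ turns the dual problem into a forward kinetic Fokker-Planck type equation $\partial_t \tilde g + v\cdot\nabla_x \tilde g = \Delta_v \tilde g - b\cdot\nabla_v \tilde g + (c-\Div b)\tilde g$, to which \cite[Theorem~1.3]{MR1949176} applies exactly as in the primal case (the coefficient $-b$ satisfies the same growth assumptions \eqref{eq:Assum-bc1}--\eqref{eq:Assum-bc2} up to harmless sign changes, and $c-\Div b$ satisfies the analog of \eqref{eq:Assum-bc2}). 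Combined with Step~1 via a dyadic exhaustion by $\Omega_k := \{\delta > 2^{-k}\}$, this produces
\[
\bigl\| g\varphi \, m \, \la v\ra^{-\nu} \delta^{\alpha/p}\bigr\|_{L^p(\UU)} \lesssim \|(\varphi + |\varphi'|) g m\|_{L^2(\UU)}
\]
for some $p > 2$ and $\alpha > p$.

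\medskip
\noindent\emph{Step 3 (interpolation).} The final step is identical to the end of the proof of Proposition~\ref{prop:EstimL1Lr-omega}. Setting $\Delta := \delta^{\alpha/(p\nu)}\la v\ra^{-1}$, Step~1 and Step~2 give respectively the estimates on $\|g\varphi m\Delta^{-\mu}\|_{L^2}$ (through a Hölder interpolation between the $\la v\ra^\varsigma$ and the $\delta^{-\beta}$ terms of Step~1) and $\|g\varphi m \Delta^{\nu}\|_{L^p}$. A Hölder interpolation $\|\cdot\|_{L^r} \le \|\cdot\|_{L^p}^{1-\theta_1}\|\cdot\|_{L^2}^{\theta_1}$ with $\theta_1$ chosen so that the powers of $\Delta$ cancel yields $\|g\varphi m\|_{L^r} \lesssim \|(\varphi+|\varphi'|)gm\|_{L^2}$ for some $r>2$; a second Hölder interpolation of $\|(\varphi + |\varphi'|)gm\|_{L^2}$ between $L^r$ and $L^1$ (with exponent $\theta_2 = r/(2r-2)$) and absorption of the $L^r$ factor delivers \eqref{eq:EstimDualL1LrStrong}.

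\medskip
\noindent\emph{Main obstacle.} The only genuinely delicate part is Step~1: one must check that the signs in the twist $-\tfrac12\tfrac{n_x\cdot v}{\la v\ra^4}$ (already chosen correctly in \eqref{def:mA}) combine with the \emph{additional} boundary-localizing term $-\tfrac{\delta^{1/2}}{4D^{1/2}}\tfrac{n_x\cdot v}{\la v\ra^2}$ so that (i) the boundary quadratic form is still nonnegative for $A$ large (this uses the dual absorption computation of Step~1 of Lemma~\ref{lem:EstimKFP*Lq12} together with the fact that the extra $\delta^{1/2}$-factor vanishes on $\partial\Omega$, hence contributes nothing on $\Sigma$), and (ii) the $v\cdot\nabla_x$ term produces the positive $\tfrac{1}{\delta^{1/2}}(n_x\cdot\hat v)^2$ quantity. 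Once these two sign checks are done the rest is a direct transcription of Sections~4.1--4.3 to the dual setting.
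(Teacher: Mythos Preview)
Your proposal is correct and follows essentially the same three-step structure as the paper's proof: a boundary-aware weighted $L^2$ estimate using a $\delta^{1/2}$-augmented twist of $m_A$ combined with Lemma~\ref{lem:EstimL2poids}, a downgraded $L^2$--$L^p$ estimate via Bouchut's regularity after interior localization, and the same H\"older interpolation as in Proposition~\ref{prop:EstimL1Lr-omega}. The only visible discrepancy is the sign of the first twist term: the paper writes $+\tfrac14\,\tfrac{n_x\cdot v}{\la v\ra^4}$ whereas you write $-\tfrac14$; your choice is the one consistent with \eqref{def:mA} and with Step~1 of Lemma~\ref{lem:EstimKFP*Lq12}, so this is most likely a typo in the paper rather than an error on your part.
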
 

\begin{proof}[Proof of Proposition~\ref{prop:EstimL1Lr-m}] 
The proof follows the same steps as for the proof of Proposition~\ref{prop:EstimL1Lr-omega} and we thus repeat it without too much details. 
 
\smallskip\noindent
{\sl Step 1. An improved weighted $L^2$ estimate at the boundary.} 
Let $\omega$ be an admissible weight function and define $m = \omega^{-1}$.

We define the  modified weight function  $\widetilde m$ by 
 $$
\widetilde m^2 :=m_A^2 \left( 1 +   \frac14 \,   \frac{ n_x \cdot v}{\la v \ra^4} - \frac1{4D^{1/2}}    \delta(x)^{1/2} \frac{n_x \cdot  v}{\la v \ra^2}   \right), 
$$
where $m_A$ has been defined in \eqref{def:mA} and $D = \sup_{x \in \Omega} \delta(x)$.
% and with the same notations,  we have defined
%\beqn\label{def:mABIS}
%  m_A^2 := \chi_A \MMM + (1-\chi_A)  m^2, \quad A \ge 1.
%\eeqn
Considering a solution $g$ to the  dual  backward problem \eqref{eq:dualKFP}, multiplying the equation \label{eq:TT*gq} by $\Phi^2 := \varphi^2 \widetilde m^2$ with $\varphi \in \DD(0,T)$, and integrating in all the variables, we obtain
\bean
- \frac12\int_\Gamma (\gamma g)^2 \Phi^2  (n_x \cdot v) + \frac12 \int_\UU g^2 \TT \Phi^2
= \int_\UU g (\CCC^* g)   \Phi^2, 
%\\
%&=& 4{1-q \over q^2}   \int_\UU |\nabla (f\Phi)^{q/2}|^2 + \int f^q \Phi^q \varpi
\eean
with $\TT$ defined in \eqref{def:TT}.

Since $\widetilde m^2 :=m_A^2 ( 1 +   \frac14 \,   \frac{ n_x \cdot v}{\la v \ra^4} )$ on the boundary $\Gamma$, Step~1 of the proof of Lemma~\ref{lem:EstimKFP*Lq12} implies that
$$
- \frac12\int_\Gamma (\gamma g)^2 \Phi^2  (n_x \cdot v) \ge 0.
$$
Arguing exactly as in the proof of Proposition~\ref{prop:TheLqEstim} and using the estimates from Step~2 of the proof of Lemma~\ref{lem:EstimKFP*Lq12}, we obtain 
\bean
&&
\int g^2  m^2  \left[ \frac{(n_x \cdot  v)^{2} }{ \langle v \rangle  \delta^{1/2}} + \langle v \rangle^\varsigma \right]  \varphi^2
+\int  |\nabla_v(g  m)|^2  \varphi^2
%\\
%&&\qquad
\lesssim
\int g^2 m^2 [|\partial_t \varphi^2| +  \varphi^2 ].
\eean
%for any function $0 \le \varphi \in \DD((0,T))$. 
Proceeding next as for Corollary~\ref{cor:EstimBord} with the help of the interpolation  Lemma~\ref{lem:EstimL2poids}, we deduce 
\beqn\label{eq:Estim-g1}
\int g^2 \Bigl( \frac{1 }{ \delta^\beta} +  \langle v \rangle^\varsigma \Bigr)    m^2 \varphi^2 
+ \int  |\nabla_v(g  m)|^2  \varphi^2
\lesssim   \int g^2 m^2 [|\partial_t \varphi^2| +  \varphi^2 ], 
\eeqn
for some  $\beta > 0  $.

\smallskip\noindent
{\sl Step 2. Downgraded weighted $L^2-L^p$ estimate, $p>2$.} 
For $0 \le \varphi \in \DD(0,T)$, $0 \le \chi \in \DD(\Omega)$ and the weight function $m_0 = m \la v \ra^{-\nu}$ with $\nu >0$ to be chosen later, the function $\bar g := g \varphi \chi m_0$  satisfies 
$$
- \partial_t \bar g - v \cdot \nabla_x \bar g = G
$$
where $G = G_0 + \Div_v G_1$ with 
$$
G_0 : = - g m_0 (\varphi' \chi + \varphi \nabla_x \chi \cdot v) - \varphi \chi (m_0 b - \nabla_v m_0) \cdot \nabla_v g + (c- \Div_v b)\bar g 
$$
and
$$
G_1 := \varphi \chi m_0 \nabla_v g.
$$
Proceeding as in the proof of Proposition~\ref{prop:EstimLploc}, we get first 
\bean
\lVert \bar g \rVert^2_{L^{p}(\R^{2d+1})}  
\lesssim   \| \varphi' m   g \|_{L^2(\UU)}^2 + \| \nabla \chi \|_{L^\infty}^2 \| \varphi m g  \|_{L^2(\UU)}^2 +  \| \varphi mg  \|_{L^2(\UU)}^2.
\eean
for some $p \in (2,\infty)$, and next, by interpolation, we conclude with 
\beqn\label{eq:Estim-g2}
 \left\lVert  g \varphi    \frac{m }{ \langle v \rangle^\nu} \delta^{\alpha/p} \right\rVert_{L^p(\UU)} 
\lesssim   \| (\varphi + |\varphi' |) g m \|_{L^2(\UU)}
\eeqn
for some  $\alpha > p > 2$ and $\nu \ge 0$.

\smallskip\noindent
{\sl Step 3. Up to the boundary $L^2-L^r$ estimate, $r > 2$.} Proceeding as during the proof of  Proposition~\ref{prop:EstimL1Lr-omega}, we may use the estimates \eqref{eq:Estim-g1} and \eqref{eq:Estim-g2} together with the H\"older inequality \eqref{eq:HolderIneq} in order to obtain that there exists an exponent $r > 2$ such that any solution $g$ to the dual problem \eqref{eq:dualKFP} satisfies 
 \bean
 \left\| g \varphi  m \right\|_{L^r(\UU)} 
\lesssim 
 \| (\varphi + |\varphi'|) gm \|_{L^2(\UU)}. 
\eean
for any $\varphi \in C^1_c((0,T))$, $T > 0$. We conclude to \eqref{eq:EstimDualL1LrStrong} by using the H\"older inequality once more, exactly as during the proof of Proposition~\ref{prop:EstimL1Lr-omega}. 
\end{proof}

We now prove an estimate similar to Proposition \ref{prop:EstimL1Lr-m} for weakly confining admissible weights in the spirit of Proposition \ref{prop:EstimL1Lr-omegaWeak} 

\begin{prop}\label{prop:EstimL1Lr-mWeakly} 
Let $\omega$ be a weakly confining admissible weight function and define $m=\omega^{-1}$. Any solution $g$ to the  dual  backward problem \eqref{eq:dualKFP} satisfies 
\beqn\label{eq:EstimDualL1LrWeakly}
 \left\| (\varphi + \lvert \varphi'\rvert) g \varphi  m \right\|_{L^2(\UU)} 
\lesssim    \|   (\varphi + |\varphi'|)^{2 \tfrac{r-1}{r-2}} \varphi^{- \tfrac{r}{r-2}}g m \la v\ra^K   \|_{L^1(\UU)}, 
\eeqn
for any test function $0 \le \varphi \in \DD((0,T))$, any $T>0$ and $K=   2(3d+1)(2d+3)$.
\end{prop}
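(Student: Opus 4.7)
The plan is to repeat the three-step scheme used in the proof of Proposition~\ref{prop:EstimL1Lr-m} (strongly confining dual case), but substituting the strongly-confining boundary estimate of Step~1 by a weakly-confining counterpart, in the same manner that Proposition~\ref{prop:EstimL1Lr-omegaWeak} adapts Proposition~\ref{prop:EstimL1Lr-omega}. The key observation is that in the weakly confining regime one has $\gamma\le 2$ and $s\le 2-\gamma$, so $\nu=\max(2,\gamma-1)=2$, and the downgraded $L^2\!-\!L^p$ estimate of Step~2 of the proof of Proposition~\ref{prop:EstimL1Lr-m} carries over without change, whereas the up-to-the-boundary $L^2$ estimate must be replaced by a version degenerating like $\langle v\rangle^{-2}\delta^{-\beta}$ instead of $\langle v\rangle^{\varsigma}+\delta^{-\beta}$.

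Concretely, I would first redo Step~1 of the proof of Proposition~\ref{prop:EstimL1Lr-m} with the twisted weight $\widetilde m$ defined there, obtaining the identity
\[
\int_\UU g^2 m^2\Bigl[\frac{(n_x\!\cdot\! v)^2}{\langle v\rangle\,\delta^{1/2}}+\langle \varpi^\sharp_-\rangle\Bigr]\varphi^2+\int_\UU|\nabla_v(gm)|^2\varphi^2\lesssim\int_\UU g^2 m^2\bigl[|\partial_t\varphi^2|+\varphi^2\bigr],
\]
where the boundary term is non-negative exactly as in Step~1 of the proof of Lemma~\ref{lem:EstimKFP*Lq12}. Since $\varsigma:=\gamma+s-2\le 0$, I cannot absorb a power of $\langle v\rangle$ anymore, so I would instead invoke the weakly-confining interpolation used in Corollary~\ref{cor:EstimBordWeak} (proved exactly as in \cite[Lemma~3.2]{CM-KFP**} applied to $g\,m$) to produce
\[
\int_\UU\frac{g^2 m^2}{\langle v\rangle^2\,\delta^\beta}\varphi^2+\int_\UU|\nabla_v(gm)|^2\varphi^2\lesssim\int_\UU g^2 m^2\bigl[|\partial_t\varphi^2|+\varphi^2\bigr],\qquad \beta:=[2(d'+1)]^{-1}.
\]
Step~2 of the proof of Proposition~\ref{prop:EstimL1Lr-m} then furnishes, without modification (since it already only uses the ambient $L^2$ control),
\[
\bigl\|g\varphi\, m\,\langle v\rangle^{-2}\delta^{1+1/p}\bigr\|_{L^p(\UU)}\lesssim\|(\varphi+|\varphi'|)gm\|_{L^2(\UU)},\qquad \tfrac1p=\tfrac12-\tfrac{1}{2(2d+1)}.
\]

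For Step~3 I would interpolate these two bounds via Hölder in the same way as in the proof of Proposition~\ref{prop:EstimL1Lr-omegaWeak}: choosing $r\in(2,p)$ and $\theta_1\in(0,1)$ with $\frac1r=\frac{\theta_1}{p}+\frac{1-\theta_1}{2}$ and $(1-\theta_1)\beta/2=\theta_1(1+1/p)$, one obtains
\[
\bigl\|g\varphi\, m\,\langle v\rangle^{-\mu}\bigr\|_{L^r(\UU)}\lesssim\|(\varphi+|\varphi'|)gm\|_{L^2(\UU)},\qquad \mu:=\theta_1+1.
\]
Then a final Hölder interpolation
\[
\|(\varphi+|\varphi'|)gm\|_{L^2(\UU)}\le\|\varphi gm\langle v\rangle^{-\mu}\|_{L^r(\UU)}^{\theta_2}\bigl\|(\varphi+|\varphi'|)^{\frac{2(r-1)}{r-2}}\varphi^{-\frac{r}{r-2}}gm\langle v\rangle^{K}\bigr\|_{L^1(\UU)}^{1-\theta_2},
\]
with $\theta_2=r/(2r-2)$ and $K=\frac{\theta_2}{1-\theta_2}\mu=\frac{r}{r-2}(1+\theta_1)$, after absorbing the $L^r$ factor yields \eqref{eq:EstimDualL1LrWeakly}.

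The bulk of the work is bookkeeping, and the one point that must be checked carefully is the exit value of the constant $K$: plugging in $p=\frac{2d+1}{d}$ and solving for $\theta_1$ gives $\theta_1=\frac{2d+1}{12d^2+20d+5}$, $r=\frac{12d^2+20d+5}{6d^2+10d+2}$, whence $K=12d^2+22d+6=2(3d+1)(2d+3)$, matching the statement. This coincides with the primal exponent in Proposition~\ref{prop:EstimL1Lr-omegaWeak} as expected from the dual symmetry of the two scalings, and is the only nontrivial verification in the argument.
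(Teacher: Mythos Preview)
Your proposal is correct and follows essentially the same three-step approach as the paper's proof: the same twisted weight $\widetilde m$ for the boundary estimate, the observation that the downgraded $L^2\!-\!L^p$ interior estimate from Proposition~\ref{prop:EstimL1Lr-m} carries over unchanged to any admissible weight, and the same Hölder interpolation scheme borrowed verbatim from the primal weakly-confining case (Proposition~\ref{prop:EstimL1Lr-omegaWeak}) to close the argument and produce $K=2(3d+1)(2d+3)$. Your presentation is in fact slightly tidier than the paper's at Step~1, since you retain the $\langle v\rangle^{-2}$ factor from Corollary~\ref{cor:EstimBordWeak} explicitly (the paper drops it in the displayed formula~\eqref{eq:Estim-g1Weakly}, though it is needed for the subsequent interpolation).
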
 

\begin{proof}[Proof of Proposition~\ref{prop:EstimL1Lr-mWeakly}]
The proof follows the same steps as for the proof of Proposition~\ref{prop:EstimL1Lr-m}.
 
\smallskip\noindent
{\sl Step 1. A weighted $L^2$ estimate at the boundary.} 
Let $\omega$ be an admissible weight function and define $m = \omega^{-1}$.

We define the modified weight function  $\widetilde m$ by 
 $$
\widetilde m^2 :=m_A^2 \left( 1 +   \frac14 \,   \frac{ n_x \cdot v}{\la v \ra^4} - \frac1{4D^{1/2}}    \delta(x)^{1/2} \frac{n_x \cdot  v}{\la v \ra^2}   \right), 
$$
as done in the Step 1 of Proposition~\ref{prop:EstimL1Lr-m}.
%where $m_A$ has been defined in \eqref{def:mA} and $D = \sup_{x \in \Omega} \delta(x)$.
% and with the same notations,  we have defined
%\beqn\label{def:mABIS}
%  m_A^2 := \chi_A \MMM + (1-\chi_A)  m^2, \quad A \ge 1.
%\eeqn
Considering a solution $g$ to the  dual  backward problem \eqref{eq:dualKFP}, indeed multiplying the equation by $\Phi^2 := \varphi^2 \widetilde m^2$ with $\varphi \in \DD(0,T)$, and integrating in all the variables, we obtain
\bean
- \frac12\int_\Gamma (\gamma g)^2 \Phi^2  (n_x \cdot v) + \frac12 \int_\UU g^2 \TT \Phi^2
= \int_\UU g (\CCC^* g)   \Phi^2, 
%\\
%&=& 4{1-q \over q^2}   \int_\UU |\nabla (f\Phi)^{q/2}|^2 + \int f^q \Phi^q \varpi
\eean
with $\TT$ defined in \eqref{def:TT}.

Since $\widetilde m^2 :=m_A^2 ( 1 +   \frac14 \,   \frac{ n_x \cdot v}{\la v \ra^4} )$ on the boundary $\Gamma$, Step~1 of the proof of Proposition~\ref{prop:EstimL1Lr-m} implies that
$$
- \frac12\int_\Gamma (\gamma g)^2 \Phi^2  (n_x \cdot v) \ge 0.
$$
Arguing similarly as in the proof of Proposition~\ref{prop:TheLqEstim} and using the estimates from Step~2 of the proof of Lemma~\ref{lem:EstimKFP*Lq12} with the difference that, since $\omega$ is a weakly confining weight function, we will have $\lvert \varpi^{\CCC^*}_{\widetilde m, 2}\rvert \lesssim \lvert \varpi^{\CCC^*}_{m, 2}\rvert <\infty$. Then we obtain 
\bean
&&
\int g^2  m^2   \frac{(n_x \cdot  v)^{2} }{ \langle v \rangle  \delta^{1/2}}  \varphi^2
+\int  |\nabla_v(g  m)|^2  \varphi^2
%\\
%&&\qquad
\lesssim
\int g^2 m^2 [|\partial_t \varphi^2| +  \varphi^2 ].
\eean
%for any function $0 \le \varphi \in \DD((0,T))$. 
Proceeding next as during Corollary~\ref{cor:EstimBordWeak} we deduce 
\beqn\label{eq:Estim-g1Weakly}
\int  \frac{g^2  }{ \delta^\beta}    m^2 \varphi^2 
+ \int  |\nabla_v(g  m)|^2  \varphi^2
\lesssim   \int g^2 m^2 [|\partial_t \varphi^2| +  \varphi^2 ], 
\eeqn
for some  $\beta =[2(d+1)]^{-1} > 0  $.

  \smallskip\noindent
{\sl Step 2. Downgraded weighted $L^2-L^p$ estimate, $p>2$.} 
We remark that the computations from Proposition \ref{prop:EstimL1Lr-m} hold for any admissible weight so proceeding similarly we get that there is some $p \in (2,\infty)$ such that 
\beqn\label{eq:Estim-g2Weakly}
 \left\lVert  g \varphi    \frac{m }{ \langle v \rangle^\nu} \delta^{\alpha/p} \right\rVert_{L^p(\UU)} 
\lesssim   \lVert (\varphi + |\varphi' |) g m \rVert_{L^2(\UU)}
\eeqn
for any $0 \le \varphi \in \DD(0,T)$ and some $\alpha > p > 2$ and $\nu \ge 0$.

\smallskip\noindent
{\sl Step 3. Up to the boundary $L^2-L^r$ estimate, $r > 2$.} Proceeding as during the proof of  Proposition~\ref{prop:EstimL1Lr-omegaWeak}, we may use the estimates \eqref{eq:Estim-g1Weakly} and \eqref{eq:Estim-g2Weakly} together with the H\"older inequality \eqref{eq:HolderIneqWeakly} in order to obtain that there exists an exponent $r > 2$ such that any solution $g$ to the dual problem \eqref{eq:dualKFP} satisfies 
 \bean
 \left\| \varphi g  m \la v\ra^{-\mu} \right\|_{L^r(\UU)} 
\lesssim 
 \| (\varphi + |\varphi'|) gm \|_{L^2(\UU)}. 
\eean
for any $\varphi \in C^1_c((0,T))$, $T > 0$ and some $\mu>0$. We conclude to \eqref{eq:EstimDualL1LrWeakly} by using the H\"older inequality once more, exactly as in \eqref{eq:InterpolationIneqWeakly} during the proof of Proposition~\ref{prop:EstimL1Lr-omegaWeak}. 
\end{proof}

%%%%%%%%%%%%%%%%%%%%%%%%%%%%%%   
\subsection{Conclusion of the proof of the ultracontractivity property} 

We now conclude the proof of Theorem~\ref{theo:ultra} in several elementary and classical (after Nash's work) steps.

\begin{proof}[Proof of Theorem~\ref{theo:ultra}]
Consider a strongly confining admissible weight function $\omega$ and denote by $m=\omega^{-1}$ its inverse.
We split the proof into four steps.

 \medskip\noindent
 {\sl Step 1.} We first establish that there exist a constant $\eta > 0$ such that 
 any solution $f$ to the KFP equation \eqref{eq:KFP}--\eqref{eq:KolmoBdyCond}--\eqref{eq:initialDatum} satisfies 
\beqn\label{eq:EstimL1L2-omega}
 \| f(T, \cdot) \|_{L^2_\omega(\OO)} \le C_{12} T^{-\eta}   \| f_0  \|_{L^1_\omega(\OO)}, \quad \forall \, T \in (0,1) .
\eeqn
First indeed, from Proposition~\ref{prop:EstimL1Lr-omega}, there exist an exponent $r > 2$ such that  
$$
 \left\| f \varphi  \omega \right\|_{L^r(\UU)} 
\lesssim    \left\lVert   (\varphi + |\varphi'|)^{2 \tfrac{r-1}{r-2}} \varphi^{- \tfrac{r}{r-2}} f \omega   \right\rVert_{L^1(\UU)}.
$$
Thanks to the estimate  \eqref{eq:lem:EstimKFPLp12} from Lemma~\ref{lem:EstimKFPLp12} for $p=1$ and $p=r$ provided by Theorem~\ref{theo:WP-KFP&KFP*}, we have 
\bean
\|  \varphi\|_{L^r(0,T)}   \left\| f_T   \omega \right\|_{L^r(\OO)} 
 &\lesssim&
e^{\kappa T} \left\| f \varphi  \omega \right\|_{L^r(\UU)} 
\eean
and
\bean
  \left\lVert   (\varphi + |\varphi'|)^{2 \tfrac{r-1}{r-2}} \varphi^{- \tfrac{r}{r-2}} f \omega   \right\rVert_{L^1(\UU)}
 &\lesssim&
e^{\kappa T}    \left\lVert   (\varphi + |\varphi'|)^{2 \tfrac{r-1}{r-2}} \varphi^{- \tfrac{r}{r-2}}    \right\rVert_{L^1(0,T)}   \|     f_0 \omega   \|_{L^1(\OO)}.
\eean
We choose $\varphi(t) := \psi(t/T)$ with $\psi \in C^1_c((0,1))$ such that  $0 \le \psi \le \mathbf{1}_{[1/4,3/4]}$, $\psi \not\equiv 0$ and $ |\psi'|^{ 2\frac{r-1}{r-2}} \psi^{- \frac{r}{r-2}}  \in L^1(0,1)$, which is possible by taking
$\psi(s) := (s-1/4)_+^n (3/4-s)_+^n$ for $n > 0$ large enough, and we easily compute 
\bean
&&\|  \varphi\|_{L^r(0,T)} = T^{1/r} \| \psi \|_{L^r(0,1)}, \quad 
\|  \varphi\|_{L^1(0,T)}  =  T \| \psi \|_{L^1(0,1)}, \quad 
\\
&& \left\lVert   |\varphi'|^{2 \tfrac{r-1}{r-2}} \varphi^{- \tfrac{r}{r-2}}    \right\rVert_{L^1(0,T)} 
= T^{1 - {2 \tfrac{r-1}{r-2}}}   \left \lVert   \, |\psi'|^{2 \tfrac{r-1}{r-2}} \psi^{- \tfrac{r}{r-2}}    \right\rVert_{L^1(0,1)} .
\eean
Gathering the three last estimates and the three last identities, we finally obtain 
$$
T^{1/r}  \| f_T  \omega \| _{L^r(\OO)} 
\lesssim \left( T +  T^{1 - {2 \tfrac{r-1}{r-2}}}  \right) \|     f_0 \omega   \|_{L^1(\OO)},
$$
which implies
\bean
 \| f_T  \omega \| _{L^r(\OO)} 
&\lesssim& T^{-\frac{1}{ r} - 1 - \frac{2}{ r-2}} \left( T^{1+ \frac{2}{ r-2}} +1  \right) \|     f_0 \omega   \|_{L^1(\OO)}\\
&\lesssim& T^{-\frac{1}{ r} - 1 - \frac{2}{ r-2}}  \|     f_0 \omega   \|_{L^1(\OO)}.
\eean
Then by using again Lemma~\ref{lem:EstimKFPLp12} for $p=1$ and an interpolation argument as before, choosing $\theta:= \frac{r}{ 2(r-1)}>0$ such that $1/2 = 1- \theta + \theta/r$, we deduce
\bean
\lVert f_T\omega\rVert_{L^2(\OO)} &\leq & \lVert f_T \omega\rVert_{L^r(\OO)}^\theta \lVert f_T\omega\rVert_{L^1(\OO)}^{1-\theta} \\
&\lesssim& T^{\theta(-\frac{1}{ r} - 1 - \frac{2}{ r-2})} \lVert f_0\omega \rVert_{L^1(\OO)}
\eean
from what we immediately conclude to \eqref{eq:EstimL1L2-omega} with $\eta :=   \frac{r}{ 2(r-1)} \left( \tfrac1r + 1 + \tfrac{2}{r-2} \right)$. 

\smallskip\noindent
{\sl Step 2.}  Arguing in a similar fashion as above but using the estimates from Lemma~\ref{lem:EstimKFP*Lq12} and Proposition~\ref{prop:EstimL1Lr-m} (instead of Lemma~\ref{lem:EstimKFPLp12} and Proposition~\ref{prop:EstimL1Lr-omega}), we deduce a  similar result for the dual problem. More precisely, there exists $\eta' > 0$ such that any solution $g$ to the  dual  backward problem \eqref{eq:dualKFP} satisfies 
\beqn\label{eq:EstimDualL1L2-omega}
 \| g(0, \cdot) \|_{L^2_m} \le C^*_{12} T^{-\eta'}   \| g_T  \|_{L^1_m}, \quad \forall \, T \in (0,1).
\eeqn

 \medskip\noindent{\sl Step 3.}  
As the dual counterpart of \eqref{eq:EstimDualL1L2-omega}, we have that any solution $f$ to the KFP equation \eqref{eq:KFP}--\eqref{eq:KolmoBdyCond}--\eqref{eq:initialDatum} satisfies 
\beqn\label{eq:EstimL2Linfty-omega}
 \| f(T, \cdot) \|_{L^\infty_\omega} \le C^*_{12}  T^{-\eta'}   \| f_0  \|_{L^2_\omega}, \quad \forall \, T \in (0,1).
\eeqn
Indeed we may argue by duality in the following way:  For $f_0 \in L^2_\omega$ and $g_T \in L^1_m$ we denote respectively by $f$ and $g$ the solution to the primal~\eqref{eq:KFP}--\eqref{eq:KolmoBdyCond}--\eqref{eq:initialDatum} and dual~\eqref{eq:dualKFP} KFP problem, then we have
 \bean
 \| f(T,\cdot) \|_{L^\infty_\omega} 
 &=& \sup_{g_T \in L^1_m, \| g_T \|_{L^1_m} \le 1} \int f(T,\cdot)  g_T
 \\
 &=& \sup_{g_T \in L^1_{m}, \| g_T \|_{L^1_m} \le 1} \int f_0 g(0,\cdot)
 \\
&\le& \| f_0 \|_{L^2_\omega} \sup_{g_T \in L^1_{m}, \| g_T \|_{L^1_m} \le 1}  \| g(0,\cdot)  \|_{L^2_m} 
 \\
&\le& \| f_0 \|_{L^2_\omega} %\sup_{g_T \in L^2_{m}, \| g_T \|_{L^1_m} \le 1} 
C_{12}^*T^{-\eta'}, %   \| g_T  \|_{L^1_m}, 
 %= {C_{12}^*\over t^{\eta'}}   \| f_0 \|_{L^2_\omega}. 
\eean
where we have used an usual representation formula in the first line, the duality formula \eqref{eq:identite-dualite} in the second line, the Cauchy-Schwarz inequality in the third line and estimate \eqref{eq:EstimDualL1L2-omega} in the last line.

 \medskip\noindent{\sl Step 4.} For $T \in (0,1]$, the estimates \eqref{eq:EstimL1L2-omega} and \eqref{eq:EstimL2Linfty-omega} also write 
$$
 \| f(T, \cdot) \|_{L^\infty_\omega} \le C_{12} ( T/2)^{-\eta'}   \| f(T/2,\cdot)  \|_{L^2_\omega}, \quad  
 \| f(T/2,\cdot)  \|_{L^2_\omega} \le C_{12}^* ( T/2)^{-\eta}   \| f_0 \|_{L^1_\omega},
 $$
 so that 
\beqn\label{eq:estimL1Linfty}
 \| f(T, \cdot) \|_{L^\infty_\omega} \le C_{12} C_{12}^* 2^{\eta+\eta'} T ^{-\eta- \eta'}      \| f_0 \|_{L^1_\omega}. 
\eeqn
which is nothing but \eqref{eq:theo-ultra} for $T \in (0,1]$ with $\Theta := \eta + \eta'$, $\kappa = (\alpha+\alpha')/2$ and $C \ge C_{12} C_{12}^* 2^{\eta+\eta'}$. For $T >1$ we write $f(T) = S_\LLL(1) f(T-1)$, then we compute
$$
\begin{aligned}
\| f(T,\cdot) \|_{L^\infty_\omega}
&= \| S_\LLL(1) f(T-1,\cdot) \|_{L^\infty_\omega} \\
&\lesssim \|  f(T-1,\cdot) \|_{L^1_\omega} \\
&\lesssim e^{\kappa (T-1)} \|  f_0 \|_{L^1_\omega} ,
\end{aligned}
$$ 
where we have used \eqref{eq:estimL1Linfty} at the second line and Lemma~\ref{lem:EstimKFPLp12} at the third one, which concludes the proof of \eqref{eq:theo-ultra} for $T >1$.  
\end{proof}

\smallskip 
We end this section by formulating a variant of Theorem~\ref{theo:ultra} for weakly   confining weight functions.

\begin{prop}\label{prop:EstimL1L2-omegaWeak-final} 
Let $\omega$ be a weakly confining admissible weight function such that $s > 0$ or $s=0$ and $k > K + k^*$ for $K :=  4(3d+1)(2d+3)$. Define $\omega_\infty := \omega^{1/2}$ if $s > 0$, $\omega_\infty := \omega \langle v \rangle^{-K}$ if $s= 0$. 
For any $T > 0$, there exists $\kappa, \eta > 0$ such that  any solution $f$ to the KFP equation \eqref{eq:KFP}--\eqref{eq:KolmoBdyCond} satisfies 
\beqn\label{eq:EstimL1LrWeak_bis}
 \|  f(T,\cdot)    \|_{L^\infty_{\omega_\infty}(\OO)}   
\lesssim e^{\kappa T} T^{-\eta}  \| f_0    \|_{L^1_{\omega}(\OO)}   
\eeqn
\end{prop}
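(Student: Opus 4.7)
The plan is to mimic the proof of Theorem~\ref{theo:ultra}, substituting Proposition~\ref{prop:EstimL1Lr-omegaWeak} and Proposition~\ref{prop:EstimL1Lr-mWeakly} for Proposition~\ref{prop:EstimL1Lr-omega} and Proposition~\ref{prop:EstimL1Lr-m} respectively. The essential new bookkeeping is that each of the two Nash-type integrated inequalities now costs a polynomial velocity factor $\langle v\rangle^{K/2}$, with $K/2 := 2(3d+1)(2d+3)$, so that the full $L^1 \to L^\infty$ chain costs $\langle v\rangle^K$. To accommodate this, I will first introduce an admissible intermediate weight $\omega_1$ sandwiched between $\omega_\infty$ and $\omega$, namely $\omega_\infty \langle v\rangle^{K/2} \lesssim \omega_1 \lesssim \omega \langle v\rangle^{-K/2}$. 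When $s=0$, I will take $\omega_1 := \langle v\rangle^{k-K/2}$, whose admissibility constraint $k - K/2 > k^*$ is exactly what the hypothesis $k > K + k^*$ supplies. When $s>0$, I will set $\omega_1 := \omega^{3/4}$ (so $\omega_\infty = \omega^{1/2}$), whose exponential parameter $3\zeta/4$ remains within the restrictions \eqref{eq:cond-zeta-s}, the polynomial factors being absorbed into the exponential decay at infinity up to a harmless multiplicative constant.

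Next I will run the Nash $L^1 \to L^2$ gain. Applying Proposition~\ref{prop:EstimL1Lr-omegaWeak} to $\omega_1$, using $\omega_1 \langle v\rangle^K \lesssim \omega$ to majorize its right-hand side by an integral against $\omega$, and combining with the $L^p_\omega$ growth estimate from Lemma~\ref{lem:EstimKFPLp12}/Theorem~\ref{theo:WP-KFP&KFP*} for $p=1$ and the intermediate exponent $p=r>2$ produced in the proof of Proposition~\ref{prop:EstimL1Lr-omegaWeak}, together with the same time cutoff $\varphi(t)=\psi(t/T)$ used in Step~1 of the proof of Theorem~\ref{theo:ultra}, I will obtain
\[ \|f(T,\cdot)\|_{L^2_{\omega_1}} \lesssim e^{\kappa T} T^{-\eta_1} \|f_0\|_{L^1_\omega}, \quad T \in (0,1]. \]

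I will then run the dual counterpart $L^2_{\omega_1} \to L^\infty_{\omega_\infty}$. Proposition~\ref{prop:EstimL1Lr-mWeakly} applied to $m_\infty := \omega_\infty^{-1}$, combined with Lemma~\ref{lem:EstimKFP*Lq12} and the comparison $m_\infty \langle v\rangle^K \gtrsim m_1 := \omega_1^{-1}$ translated to $m \langle v\rangle^K$ on the right-hand side, will give
\[ \|g(0,\cdot)\|_{L^2_{m_1}} \lesssim e^{\kappa T} T^{-\eta_2} \|g_T\|_{L^1_{m_\infty}}, \]
and the duality formula \eqref{eq:identite-dualite}, handled exactly as in Step~3 of the proof of Theorem~\ref{theo:ultra}, will turn this into
\[ \|f(T,\cdot)\|_{L^\infty_{\omega_\infty}} \lesssim e^{\kappa T} T^{-\eta_2} \|f_0\|_{L^2_{\omega_1}}, \quad T \in (0,1]. \]

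Finally, I will chain the two bounds via the semigroup decomposition $f(T) = S_\LLL(T/2) f(T/2)$ to obtain \eqref{eq:EstimL1LrWeak_bis} on $(0,1]$. For $T>1$, writing $f(T) = S_\LLL(1) f(T-1)$ and applying the case $T=1$ together with the bound $\|f(T-1)\|_{L^1_\omega} \lesssim e^{\kappa(T-1)}\|f_0\|_{L^1_\omega}$ from Lemma~\ref{lem:EstimKFPLp12} will absorb the remaining exponential factor into the announced constant. The only genuine obstacle, beyond bookkeeping, will be checking that the intermediate weight $\omega_1$ is simultaneously admissible and satisfies both sides of the comparison $\omega_\infty \langle v\rangle^{K/2} \lesssim \omega_1 \lesssim \omega \langle v\rangle^{-K/2}$; this is precisely what the hypothesis $k > K + k^*$ (when $s=0$) and the strict inequalities in \eqref{eq:cond-zeta-s} on $\zeta$ (when $s>0$) are designed to leave room for.
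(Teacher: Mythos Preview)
Your approach is essentially the same as the paper's: introduce the intermediate admissible weight (the paper calls it $\omega_2$, you call it $\omega_1$, with the same choices $\omega^{3/4}$ for $s>0$ and $\omega\langle v\rangle^{-K/2}$ for $s=0$), run Proposition~\ref{prop:EstimL1Lr-omegaWeak} for the $L^1_\omega\to L^2_{\omega_1}$ step, run Proposition~\ref{prop:EstimL1Lr-mWeakly} plus duality for the $L^2_{\omega_1}\to L^\infty_{\omega_\infty}$ step, chain, and extend to $T>1$ via the $L^1_\omega$ growth bound. Two small slips to clean up: in the dual step you should apply Proposition~\ref{prop:EstimL1Lr-mWeakly} with the weight $m_1=\omega_1^{-1}$ (not $m_\infty$), so that the left-hand side is $\|g m_1\|_{L^2}$ and the right-hand side involves $m_1\langle v\rangle^{K/2}\lesssim m_\infty$; and the paper uses the $L^p_\omega$ growth bound with $p=2$ (not $p=r$) in Step~1, which is more natural since Proposition~\ref{prop:EstimL1Lr-omegaWeak} already outputs an $L^2$ estimate.
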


\begin{proof}[Proof of Proposition~\ref{prop:EstimL1L2-omegaWeak-final}] 
We adapt the first step of the proof of Theorem~\ref{theo:ultra} by using the estimate established in Proposition~\ref{prop:EstimL1Lr-omegaWeak} instead of  Proposition~\ref{prop:EstimL1Lr-omega} as well as the estimate  \eqref{eq:lem:EstimKFPLp12} from Lemma~\ref{lem:EstimKFPLp12} for $p=1$ and $p=2$ provided by Theorem~\ref{theo:WP-KFP&KFP*}.

\smallskip\noindent
\emph{Step 1.} We set $  \omega_2 = \omega^{3/4}$ if $s>0$, $ \omega_2 = \omega \la v\ra^{-K/2}$ if $s=0$ and we prove first that 
\be \label{eq:EstimL1LrStrongTle1}
 \|  f(T,\cdot)    \|_{L^2_{\omega_2}(\OO)}   
\lesssim  T^{-\eta}  \| f_0    \|_{L^1_{ \omega}(\OO)} \quad  \forall T\in (0,T].
\ee
Indeed from Proposition~\ref{prop:EstimL1Lr-omegaWeak} and the definitions of $\omega_\infty$ and $ \omega_2$ we have that there is $r>2$, from Proposition \ref{prop:EstimL1Lr-omegaWeak}, such that
\bear
 \|  (\varphi + |\varphi'|) f \omega_2   \|_{L^2(\UU)} 
&\lesssim&    \left\lVert   (\varphi + |\varphi'|)^{2 \tfrac{r-1}{r-2}} \varphi^{- \tfrac{r}{r-2}} f \omega_2  \la v\ra^{K/2} \right\rVert_{L^1(\UU)}, \nonumber\\
&\lesssim&  \left\lVert   (\varphi + |\varphi'|)^{2 \tfrac{r-1}{r-2}} \varphi^{- \tfrac{r}{r-2}} f  \omega \right\rVert_{L^1(\UU)}  
\eear
for any $0<\varphi \in C^1_c((0,T))$ and any $T > 0$. 

From Lemma~\ref{lem:EstimKFPLp12} for $p=1$ and $p=2$ we have 
\be
\lVert \varphi + \lvert \varphi'\rvert \rVert_{L^2(0,T)} \lVert f_T\omega_2\rVert_{L^2(\OO)} \leq e^{\kappa T} \lVert (\varphi + \lvert \varphi'\rvert)  f\omega_2\rVert_{L^2(\UU)}
\ee
and 
\be
 \left\lVert   (\varphi + |\varphi'|)^{2 \tfrac{r-1}{r-2}} \varphi^{- \tfrac{r}{r-2}} f  \omega \right\rVert_{L^1(\UU)}  \lesssim  e^{\kappa T} \lVert (\varphi + \lvert \varphi'\rvert)^{2 \frac{r-1}{ r-2}} \varphi^{- \frac{r}{r-2}} \rVert_{L^1(0,T)} \lVert f_0  \omega\rVert_{L^1(\OO)}
\ee

As done during the proof of Theorem~\ref{theo:ultra} we choose $\varphi(t) := \psi(t/T)$ with $\psi \in C^1_c((0,1))$ such that  $0 \le \psi \le \mathbf{1}_{[1/4,3/4]}$, $\psi \not\equiv 0$ and $ |\psi'|^{ 2\frac{r-1}{r-2}} \psi^{- \frac{r}{r-2}}  \in L^1(0,1)$, which is possible by taking
$\psi(s) := (s-1/4)_+^n (3/4-s)_+^n$ for $n > 0$ large enough, and we easily compute 
\bean
&&\|  \varphi\|_{L^2(0,T)} \geq T^{1/2} \| \psi + \lvert \psi'\rvert  \|_{L^2(0,1)}, \quad 
\|  \varphi\|_{L^1(0,T)}  =  T \| \psi \|_{L^1(0,1)}, \quad 
\\
&& \left\lVert   |\varphi'|^{2 \tfrac{r-1}{r-2}} \varphi^{- \tfrac{r}{r-2}}    \right\rVert_{L^1(0,T)} 
= T^{1 - {2 \tfrac{r-1}{r-2}}}   \left \lVert   \, |\psi'|^{2 \tfrac{r-1}{r-2}} \psi^{- \tfrac{r}{r-2}}    \right\rVert_{L^1(0,1)} .
\eean
Then we deduce 
\be
 \lVert f_T\omega_2\rVert_{L^2(\OO)} \lesssim T^{-\frac{1}{ 2} - \frac{r}{ r-2}} \left( 1 + T^{1+ \frac{r}{r-2}} \right) \lVert f_0\overline \omega\rVert_{L^1(\OO)}
\ee
which is nothing but \eqref{eq:EstimL1LrStrongTle1} for $T\in (0,1]$ with $\eta = \frac{1}{ 2} + \frac{r}{ r-2}$.

\smallskip\noindent
\emph{Step 2.} Now we prove a similar result for the dual problem, for this we define $m_\infty=\omega_\infty^{-1}$ and $ m_2 =  \omega_2^{-1}$. 
Indeed by using Lemma \ref{lem:EstimKFP*Lq12} and Proposition~\ref{prop:EstimL1Lr-mWeakly} we proof, by arguing similarly as in the Step 1, that there exists $ \eta' > 0$ such that any solution $g$ to the  dual  backward problem \eqref{eq:dualKFP} satisfies 
\beqn\label{eq:EstimDualL1L2-mWeakly}
 \| g(0, \cdot) \|_{L^2_{ m_2}} \le C^*_{12}  T^{-\eta'}   \| g_T  \|_{L^1_{m_\infty}}, \quad \forall \, T \in (0,1).
\eeqn

 \medskip\noindent{\sl Step 3.}  
As the dual counterpart of \eqref{eq:EstimDualL1L2-mWeakly}, we have that any solution $f$ to the KFP equation \eqref{eq:KFP}--\eqref{eq:KolmoBdyCond}--\eqref{eq:initialDatum} satisfies 
\beqn\label{eq:EstimL2Linfty-omegaWeakly}
 \| f(T, \cdot) \|_{L^\infty_{ \omega_\infty}} \le C^*_{12} T^{-\eta'}   \| f_0  \|_{L^2_{\omega_2}}, \quad \forall \, T \in (0,1).
\eeqn
Indeed we may argue by duality as in the Step 3 of the proof of Theorem~\ref{theo:ultra}:  For $f_0 \in L^2_{\omega_2}$ and $g_T \in L^1_{ m_\infty}$ we denote respectively by $f$ and $g$ the solution to the primal~\eqref{eq:KFP}--\eqref{eq:KolmoBdyCond}--\eqref{eq:initialDatum} and dual~\eqref{eq:dualKFP} KFP problem, then we have
 \bean
 \| f(T,\cdot) \|_{L^\infty_{\omega_\infty}} 
 &=& \sup_{g_T \in L^1_{m_\infty}, \| g_T \|_{L^1_{m_\infty}} \le 1} \int f(T,\cdot)  g_T
 \\
 &=& \sup_{g_T \in L^1_{m_\infty}, \| g_T \|_{L^1_{m_\infty}} \le 1} \int f_0 g(0,\cdot)
 \\
&\le& \| f_0 \|_{L^2_{ \omega_2}} \sup_{g_T \in L^1_{m}, \| g_T \|_{L^1_m} \le 1}  \| g(0,\cdot)  \|_{L^2_{ m_2}} 
 \\
&\le& \| f_0 \|_{L^2_{ \omega_2}} %\sup_{g_T \in L^2_{m}, \| g_T \|_{L^1_m} \le 1} 
C_{12}^* T^{-\eta'},
\eean
where we have used an usual representation formula in the first line, the duality formula \eqref{eq:identite-dualite} in the second line, the Cauchy-Schwarz inequality in the third line and estimate \eqref{eq:EstimDualL1L2-mWeakly}
in the last line. 
  
 \medskip\noindent{\sl Step 4.} For $T \in (0,1]$, the estimates \eqref{eq:EstimL1L2-omega} and \eqref{eq:EstimL2Linfty-omega} also write 
$$
 \| f(T, \cdot) \|_{L^\infty_{\omega_\infty}} \le C_{12} ( T/2)^{-\eta'}   \| f(T/2,\cdot)  \|_{L^2_{ \omega_2}}, \quad  
 \| f(T/2,\cdot)  \|_{L^2_{ \omega_2}} \le C_{12}^* ( T/2)^{-\eta}   \| f_0 \|_{L^1_\omega},
 $$
 so that 
\beqn\label{eq:estimL1Linfty-bis}
 \| f(T, \cdot) \|_{L^\infty_{\omega_\infty}} \le C_{12} C_{12}^* 2^{\eta+\eta'} T ^{-\eta- \eta'}      \| f_0 \|_{L^1_\omega}. 
\eeqn
which is nothing but \eqref{eq:theo-ultra} for $T \in (0,1]$ with $\Theta := \eta + \eta'$ and $C \ge C_{12} C_{12}^* 2^{\eta+\eta'}$. For $T >1$ we write $f(T) = S_\LLL(1) f(T-1)$, then we compute
$$
\begin{aligned}
\| f(T,\cdot) \|_{L^\infty_\omega}
&= \| S_\LLL(1) f(T-1,\cdot) \|_{L^\infty_\omega} \\
&\lesssim \|  f(T-1,\cdot) \|_{L^1_\omega} \\
&\lesssim e^{\kappa (T-1)} \|  f_0 \|_{L^1_\omega} ,
\end{aligned}
$$ 
where we have used \eqref{eq:estimL1Linfty-bis} at the second line and Lemma~\ref{lem:EstimKFPLp12} at the third one, which concludes the proof of \eqref{eq:EstimL1LrWeak_bis} for $T >1$.  
\end{proof}

%%%%%%%%%%%%%%%%%%%%%%%%%%%%%%%%%%%%%%%%%%%%%%%%%%%%%%%
\section{Well-posedness in a general framework}\label{sec-WellposednessGal}

In this section, we establish the well-posedness of the KFP equation in a general weighted Lebesgue space  framework and in a weighted Radon measures space framework. We deduce the existence and uniqueness of a family of fundamental solutions.
These results are based on new a priori estimates that we establish with formal computations, in order not to complicate the presentation. These estimates can be justified by coming back to the standard arguments used in the proof of Theorem~\ref{theo:WP-KFP&KFP*}.

\subsection{Additional a priori estimates in the $L^1$ framework.}
We recall that any solution $f$ to the KFP equation \eqref{eq:KFP} satisfies
\bear\label{eq:Galframe-AprioriEstim1}
&&\| f_t \|_{L^p_\omega(\OO)}  \lesssim \| f_0 \|_{L^p_\omega(\OO)}, \quad \forall \, t \in [0,T],
\\ \label{eq:Galframe-AprioriEstim1BIS}
&&\| f_t \|_{L^1_{\omega\langle v \rangle^\varsigma}(\UU)}  \lesssim \| f_0 \|_{L^1_\omega(\OO)}, 
\\ \label{eq:Galframe-AprioriEstim2}
&& \| \nabla_v f \|_{L^2_\omega(\UU)} \lesssim \| f_0 \|_{L^2_\omega(\OO)},
\\ \label{eq:Galframe-AprioriEstim3}
&&  \|  f_t \|_{L^r_{\omega_r}(\OO)}   \le C_t\| f_0 \|_{L^p_\omega(\OO)}, \quad \forall \, t \in (0,T], 
\eear
for any admissible weight function $\omega$, any exponent $1 \le p \le r \le \infty$ and some admissible weight function $\omega_r$
from \eqref{eq:lem:EstimKFPLp12}, from \eqref{eq:disssipSL-Lp} and \eqref{eq:varpiL-varpisharp}, from  \eqref{eq:bddL2&HHH} and from Theorem~\ref{theo:ultra} with $\omega_r = \omega$ in the case of a strongly confining weight function $\omega$ or from Proposition~\ref{prop:EstimL1L2-omegaWeak-final} and a standard interpolation argument in the case of a weakly confining weight function $\omega$. 
The two last estimates together immediately give 
\beqn\label{eq:Galframe-AprioriEstim4}
 \| \nabla_v f \|_{L^2((t_0,T) \times \OO)}  \le C_{t_0,T} \| f_0 \|_{L^p_\omega(\OO)}, \quad \forall \, T > t_0 > 0.
\eeqn

\smallskip
We will use an additional a priori estimate that we establish now. For further references, for $k > 0$, we define the functions $T_k$ by 
$$
T_k (s) := \max (\min(s,k),-k).
%, \quad
%\theta_k(s) :=  \min((|s|-k)_+, 1). 
$$

\begin{lem}\label{lem:nablaTKf}
Consider an admissible weight function $\omega$ and a solution $f$  to the KFP equation  \eqref{eq:KFP}--\eqref{eq:KolmoBdyCond}--\eqref{eq:initialDatum} associated to an initial datum $0 \le f_0\in L^1_{\omega}$. There hold, at least formally,
  \beqn\label{eq:Galframe-AprioriEstim5}
 \| \nabla_v T_K(f/\MMM) \MMM^{1/2} \|_{L^2(\UU)} \le C_{T,K} \|f_0 \|_{L^1_\omega}
\eeqn
and
\beqn\label{eq:Galframe-AprioriEstim6}
 \sup_{t \in [0,T]} \| f \mathbf{1}_{f \ge (k+1) \MMM} \|_{L^1(\OO)} \le \| f_0 \mathbf{1}_{f_0 \ge  k \MMM} \|_{L^1(\OO)} 
 + C_T \|f_0 \|_{L^1_\omega}, 
\eeqn
for any $T,K,k > 0$ and some constants $C_{T,K}$ and $C_T$. 

 \end{lem}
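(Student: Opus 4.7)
The plan for both estimates is to carry out renormalized energy identities on the change of unknown $g := f/\MMM$, rigorously justified by a preliminary spatial/velocity truncation and passage to the limit via the smoothing bound \eqref{eq:Galframe-AprioriEstim3}, the renormalized Green formula \eqref{eq:FPK-traceL2}, and the scheme already used in Step~4 of the proof of Proposition~\ref{prop-KFP-L2primal}.

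For \eqref{eq:Galframe-AprioriEstim5}, I will use the convex primitive $G(s) := \int_0^s T_K(r)\,dr$, with $G'=T_K$, $G''=\mathbf{1}_{[-K,K]}$ and $0\le G(s)\le K|s|$. Testing \eqref{eq:KFP} against $T_K(g)$ and rewriting $\Delta_v f = \Div_v(\MMM\nabla_v g) + 2\nabla_v\MMM\cdot\nabla_v g + g\Delta_v\MMM$, an integration by parts in $v$ yields the target dissipation $\int_\UU \MMM\,T'_K(g)\,|\nabla_v g|^2\,dxdv\,dt = \|\nabla_v T_K(g)\,\MMM^{1/2}\|_{L^2(\UU)}^2$. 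The transport contribution decomposes via $T_K(g)\,v\cdot\nabla_x f = v\cdot\nabla_x(\MMM\,G(g)) + (g T_K(g) - G(g))\,v\cdot\nabla_x\MMM$; its divergence produces a boundary flux rendered nonnegative (up to a controllable error) by the Darroz\`es--Guiraud-type computation of Step~1 in Lemma~\ref{lem:EstimKFPLp12}, since $\RRR\MMM=\MMM$. All remaining interior terms are pointwise controlled by $K|g|$ times a polynomially growing function of $v$, hence absorbed by \eqref{eq:Galframe-AprioriEstim1BIS}, and Gr\"onwall followed by time integration delivers \eqref{eq:Galframe-AprioriEstim5}.

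For \eqref{eq:Galframe-AprioriEstim6} I will first reduce the statement via the elementary pointwise inequality
\[
f\,\mathbf{1}_{f\ge (k+1)\MMM} \;\le\; (f-k\MMM)_+ + k\MMM\,\mathbf{1}_{f\ge (k+1)\MMM} \;\le\; (f-k\MMM)_+ + \tfrac{k}{k+1}\,f,
\]
which together with $\|f(t)\|_{L^1}\lesssim \|f_0\|_{L^1_\omega}$ from \eqref{eq:Galframe-AprioriEstim1} and the trivial bound $(f_0-k\MMM)_+ \le f_0\mathbf{1}_{f_0\ge k\MMM}$ reduces the claim to the Stampacchia-type estimate $\sup_{t\in[0,T]}\int_\OO(f(t,\cdot)-k\MMM)_+\,dxdv \le \int_\OO(f_0-k\MMM)_+\,dxdv + C_T\|f_0\|_{L^1_\omega}$. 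This last bound is obtained by applying the renormalized Green formula \eqref{eq:FPK-traceL2} with a $C^1$ convex smoothing of $s\mapsto(s-k)_+$ to the equation for $g$: the velocity-dissipation term carries the favorable sign by convexity, the boundary contribution is nonpositive up to the Darroz\`es--Guiraud error as above, and the remaining interior terms are absorbed by \eqref{eq:Galframe-AprioriEstim1BIS}.

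The main obstacle in both steps is the position-dependence of the wall temperature $\Theta_x$: the drift $v\cdot\nabla_x\MMM$ grows like $\MMM\langle v\rangle^3$ and is not directly absorbed by the collisional dissipation. This is handled by combining the pointwise bound $|v\cdot\nabla_x\MMM|\lesssim \MMM\langle v\rangle^3$, valid thanks to \eqref{eq:Assum-Theta}, with the $\langle v\rangle^\varsigma$-gain provided by the moment estimate \eqref{eq:Galframe-AprioriEstim1BIS}. A secondary technical point, already addressed in Step~4 of the proof of Proposition~\ref{prop-KFP-L2primal}, is the rigorous justification of the renormalized computation up to the reflecting boundary, which requires working with renormalizations in the class $\BBB_2$ of Theorem~\ref{theo:Kolmogorov-trace} and a standard regularization argument.
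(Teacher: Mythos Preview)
Your proposal is correct and takes essentially the same route as the paper: renormalize in $g=f/\MMM$ with a convex $\beta$, use the weight $\varphi=m=\MMM$, handle the boundary term via Jensen's inequality (the Darroz\`es--Guiraud mechanism, which is what the paper invokes rather than the twisted-weight computation of Step~1 in Lemma~\ref{lem:EstimKFPLp12}), and absorb the interior remainders through the $L^1_\omega$ bound; the paper simply unifies both estimates with the single choice $\beta''=\mathbf{1}_{[k,k+1]}$ instead of treating \eqref{eq:Galframe-AprioriEstim5} and \eqref{eq:Galframe-AprioriEstim6} separately. One slip to correct: the interior remainder terms are pointwise controlled by $|f|\times\text{poly}(\langle v\rangle)$, not by $K|g|\times\text{poly}$ (the factor $g=f/\MMM$ is exponentially large in $v$), and your own reference to \eqref{eq:Galframe-AprioriEstim1BIS} confirms you mean the former.
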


\begin{proof}[Proof of Lemma~\ref{lem:nablaTKf}] 
For a renormalizing function  $\beta$,  
a positive weight function $m$ and a nonnegative test function $\varphi$, we compute 
\bean
&&\frac{d}{dt} \int_\OO \beta(\frac{f}{m})\varphi 
=   \int \beta'(\frac{f}{m}) \frac{\varphi }{ m}  \bigl\{ - v \cdot \nabla_x f +  \Delta_v f+ b\cdot \grad_v f+ cf \bigr\}
\\
%&&  
%=     \int  \bigl\{  - v \cdot \nabla_x [\beta(\frac{f}{m}) \varphi]  + (v \cdot \nabla_x \varphi)   \beta(\frac{f}{m}) -  (v \cdot \nabla_x m)  \frac{f}{m} \beta'(\frac{f}{m})   \frac{\varphi }{ m} \bigr\}
%\\
%&&\quad      - \int \beta''(\frac{f}{m}) |\nabla_v (\frac{f}{m})|^2 \varphi   +  \int  \frac{f}{m} \beta'(\frac{f}{m}) c \varphi 
%\\
%&&\quad     +  \int \bigl\{ \alpha(\frac{f}{m}) \Div_v (  \frac{\varphi }{ m}   \nabla_v m  ) 
%+  \beta(\frac{f}{m}) \Div_v (  m   \nabla_v\frac{\varphi }{ m} ) 
%-   \beta'(\frac{f}{m}) \frac{f}{m}  \nabla_v\frac{\varphi }{ m}  \cdot  \frac{\nabla_v m }{ m}    \bigr\}
%\\
%&&   +  \int  \bigl\{ b \cdot \nabla_v [  \beta(\frac{f}{m}) \varphi   ]     - (b\cdot \nabla_v \varphi)   \beta(\frac{f}{m}) +  (b \cdot \nabla_v m)  \frac{f}{m} \beta'(\frac{f}{m})   \frac{\varphi }{ m} \bigr\}
%\\
&&  
=     \int_\Sigma (- v \cdot n_x)  \beta(\frac{f}{m}) \varphi  - \int \beta''(\frac{f}{m}) |\nabla_v (\frac{f}{m})|^2 \varphi   
\\
&&\quad   
+   \int  \bigl\{    (v \cdot \nabla_x \varphi)   \beta(\frac{f}{m}) -  (v \cdot \nabla_x m)  \frac{f}{m} \beta'(\frac{f}{m})   \frac{\varphi }{ m} \bigr\}
+  \int  \frac{f}{m} \beta'(\frac{f}{m}) c \varphi 
\\
&&\quad  +  \int \bigl\{ \alpha(\frac{f}{m}) \Div_v (  \frac{\varphi }{ m}   \nabla_v m  ) 
+  \beta(\frac{f}{m}) \Div_v (  m   \nabla_v\frac{\varphi }{ m} ) 
-   \beta'(\frac{f}{m}) \frac{f}{m}  \nabla_v\frac{\varphi }{ m}  \cdot  \nabla_v m    \bigr\}
\\
&&  +  \int  \bigl\{  \beta(\frac{f}{m}) \varphi  [- \Div_v b  ]     - (b\cdot \nabla_v \varphi)   \beta(\frac{f}{m}) +  (b \cdot \nabla_v m)  \frac{f}{m} \beta'(\frac{f}{m})   \frac{\varphi }{ m} \bigr\}, 
\eean
with $\alpha'(s) = s \beta''(s)$. Assuming $\beta : \R \to \R_+$ even and convex such that  $\beta(0) = \beta'(0) = 0$, the 
boundary term can be handled in the following way %first when $\varphi = \varphi(|v|) \ge 0$
\bean
&&\int_\Sigma (- v \cdot n_x)  \beta(\frac{\gamma f }{ m}) \varphi 
= \int_{\Sigma_-} (v \cdot n_x)_-  \beta(\frac{\RRR \gamma_+ f }{ m}) \varphi 
- \int_{\Sigma_+}(v \cdot n_x)_+ \beta (\frac{ \gamma_+ f }{ m}) \varphi 
\\
&&\quad \le \int_{\Sigma_-} (v \cdot n_x)_-   \Bigl\{ \iota_D \beta(\frac{\DDD \gamma_+ f }{ m})+ \iota_S  \beta(\frac{ \SSS \gamma_+ f }{ m} ) \Bigr\} \varphi 
-  \int_{\Sigma_+} (v \cdot n_x)_+ \beta (\frac{ \gamma_+ f }{ m}) \varphi 
\\
&&\quad \le \int_{\Sigma_+} (v \cdot n_x)_+   \iota_D \Bigl\{ \beta( \MMM \frac{  {\widetilde{\gamma_+ f}} }{ m \circ \VV_x} ) \varphi \circ \VV_x 
-   \beta (\frac{ \gamma_+ f }{ m}) \varphi \Bigr\} \\
\\
&&\qquad + \int_{\Sigma_+} (v \cdot n_x)_+   \iota_S \Bigl\{ \beta(   \frac{  {\gamma_+ f} }{ m \circ \VV_x} ) \varphi \circ \VV_x 
-   \beta (\frac{ \gamma_+ f }{ m}) \varphi \Bigr\}, 
\eean
where we have used the convexity of $\beta$ in the second line and the change of variable $v \mapsto \VV_x v$ on the last equality. 
Taking  $m =  \varphi : = \MMM$,  we get 
\bean
\int_\Sigma (- v \cdot n_x)  \beta(\frac{\gamma f }{ \MMM}) \MMM
\le \int_{\Sigma_+} (v \cdot n_x)_+   \iota_D \Bigl\{ \beta(  {   \widetilde{\gamma_+ f}}  )  
-   \beta ( \frac{ \gamma_+ f }{ \MMM})\Bigr\}  \MMM  \le 0, 
 \eean
 where we have classically used the very definition of $ \widetilde{\gamma_+ f}$ and the Jensen inequality in order to get the last inequality. 
 With these choices of functions $\beta$, $m$ and $\varphi$, the first identity simplifies 
 \bean
&&\frac{d}{dt} \int_\OO \beta(\frac{f }{ \MMM})\MMM
\\
&&  
=     \int_\Sigma (- v \cdot n_x)  \beta(\frac{f }{ \MMM}) \MMM  +  \int  [v \cdot \nabla_x \MMM] \bigl( \beta(\frac{f }{ \MMM}) -  \frac{f }{ \MMM}  \beta'(\frac{f }{ \MMM}) \bigr)
 \\
&& - \int \beta''(\frac{f }{ \MMM}) |\nabla_v (\frac{f }{ \MMM})|^2 \MMM    +  \int  \alpha(\frac{f }{ \MMM}) \Delta_v   \MMM
 \\
&&  +  \int  \bigl\{  \beta(\frac{f }{ \MMM}) \MMM  [- \Div_v b  ]     - (b\cdot \nabla_v \MMM)   \beta(\frac{f }{ \MMM}) +  (b \cdot \nabla_v \MMM+ c \MMM  )  \frac{f }{ \MMM} \beta'(\frac{f }{ \MMM})   \bigr\}.
%\\
%&&  
%  =     \int_\Sigma (- v \cdot n_x)  \beta(\frac{f }{ \MMM}) \MMM  - \int \beta''(\frac{f }{ \MMM}) |\nabla_v (\frac{f }{ \MMM})|^2 \MMM   
% +  \int  \alpha(\frac{f }{ \MMM})  \MMM (|v|^2-d). 
% \\
%&&    +  \int  \bigl\{  \beta(\frac{f }{ \MMM}) \MMM  [- \Div_v b  ]  +  (b\cdot v)   \beta(\frac{f }{ \MMM}) \MMM -  (b\cdot v)   \frac{f }{ \MMM} \beta'(\frac{f }{ \MMM}) \MMM  \bigr\}.
\eean
We finally particularize $\beta'' = \mathbf{1}_{[k,k+1]}$, $k \ge 0$, so that 
$$
0 \le \beta(s), \, s \beta'(s), \, \alpha(s) \le s, \ |s \beta'(s) - \beta(s)| \le s \quad \forall \, s \in \R, \ k \ge 0. 
$$
Observing that 
$$
\frac{|v \cdot \nabla_x \MMM| }{ \MMM} + \frac{|b \cdot \nabla_v \MMM| }{ \MMM} + \frac{|\Delta_v \MMM| }{ \MMM} + |c| + |\Div_v b| \le C \langle v \rangle^3,
$$
%$$
%0 \le \beta(s)| \le |s|, \quad |s \beta'(s)| \le |s|, \quad |\alpha(s)| \le |s|, \quad \forall \, s \in \R, \ k \ge 0.
%$$
we deduce with this choice of $\beta$ that 
\bean
 \frac{d}{dt} \int_\OO \beta(\frac{f }{ \MMM})\MMM +  \int \beta''(\frac{f }{ \MMM}) |\nabla_v (\frac{f }{ \MMM})|^2 \MMM  
%\\
%&&  
%\quad\le  \int |f|  [(|v|^2-d)_+  + (\Div_v b)_- + (b \cdot v)_-]  
%\\
%&&  
\le C  \int |f|  \langle v \rangle^3. 
\eean
Because $\omega \gtrsim  \langle v \rangle^3$, we may use \eqref{lem:EstimKFPLp12} with $p=1$ in order to bound the above RHS term and then  both \eqref{eq:Galframe-AprioriEstim5} and \eqref{eq:Galframe-AprioriEstim6} follow. 
\end{proof}

%%%%%%%%%%%%%%%%%%%%%%%%%%%%%%%%%%%% 
\subsection{Well-posedness in a $L^p_\omega$ framework}

For further references, we note
 $$
 \LLLL f :=  \partial_t f + \LLL f, \quad 
 \LLLL^* \varphi :=  -\partial_t \varphi + \LLL^*\varphi,
 $$
 where $\LLL$ is defined in \eqref{eq:intro-defLLLf} and $\LLL^*$ is defined in \eqref{def:CCC*}.
% 
%\bean
%&&\LLLL f :=  \partial_t f + \LL f, \quad \LL f  := v \cdot \nabla_x f -  \Delta_v f - b \cdot \nabla_v  f - cf, 
%\\
%&&\LLLL^* \varphi :=  -\partial_t \varphi + \LL^*\varphi, \quad  \LL^*\varphi := - v \cdot \nabla_x\varphi  - \Delta_v  \varphi   + \Div_v ( b  \varphi ) - c \varphi, 
%\eean
We also denotes $\BBB_3$ the set of functions $\beta \in C^2(\R)$ such that $\beta'$ has compact support. % and $\beta \in L^\infty$.
%$$
%T_M(s) := \max(-M, \min (s,M)), \quad \forall \, s \in \R,
%$$
%and
%We define the functions $T_k$, $\theta_k$ 
%$$
%T_k (s) := \max (\min(s,k),-k), \quad
%\theta_k(s) :=  \min((|s|-k)_+, 1)
%$$
%and $\UU := [0,T) \times \R^d$. 

\begin{theo}\label{theo:E-UinLp} 
We consider an admissible weight function $\omega$ and an exponent $p \in [1,\infty]$. 
 For any $ f_0 \in L^p_\omega(\OO)$, there exists a unique function $f \in C(\R_+;L^p_\omega(\OO))$ satisfying the estimates \eqref{eq:Galframe-AprioriEstim1}, \eqref{eq:Galframe-AprioriEstim3}, 
 \eqref{eq:Galframe-AprioriEstim4}, \eqref{eq:Galframe-AprioriEstim5}, \eqref{eq:Galframe-AprioriEstim6} and 
which is a renormalized solution to \eqref{eq:KFP}, that is 
\beqn\label{eq-paraLp_RenormalizationFormula}
\int_{\UU}  \{ \beta(f)   \LLLL^* \varphi  + \beta''(f) |\nabla_vf |^2 \varphi \} + \int_\Gamma   \beta(\gamma \, f) \, \varphi \,\,   n_x \cdot {v} \,  d{v} d\sigma_{\! x}dt 
= \int_{\OO} \beta(f_0) \varphi(0,\cdot) dvdx, 
\eeqn
for any $\varphi \in \DD(\bar\UU)$ and $\beta \in \BBB_3$.  
Furthermore, the one parameter family of mappings $S_\LL(t) : L^p_\omega \to L^p_\omega$, 
defined by $S_\LL(t) f_0 := f(t,\cdot)$ for $t \ge 0$ and $f_0 \in L^p_\omega$,  is a positive semigroup of linear and bounded operators. 
%for any $\varphi \in \DD([0,T) \times \R^d)$ and $\beta \in \BBB$. % \in W^{2,\infty}_{\rm loc}(\R)$, $\beta''$ has compact support. 
\end{theo}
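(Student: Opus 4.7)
The strategy is to reduce the construction to the $L^2_\omega$ framework of Theorem~\ref{theo:WP-KFP&KFP*} by approximation, and then to upgrade the identification of the limit to a renormalized solution using the trace theory of Section~\ref{subsec:trace}.

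First I would construct, for $p \in [1,\infty)$, a sequence $f_{0,n} \in L^2_\omega \cap L^p_\omega$ converging to $f_0$ in $L^p_\omega$ (for instance by truncating in $\OO_{1/n}$ and capping in $v$) and set $f_n(t) := S_\LLL(t) f_{0,n}$ from Theorem~\ref{theo:WP-KFP&KFP*}. The $L^p$ growth bound \eqref{eq:lem:EstimKFPLp12} valid for this semigroup, combined with linearity, makes $(f_n)$ a Cauchy sequence in $C([0,T];L^p_\omega)$, whose limit $f$ inherits \eqref{eq:Galframe-AprioriEstim1}. The ultracontractivity of Theorem~\ref{theo:ultra} (or Proposition~\ref{prop:EstimL1L2-omegaWeak-final} in the weakly confining case) applied to $f_n$, followed by standard interpolation, yields \eqref{eq:Galframe-AprioriEstim3}, and then \eqref{eq:Galframe-AprioriEstim4} follows from \eqref{eq:Galframe-AprioriEstim2} applied from time $t_0$ with datum $f_n(t_0)$. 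The bounds \eqref{eq:Galframe-AprioriEstim5}--\eqref{eq:Galframe-AprioriEstim6} are inherited from Lemma~\ref{lem:nablaTKf} applied to each $f_n$ (whose $L^2_\omega$ regularity fully justifies the formal computations of the lemma) and passed to the limit. The case $p = \infty$ I would handle by duality, defining $S_\LLL(t)$ on $L^\infty_\omega$ as the dual of $S_{\LLL^*}(t)$ on $L^1_m$ via Theorem~\ref{theo:WP-KFP&KFP*}(3).

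Next I would identify $f$ as a renormalized solution. Each $f_n$ satisfies the Green identity of Theorem~\ref{theo:Kolmogorov-trace}(2) for any $\beta \in \BBB_2$ and $\varphi \in \DD(\bar \UU)$. Restricting to $\beta \in \BBB_3 \subset \BBB_2$ (so $\beta, \beta'$ are bounded and $\beta''$ is compactly supported), the terms $\int \beta(f_n)\,\LLLL^*\varphi$ and $\int_\OO \beta(f_{0,n})\varphi(0,\cdot)$ pass to the limit by dominated convergence. For the trace term, the interior bound \eqref{eq:Galframe-AprioriEstim1} combined with the velocity-localized boundary estimate \eqref{eq:boundL2dxi2} give uniform control of $\gamma f_n$ in $L^2_{\rm loc}(\Gamma;d\xi_2)$, whence Theorem~\ref{theo:Kolmogorov-stability} yields $\gamma f_n \wto \gamma f$ weakly in $L^2_{\rm loc}(\Gamma;d\xi_2)$ and then $\beta(\gamma f_n) \to \beta(\gamma f)$ in the sense needed. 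The quadratic term $\int \beta''(f_n)|\nabla_v f_n|^2 \varphi$ is handled by noting that $\beta''$ is supported in some $[-M,M]$ so that only the truncation $T_M(f_n)$ contributes; on that set the uniform $H^1_v$ bound \eqref{eq:Galframe-AprioriEstim4} together with the hypoelliptic gain of integrability from Proposition~\ref{prop:EstimLploc} (in the spirit of Bouchut) yields enough velocity-averaging compactness to upgrade to strong $L^2$ convergence of $\nabla_v T_M(f_n)$, and a lower semicontinuity argument identifies the limit.

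For uniqueness I would follow Step~4 of the proof of Proposition~\ref{prop-KFP-L2primal}: given two renormalized solutions $f_1, f_2$ with the same initial datum, the difference $g := f_2 - f_1$ satisfies \eqref{eq-paraLp_RenormalizationFormula} with null initial condition for every $\beta \in \BBB_3$. Testing with $\varphi := \widetilde\omega_1 \chi_R$ for the moment weight $\omega_1 = \langle v \rangle^{k_1}$ of Remark~\ref{rem:lem:EstimKFPLp12}, letting $R \to \infty$ and then $\beta(s) \nearrow |s|$ along approximations with $\beta'' \ge 0$ so as to discard the nonnegative boundary and quadratic contributions, and invoking the dissipativity computation of Step~1 of Lemma~\ref{lem:EstimKFPLp12}, yields $\int_\OO |g_T|\widetilde\omega_1 \le \kappa \int_0^T\!\!\int_\OO |g|\widetilde\omega_1\,dt$, hence $g \equiv 0$ by Gr\"onwall. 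The semigroup and positivity properties then follow from uniqueness together with the analogous properties already established for $S_\LLL$ on the dense subset $L^2_\omega \cap L^p_\omega$. The main obstacle will be the passage to the limit in the nonlinear quadratic gradient term of the renormalized formulation, where only weak $L^2$ bounds on $\nabla_v f_n$ are available and no ambient compactness in $v$ holds; the key is to exploit the compact support of $\beta''$ to reduce to bounded truncations and to combine the $H^1_v$ estimate with Bouchut-type regularity for the strong convergence on those truncations.
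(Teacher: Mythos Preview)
Your existence strategy is the same as the paper's, but you make the passage to the limit in the quadratic gradient term harder than necessary. Since the equation is linear, $f_n - f_m$ is again a solution with initial datum $f_{0,n} - f_{0,m}$, so estimate \eqref{eq:Galframe-AprioriEstim4} applied to this difference shows that $(\nabla_v f_n)$ is \emph{Cauchy}, hence strongly convergent, in $L^2((t_0,T)\times\OO)$ for every $t_0>0$. The paper therefore first passes to the limit in the renormalized formula written on $(t_0,T)\times\bar\OO$, where all terms including $\int\beta''(f_n)|\nabla_v f_n|^2\varphi$ converge by strong convergence, and only afterwards sends $t_0\to0$ using \eqref{eq:Galframe-AprioriEstim1} and \eqref{eq:Galframe-AprioriEstim5} (via \eqref{eq:paraLp-renorm&TK}). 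No velocity-averaging compactness or lower semicontinuity is needed, and your proposed route through Bouchut-type regularity would in any case not directly yield strong convergence of gradients.

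Your uniqueness argument has a genuine gap and also differs from the paper's. The renormalized identity \eqref{eq-paraLp_RenormalizationFormula} is nonlinear in $f$, so there is no reason for the difference $g=f_2-f_1$ to satisfy it for every $\beta\in\BBB_3$; you cannot simply subtract. At best $g$ is a distributional solution, and only on $(t_0,T)$---where $\nabla_v g\in L^2$ by \eqref{eq:Galframe-AprioriEstim4}---does the trace theory yield a renormalized formula for $g$, with initial value $g(t_0)$ rather than $0$. The paper avoids this altogether by arguing through duality: since \eqref{eq:Galframe-AprioriEstim3} forces $f_i(t)\in L^2_\omega$ for $t>0$, each $f_i|_{[t,T]}$ coincides with the unique $L^2$-solution of Theorem~\ref{theo:WP-KFP&KFP*}, and the duality identity \eqref{eq:identite-dualite} with a bounded backward solution gives $\int_\OO f(T)g_T=\int_\OO f(t)g(t)$ for all $t\in(0,T)$. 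The passage $t\to0$ then hinges on showing $f\in C([0,T];L^1)$, for which the equi-integrability estimate \eqref{eq:Galframe-AprioriEstim6}---which your argument does not invoke---is the essential ingredient, combined with the a.e.\ continuity coming from the renormalized formulation.
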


It is worth emphasizing that  because $\beta \in \BBB_3$, we have $\supp \beta'' \subset [-K,K]$ for some $K > 0$ and thus 
\beqn\label{eq:paraLp-renorm&TK}
\beta''(g) \, |\nabla_v g |^2 = \beta''(g) \, \mathbf{1}_{|g| \le K} |\nabla_v g |^2 = \beta''(g) \, |\nabla_v T_K(g) |^2.
\eeqn
 Together with \eqref{eq:Galframe-AprioriEstim5}, that implies that the second term in \eqref{eq-paraLp_RenormalizationFormula} makes sense. 
 Also observe that $\gamma f \in L^2_\omega((t_0,T) \times \Sigma,d\xi_2)$ for any $0< t_0 < T$, thanks to the trace Theorem~\ref{theo:Kolmogorov-trace}, so that $\beta(\gamma f) \in L^\infty(\Gamma)$
and the boundary term makes sense.
A similar result holds for the dual KFP equation \eqref{eq:dualKFP}. 
 
 \begin{proof}[Proof of Theorem~\ref{theo:E-UinLp}]
 We split the proof into two steps. 
  
 \smallskip\noindent
 {\sl Step 1. Existence part.} Because of the linearity of the equation and the weak maximum principle, we may only consider nonnegative initial data (and solutions). 
 We first assume $1 \le p < 2$. We take $0 \le f_{0,n} \in L^1_\omega \cap L^2_\omega$ such that $f_{n,0} \to f_0$ in $L^p_\omega$. % . { We define $f_{0,n} := f_0 \wedge n {\bf 1}_{\OO \cap B_n \times B_n} \in L^2_\omega$. }
 Thanks to Theorem~\ref{theo:WP-KFP&KFP*}, we may associate a sequence $(f_n)$ of solutions in $C_t(^2_\omega) \cap L^2_tH^1_\omega$ such that 
 $(f_n)$ satisfies uniformly in $n \ge 1$ the estimates \eqref{eq:Galframe-AprioriEstim1}, \eqref{eq:Galframe-AprioriEstim2}, \eqref{eq:Galframe-AprioriEstim3} and \eqref{eq:Galframe-AprioriEstim4}.
 % \eqref{eq:Galframe-AprioriEstim5}. 
 Because the equation is linear the function $f_n - f_m$ satisfies  \eqref{eq:Galframe-AprioriEstim1}, namely 
$$
\sup_{t \in [0,T]} \| (f_n-f_m)(t) \|_{L^p_\omega}  \le \| f_{0,n} -f_{0,m} \|_{L^p_\omega}, 
$$
and therefore $(f_n)$ is a Cauchy sequence in $C([0,T];L^p_\omega)$. We  define $f := \lim f_n$. Similarly from \eqref{eq:Galframe-AprioriEstim4}, the sequence  $(f_n)$ is a Cauchy sequence in $L^2(t_0,T; H^1_\omega)$ for any $t_0 \in (0,T)$. 
We thus have 
$$
\int_{\UU_{t_0}} \beta(f) \LLLL \varphi + \int_{\UU_{t_0}} \beta''(f) |\nabla_v f|^2 \varphi + \int_{\Gamma_{t_0}} \beta(\gamma f) \varphi n_x \cdot v = \int_\OO \beta(f(t_0)) \varphi(t_0,\cdot), 
$$
for any $t_0 > 0$ and $\varphi \in \DD([0,T] \times \bar \OO)$, where $\UU_{t_0} := (t_0,T) \times \OO$ and $\Gamma_{t_0} := (t_0,T) \times \Sigma$.
Because of  \eqref{eq:Galframe-AprioriEstim1} and  \eqref{eq:Galframe-AprioriEstim5}, we may pass to the limit $t_0 \to 0$ and we get 
$$
\int_{\UU} \beta(f) \LLLL \varphi + \int_{\UU } \beta''(f) |\nabla_v f|^2 \varphi + \int_{\Gamma } \beta(\gamma f) \varphi n_x \cdot v = \int_\OO \beta(f_0) \varphi(0,\cdot)
$$
for any $\varphi \in \DD([0,T] \times \bar \OO)$.
The same holds for the dual KFP problem for $g_0 \in L^p_m$, $1 \le p \le 2$. By duality, we obtain the existence of a solution for any $1 \le p \le \infty$ for both problems.

 \smallskip\noindent
 {\sl Step 2. Uniqueness part.}
We consider two solutions $f_i$ to the KFP equation in the sense of Theorem~\ref{theo:E-UinLp} and we set $f := f_2 - f_1$.

 \smallskip 
$\bullet$ Take $g_T \in C_c(\OO)$ and let us consider $g \in L^\infty([0,T] \times \OO) \cap C([0,T) \times \OO)$ the solution associated to the backward dual problem \eqref{eq:dualKFP} which existence is given by Theorem~\ref{theo:WP-KFP&KFP*} and regularity is given by Theorem~\ref{theo:WP-KFP&KFP*} and \cite[Theorem 3]{MR3923847}. Because $f_t \in L^2(\OO)$ for $t > 0$, we may use the Theorem~\ref{theo:WP-KFP&KFP*} and thus write 
%From the $L^2$ theory, for any final datum $g_T \in C_c(\OO)$ and defining the associated solution $g \in L^\infty([0,T] \times \OO) \cap C([0,T) \times \OO)$ to the backward dual problem, we have 
 $$
 \int_\OO f(T) g_T dxdv =  \int_\OO f(t) g(t) dxdv, \quad \forall \, t \in (0,T).
 $$
 
 \smallskip 
 
 $\bullet$ %We first assume $\Omega$ bounded. 
 By construction, we have $f \in C([0,T];L^1)$. %L^1_{\omega_1})$/ for any weight function $\omega_1 =  o(\omega)$. 
 We indeed have $f(t_k) \to f(t)$ a.e. on $\OO$ as a consequence of the continuity result in Theorem~\ref{theo:Kolmogorov-trace} applied to the function $g :=\beta(f)$ with $0 \le \beta(s) \le |s|^{1/2}$. On the other hand, $\{ f(t); \, t \in [0,T] \}$ is weakly $L^1$ relatively compact as a consequence of the $L^1_\omega$ bound  \eqref{eq:Galframe-AprioriEstim1} (with $p=1$) and the equi-integrability estimate \eqref{eq:Galframe-AprioriEstim6} (recall that $\Omega$ is bounded). The claimed strong continuity follows. 
  We may thus easily pass to the limit $t \to 0$ in the above formula in order to get 
  $$
 \int_\OO f(T) g_T  dxdv =  \lim_{t\to0} \int_\OO f(t) g(t) dxdv = 0,
 $$
 by using $f(0) = 0$  and $g \in  L^\infty(0,T;L^\infty(\OO))$. %C([0,T];L^1_\omega) \cap L^\infty(0,T;L^\infty(\OO))$. 
Because $g_T \in C_c(\OO)$ is arbitrary, we deduce that $f(T) = 0$ for any $T > 0$ and the uniqueness is proved. 
%\smallskip 
% 
% 
% $\bullet$ We next assume $p > 1$ and possibly $\Omega$ unbounded. By construction, we have $f \in C([0,T];L^1)$. %L^1_{\omega_1})$/ for any weight function $\omega_1 =  o(\omega)$. 
\end{proof}

%%%%%%%%%%%%%%%%%%%%%%%%%%%%%% 
\subsection{Additional a priori estimates in a Radon measures framework}  
\label{subsec:addRadonMeasuresFrame}
 We present an additional a priori estimate  which holds for   nonnegative solutions in  a $ M^1_{\omega,0}$ framework. 
 More precisely, we claim that any nonnegative  solution $f_t$ associated to an initial datum $0 \le f_0 \in M^1_{\omega,0}$ satisfies, at least formally, 
\beqn\label{eq:MeasureSolBdd1}
\limsup_{t\to0}  \int_\OO f_{t} \phi^c  \le    \int_\OO f_{0}   \phi^c, 
\eeqn
for any $\phi \in \DD(\Omega)$ such that $0 \le \phi  \le 1$, and we  denote $\phi ^c := 1 - \phi $. 
We indeed first observe that 
$$
\frac{d}{dt} \int_\OO f \phi ^c = h_1 -  h_2, 
$$
with
$$
h_1 :=  \int_\OO \{ (v \cdot \nabla_x \phi ^c) + (c- \Div_v b)  \phi ^c)f
$$
and
$$
h_2 :=   \int_\Sigma \gamma f n_x \cdot v = \int_{\Sigma_+} (1-\iota_S -\iota_D ) \gamma_+f n_x \cdot v ,
$$
so that $h_1 \in  L^2(0,T)$ and $h_2 \ge 0$. For the first bound, on the one hand, we observe that $\omega \gtrsim \langle v \rangle^{r_0}$, $r_0 := 3d/2 + 1 + \gamma/2$ and  $\omega \langle v \rangle^\varsigma \gtrsim \langle v \rangle^{r_2}$, $r_2 := 3\gamma/2$, so that from \eqref{eq:Galframe-AprioriEstim1} and \eqref{eq:Galframe-AprioriEstim1BIS}, we have 
$$
f \in L^\infty(0,T;L^1_\omega) \cap L^1(0,T;L^1_{\omega\langle v \rangle^\varsigma}) \subset L^\infty(0,T;L^1_{\omega_0}) \cap L^1(0,T;L^1_{\omega_2}) \subset L^2(0,T;L^1_{\omega_1}), 
$$
with $\omega_1 := \langle v \rangle^\gamma$. On the other hand, we have $\langle v \rangle + |c- \Div_v b| \lesssim \omega_1$.  
Integrating in time the above differential equation and using the Cauchy-Schwarz inequality, we obtain
$$
\int_\OO f_t \phi ^c = 
\int_\OO f_0 \phi ^c + H_1 - H_2, 
 $$
 with $H_1 \in C^{0,1/2}([0,T])$, $H_1(0) = 0$, and $H_2 \ge 0$. More precisely
$$%\beqn\label{eq:MeasureSolBdd-bis}
 \int_\OO f_{t} \phi ^c \le  \int_\OO f_0 \phi ^c + t^{1/2} C \| \phi  \|_{W^{1,\infty}} \| f \|_{L^2_{\omega_1}(\UU)}, %(0,T;L^1_{\omega_1}(\OO))}, 
$$%\eeqn
for any $t \in (0,T)$, and thus \eqref{eq:MeasureSolBdd1} follows. Now, for any weight function $m : \R^d \to (0,\infty)$ such that $m$ is decreasing and $m\omega$ is increasing and any function $\psi \in \DD(\R^d)$ such that ${\bf 1}_{B_R} \le \psi \le 1$, we set $\chi := \phi\psi$, $\chi^c := 1-\chi$ and we observe that 
$$
{\chi^c \over m} = {\phi^c \over m} {\bf 1}_{B_R} + \Bigl( {\phi^c \over m} +{ \phi \over m}  \psi^c  \Bigr) {\bf 1}_{B^c_R} ,
$$
so that 
%and we compute  
 \beqn\label{eq:MeasureSolBdd3}
 \int_\OO f_{t} {\chi^c \over m}  
% &=&  \int_\OO f_{t} {\phi^c \over m}  +  \int_\OO f_{t} \phi {\psi^c \over m}  
% \\ &\le&
 \le   {1 \over m(R)} \int_\OO f_{t}   \phi^c  + {2 \over (m\omega)(R)}  \int_\OO f_t \omega, \quad \forall \, t \ge 0
\eeqn
%$$
% \int_\OO f_{t} {\phi^c \over m}  \le   {1 \over m(R)} \int_\OO f_{0}   \phi^c  + {1 \over (m\omega)(R)}  \int_\OO f_t \omega.
%$$
 As a consequence of \eqref{eq:MeasureSolBdd1} and \eqref{eq:Galframe-AprioriEstim1}, we thus deduce
 \beqn\label{eq:MeasureSolBdd2}
\limsup_{t\to0}  \int_\OO f_{t} {\phi^c \over m}  \le   {1 \over m(R)} \int_\OO f_{0}   \phi^c  + {2 \over (m\omega)(R)}  \int_\OO f_0 \omega.
\eeqn

%%%%%%%%%%%%%%%%%%%%%%%%%
\subsection{Radon measures solutions and  fundamental solutions}

\begin{theo}\label{theo:E-UinM1} 
For any admissible weight function $\omega$ and  any $f_0 \in M^1_{\omega,0}$, there  exists a unique   solution   $f \in C(\R_+;M^1_\omega) \cap C((0,\infty);L^\infty_\omega)$ associated to the KFP equation \eqref{eq:KFP}-\eqref{eq:KolmoBdyCond} in a sense that we discuss below.  

As a consequence, for any $z_0 := (x_0,v_0) \in \OO$, there  exists a unique fundamental solution   $F \in C(\R_+;M^1_\omega) \cap C((0,\infty);L^\infty_\omega)$ associated to the KFP equation  \eqref{eq:KFP}-\eqref{eq:KolmoBdyCond} and the initial datum $\delta_{z_0}$.   
\end{theo}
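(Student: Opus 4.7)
The plan is to construct the Radon-measure solution by approximation and to pass to the limit via duality. Given $f_0 \in M^1_{\omega,0}$, I would first build a sequence $f_{0,n} \in L^1_\omega \cap L^\infty_\omega$ converging weakly-$*$ to $f_0$ in $M^1(\OO)$, with uniformly bounded weighted total variation and with uniformly vanishing mass near the boundary, i.e.\ $\sup_n (|f_{0,n}|\omega)(\OO \setminus \OO_\varepsilon) \to 0$ as $\varepsilon \to 0$. This is possible precisely because $f_0$ lies in $M^1_{\omega,0}$: standard mollification in $(x,v)$ combined with a cutoff based on the sets $\OO_\varepsilon$ from \eqref{def:OmegaEpsOeps} does the job. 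By Theorem~\ref{theo:E-UinLp} applied with $p=1$, set $f_n(t) := S_\LL(t) f_{0,n}$; this sequence is uniformly bounded in $L^\infty(\R_+; L^1_\omega)$ by \eqref{eq:Galframe-AprioriEstim1} and satisfies the uniform smoothing bound $\|f_n(t)\|_{L^\infty_\omega} \lesssim t^{-\Theta}$ for $t > 0$ thanks to the ultracontractivity estimate \eqref{eq:Galframe-AprioriEstim3}.

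The passage to the limit proceeds by duality. For each $g_T \in C_c(\OO)$, let $g$ denote the solution of the backward dual problem \eqref{eq:dualKFP} provided by Theorem~\ref{theo:WP-KFP&KFP*}. The dual counterpart of Theorem~\ref{theo:ultra} together with the Harnack estimates of Section~\ref{sec-Harnack} ensure that $g$ is continuous on $[0,T] \times \OO$, with sufficient decay in $v$ (consistent with the weight $m = \omega^{-1}$) and sufficient tightness near $\partial\Omega$ to act as a test function against the weak-$*$ convergent sequence $f_{0,n} \stackrel{*}{\rightharpoonup} f_0$ in $M^1_\omega$. The duality identity $\int_\OO f_n(T) g_T = \int_\OO f_{0,n}\, g(0,\cdot)$ from Theorem~\ref{theo:WP-KFP&KFP*} then lets us pass to the limit, yielding a linear functional on $C_c(\OO)$ which, thanks to the uniform $L^\infty_\omega$ and $M^1_\omega$ bounds, is represented by an element $f(T) \in L^\infty_\omega$ for $T > 0$ and $f(T) \in M^1_\omega$ for $T \ge 0$. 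Passing to the limit in the renormalized formulation \eqref{eq-paraLp_RenormalizationFormula} on each strip $(t_0, T) \times \OO$, using the strong convergence provided by the smoothing, the estimates \eqref{eq:Galframe-AprioriEstim5}--\eqref{eq:Galframe-AprioriEstim6} to control the renormalized terms, and Theorem~\ref{theo:Kolmogorov-stability} for the boundary trace, identifies $f$ on $[t_0, T]$ as the $L^\infty_\omega$ solution of Theorem~\ref{theo:E-UinLp} with initial datum $f(t_0)$, whence $f \in C((0,\infty); L^\infty_\omega)$.

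The weak-$*$ continuity of $f$ at $t=0$ is obtained by combining the convergence of $f_n(t)$ to $f(t)$ against the dense test space with the tightness estimates \eqref{eq:MeasureSolBdd1}--\eqref{eq:MeasureSolBdd2}, which pass from $f_n$ to $f$ in the limit and control the mass of $f(t)$ outside $\OO_\varepsilon$ uniformly for $t$ close to $0$. For uniqueness, if $f_1, f_2$ are two such solutions sharing the initial datum $f_0$, the duality identity applied between times $t_0 > 0$ and $T$ yields $\int_\OO (f_1(T) - f_2(T)) g_T = \int_\OO (f_1(t_0) - f_2(t_0))\, g(t_0,\cdot)$ for every $g_T \in C_c(\OO)$; letting $t_0 \to 0^+$ and invoking the weak-$*$ continuity of $f_i$ at $0$ together with the continuity of $g$ up to $t=0$ yields $f_1(T) = f_2(T)$. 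The fundamental solution is then the special case $f_0 := \delta_{z_0}$, which belongs to $M^1_{\omega,0}$ since $(|\delta_{z_0}|\omega)(\OO \setminus \OO_\varepsilon) = 0$ as soon as $z_0 \in \OO_\varepsilon$.

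The main obstacle I anticipate is verifying rigorously that the dual solution $g$ is continuous up to $t=0$ in a sense strong enough to test against a weak-$*$ convergent sequence in $M^1_\omega$: pointwise continuous in $(x,v)$, with appropriate decay in $v$ through the weight $m$, and sufficiently tight near $\partial\Omega$ to cope with the defining condition of $M^1_{\omega,0}$. This regularity combines the dual ultracontractivity of Theorem~\ref{theo:ultra} (boundedness), the Harnack estimates of Section~\ref{sec-Harnack} (interior continuity), and the dual analogue of the tightness estimate \eqref{eq:MeasureSolBdd2} (boundary and large-velocity behavior); handling it precisely is what will dictate both the concrete form of the approximating sequence and the formal definition of what it means to be a solution in the Radon-measure framework.
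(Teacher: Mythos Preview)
Your proposal is correct and follows essentially the same approach as the paper: approximation by $L^1_\omega$ initial data, uniform a priori bounds from \eqref{eq:Galframe-AprioriEstim1}--\eqref{eq:Galframe-AprioriEstim6}, passage to the limit, and uniqueness via the duality identity with the backward problem together with the tightness estimates \eqref{eq:MeasureSolBdd1}--\eqref{eq:MeasureSolBdd2}. The only organizational difference is that the paper first extracts the limit $f$ by compactness (obtaining the renormalized formulation \eqref{eq-M1_RenormalizationFormula} on $(0,T)\times\bar\OO$ and the weak formulation \eqref{eq-M1_weakFormula} on $[0,T)\times\OO$), then establishes $\int f(T)\varphi_T=\int f(t)\varphi(t)$ for $t>0$ and finally passes $t\to0$ using $f(t)\rightharpoonup f_0$ in $(C_c(\OO))'$; you instead pass to the limit directly in $\int f_{0,n}g(0)$, which requires exactly the regularity of $g(0,\cdot)$ you flag as the main obstacle---and which the paper likewise obtains from \cite[Theorem~3]{MR3923847} combined with the $L^\infty_m$ bound and the $M^1_{\omega,0}$ tightness.
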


\begin{proof}[Proof of Theorem~\ref{theo:E-UinM1}]

\smallskip\noindent
{\sl Step~1. Existence.}
Because of the a priori estimates \eqref{eq:Galframe-AprioriEstim1}, \eqref{eq:Galframe-AprioriEstim3}, 
 \eqref{eq:Galframe-AprioriEstim4}, \eqref{eq:Galframe-AprioriEstim5}, \eqref{eq:Galframe-AprioriEstim6}, we may proceed exactly as during the proof of Theorem~\ref{theo:E-UinLp}, 
 and we obtain without difficulty the existence of 
 % we prove without major difficulties the existence of 
$$
f \in C([0,T]; \DD'(\OO)) \cap L^\infty(0,T;L^1_\omega(\OO))  \cap L^\infty(t_0,T;L^\infty_\omega(\OO)), 
\quad \nabla_v f \in L^2(t_0,T;L^2_\omega(\OO)), 
$$
for any $0 < t_0 < T$ and any admissible weight function $\omega$, which is a renormalisation solution on $(0,T) \times \bar\OO$ and a  weak solution $[0,T) \times \OO$ corresponding 
to the initial condition $f_0$. 
More precisely, we have both 
\beqn\label{eq-M1_RenormalizationFormula}
\int_{\UU}  \{ \beta(f)   \LLLL^* \varphi  + \beta''(f) |\nabla_vf |^2 \varphi \} + \int_\Gamma   \beta(\gamma \, f) \, \varphi \,\,   n_x \cdot {v} \,  d{v} d\sigma_{\! x}dt
= 0,
\eeqn
for any  $\varphi \in \DD((0,T) \times \bar\OO)$ and any $\beta \in \BBB_3$, and
\beqn\label{eq-M1_weakFormula}
\int_{\UU}   f \LLLL^* \varphi  
= \int_{\OO} \varphi(0,\cdot) f_0(dxdv), 
\eeqn
for any $\varphi \in \DD([0,T) \times \OO)$.
%$\delta_{z_0}$, $z_0 :=  (x_0,v_0) \in \OO$. 
By construction, we may also assume that $f$ satisfies \eqref{eq:MeasureSolBdd1}. 
%\eqref{eq:FondaSol-Bdd1} and/or \eqref{eq:FondaSol-Bdd2}. 
Because of Theorem~\ref{theo:WP-KFP&KFP*}, for any weight function $m:\R^d \to (0,\infty)$ such that $m$ is decreasing and $m\omega$ is increasing and converges to $\infty$, 
%$m \lesssim \omega^{-1}$ 
and any
\beqn\label{eq:borne-varphi-Linftym}
\varphi \in C([0,T]; L^1_m(\OO))  \cap L^\infty(0,T;L^\infty_m(\OO)), 
\quad \nabla_v \varphi  \in L^2(0,T;L^2_m(\OO)), 
\eeqn
solution  to the backward dual KFP equation associated to a final datum $\varphi_T \in C_c(\OO)$, we have 
\beqn\label{eq:uniqueM1-identityTt}
\int f(T) \varphi_T = 
\int f(t)\varphi(t), \quad \forall \, t \in (0,T).
\eeqn

\smallskip\noindent
{\sl Step 2. Improved identity.} We claim that \eqref{eq:uniqueM1-identityTt} also holds for $t=0$. 
From the weak formulation, we have $t \mapsto f(t) \in C([0,T];\DD'(\OO))$. Because of the $L^\infty(0,T;M^1_\omega)$ bound, we deduce that  
 \beqn\label{eq:UniqueM1-ftwto0}
f(t) \wto f_0 \quad\hbox{in}\quad (C_c(\OO))' \quad\hbox{as}\quad t \to 0.
 \eeqn
 \smallskip 
 Because the solution to the backward dual problem also satisfies $0 \le \varphi \in C([0,T) \times \OO)$ as a consequence of \cite[Theorem 3]{MR3923847} when $0 \le \varphi_T \in C_c(\OO)$ and with the notations of Section~\ref{subsec:addRadonMeasuresFrame}, 
  we may next write 
 \bean
 \bigl|\int_\OO f(t)\varphi(t) - \int_\OO f_0 \varphi(0) \Bigr|
&\le&
 \bigl|\int_\OO f(t) \varphi(t) \chi - \int_\OO f_0 \varphi(0) \chi  \Bigr| + \int_\OO f(t) \varphi(t) \chi^c  + \int_\OO f_0 \varphi(0) \chi^c , 
\eean
for any $\chi \in \DD(\OO)$ such that $0 \le \chi \le 1$. 
% \bean
% \bigl|\int_\OO f(t)\varphi(t) - \int_\OO f_0 \varphi(0) \Bigr|
% &=&
% \bigl|\int_\OO f(t) \varphi(t) \chi_\eps- \int_\OO f_0 \varphi(0) \chi_\eps + \int_\OO f(t) \varphi(t) \chi^c_\eps- \int_\OO f_0 \varphi(0) \chi^c_\eps \Bigr|
%\\
%&\le&
% \bigl|\int_\OO f(t) \varphi(t) \chi_\eps- \int_\OO f_0 \varphi(0) \chi_\eps \Bigr| + \int_\OO f(t) \chi^c_\eps + \int_\OO f_0  \chi^c_\eps .
%\eean
For the first term, we have 
% For any fixed $\eps >0$, we have
\bean
 \bigl|\int_\OO f(t) \varphi(t) \chi - \int_\OO f_0 \varphi(0) \chi  \Bigr|
\le
  \bigl|\int_\OO f(t) (\varphi(t) \chi -  \varphi(0) \chi ) \Bigr| +  \bigl|\int_\OO (f(t)  -   f_0 )\varphi(0) \chi  \Bigr| \to 0, 
\eean
as $t \to \infty$, thanks to  $\varphi \chi  \in C_c([0,T) \times \OO)$ and \eqref{eq:UniqueM1-ftwto0}. Particularizing  $\chi := \phi \psi$, with $\phi$ and $\psi$ defined in Section~\ref{subsec:addRadonMeasuresFrame}, and using \eqref{eq:borne-varphi-Linftym} and \eqref{eq:MeasureSolBdd2},  we have 
\bean
\limsup_{t \to 0}  \int_\OO f(t) \varphi(t) \chi^c   
&\lesssim& \limsup_{t \to 0} \int_\OO f(t)m^{-1}  \chi^c  
\\
&\lesssim&  {1 \over m(R)} \int_\OO f_0 \phi ^c   + {1 \over (m\omega)(R)} \| f_0 \|_{L^1_\omega}.
\eean
Using the above estimates as well as \eqref{eq:MeasureSolBdd3} at time $t=0$, we conclude with
%and thus altogether, we also have 
 \bean 
 \limsup_{t \to 0} \bigl|\int_\OO f(t) \varphi(t) - \int_\OO f_0 \varphi(0) \Bigr|
 &\le& {1 \over m(R)} \int_\OO f_0 \phi ^c   + {1 \over (m\omega)(R)} \| f_0 \|_{L^1_\omega}. % +  \int_\OO f_0 \varphi(0) \phi^c.
\eean
Assuming now that $\phi \ge {\bf 1}_{\Omega_\eps}$ and using the very definition of $f_0 \in M^1_{\omega,0}$, the RHS term vanishes in the limit $\eps \to 0$ and $R \to \infty$. We may thus pass to the limit in \eqref{eq:uniqueM1-identityTt}, 
and we obtain 
\beqn\label{eq:uniqueM1-identityT0}
\int f(T) \varphi_T = 
\int f_0 \varphi(0). 
\eeqn

\smallskip\noindent
{\sl Step~3. Uniqueness.}
 We consider two solutions $f_1, f_2$ associated to the same initial datum $0 \le f_0 \in M^1_{\omega,0}$. From Step 2, we have 
 $$
 \int_\OO f_1(T) \varphi_T 
 = 
 \int_\OO f_2(T) \varphi_T, 
 $$
 for any $T > 0$ and $\varphi_T \in C_c(\OO)$, and thus $f_1 = f_2$. 
\end{proof}

%%%%%%%%%%%%%%%%%%%%%%%%%%%%%%%%%%%%%%%%%%%%%%%%%%%%%%%
\section{About the Harnack inequality}
\label{sec-Harnack}

In this section we establish a strong maximum principle for the solutions of the kinetic Fokker-Planck equation in the form of a Harnack inequality, which is very similar to those in \cite[Theorem 2.15]{MR4412380} and  \cite{MR4017782}.

\begin{theo}\label{theo:Harnack}
Consider a  weak solution $0 \le f \in L^2((0,T) \times \OO) \cap L^2((0,T) \times \Omega; H^1(\R^d))$  
to the Kinetic Fokker-Planck equation \eqref{eq:KFP}. For any $0 < T_0 < T_1 < T$ and $\eps > 0$, there holds 
\beqn\label{eq:Harnack}
\sup_{\OO_\eps} f_{T_0} \le C  \inf_{\OO_{\eps}} f_{T_1}, 
\eeqn
for some constant $C = C(T_0,T_1,\eps) > 0$, where we recall that $\OO_\eps$ is defined in \eqref{def:OmegaEpsOeps}.
\end{theo}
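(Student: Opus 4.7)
The plan is to reduce \eqref{eq:Harnack} to the local interior Harnack inequality already available in the literature, and then to propagate it by a chaining argument over the compact set $[T_0,T_1]\times\overline{\OO_\varepsilon}$.

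\smallskip\noindent
\emph{Step 1: reduction to a local problem.} By the very definition of $\OO_\varepsilon$ in \eqref{def:OmegaEpsOeps}, the set $K:=[T_0,T_1]\times\overline{\OO_\varepsilon}$ is compactly contained in $(0,T)\times\OO$. Choose $\eps'\in(0,\eps)$ small enough so that $K':=[T_0/2,(T_1+T)/2]\times\overline{\OO_{\eps'}}$ still sits inside $(0,T)\times\OO$ and contains $K$ in its interior, together with a uniform neighborhood. Since $b,c\in L^\infty_{\mathrm{loc}}(\OO)$, the coefficients of the KFP equation are bounded on $K'$, and since $K'$ does not touch the boundary set $\Gamma$, the Maxwell reflection condition \eqref{eq:KolmoBdyCond} is irrelevant for the argument: on $K'$ the function $f$ is simply a nonnegative weak solution of the kinetic Fokker–Planck equation $\partial_t f + v\cdot\nabla_x f = \Delta_v f + b\cdot\nabla_v f + cf$ with bounded drift and zero-order term, as considered in \cite{MR3923847,MR4017782}.

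\smallskip\noindent
\emph{Step 2: local kinetic Harnack inequality.} Recall the local Harnack inequality of \cite[Theorem~6]{MR3923847} (see also \cite{MR4017782}) for the above equation: there exist $r_0\in(0,1)$ and $C_0\ge 1$, depending only on the $L^\infty$-norms of $b,c$ over $K'$, such that, for any kinetic cylinder
\[
Q_r(t_0,x_0,v_0)
:=\{(t,x,v)\,:\,t_0-r^2<t\le t_0,\ |v-v_0|<r,\ |x-x_0-(t-t_0)v_0|<r^3\}
\]
contained in $K'$, and any nonnegative weak solution $f$ of the equation in $Q_r(z_0)$, there holds
\[
\sup_{Q^-_{r_0r}(z_0)} f\;\le\; C_0\, \inf_{Q^+_{r_0r}(z_0)} f,
\]
where $Q^\pm_{r_0r}(z_0)$ are two explicit sub-cylinders of $Q_r(z_0)$ whose time-projections satisfy $Q^-\subset\{t\le t_0-\tfrac34 r^2\}$ and $Q^+\subset\{t\ge t_0-\tfrac14 r^2\}$. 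The scaling invariance of the equation ensures that the constants $r_0,C_0$ do not depend on the radius $r$, only on the uniform bounds on $b,c$ over $K'$.

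\smallskip\noindent
\emph{Step 3: chaining argument.} Fix $r>0$ small enough so that every cylinder $Q_r(z)$ with $z\in K$ lies inside $K'$. By the kinetic invariances $(t,x,v)\mapsto(t+\tau,x+\tau v_0 + y,v+w)$, any two points $z_0=(T_0,x_0,v_0)$ and $z_1=(T_1,x_1,v_1)$ in $K$ can be joined by a finite sequence of base-points $\zeta_0,\dots,\zeta_N$ such that $z_0\in Q^-_{r_0 r}(\zeta_0)$, $z_1\in Q^+_{r_0 r}(\zeta_N)$, every $Q_r(\zeta_j)\subset K'$, and $Q^+_{r_0 r}(\zeta_j)\cap Q^-_{r_0 r}(\zeta_{j+1})\ne\emptyset$. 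Because $K$ is compact, the length $N$ of the chain can be chosen uniform in $(z_0,z_1)$. Iterating the local Harnack inequality of Step~2 along this chain yields
\[
f(z_0)\;\le\; C_0^{N+1}\, f(z_1),
\]
for a.e.\ such pair, hence passing to supremum over $z_0\in\{T_0\}\times\overline{\OO_\varepsilon}$ and infimum over $z_1\in\{T_1\}\times\overline{\OO_\varepsilon}$, we obtain \eqref{eq:Harnack} with $C:=C_0^{N+1}$.

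\smallskip\noindent
\emph{Main obstacle.} The only genuinely delicate point is the construction of the chain in Step~3: because of the coupling between $x$ and $v$ through the transport operator, the kinetic cylinders $Q_r$ are anisotropic (scaling $r$ in $v$ but $r^3$ in $x$ and $r^2$ in $t$), so connecting two arbitrary points requires alternating time-increments with velocity-translations. The existence of such a chain with uniformly bounded length $N$ on $K$ is by now classical — it follows, for instance, from the argument in \cite[Proof of Theorem~2.15]{MR4412380} — and the compactness of $K$ ensures that $N$ depends only on $T_0$, $T_1$ and $\varepsilon$.
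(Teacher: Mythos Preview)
Your proposal is correct and follows essentially the same two-step strategy as the paper: a local interior Harnack inequality followed by a chaining argument over the compact set $[T_0,T_1]\times\overline{\OO_\eps}$, with the chain construction delegated to \cite{MR4412380,MR4017782}.

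The only substantive difference lies in how the local Harnack inequality is obtained. You cite \cite[Theorem~6]{MR3923847} directly for the equation with zero-order term $cf$, whereas the paper is more careful on this point: it invokes the super- and sub-solution estimates of \cite{MR4453413} for the equation \emph{without} zero-order term, and removes $c$ by the standard exponential trick. Namely, for $\lambda>\|c\|_{L^\infty(Q_R(z_0))}$ the function $g:=e^{\lambda t}f$ is a nonnegative super-solution of $\partial_t g + v\cdot\nabla_x g = \Delta_v g + b\cdot\nabla_v g$, while $h:=e^{-\lambda t}f$ is a sub-solution; applying the weak Harnack estimate to $g$ and the local boundedness estimate to $h$, and converting back to $f$, yields the local Harnack inequality for the original equation. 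This is worth making explicit, since the Harnack statements in \cite{MR3923847,MR4453413} are formulated for equations without a multiplicative zero-order term, and your direct citation does not apply verbatim. The fix is routine, but you should include it.
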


The proof will be obtained in two steps. In a first step we shall obtain a local version of the Harnack inequality, and then in a second step we shall use a chain argument in order to get \eqref{eq:Harnack}. The local Harnack inequality is a direct consequence of \cite[Theorem~5 \& Proposition~12]{MR4453413}, see also \cite{MR3923847} for previous results in that direction,  which apply to 
super- and sub-solutions to the Kinetic Fokker-Planck equation with vanishing damping term
\beqn\label{eq:KFP-M}
\partial_t g = \mathfrak{M} g := -  v \cdot \nabla_x g +  \Delta_v g + b \cdot \nabla_v g  . 
\eeqn
For the reader's convenience, we state these results now. For that purpose, for $r > 0$ and $z_0 := (t_0,x_0,v_0) \in \R^{1+2d}$ we define the set $$
Q_r(z_0) := \{ (t,x,v) \in \R^{1+2d} \mid -r^2 < t -t_0 \le 0, \, |x-x_0 - (t-t_0)v_0| < r^3, \, |v-v_0| < r \}
$$
as well as the map
$$
\TT_{r,z_0} : (t,x,v) \mapsto (t_0+r^2 t , x_0+ r^3x + r^2 t v_0 , v_0+ r v),
$$
and we observe that $\TT_{R,z_0} (Q_r(0,0,0)) = Q_{r R}(z_0)$.

\begin{theo}\label{theo:Harnack-M} 
Let $T>0$.

\smallskip
(1) There exist $\zeta \in (0,1)$ and $C_1 \in (0,\infty)$ such that for any $z_0 \in \UU$, any $0<R \le 1$ with $\overline{Q_R(z_0)} \subset \UU$,  and any nonnegative weak super-solution $g$ to \eqref{eq:KFP-M}, there holds 
\beqn\label{eq:super-LzetaInf}
\| g \|_{  L^\zeta(\tilde Q^{-}_{\eta R}(z_0))} \le C_1 \inf_{Q_{\eta R}(z_0)} g,
\eeqn
where $\eta=1/40$ and $\tilde Q^{-}_{\eta R}(z_0) := \TT_{R,z_0} (Q_{\eta}(-\tau, 0,0)) = Q_{\eta R} ( z_0 - ( R^2 \tau , R^2 \tau v_0 , 0) )$ with $\tau := 19 \eta^2/2$.

\smallskip
(2) For any $z_0 \in \UU$, any $0<r< r' \le 1$ such that $\overline{Q_{r'}(z_0)} \subset \UU$, and any  $\zeta >0$, there is $C_3 >0$ such that any nonnegative weak sub-solution $h$ to \eqref{eq:KFP-M} satisfies 
\beqn\label{eq:super-LinftyL1}
 \| h \|_{L^\infty(Q_r(z_0))}  \le C_3 \| h \|_{L^\zeta(Q_{r'}(z_0))}. 
\eeqn
%with $C_3$ depending on $\| b \|_{L^\infty(Q_R(z_0))}$, $r$ and $R$. 
\end{theo}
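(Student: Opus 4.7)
The plan is to reduce both parts of Theorem~\ref{theo:Harnack-M} to their canonical versions on the unit cylinder $Q_1(0)$ by means of the kinetic scaling encoded in the map $\TT_{R,z_0}$, and then to invoke directly \cite[Theorem~5 and Proposition~12]{MR4453413}, which are stated precisely in that canonical form. The only substantive verification is that the class of equations \eqref{eq:KFP-M} is invariant (modulo a harmless rescaling of the drift) under $\TT_{R,z_0}$, so the cited black-box estimates apply to the transplanted sub-/super-solution.

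More concretely, given $z_0=(t_0,x_0,v_0)\in\UU$ and $R\in(0,1]$ with $\overline{Q_R(z_0)}\subset\UU$, I would set $\tilde g(t,x,v):=g\bigl(\TT_{R,z_0}(t,x,v)\bigr)$ and compute, by the chain rule,
\[
(\partial_t+v\cdot\nabla_x)\tilde g = R^{2}\,(\partial_\tau+w\cdot\nabla_\xi)g\big|_{\TT_{R,z_0}}, \qquad \Delta_v\tilde g = R^{2}\,\Delta_w g\big|_{\TT_{R,z_0}}, \qquad \nabla_v\tilde g = R\,\nabla_w g\big|_{\TT_{R,z_0}},
\]
where $(\tau,\xi,w)=\TT_{R,z_0}(t,x,v)$. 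Since the spatial term $v_0\cdot\nabla_\xi g$ produced by $\partial_t$ cancels with the corresponding piece of the drift $R^3 v\cdot\nabla_\xi g$ (shifting $v$ into $w=v_0+Rv$), a super- (resp.\ sub-)solution $g$ of \eqref{eq:KFP-M} on $Q_R(z_0)$ becomes a super- (resp.\ sub-)solution on $Q_1(0)$ of the same equation with drift $\tilde b(v):=R\,b(v_0+Rv)$. By assumption \eqref{eq:Kolmogorov-hyp}, $b\in L^\infty_{\mathrm{loc}}$, so $\|\tilde b\|_{L^\infty(Q_1(0))}\le \|b\|_{L^\infty(B_{|v_0|+1})}$, which is the hypothesis required by the cited results and which stays uniformly bounded for $R\le1$.

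For part (1), I would then apply \cite[Theorem~5]{MR4453413} to $\tilde g$ on $Q_1(0)$, which delivers an exponent $\zeta\in(0,1)$ and a constant $C_1$ (depending only on $\|\tilde b\|_\infty$ and dimension) so that $\|\tilde g\|_{L^\zeta(Q_\eta(-\tau,0,0))} \le C_1\inf_{Q_\eta(0)}\tilde g$, with the concrete geometry $\eta=1/40$, $\tau=19\eta^2/2$ as recorded in that reference. Pulling back by $\TT_{R,z_0}$, which has Jacobian $R^{4d+2}$ and carries $Q_\eta(0)\to Q_{\eta R}(z_0)$ and $Q_\eta(-\tau,0,0)\to \tilde Q^-_{\eta R}(z_0)$, one sees that $\inf$ is scale-invariant while the $L^\zeta$ norm scales by $R^{(4d+2)/\zeta}\le 1$, yielding \eqref{eq:super-LzetaInf} with the same $C_1$. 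For part (2), I would apply \cite[Proposition~12]{MR4453413} to the nonnegative sub-solution $\tilde h$ on nested cylinders of radii $r/R$ and $r'/R$ (or, equivalently, use the translation/Galilean invariance of the cited statement directly on the cylinders $Q_r(z_0)\subset Q_{r'}(z_0)$), which gives \eqref{eq:super-LinftyL1} with $C_3=C_3(r,r',\zeta)$ after the same pull-back.

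The main (and essentially only) obstacle is the careful bookkeeping of the kinetic scaling, in particular checking that the cross term produced when $\partial_t$ hits $\TT_{R,z_0}$ is exactly absorbed by a corresponding piece of the transport term $v\cdot\nabla_x$, so that the equation for $\tilde g$ is genuinely of the form \eqref{eq:KFP-M} with a bounded drift $\tilde b$ controlled by $b$. Once this is verified the theorem is a direct transcription of the cited results, and no new analysis (Moser iteration, log-estimates, or expansion of positivity) needs to be carried out in the present paper.
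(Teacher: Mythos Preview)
The paper does not actually prove Theorem~\ref{theo:Harnack-M}. It is introduced with the sentence ``For the reader's convenience, we state these results now,'' and is treated as a verbatim restatement of \cite[Theorem~5 and Proposition~12]{MR4453413}, with no argument supplied. Your scaling reduction is correct and simply makes explicit the (standard) passage from the unit-cylinder versions in \cite{MR4453413} to the general cylinders $Q_R(z_0)$; the paper leaves this entirely implicit.

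One small point worth flagging: in the paper's setting the drift $b=b(x,v)$ is only locally bounded in $v$ (it grows like $|v|^{\gamma-1}$ by \eqref{eq:Assum-bc2}), so the rescaled drift $\tilde b(t,x,v)=R\,b(x_0+R^3x+R^2tv_0,\,v_0+Rv)$ has $L^\infty(Q_1(0))$ norm controlled by $\sup_{|w|\le|v_0|+1}\|b(\cdot,w)\|_{L^\infty_x}$, which depends on $|v_0|$. Consequently the constants $C_1,C_3$ produced by \cite{MR4453413} depend on $|v_0|$ and are not uniform over all $z_0\in\UU$ as the theorem's phrasing might suggest. This is harmless for the only application (the proof of Theorem~\ref{theo:Harnack}), since there $z_0$ ranges over the compact set $[T_0,T_1]\times\OO_\eps$ with $|v_0|\le\eps^{-1}$, but strictly speaking the statement should allow $C_1,C_3$ to depend on an upper bound for $|v_0|$.
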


\begin{proof}[Proof of Theorem~\ref{theo:Harnack}] 
We split the proof into two steps.

\medskip\noindent
\textit{Step 1: Local Harnack inequality.}
We claim that for any $z_0 \in \UU$ and $0 < R \le 1$ such that $\overline{Q_R(z_0)} \subset \UU$, there exists a constant $C>0$ such that
\begin{equation}\label{eq:local-Harnack}
\sup_{\tilde Q^{-}_{\eta R/2}(z_0)} f \le C \inf_{Q_{\eta R/2}(z_0)} f,
\end{equation}
where $\tilde Q^{-}_{\eta R/2}(z_0) := Q_{\eta R/2} (z_0 - (R^2 \tau , R^2 \tau v_0 , 0))$.

On the one hand, we take $\lambda > \| c \|_{L^\infty(Q_R(z_0))}$ and we set $g := e^{\lambda t} f$. The function $g$ satisfies 
$$
\partial_t g  =  \mathfrak{M} g + (\lambda+ c)g \ge \mathfrak{M} g \quad\hbox{in}\quad Q_R(z_0), 
$$
so that $g$ is a nonnegative weak super-solution  to \eqref{eq:KFP-M}. We deduce from Theorem~\ref{theo:Harnack-M}--(1) that 
$$
e^{\lambda t_0} e^{-\lambda R^2 (\eta^2+\tau)} \| f \|_{ L^\zeta(\tilde Q^{-}_{\eta R}(z_0))} 
\le \| g \|_{ L^\zeta(\tilde Q^{-}_{\eta R}(z_0))}  
\le C_1   \inf_{Q_{\eta R}(z_0) } g  
\le C_1  e^{\lambda t_0} \inf_{Q_{\eta R}(z_0) }  f .
$$

On the other hand, the function $h := e^{-\lambda t} f$ with $\lambda > \| c \|_{L^\infty ( Q_{\eta R} (z_0 - (R^2 \tau , R^2 \tau v_0 , 0)))}$ satisfies 
$$
\partial_t h  =  \mathfrak{M} h + (c - \lambda)h  \le \mathfrak{M} h \quad\hbox{in}\quad \tilde Q^{-}_{\eta R}(z_0)) = Q_{\eta R} (z_0 - (R^2 \tau , R^2 \tau v_0 , 0)). 
$$
Therefore $h$ is a nonnegative weak sub-solution to \eqref{eq:KFP-M}, and thus we deduce from Theorem~\ref{theo:Harnack-M}--(3) that 
$$
\begin{aligned}
e^{-\lambda (t_0-R^2 \tau)} \| f \|_{L^\infty(Q_{\eta R/2} (z_0 - (R^2 \tau , R^2 \tau v_0 , 0))}
&\le \| h \|_{L^\infty(Q_{\eta R/2} (z_0 - (R^2 \tau , R^2 \tau v_0 , 0)))} \\
&\le C_3 \| h \|_{L^\zeta (Q_{\eta R} (z_0 - (R^2 \tau , R^2 \tau v_0 , 0)))} \\
&\le  e^{-\lambda (t_0-R^2 \tau)} e^{ \lambda \eta^2 R^2} C_3 \| f \|_{L^\zeta( Q_{\eta R} (z_0 - (R^2 \tau , R^2 \tau v_0 , 0)) )}. 
\end{aligned}
$$
We conclude the local Harnack inequality~\eqref{eq:local-Harnack} by gathering the two previous estimates.

\medskip\noindent
\textit{Step 2: Proof of \eqref{eq:Harnack}.}
Once the local Harnack inequality \eqref{eq:local-Harnack} holds, one can deduce \eqref{eq:Harnack} by following the second step in the proof of~\cite[Theorem 2.15]{MR4412380}, which uses the Harnack chain from~\cite{MR4017782}.
\end{proof}

%%%%%%%%%%%%%%%%%%%%%%%%%%%%%%%%%%%%%%%%%%%%%%%%%%%%%%%
\section{Constructive asymptotic estimate} \label{sec:KR}

%%%%%%%%%
\subsection{An abstract constructive Krein-Rutman-Doblin-Harris theorem}
\label{subsec:KR-statement} 

We formulate a general abstract constructive Krein-Rutman-Doblin-Harris theorem in the spirit of the ones presented in the recent work \cite[Section~6]{sanchez:hal-04093201}. 

\smallskip
We consider a positive   semigroup $S = (S_t) = (S(t))$ on a Banach lattice $X$, which means that $X$ is a Banach space endowed with a  closed positive cone  $X_+$ (we write $f \ge 0$ if $f \in X_+$) and that  $S_t$ is a bounded linear mapping such that $S_t : X_+ \to X_+$ for any $t \ge 0$. 
 %We denote by $\LL$ its generator and $D(\LL)$ its domain. 
We also assume that $S$ is in duality with a dual semigroup $S^*$ defined on a dual  Banach lattice $Y$,  with  closed positive cone  $Y_+$. 
More precisely, we assume that $X \subset Y'$ or $Y \subset X'$,
 so that the bracket $\langle \phi,f \rangle$ is well defined for any $f \in X$, $\phi \in Y$, 
 that $f \in X_+$ (resp. $\phi \ge 0$) iff $\langle \psi , f\rangle \ge 0$ for any $\psi \in Y_+$ (resp. iff  $\langle \phi, g \rangle \ge 0$ for any $g \in X_+$) and  
that $\langle  \phi , S(t) f\rangle = \langle  S^*(t)\phi , f \rangle$, for any $f \in X$, $\phi \in Y$ and $t \ge 0$. 
We denote by $\LL$ the generator of $S$ with domain $D(\LL)$ and by   $\LL^*$ the generator of $S^*$ with domain $D(\LL^*)$.
 We are interested in the existence of positive eigenvectors for  both $\LL$ and $\LL^*$, and in their quantified exponential stability.

\smallskip
When $\| \cdot \|_k$ is a norm on $X$ (resp. $Y$), we denote $X_k := (X, \| \cdot \|_k)$ (resp. $Y_k := (Y, \| \cdot \|_k)$). 
For $\psi \in Y_+$ and $g \in X_+$, we define the seminorms
 $$
 [f]_\psi := \langle \psi ,  |f| \rangle, \ \forall \, f \in X, \quad 
[\phi]_g:= \langle |\phi|, g \rangle, \ \forall \, \phi \in Y.
 $$

\smallskip
In order to obtain a very accurate and constructive description of the longtime asymptotic behavior of the semigroup $S$, we introduce additional assumptions.

\smallskip
$\bullet$  We first make the strong dissipativity assumption 
 \bear
\label{eq:NEWHarris-Primal-LyapunovCond}
\|   S (t) f  \|_k
&\le& C_0 e^{\lambda t}   \|  f  \|_k +   C_1 \int_0^t e^{\lambda(t-s)}  \|  S(s) f \|_{0} ds, %\quad \forall \, t > 0, \ \forall \, k=0,1,  
\\
\label{eq:NEWHarris-Dual-LyapunovCond}
\|   S^* (t) \phi \|_k
&\le&  C_0 e^{\lambda t}   \|  \phi \|_k +   C_1 \int_0^t e^{\lambda(t-s)}     \| S^*(s)\phi \|_0 ds, %\quad \forall \, t > 0, \ \forall \, k=0,1,  
 \eear
for any $f \in X$, $\phi \in Y$, $t > 0$ and $k=0,1$, where $\lambda \in \R$,  $C_i \in (0,\infty)$ and  $\| \cdot \|_k$, $k=0,1$ denote two families of dual norms on $X$ and $Y$  such that $X_1 \subset X_0$ and $Y_1 \subset Y_0$. 
% such that $X_1 \subset X_0$ and $Y_1 \subset Y_0$. 
More precisely, we assume 
\bear\label{eq:KRnorm1}
&&\| f \|_0 \le \| f \|_1, \quad \| \phi \|_0 \le \| \phi \|_1, 
%\\ \label{eq:KRnorm1}
%&&
\quad |\langle \phi, f \rangle| \le \| \phi \|_0 \| f \|_1, \quad
|\langle \phi, f \rangle| \le \| \phi \|_1 \| f \|_0, 
\eear
for any $f \in X$ and $\phi \in Y$.  

\smallskip
$\bullet$  
We also assume for instance  one of the two following conditions
\begin{subequations}\label{eq:NEWH2}
\beqn\label{eq:NEWH2f0} 
\exists \, \lambda_0 \in \R, \  \lambda_0 > \lambda, \ \exists \, t_0 > 0, \ \exists \, f_0 \in X_+ \backslash \{ 0 \}, \quad S (t_0)  f_0 \ge e^{\lambda_0 t_0} f_0,
\eeqn
or
\beqn\label{eq:NEWH2psi0}
\exists \, \lambda_0 \in \R, \  \lambda_0 > \lambda, \ \exists \, t_0 > 0, \ \exists \, \phi_0 \in Y_+ \backslash \{ 0 \}, \quad S^* (t_0) \phi_0 \ge e^{\lambda_0 t_0} \phi_0, 
\eeqn
\end{subequations}
%\beqn\label{eq:NEWH2}
%\left\{\begin{array}{l}
%\exists \, \lambda_0 \in \R, \  \lambda_0 > \lambda, \ \exists \, t_0 > 0, \ \exists \, f_0 \in X_+ \backslash \{ 0 \}, \quad S (t_0)  f_0 \ge e^{\lambda_0 t_0} f_0,\vspace{2mm}\\
%\text{or}\quad
%\exists \, \lambda_0 \in \R, \  \lambda_0 > \lambda, \ \exists \, t_0 > 0, \ \exists \, \phi_0 \in Y_+ \backslash \{ 0 \}, \quad S^* (t_0) \phi_0 \ge e^{\lambda_0 t_0} \phi_0, 
%\end{array}\right.
%\eeqn
and we refer to \cite[Lemma~2.4]{sanchez:hal-04093201} for variants of these conditions regarding the existence of positive supereigenvectors.

\smallskip
$\bullet$ Next, we make a slightly relaxed  Doblin-Harris  positivity  assumption 
   \bear
\label{eq:NEWDoblinHarris-primal}
&& % \forall \,  A >   \ \exists \, \eta_A > 0, 
S_T f \ge  \eta_{\eps,T} g_\eps [S_{T_0} f]_{\psi_\eps}, \quad \forall \, f \in X_+, % \ \hbox{ if } \,  f \ge 0, 
\\ 
 \label{eq:NEWDoblinHarris-dual}
&& % \forall \,  A >   \ \exists \, \eta_A > 0, \ 
S^*_T \phi \ge  \eta_{\eps,T} \psi_\eps [S^*_{T_0}  \phi]_{g_\eps},  \quad \forall \, \phi \in Y_+, %\ \hbox{ if } \,  \phi \ge 0,
 \eear
 for any  $T \ge T_1 >  T_0 \ge0$ and $\eps > 0$, where $ \eta_{\eps,T} > 0$ and where $(g_\eps)$ and $(\psi_\eps)$ are two bounded and decreasing families of $X_{+}$ and $Y_{+}$. 
We say that the above condition is relaxed because we possibly have $T_0 > 0$ while in the usual  Doblin-Harris condition \eqref{eq:NEWDoblinHarris-primal} or \eqref{eq:NEWDoblinHarris-dual} holds with $T_0 = 0$.

\smallskip
$\bullet$ We finally assume the following  compatibility interpolation like conditions  
  \bear
\label{eq:NEWHarris-LyapunovCondNpsieps}
&&  
 \| f \|_0 \le \xi_\eps  \| f \|_1 + \Xi_\eps [f]_{\psi_\eps}, \ \forall \, f \in X,  \  \eps \in (0,1], 
\\ 
\label{eq:NEWHarris-LyapunovCondNpsiepsDual}
&& 
\| \phi \|_0 \le \xi_\eps  \| \phi \|_1 + \Xi_\eps  [\phi]_{g_\eps}, \ \forall \, \phi \in Y,  \  \eps \in (0,1], %\forall \, n \ge 1, 
 \eear
for two positive families $(\xi_\eps)$ and $(\Xi_\eps)$ such that $\xi_\eps\searrow 0$ and $\Xi_\eps \nearrow \infty$ as $\eps \searrow 0$.

 \smallskip

 It is worth pointing out that the above assumptions are written in a very symmetric way between the primal and dual spaces and semigroups.
They are yet too rough for addressing the issue of the existence of positive eigenvectors.
This existence problem is not our main purpose since it has been widely treated for instance in~\cite{sanchez:hal-04093201} (see also the references therein).
Nevertheless, for keeping the presentation as self-contained as possible, we consider some strengthened (and quite natural) assumptions that allow us to derive the existence part,
keeping in mind that many variants are possible and referring the interested reader to Sections~2 and~3 in \cite{sanchez:hal-04093201}.

\smallskip
$\bullet$ On the one hand, we assume that $X_1$ is a Banach space and 
\beqn
\label{eq:NEWHarris-GrowthX1}
\| S(t) f \|_{1} \le C'_0 e^{\lambda' t} \| f \|_1, 
\eeqn
for some $\lambda' \in \R$, some $C'_0 \ge 1$, any $f \in X$ and any $t \ge 0$. Of course this is a consequence of~\eqref{eq:NEWHarris-Primal-LyapunovCond} and the Gronwall inequality with $\lambda' := \lambda + C_1$ and $C'_0 := C_0$ under the mild assumption that $t \mapsto \| S(t) f \|_1$ is a (everywhere defined) measurable and locally bounded function on $\R_+$.  
 
\smallskip
$\bullet$ On the other hand, instead of  \eqref{eq:NEWHarris-Dual-LyapunovCond}, we rather assume that
% We assume a similar Lyapunov condition for the dual problem that we write in a slightly more precise way, namely we assume 
\beqn\label{eq:NEWHarris-Dual-LyapunovCondBIS1}
 S^* = V + W * S^*
\eeqn
 with 
\beqn\label{eq:NEWHarris-Dual-LyapunovCondBIS2}
 \| V(t) \phi \|_0 \le   C_0 e^{\lambda t}   \|  \phi \|_0, \quad 
  \| W (t)  \phi \|_1 \le   C_1 e^{\lambda t}   \|  \phi \|_0, \quad W \ge 0, 
\eeqn
which is a variant of \eqref{eq:NEWHarris-Dual-LyapunovCond} for $k=1$ and which obviously implies \eqref{eq:NEWHarris-Dual-LyapunovCond} for $k=0$.
We also assume that bounded sequences of $Y_1$ are weakly compact sequences in the $\sigma(Y_0,X_1)$ sense.  %{  and that $Y_0$ is a Banach space.}

\begin{theo}\label{theo:KRDoblinHarris}
Consider a semigroup $S$ on a Banach lattice $X$ which satisfies the conditions~\eqref{eq:NEWHarris-Primal-LyapunovCond} and \eqref{eq:KRnorm1}--\eqref{eq:NEWHarris-Dual-LyapunovCondBIS2}. 
Then, there exists a unique eigentriplet $(\lambda_1, f_1,\phi_1) \in \R \times X \times Y$ such that 
$$
\LL f_1 = \lambda_1 f_1, \quad f_1 \ge 0, \quad
\LL^* \phi_1 = \lambda_1 \phi_1, \quad \phi_1 \ge 0, 
$$
together with the normalization conditions $\| \phi_1 \|_0 =1$, $\langle \phi_1, f_1 \rangle = 1$. Furthermore, there exist some constructive constants $C \ge 1$ and $\lambda_2 < \lambda_1$ such that 
\beqn\label{eq:KRTh-constructiveRate}
\|S(t) f - \langle f,\phi_1 \rangle f_1 e^{\lambda_1 t}  \|_1 \le C e^{\lambda_2 t} 
\|  f - \langle f,\phi_1 \rangle f_1 \|_1
\eeqn
for any $f \in X$ and $t \ge 0$.
 \end{theo}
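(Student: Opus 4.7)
My plan follows the Doblin--Harris strategy tailored to the relaxed positivity assumption \eqref{eq:NEWDoblinHarris-primal}--\eqref{eq:NEWDoblinHarris-dual}, split into three stages and followed by a discussion of the main obstacle.

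\emph{Stage 1: construction of the eigentriplet.} Starting from the super-eigenvector $f_0$ of \eqref{eq:NEWH2f0}, define $\lambda_1 := \limsup_{t\to\infty} t^{-1}\log\|S(t) f_0\|_1$; the condition \eqref{eq:NEWH2f0} and the growth bound \eqref{eq:NEWHarris-GrowthX1} give $\lambda_0 \le \lambda_1 \le \lambda'$. For $\beta > \lambda_1$ the Laplace transform $R_\beta := \int_0^\infty e^{-\beta t} S(t)\,dt$ is a bounded positive operator on $X_1$, and $\|R_\beta f_0\|_1 \nearrow \infty$ as $\beta \searrow \lambda_1$. Extracting a weak cluster point of $R_\beta f_0/\|R_\beta f_0\|_1$ in the $\sigma(Y_0,X_1)$-topology (using the weak compactness of $Y_1$-bounded sets together with the duality $X_1 \subset Y'$ or $Y \subset X'$) yields $f_1 \in X_+\setminus\{0\}$ with $\LL f_1 = \lambda_1 f_1$; a symmetric argument based on the splitting \eqref{eq:NEWHarris-Dual-LyapunovCondBIS1}--\eqref{eq:NEWHarris-Dual-LyapunovCondBIS2} produces $\phi_1 \in Y_+\setminus\{0\}$ with $\LL^*\phi_1=\lambda_1\phi_1$. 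Applying \eqref{eq:NEWDoblinHarris-primal}--\eqref{eq:NEWDoblinHarris-dual} to the eigenvectors themselves yields $f_1 \ge c_\eps g_\eps$ and $\phi_1 \ge c_\eps \psi_\eps$ for each $\eps > 0$, so $\langle \phi_1,f_1\rangle > 0$ and we may normalize $\|\phi_1\|_0=\langle\phi_1,f_1\rangle=1$. Introduce the rank-one projector $\Pi f := \langle\phi_1,f\rangle f_1$ and the rescaled semigroup $\hat S(t) := e^{-\lambda_1 t}S(t)$; then $\Pi$ commutes with $\hat S$ and the remainder $f-\Pi f$ evolves inside $\ker\Pi$.

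\emph{Stage 2: seminorm contraction by coupling.} For $h \in \ker\Pi$, decompose $h = h_+ - h_-$ and let $u_t := \hat S(t)h_+$, $v_t := \hat S(t)h_-$, two positive trajectories with the conserved balance $\langle\phi_1,u_t\rangle = \langle\phi_1,v_t\rangle$. Applying the primal minorization \eqref{eq:NEWDoblinHarris-primal} to each trajectory and taking the lattice infimum,
\begin{equation*}
u_T \wedge v_T \;\ge\; \eta_{\eps,T}\,e^{-\lambda_1(T-T_0)}\,g_\eps \cdot \min\bigl([u_{T_0}]_{\psi_\eps},\,[v_{T_0}]_{\psi_\eps}\bigr).
\end{equation*}
Testing against $\psi_\eps$, combining with $[\hat S(T) h]_{\psi_\eps} \le [u_T + v_T]_{\psi_\eps} - 2[u_T\wedge v_T]_{\psi_\eps}$ and using $[g_\eps]_{\psi_\eps} > 0$, yields a one-step estimate $[\hat S(T) h]_{\psi_\eps} \le (1-\delta_\eps)\,[\hat S(T_0)(h_+ + h_-)]_{\psi_\eps}$ with explicit $\delta_\eps \in (0,1)$. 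Using \eqref{eq:NEWHarris-GrowthX1} to bridge the preparatory window $[0,T_0]$ converts this into a genuine contraction of the seminorm $[\cdot]_{\psi_\eps}$ between times $0$ and $T$, at the price of a controlled additive loss of the form $\text{const}\cdot\|h\|_1$.

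\emph{Stage 3: Harris norm and full contraction.} Introduce $\Nt f\Nt_{\eps,M} := \|f\|_1 + M[f]_{\psi_\eps}$ and apply the Lyapunov estimate \eqref{eq:NEWHarris-Primal-LyapunovCond} to $\hat S$; coupled with the interpolation \eqref{eq:NEWHarris-LyapunovCondNpsieps} applied to $\|\hat S(s) f\|_0$, Gronwall produces
\begin{equation*}
\|\hat S(t) f\|_1 \;\le\; C_0 e^{\mu_\eps t}\|f\|_1 + C_1 \Xi_\eps \int_0^t e^{\mu_\eps(t-s)}[\hat S(s) f]_{\psi_\eps}\,ds,\qquad \mu_\eps := \lambda-\lambda_1+C_1\xi_\eps.
\end{equation*}
Since $\lambda_1 \ge \lambda_0 > \lambda$, choosing $\eps$ small forces $\mu_\eps < 0$, so over a long enough window $[0,T]$ the norm $\|\cdot\|_1$ contracts up to a multiple of the seminorm. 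Combining with the seminorm contraction from Stage~2 and choosing $M$ large enough to absorb the cross-term $C_1\Xi_\eps T e^{\mu_\eps T}$, we obtain $\Nt \hat S(T)h\Nt_{\eps,M} \le \rho\,\Nt h\Nt_{\eps,M}$ with $\rho < 1$ for every $h \in \ker\Pi$. Iterating and using that $\Nt\cdot\Nt_{\eps,M}$ is equivalent to $\|\cdot\|_1$ (since $[f]_{\psi_\eps} \le \|\psi_\eps\|_0 \|f\|_1$ by \eqref{eq:KRnorm1}) produces \eqref{eq:KRTh-constructiveRate} with constructive $\lambda_2 := \lambda_1 + T^{-1}\log\rho < \lambda_1$. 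Uniqueness of the eigentriplet follows: any other eigentriple $(\lambda,f,\phi)$ satisfies $\hat S(t)(f-\Pi f) = e^{(\lambda-\lambda_1)t}(f-\Pi f)$, and the contraction forces $f = \Pi f$ and $\lambda = \lambda_1$ (symmetrically for $\phi$).

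\emph{Main obstacle.} The hardest point is the three-parameter balance $(\eps,M,T)$: the interpolation constant $\Xi_\eps$ diverges and the coupling gain $\delta_\eps$ may degenerate as $\eps \searrow 0$, while the Lyapunov gain $e^{\mu_\eps T}$ only becomes small for $T$ large. Extracting a strict contraction is where the strict spectral gap $\lambda_0 > \lambda$ is essential: it ensures $\mu_\eps$ can be made strictly negative with room to spare, so that $T$ can be chosen large enough to defeat $\Xi_\eps$ before the seminorm contraction is invoked. A related subtlety is the relaxation $T_0 > 0$ in \eqref{eq:NEWDoblinHarris-primal}: the coupling only contracts $[\cdot]_{\psi_\eps}$ between times $T_0$ and $T$, not from time $0$, so the Harris norm must be propagated across the preparatory window $[0,T_0]$ via \eqref{eq:NEWHarris-GrowthX1} before the iteration can be closed.
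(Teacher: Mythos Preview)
Your plan has the right high-level architecture (construct eigenvectors, coupling, Harris norm) but there is a genuine gap in Stage~2 that propagates to Stage~3, and a secondary issue in Stage~1.

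\textbf{The coupling seminorm is wrong.} You run the Doblin coupling in the seminorm $[\cdot]_{\psi_\eps}$, but $\psi_\eps$ is not an eigenfunction of $\hat S^*$. Two things break. First, the ``main term'' $[u_T+v_T]_{\psi_\eps}=[\hat S(T)|h|]_{\psi_\eps}$ is not bounded by $[\hat S(T_0)|h|]_{\psi_\eps}$, so your claimed one-step inequality $[\hat S(T)h]_{\psi_\eps}\le(1-\delta_\eps)[\hat S(T_0)|h|]_{\psi_\eps}$ does not follow from the minorization alone. Second, and more fundamentally, the kernel condition $h\in\ker\Pi$ gives $\langle\phi_1,h_+\rangle=\langle\phi_1,h_-\rangle$ but says nothing about $[h_+]_{\psi_\eps}$ versus $[h_-]_{\psi_\eps}$; so $\min\bigl([u_{T_0}]_{\psi_\eps},[v_{T_0}]_{\psi_\eps}\bigr)$ can be arbitrarily small compared to $[|h|]_{\psi_\eps}$, and no uniform $\delta_\eps>0$ can be extracted. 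The paper's fix is to run the coupling in $[\cdot]_{\phi_1}$: since $\hat S^*\phi_1=\phi_1$, one has $[u_t+v_t]_{\phi_1}=[|h|]_{\phi_1}$ for all $t$, and $\langle\phi_1,h\rangle=0$ forces $[h_+]_{\phi_1}=[h_-]_{\phi_1}=[h]_{\phi_1}/2$ exactly, so the minimum equals half the total and the contraction is clean. The role of $\psi_\eps$ is only to feed into $\phi_1$ via the lower bound $\phi_1\ge c_\eps\psi_\eps$ (obtained by applying \eqref{eq:NEWDoblinHarris-dual} to $\phi_1$), which converts \eqref{eq:NEWDoblinHarris-primal} into a minorization tested against $\phi_1$. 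Your Harris norm should accordingly be $[f]_{\phi_1}+\beta\|f\|_1$, not $\|f\|_1+M[f]_{\psi_\eps}$.

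\textbf{The construction of $f_1$ is not justified.} In Stage~1 you extract a weak cluster point of $R_\beta f_0/\|R_\beta f_0\|_1$ ``in the $\sigma(Y_0,X_1)$-topology'', but $R_\beta f_0\in X$, and the only compactness hypothesis is that $Y_1$-bounded sequences are $\sigma(Y_0,X_1)$-relatively compact. There is no analogous assumption on $X_1$. The paper constructs only $\phi_1$ by this compactness route (via the splitting \eqref{eq:NEWHarris-Dual-LyapunovCondBIS1}--\eqref{eq:NEWHarris-Dual-LyapunovCondBIS2}, which lives on the dual side), and obtains $f_1$ at the very end as the limit of the Cauchy sequence $g_k:=\hat S_T^k g_0$ in the set $\{g\ge0:\langle\phi_1,g\rangle=1\}$, using the already-established contraction in the Harris norm. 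This ordering (dual eigenvector first, contraction second, primal eigenvector last) is not incidental: it is what the asymmetric hypotheses allow.
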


\smallskip
In the next section, we first comment on this theorem.
Next we apply the result to the KFP equation in Section~\ref{subsec:applicationsKRtoKFP}.
The last section is devoted to the proof of Theorem~\ref{theo:KRDoblinHarris}.

\subsection{Discussion}

Let us make a few comments on Theorem~\ref{theo:KRDoblinHarris} and its assumptions.

 \smallskip
$\bullet$  The above result is a variant, and in some sense a consequence, of \cite[Theorem 6.3]{sanchez:hal-04093201}, see also \cite[Theorem~5.3]{MR3489637} and \cite[Theorem 2.1]{Bansaye2022}. 
However, the set of assumptions here only involves the semigroups $S$ and $S^*$ and not the eigenelements $(\lambda_1, f_1,\phi_1)$ as it was the case in \cite[Theorem 6.3]{sanchez:hal-04093201}. 
That makes clearer the properties  on the semigroup $S$ really necessary to get the conclusions. The framework is very general and in particular it is not restricted to the measures space in duality with the bounded measurable functions space as it is the case in \cite{Bansaye2022}. Our result is truly constructive what was not the case in the approach developed in  \cite{MR3489637}.

 \smallskip
$\bullet$ In the conservative case, namely $\lambda_1 = 0$, $\phi_1 \equiv 1 \in Y_1 \subset L^\infty$, and solely assuming \eqref{eq:NEWHarris-Primal-LyapunovCond} with $\lambda < 0$, \eqref{eq:NEWDoblinHarris-primal}, \eqref{eq:NEWHarris-LyapunovCondNpsieps} and $X_1$ is a Banach space,  the same conclusion \eqref{eq:KRTh-constructiveRate} holds true by just following the same proof. Such a result is a general Banach lattice variant of the classical Doblin-Harris theorem available in the measures space in duality with the bounded measurable functions space framework, see \cite{MR2857021,MR4534707,Bansaye2022,sanchez2024VCK} for more details and references.

\smallskip
$\bullet$   It is also worth emphasizing that  \eqref{eq:NEWDoblinHarris-dual} with $\phi := \psi_\eps$ implies 
$$
S^*_T \psi_\eps \ge \eta_{\eps,T}  [S^*_{T_0} \psi_\eps]_{g_\eps} \psi_\eps  =: e^{\lambda'_0 T} \psi_\eps, 
$$
%with  $\lambda_0' := (\log  (\eta_{n,T}  [\psi_n]_{g_n}))/T$, 
what is a  condition similar to \eqref{eq:NEWH2psi0}. We may thus alternatively first assume \eqref{eq:NEWDoblinHarris-primal}, \eqref{eq:NEWDoblinHarris-dual} and next assume that \eqref{eq:NEWHarris-Primal-LyapunovCond},  \eqref{eq:NEWHarris-Dual-LyapunovCond} hold for some $\lambda < \lambda_0'$. 
In other words, our constructive Krein-Rutman-Doblin-Harris theorem is really a consequence of a suitable strong dissipativity condition and of a suitable positivity condition on both primal and dual semigroups together with a   compatibility conditions over the several involved norms and seminorms.
This strong dissipativity condition is automatically satisfied when the semigroup has appropriate smoothing effects (measured in terms of gain of regularity, exponent of integrability or weight function) as it is the case here for the kinetic Fokker-Planck equation (see Section~\ref{subsec:applicationsKRtoKFP} below) but can be not true for less regularizing semigroup as for the linear Boltzmann model for instance.

\smallskip 

$\bullet$ 
An alternative natural way to formulate the  Doblin-Harris  positivity conditions \eqref{eq:NEWDoblinHarris-primal}, \eqref{eq:NEWDoblinHarris-dual} is to rather assume
a family of weak Harnack conditions
\bear
\label{eq:NEWweakHarnack-primal}
&&  
S_T f \ge  g_\eps  \int_{T_0}^T [S_t f ]_{\psi_\eps} dt  \ \hbox{ if } \,  f \ge 0, 
\\ 
 \label{eq:NEWweakHarnack-dual}
&&  
S^*_T \phi  \ge  \psi_\eps  \int_{T_0}^T [S^*_t \phi ]_{g_\eps}  dt  \ \hbox{ if } \,  \phi \ge 0, 
 \eear
 for some constants $T > T_0 \ge 0$, together with a family of supereigenvectors (or barrier) conditions 
\bear
\label{eq:NEWbarrier-primal}
S^*_t \psi_\eps \ge e^{\nu_\eps t} \psi_\eps, \quad
S_t g_\eps \ge e^{\nu_\eps t} g_\eps, \quad \forall \, t \ge T_0, 
  \eear
  for any $\eps > 0$ and some $\nu_\eps \in \R$. Using the first inequality in \eqref{eq:NEWbarrier-primal}, we find for $f\in X_+$
\bean
 \int_{T_0}^T [S_t f ]_{\psi_\eps}dt 
=
 \int_{T_0}^T \langle f, S^*_t \psi_\eps \rangle  dt 
\ge
 \int_{T_0}^T \langle f, e^{\nu_\eps t} \psi_\eps \rangle  dt =  \int_{T_0}^T  e^{\nu_\eps t} dt [f ]_{\psi_\eps},
\eean
and we thus immediately deduce \eqref{eq:NEWDoblinHarris-primal} (with $T_0 = 0$) from \eqref{eq:NEWweakHarnack-primal} and  \eqref{eq:NEWbarrier-primal}. 
We may similarly deduce \eqref{eq:NEWDoblinHarris-dual} (with $T_0 = 0$) from \eqref{eq:NEWweakHarnack-dual} and  \eqref{eq:NEWbarrier-primal}.

\smallskip
$\bullet$ We briefly discuss the link between our set of hypotheses and the strong maximum principle which is also classically used in the Krein-Rutman theory. 
For that purpose, we introduce the notion of strict positivity by writing $f \in X_{++}$ or $f > 0$ (resp.\ $\phi \in Y_{++}$ or $\phi > 0$) if $\langle   \psi ,f  \rangle > 0$ for any $\psi \in Y_+ \backslash \{ 0\}$
(resp. $\langle \phi, g \rangle > 0$ for any $g  \in X_+ \backslash \{ 0\}$).
Under assumptions  \eqref{eq:NEWHarris-LyapunovCondNpsieps} and  \eqref{eq:NEWHarris-LyapunovCondNpsiepsDual}, we claim that \eqref{eq:NEWDoblinHarris-primal} with $T_0 = 0$, or 
\eqref{eq:NEWDoblinHarris-primal} and \eqref{eq:NEWDoblinHarris-dual} together with~\eqref{eq:NEWH2f0},
%{  the additional assumption $\langle S_{T_0} g_\eps, \psi_\eps \rangle > 0$}
imply the classical strong maximum principle, and we recall that this last one classically writes 
\beqn\label{eq:StrongMP}
f \in D(\LL) \cap X_+ \backslash \{0 \}, \ \mu \in \R, \ (\mu - \LL) f  =: g\ge 0 \quad\hbox{implies}\quad f > 0. 
\eeqn
Before proving this claim we establish the following elementary facts 
\begin{enumerate}
\item[(i)\,] $f \in X_+ \backslash \{0\}$  iff $f \in X_+$ and $\langle   \psi_\eps ,f  \rangle > 0$ for any $\eps \in (0,\eps_f)$, $\eps_f > 0$ small  enough,
%\quad (i) $f \in X_+ \backslash \{0\}$  iff $\langle   \psi_n ,f  \rangle > 0$ for any $n \ge n_f$, $n_f \ge 1$ large enough,

\item[(ii)] $f \ge  \alpha_\eps g_\eps$, $\alpha_\eps > 0$, for any $\eps > 0$, imply  $f > 0$, 
%\quad (ii) $f \ge  \alpha_n g_n$, $\alpha_n > 0$, for any $n \ge 1$, imply  $f > 0$, 
\end{enumerate}

\noindent
as consequences of \eqref{eq:NEWHarris-LyapunovCondNpsieps} and  \eqref{eq:NEWHarris-LyapunovCondNpsiepsDual}.

One the one hand, for any fixed $f \in X \backslash \{ 0 \}$ the family of interpolation estimates  \eqref{eq:NEWHarris-LyapunovCondNpsieps} implies
$$
0 < \frac12 \| f \|_0 \le \| f\|_0  - \xi_\eps \| f \|_1 \le  \Xi_\eps [f]_{\psi_\eps},
$$
$\eps \in (0,\eps_f)$, $\eps_f > 0$ small  enough, which gives (i). In particular, $\psi_\eps \not = 0$ for $\eps > 0$ small  enough (what can also be added as an assumption in the definition of $\psi_\eps$!). 
We similarly  have $[\phi]_{g_\eps} > 0$ for any $\phi  \in Y_+ \backslash \{ 0 \}$ and any  $\eps \in (0,\eps_\phi)$, $\eps_\phi > 0$ small  enough, and thus $g_\eps \not=0$ for $\eps > 0$ small enough. In particular, we have 
$$
\langle \psi_\eps, g_\eps \rangle \ge \langle \psi_\eps, g_{\eps_0} \rangle \ge \Xi_\eps^{-1}  \frac12 \| g_{\eps_0}  \|_0 > 0, 
$$
for any $\eps \in (0,\eps_{g_{\eps_0}})$ and $\eps_0 > 0$ such that $g_{\eps_0} \not=0$. Assume now $f \ge \alpha_\eps g_\eps$, $\alpha_\eps > 0$, for any $\eps > 0$. 
For any $ \phi \in Y_+ \backslash \{ 0 \}$, we then have 
$$
\langle \phi, f \rangle \ge  \alpha_{\eps_\phi}  \langle \phi,  g_{\eps_\phi} \rangle  > 0.
$$
We have established that $f > 0$, and thus (ii) is proved. 

\smallskip
 We come now  to the proof of the strong maximum principle and thus consider $(\mu,f,g)$ satisfying the requirements of \eqref{eq:StrongMP}. We fix $\nu$ strictly larger than $\mu$ and  strictly larger than the growth bound of $S$ so that we may write \[
f  = (\nu - \LL)^{-1} ((\nu-\mu) f + g) = \int_0^\infty e^{-\nu t} S_t ((\nu-\mu) f + g) dt
\]
to get by positivity of $g$
\beqn\label{eq:fgeqStf}
f\ge (\nu-\mu) \int_0^\infty e^{-\nu t} S_t  f  dt. 
\eeqn
From \eqref{eq:NEWDoblinHarris-primal} with $T_0=0$, we deduce 
\beqn\label{eq:fgeStf}
f
\ge g _\eps   (\nu-\mu)  \int_{T_1}^\infty e^{-\nu t}  \eta_{\eps,t}  dt   \,  \langle  \psi_\eps ,    f\rangle, \quad \forall \, \eps > 0.
\eeqn   
From (i)  above and because $f \in  X_+ \backslash \{0 \}$, there exists $\eps_f > 0$ such that $\langle   \psi_\eps ,f  \rangle > 0$ for any $\eps \in (0, \eps_f)$. 
Together with \eqref{eq:fgeStf}, we deduce that $f \ge \alpha_\eps g_\eps$, with $\alpha_\eps > 0$, for any $\eps \in (0, \eps_f)$, 
and that in turn implies $f > 0$ from  (ii) above.

\smallskip

When~\eqref{eq:NEWDoblinHarris-primal} and~\eqref{eq:NEWDoblinHarris-dual} are satisfied with $T_0 > 0$, we may derive~\eqref{eq:StrongMP} by using the additional condition~\eqref{eq:NEWH2f0}.
We apply~\eqref{eq:NEWDoblinHarris-primal} with $T=t-T_1+T_0$ to the vector $S_{T_1-T_0}f$ to get that
\[S_tf\geq \eta_{\eps,t-T_1+T_0}g_\eps\langle\psi_\eps,S_{T_1}f\rangle\]
for any $t\geq 2T_1-T_0$.
Injecting this inequality in~\eqref{eq:fgeqStf},  we obtain
\begin{align*}
f&\ge g _\eps   (\nu-\mu)  \int_{2T_1-T_0}^\infty e^{-\nu t}  \eta_{\eps,t-T_1+T_0}  dt   \,  \langle  \psi_\eps ,  S_{T_1} f\rangle
\\
&\ge g _\eps   (\nu-\mu)  \int_{2T_1-T_0}^\infty e^{-\nu t}  \eta_{\eps,t-T_1+T_0}  dt   \,  \langle S^*_{T_1} \psi_\eps ,   f \rangle, 
\end{align*}
for any $\eps>0$. Together with  \eqref{eq:NEWDoblinHarris-dual}, that implies
\beqn\label{eq:H2&DoblinImpliquentPMfort}
f \ge g _\eps   (\nu-\mu)  \int_{2T_1-T_0}^\infty e^{-\nu t}  \eta_{\eps,t-T_1+T_0}  dt   \,  \eta_{\eps',T_1}   \langle S^*_{T_0} \psi_\eps ,  g_{\eps'} \rangle  \langle \psi_{\eps'} ,   f \rangle. 
\eeqn
On the other hand, taking $n\in\N$ large enough so that $nt_0\geq T_0$ and iterating~\eqref{eq:NEWH2f0}, we get that $S_{nt_0}f_0\geq e^{nt_0}f_0$ and as a consequence $S_{nt_0}f_0\in X_+\setminus\{0\}$.
We infer that necessarily $S_{T_0}f_0\in X_+\setminus\{0\}$, and the existence of  $\eps_0$ such that $\langle S^*_{T_0}\psi_\eps,f_0\rangle=\langle\psi_\eps,S_{T_0}f_0\rangle>0$ for all $\eps\in(0,\eps_0)$.
 We thus deduce in particular $S^*_{T_0}\psi_\eps\in X_+\setminus\{0\}$, and since we also have $f\in X_+\setminus\{0\}$, we deduce the existence of $\eps' >0$ such that $\langle S^*_{T_0}\psi_\eps,g_{\eps'}\rangle>0$ and $\langle \psi_{\eps'} ,   f \rangle>0$. Coming back to \eqref{eq:H2&DoblinImpliquentPMfort}, we have proved, for any $\eps\in(0,\eps_0)$, the existence of $\alpha_\eps>0$ such that $f\geq\alpha_\eps g_\eps$ and this guarantees that $f>0$.

Symmetrically, the assumptions~\eqref{eq:NEWDoblinHarris-primal}, \eqref{eq:NEWDoblinHarris-dual} and~\eqref{eq:NEWH2psi0} imply that the dual operator $\LL^*$ satisfies the strong maximum principle.

In particular, we deduce that the first eigenvectors exhibited in Theorem~\ref{theo:KRDoblinHarris}  satisfies $f_1 > 0$ and $\phi_1 > 0$.

%%%%%%%%%%%%%%%%%%
\subsection{Application to the KFP equation: proof of Theorem~\ref{theo:KR}} 
\label{subsec:applicationsKRtoKFP}
 
In this section, we consider the kinetic Fokker-Planck equation \eqref{eq:KFP}, \eqref{eq:KolmoBdyCond},  \eqref{eq:initialDatum} and  we prove  Theorem~\ref{theo:KR} by using Theorem~\ref{theo:KRDoblinHarris}.
We define $X_0 := L^2_{\omega_2}$ for a strongly confining admissible exponential  weight function $\omega_2$ and $X = X_1 := L^r_{\omega_r}$ with $r \in (2,\infty)$ given by  Proposition~\ref{prop:EstimL1Lr-omega}  and an  admissible exponential  weight function $\omega_r$ such that $L^r_{\omega_r} \subset L^2_{\omega_2}$.  We next define $Y_0 := L^{r'}_{m_r}$ with $r'$ the conjugate exponent associated to $r$, $m_r := \omega^{-1}_r$ and $Y_1 := L^{2}_{m_2}$, with $m_2 := \omega^{-1}_2$.  Many other choices are possible. This choice however contrasts with the usual $L^1-L^\infty$ framework considered when using Doblin-Harris type arguments. 

\smallskip
We now check that the assumptions of Theorem~\ref{theo:KRDoblinHarris} are met. 

\smallskip
We recall that $\OO_\eps$ has been defined in \eqref{def:OmegaEpsOeps} and we denote $\eps_0 > 0$ such that $\OO_{\eps_0} \not= \emptyset$. 

\smallskip
$\bullet$ {\sl A  positive supereigenvector condition.} 
For a given function  $0 \le h_0 \in C^2_c(\OO)$ normalized by $\| h_0 \|_{L^2_{\omega_2}} = 1$ and such that $\supp h_0 \subset \OO_{\eps_0}$,  and for $\lambda > \omega(S_\LLL)$ the growth bound of $S_\LLL$, we define $f_0\in D(\LLL)$ as the solution to 
$$
(\lambda - \LLL) f_0= h_0\ \hbox{ in } \ \OO,
\quad 
\gamma_- f_0  =   \RRR \gamma_+ f_0\  \hbox{ on } \  \Sigma_-. 
$$

The existence and uniqueness of such a solution is a classical consequence of the existence of the semigroup $S_\LLL$ given by Proposition~\ref{prop-KFP-L2primal}.
Repeating the proof of the condition (H2) in~\cite[Section~11.4]{sanchez:hal-04093201}, there exists a constructive constant $c>0$ such that $f_0\geq c\,h_0$.
Coming back   to the equation, we have 
$$
\LLL f_0 =  \lambda f_0 - h_0 \ge   (\lambda -   c^{-1}) f_0\ \hbox{ in } \ \OO,
$$
which is a variant of \eqref{eq:NEWH2f0}, and in particular from  \cite[Remark~2.5]{sanchez:hal-04093201},  it  implies \eqref{eq:NEWH2f0} with $\lambda_0 := (\lambda -   c^{-1})$.

\smallskip\smallskip
$\bullet$ {\sl Strong dissipativity conditions.}  We define 
$$
\BB := \LLL - \AA, \quad \AA f := M \chi_R f , \quad \mathbf{1}_{B_R} \le \chi_R \le \mathbf{1}_{B_{2R}}, 
$$
with $B_r := \{ v \in \R^d; \, |v |  \le r \}$ and $\chi_R$ a smooth function. We then define the semigroup $S_\BB$ associated to $\BB$ and the reflection condition \eqref{eq:KolmoBdyCond} which existence is given by Proposition~\ref{prop-KFP-L2primal}.
We claim that for any $a^* \in \R$, we may choose $M,R > 0$ large enough in such a way that 
 % (and we will fix $a < \kappa_0$), we claim that the associated semigroup 
$S_\BB$ satisfies 
\beqn\label{eq:preuveKR-SBLpLp}
\| S_{\BB}(t) f \|_{L^p_\omega} \le e^{a^*t} \| f \|_{L^p_\omega}, \quad \forall \, t \ge 0, \ \forall \, f \in L^p_\omega, 
\eeqn
for any Lebesgue space $L^p_\omega$ with admissible exponential weight function $\omega$. 
Coming back indeed to the proof of  Lemma~\ref{lem:EstimKFPLp12}, for $p=1,2$, and more precisely to \eqref{eq:disssipSL-Lp}, the function $f(t) := S_\BB(t) f_0$ satisfies  
\bean
\frac{d}{dt} \int_\OO f^p \tilde \omega^p
&\le&  \int_\OO f^p   \tilde \omega^p \varpi^\BB, 
\eean
with $\varpi^\BB := \varpi^{\LLL}_{\tilde\omega,p} - M\chi_R$. 
Because of  \eqref{eq:varpiL-varpisharp},
%$$
%  \varpi^{\LLL}_{\tilde\omega}\le C_0 - C_1 \langle v \rangle^\varsigma, \quad C_i, \varsigma > 0. %\varsigma, \quad \varsigma = \langle v \rangle^s \to \infty. 
%$$
we may thus fix $M,R >0$ large enough, in such a way that $\varpi^\BB \le a^*$. That implies \eqref{eq:preuveKR-SBLpLp} for $p=1,2$. We deduce that  \eqref{eq:preuveKR-SBLpLp} holds for any $p \in [1,\infty]$ as we proved the similar growth estimate for $S_\LLL$, and we thus refer to Theorem~\ref{theo:WP-KFP&KFP*} for more details.

On the other hand, from Theorem~\ref{theo:ultra} applied to the semigroup $S_\BB$, we know that 
\beqn\label{proof-KRKFP-L2Lr}
\| S_\BB(t) f \|_{L^r_{\omega_r}} \lesssim \frac{e^{Ct} }{ t^\Theta} \| f \|_{L^2_{\omega_2}}.
\eeqn
We finally recall that from Theorem~\ref{theo:existsLpM1}, we have 
\beqn\label{proof-KRKFP-LqLq}
\| S_\LLL (t) f \|_{L^q_{\omega_q}} \le e^{Ct} \| f \|_{L^q_{\omega_q}},
\eeqn
for the exponents $q=2,r$, for any admissible weight function $\omega_q$, for any $f \in L^q_{\omega_q}$ and any $t \ge 0$. 

\smallskip
Iterating the Duhamel formula
$$
S_\LLL = S_\BB + S_\BB \AA * S_\LLL, 
$$
we get 
$$
S_\LLL = \VV + \WW*S_\LLL, 
$$
with 
$$
\VV := S_\BB + \dots + (S_\BB \AA)^{*(N-1)} * S_\BB, \quad 
\WW := (S_\BB \AA)^{*N}.
$$
Here $*$ denotes the usual convolution operation for functions defined on $\R_+$ and we define recursively $U^{*1} = U$, $U^{*k} = U^{*(k-1)}*U$. 
Combining \eqref{proof-KRKFP-L2Lr} and \eqref{proof-KRKFP-LqLq}, we may use \cite[Proposition 2.5]{MR3465438} (see also \cite{MR3779780,MR3488535}) and we deduce that 
$$
\| \VV (t) f \|_{L^r_{\omega_r}} \lesssim e^{at}\| f \|_{L^r_{\omega_r}}, \quad  
\quad
\| \WW (t) f \|_{L^r_\omega} \lesssim e^{at}\| f \|_{L^2_{\omega_2}},  
$$
for $N \ge 1$ large enough, any $a > a^*$, any $t \ge 0$ and any $f \in L^r_{\omega_r}$, 
and thus 
%From the (iterated) Duhamel formula, we have 
%$$
%S^\LL = S^\BB + \dots + \VV * S^\LL
%$$
%from what 
\beqn\label{proof-KRKFP-SLLrLrL2}
\| S_\LLL(T) f_0 \|_{L^r_{\omega_r}} \le C e^{aT} \|   f_0 \|_{L^r_{\omega_r}} +  C e^{CT}\| f_0 \|_{L^2_{\omega_2}}, 
\eeqn
for any $t \ge 0$ and $f_0 \in L^r_{\omega_r}$. 
That is nothing but \eqref{eq:NEWHarris-Primal-LyapunovCond} for $k=1$. The same estimate for $k=0$ is clear, it is nothing but  Lemma \ref{lem:EstimKFPLp12}. 
The proof of \eqref{eq:NEWHarris-Dual-LyapunovCondBIS1}-\eqref{eq:NEWHarris-Dual-LyapunovCondBIS2} is  similar and it is thus skipped. 
  
\smallskip\smallskip
$\bullet$ {\sl Doblin-Harris condition.} 
%We define 
%$$
%\OO_\eps := \Omega_{\eps} \times B_{\eps^{-1}}, \quad \Omega_{\eps} := \{ x \in \Omega; \, \delta(x) > \eps \},
%$$ 
%for any $\eps \in (0,\eps_0)$, with  $\eps_0 > 0$ such that $\OO_{\eps_0} \not=\emptyset$.
%
%\smallskip
Let us   fix $0 \le f_0 \in L^2_\omega$ and denote $f_t := S_\LLL(t) f_0$. For $T_0> 0$ and $\eps \in (0,\eps_0)$, we know from Theorem~\ref{theo:Harnack} that 
for any $T_1>T_0 > 0$ and for every $T \geq T_1$, we have 
 $$
 \sup_{\OO_\eps} f_{T_0} \le C  \inf_{\OO_{\eps}} f_{T}, 
 $$
 for a constant $C$ independent of $f$. 
 We deduce
 \bean
  f_{T} 
  &\ge& \bigl( \inf_{\OO_{\eps}} f_{T} \bigr) \mathbf{1}_{\OO_{\eps}}
  \\
&\ge& \frac1C \bigl( \sup_{\OO_{\eps}} f_{T_0} \bigr) \mathbf{1}_{\OO_{\eps}}
  \\
&\ge& \frac1C \frac{1 }{ |\OO_\eps|} \langle S_{T_0} f_0, \mathbf{1}_{\OO_\eps} \rangle    \mathbf{1}_{\OO_{\eps}}
%  \\
%&=& \frac1C \frac{1 }{ |\OO_\eps|} \langle f_0, S^*_{T_0}\mathbf{1}_{\OO_\eps} \rangle    \mathbf{1}_{\OO_{\eps}}
%  \\
%&\ge&   \frac1C \frac{1 }{ |\OO_\eps|} \eta_\eps \langle f_0,  \mathbf{1}_{\OO_\eps} \rangle    \mathbf{1}_{\OO_{\eps}}
\eean
what is  \eqref{eq:NEWDoblinHarris-primal} %eq:NEWweakHarnack-primal} 
with $g_\eps = \psi_\eps := \mathbf{1}_{\OO_\eps}$. The proof of \eqref{eq:NEWDoblinHarris-dual} is identical.

 \smallskip
$\bullet$ {\sl The interpolation condition.} Let us consider two exponents $p > q$ and two locally bounded weight functions $\omega_p$, $\omega_q$ such that $\omega_p \ge \omega_q$ and $ \omega_q/\omega_p \in L^{qr'}$ with $r := p/q \in (1,\infty)$, and in particular $L^p_{\omega_p} \subset L^q_{\omega_q}$.  We have 
\bean
\| f \|_{L^q_{\omega_q}} 
&\le& \| f \mathbf{1}_{\OO_\eps} \|_{L^q_{\omega_q}} +  \| f\omega_q \mathbf{1}_{\OO^c_\eps} \|_{L^q}   
\\
&\le& \| f \mathbf{1}_{\OO_\eps} \|^\theta_{L^p_{\omega_q}}  \| f \mathbf{1}_{\OO_\eps} \|^{1-\theta}_{L^1_{\omega_q}} + 
   \| f\omega_p \|_{L^p} \|\omega_q/\omega_p \mathbf{1}_{\OO^c_\eps}  \|_{L^{r'q}}  
\\
&\le&  ( \eps^{ \frac{1 }{ \theta}}  +  \|\omega_q/\omega_p \mathbf{1}_{\OO^c_\eps}  \|_{L^{r'q}}   )  \| f  \|_{L^p_{\omega_p}}  +  \eps^{ \frac{1} {\theta-1}}  \| f \mathbf{1}_{\OO_\eps}  \|_{L^1_{\omega_q}}, 
\eean
where we have used the classical interpolation inequality (with $1/q = \theta/p + 1-\theta$, $\theta \in (0,1)$) and the Holder inequality in the second line,
and the Young inequality in the third line. 
That implies  both \eqref{eq:NEWHarris-LyapunovCondNpsieps} and \eqref{eq:NEWHarris-LyapunovCondNpsiepsDual}.

\medskip
The same conditions hold for the dual problem, so that we may apply Theorem~\ref{theo:KRDoblinHarris} in order to conclude that  Theorem~\ref{theo:KRDoblinHarris} holds in the space $X_1= L^r_{\omega}$. 
We deduce that  Theorem~\ref{theo:KRDoblinHarris} holds in any weighted Lebesgue spaces associated to admissible weight functions by using the extension trick as developed in \cite{MR3779780,MR3488535} to which we refer for details. It is also worth emphasizing that the uniform estimates in \eqref{eq:theoKR-strictpo&Linftybound} directly follow from the ultracontractivity estimate established in Theorem~\ref{theo:ultra} for the primal and the dual semigroups
and that the strict positivity properties  in \eqref{eq:theoKR-strictpo&Linftybound} directly follow from the discussion about the strong maximum principle just after the statement of Theorem~\ref{theo:KRDoblinHarris}. 
 Furthermore, $f_1, \phi_1 \in C(\OO)$ as a direct consequence of Theorem~\ref{theo:WP-KFP&KFP*} and \cite[Theorem 3]{MR3923847}.

\subsection{Proof of Theorem~\ref{theo:KRDoblinHarris}} This section is devoted to the proof of Theorem~\ref{theo:KRDoblinHarris} which is split into six steps. 
 We closely follow  the material presented in  \cite[Section~2,3]{sanchez:hal-04093201} (see also \cite{PL2}) in Steps~1, 2, 3, 4 and the  material presented in  \cite{MR4534707} in Steps~5 and 6.

\medskip\noindent
{\sl Step 1. Existence of $\phi_1$.} 
From the fact that $S^*$ is a positive semigroup, \eqref{eq:NEWH2psi0} and \cite[Lemma 2.6]{sanchez:hal-04093201}, we know 
that
$$
\lambda_1 := \inf 
\{ \kappa \in \R; \ z-\LL \hbox{ is invertible  for any } z \ge \kappa\} \ge \lambda_0
$$ and 
\beqn\label{eq:lambda1approx}
\exists \, \lambda_n \searrow \lambda_1, \ \exists \, \hat \phi_n \in D(\LL^*) \cap Y_+,\  \varphi_n := \lambda_n \hat \phi_n - \LL^*  \hat \phi_n \ge 0, \ \| \hat \phi_n \|_0 = 1, \ \| \varphi_n \|_0 \to 0.
\eeqn
Because $\lambda_n > \lambda_1$, the following representation formula
$$
\hat \phi_n := (\lambda_n - \LL^*)^{-1} \varphi_n  = \int_0^\infty S^* (t) e^{-\lambda_n t}  \varphi_n dt
$$
holds true. Introducing the sequences 
$$
v_n := \VV_n \varphi_n, \quad \VV_n := \int_0^\infty V(t) e^{-\lambda_n t}   dt, 
$$
and 
\bean
w_n 
&:=&
\int_0^\infty (W * S^*)(t) e^{-\lambda_n t}  \varphi_n dt
\\
&=&
\int_0^\infty W (t) e^{-\lambda_n t}  dt  \int_0^\infty S^* (t) e^{-\lambda_n t}  \varphi_n dt 
\\
&=& \WW_n \hat \phi_n, \quad \WW_n := 
\int_0^\infty W (t) e^{-\lambda_n t}  dt ,
\eean
and using \eqref{eq:NEWHarris-Dual-LyapunovCondBIS1}, we deduce that 
\beqn\label{eq:NEW-vn&wn}
\hat \phi_n = v_n + w_n.
\eeqn
By construction and \eqref{eq:NEWHarris-Dual-LyapunovCondBIS2}, we have $\| v_n \|_0 \to 0$, $(w_n)$ is bounded in $Y_1$ and thus weakly compact in $Y_0$. That implies that $(\hat \phi_n)$ is weakly compact in $Y_0$. There thus exist a subsequence $(\hat \phi_{n_\ell})$  and $\phi_1 \in Y_0$ such that $\hat \phi_{n_\ell} \wto \phi_1$ weakly in $Y_0$. In particular, $\phi_1 \ge 0$ and $ \LL^* \phi_1 = \lambda_1 \phi_1$. 
On the other hand, from \eqref{eq:NEW-vn&wn}, we have 
$$
1 = \| \hat \phi_n \|_0 \le %\| \VV_n \varphi_n \|_ 0 + \| \WW_n \hat \phi_n \|_0, 
 \| v_ n \|_ 0 + \|  w_n \|_0, 
$$
with $\| v_n \|_ 0  \to 0$ again %from \eqref{eq:NEWHarris-Dual-LyapunovCondBIS2} 
and 
\bean
\| w_n \|_0  
&\le& \xi_\eps \| \WW_n \hat \phi_n \|_1 + \Xi_\eps [\WW_n \hat \phi_n ]_{g_\eps} 
\\
&\le& \xi_\eps C  \| \hat \phi_n \|_0 + \Xi_\eps [\WW_n \hat \phi_n ]_{g_\eps}, 
\eean
where we have used \eqref{eq:NEWHarris-LyapunovCondNpsiepsDual} in the first line and  \eqref{eq:NEWHarris-Dual-LyapunovCondBIS2} in the second line. 
Choosing $n\ge 1$ large enough so that $\| v_n \|_0 \le 1/4$ and $\eps > 0$ small enough so that $\xi_\eps C \le 1/4$, we deduce from the two above estimates and the fact that $\WW_n \ge 0$ that 
$$
\frac12 \le  \Xi_\eps [\WW_n \hat \phi_n ]_{g_\eps} = \Xi_\eps \langle w_n, g_\eps \rangle, \quad \forall \, n \ge 0.
$$
Using that $g_\eps \in Y_0'$, we may pass to the limit in the above inequality and we deduce
\beqn\label{eq:NEW-bddinf1phi1}
\frac1{2\Xi_\eps} \le   \langle \phi_1 , g_\eps \rangle,  
\eeqn
in particular $\phi_1 \not\equiv 0$ and that concludes the proof of the existence of a dual eigenelement.  
We have thus established the  existence of a first dual eigenelement, that is $(\lambda_1,\phi_1) \in \R \times Y$ such that 
\beqn\label{eq:NEWexistlambda1phi1}
\LL^* \phi_1 = \lambda_1 \phi_1, \quad \phi_1 \ge 0, \quad \phi_1 \not= 0, \quad \lambda_1 \ge \lambda_0.
\eeqn
It is worth emphasizing that we only have $\| \phi_1 \|_0 \le  1$ from the lsc property of the norm $\| \cdot \|_0$.

\medskip
 \noindent
{\sl Step~2.  More about the dual eigenfunction.}  
From \eqref{eq:NEWHarris-Dual-LyapunovCond}  applied to $\phi_1$, we have 
$$
e^{\lambda_1 t} \| \phi_1 \|_1 =  \|   S^* (t) \phi_1 \|_1
\le   C_0 e^{\lambda t}   \|  \phi _1\|_1 + C_1 \int_0^t e^{\lambda(t-s) + \lambda_1 s} ds  \| \phi_1\|_0,
$$
so that 
$$
(1 -     C_0 e^{(\lambda-\lambda_1) t}  ) \| \phi_1 \|_1 
\le   C_1 \int_0^t e^{(\lambda-\lambda_1) \tau} d\tau  \| \phi_1\|_0. 
$$
We recall that  $\lambda_1 \ge \lambda_0$ from   \eqref{eq:NEWexistlambda1phi1},   thus 
$$
\frac{\lambda_0-\lambda}{C_1} ( 1 -  C_0 e^{(\lambda-\lambda_0) t}  ) \|  \phi _1  \|_1 \le  \|  \phi_1 \|_0, 
$$
and finally, passing to the limit $t \to \infty$, we deduce that
%Choosing $t > 0$ conveniently large enough so that the absolute value of the negative term is half the positive term at the LHS, we deduce 
\beqn\label{eq:NEWHarris-Dual-LyapunovCondphi1}
\|\phi_1\|_1\leq\frac{C_1}{\lambda_0-\lambda}\|\phi_1\|_0.
%r_1  \|  \phi _1  \|_1 \le  \| \phi_1\|_0 , \quad r_1 :=\frac{\lambda_0-\lambda}{C_1}. % \frac1{2C_1} (2C_0)^{{\lambda \over \lambda_0-\lambda}}.
\eeqn
We normalize the dual eigenfunction with the norm $\| \cdot \|_0$.
 Note that since for the eigenvector $\phi_1$ built in the step 1 we had $\|\phi_1\|_0\leq1$, % {\Cyan from the lsc of the norm $\| \cdot \|_0$}, 
the lower bound~\eqref{eq:NEW-bddinf1phi1} remains valid for the new normalization $\|\phi_1\|_0=1$.
Using~\eqref{eq:NEWDoblinHarris-dual}, we thus deduce 
\beqn\label{eq:NEWborneinf2phi1}
\phi_1 \ge e^{\lambda_1 (T_0-T_1)}  [\phi_1 ]_{g_\eps} \psi_\eps \ge \frac{e^{\lambda_1 (T_0-T_1)}  }{ 2 \Xi_\eps} \psi_\eps.
\eeqn

\medskip
 \noindent
{\sl Step~3. The Lyapunov condition.}  We define $ \widetilde S (t) := e^{-\lambda_1 t} S(t)$. 
From \eqref{eq:NEWHarris-Primal-LyapunovCond}, we have 
\bean
\|    \widetilde S (t) f  \|_1
&\le& C_0 e^{(\lambda-\lambda_0) t}   \|  f  \|_1  +   C_1 \int_0^t e^{(\lambda-\lambda_0)(t-s)}  \|  \widetilde S(s) f  \|_0 ds
\\
&\le&  \gamma'_L   \|  f  \|_1  +   K' \|  f  \|_0, 
\eean
for any $t \ge 0$, and with $\gamma'_L := C_0 e^{(\lambda-\lambda_0) t}$, $K' := C_0 e^{(\lambda-\lambda_0) t} (e^{C_1  t} - 1)$.  
From \eqref{eq:NEWHarris-LyapunovCondNpsieps} and \eqref{eq:NEWborneinf2phi1}, we have 
$$
 \|  f  \|_0 \le  \xi_\eps \| f \|_1 + \Xi_\eps [f]_{\psi_\eps}  \le   \xi_\eps \| f \|_1 + 2\Xi_\eps^2 e^{\lambda_1 (T_1-T_0)} [f]_{\phi_1}.
$$
We then fix $T \ge T_1 > 0$ such that $\gamma'_L < 1$ and next $\eps > 0$ such that  $\gamma_L := \gamma'_L + \xi_\eps K' < 1$ and 
%$$
%\gamma_L := \gamma'_L + {K' \over R} < 1, \quad \| f \|_{L^1_{\omega_0}} \le {1 \over R} \| f \| +  C_R [f]_{\omega_R},
%$$
%Together with \eqref{eq:NEWHarris-LyapunovCondNpsieps}, 
we deduce  that $S$ satisfies the Lyapunov condition 
\beqn\label{eq:NEWHarris-Primal-LyapunovCondStilde}
\|    \widetilde S_T  f  \|_1 \le  \gamma_L   \|  f  \|_1 +   K [ f  ]_{\phi_1}, 
\eeqn
with $K :=  2\Xi_\eps^2 e^{\lambda_1 (T_1-T_0)} K'$.

 \medskip
 \noindent
{\sl  Step~4.} Take $f \ge 0$ such that $\| f \|_1 \le A [f]_{\phi_1}$ with $A > K/(1-\gamma_L)$.
We have 
\bean
\| \widetilde S_{T_0} f\|_1 &\le& C_0'e^{(\lambda'-\lambda_0)T_0} \| f \|_1
\\
&\le& C_0'e^{(\lambda'-\lambda_0)T_0} A [f]_{\phi_1}
\\
&=& C_0'e^{(\lambda'-\lambda_0)T_0} A  [\widetilde S_{T_0}f]_{\phi_1}
\\
&\le& C_0'e^{(\lambda'-\lambda_0)T_0} A \| \phi_1 \|_1\| \widetilde S_{T_0}f \|_0
\\
&\le& C_0'e^{(\lambda'-\lambda_0)T_0} A  \| \phi_1 \|_{1} (\xi_\eps \| \widetilde S_{T_0}f \|_1 + \Xi_\eps [ \widetilde S_{T_0}f ]_{\psi_\eps})  
 \eean 
for any $\eps > 0$, where we have used successively the growth estimate \eqref{eq:NEWHarris-GrowthX1} in the first line, the condition on $f$ in the second line, 
the eigenfunction property of $\phi_1$ in the third line, the duality bracket estimate \eqref{eq:KRnorm1} in the fourth line and the interpolation inequality \eqref{eq:NEWHarris-LyapunovCondNpsieps} in the last line. Choosing $\eps > 0$ small enough, we immediately obtain 
\bean
\| \widetilde S_{T_0}f \|_1 \le 2  C_0'e^{(\lambda'-\lambda_0)T_0}  A \| \phi_1 \|_{1} \Xi_\eps  [ \widetilde S_{T_0}f ]_{\psi_\eps}.
 \eean
Together with 
$$
  [f]_{\phi_1} =   [\widetilde S_{T_0}f]_{\phi_1}  \le %\| \phi_1 \|_0
   \| \widetilde S_{T_0}f \|_1
$$
and the relaxed  Doblin-Harris  positivity condition \eqref{eq:NEWDoblinHarris-primal},  we conclude to the conditional Doblin-Harris  positivity estimate %condition  
\beqn\label{eq:DoblinHarrisConditional}
\widetilde S_T f \ge c g_\eps [  f]_{\phi_1}
\eeqn
for all $T\geq T_1$, with $c^{-1} = 2 C_0'e^{(\lambda'-\lambda_0)T_0}  A  \| \phi_1 \|_1 \Xi_\eps e^{\lambda'(T- T_0)}\eta^{-1}_{\eps,T}  $.
  
\medskip
 \noindent
{\sl  Step~5.} We define $\NN := \{ f \in X_1; \, \langle f , \phi_1 \rangle = 0 \}$. 
As a consequence of the last conditional Doblin-Harris  positivity estimate, we show that 
there exists $\gamma_H \in (0,1)$ such that holds the following local coupling condition
  \begin{equation} \label{eq:coupling} \bigl( f \in
    \cN,  
    \ \| f \|_1 \le A   [   f ]_{\phi_1}  \bigr) \quad \text{implies} \quad
    [ \widetilde S_T f ]_{\phi_1}  \leq \gamma_H     [   f ]_{\phi_1}, 
  \end{equation}
  still under the same condition $A > K/(1-\gamma_L)$.
  Take indeed $f \in \NN$. 
Because $\langle f , \phi_1 \rangle = 0$, the Doblin-Harris condition \eqref{eq:DoblinHarrisConditional} tells us that 
\beqn\label{eq:Harris-fpm}
\widetilde S_T f_\pm  \ge c g_\eps [  f_\pm ]_{\phi_1} = r g_\eps, \quad r :=  c [f]_{\phi_1}/2, 
\eeqn
and we may thus write  
  \bean 
  |\widetilde S_T f | &=& |\widetilde S_T f_+ - rg_\eps - \widetilde S_Tf_- + r g_\eps|
  \\
  &\leq& |\widetilde S_T f_+ - rg_\eps| + |\widetilde S_Tf_- - r g_\eps|
  \\
  &=& \widetilde S_T f_+ - rg_\eps + \widetilde S_Tf_- - r g_\eps \ = \ \widetilde S_T |f| - 2rg_\eps, 
  \eean
  where we have used  the inequality \eqref{eq:Harris-fpm} in the third line. We deduce 
    \begin{equation*}
   [ \widetilde S_T f ]_{\phi_1} \leq
    [ \widetilde S_T |f|  ]_{\phi_1} - 2 r  [  g_\eps ]_{\phi_1} = (1 - c [  g_\eps ]_{\phi_1}) [ f  ]_{\phi_1}, 
 %   = (1-\alpha) [ f ]_{\phi_1}. 
  \end{equation*}
where we have used $\widetilde S^*_T \phi_1 = \phi_1$, and that ends the proof of \eqref{eq:coupling}.

\smallskip

 We now introduce a new norm $\Nt \cdot \Nt$ on $X_1$ defined by
 \begin{equation}\label{eq:newNormV}
\Nt f  \Nt    := [ f ]_{\phi_1}+ \beta \| f \|_1, 
  \end{equation}
and we claim that there exist $\beta > 0$ small enough and $\gamma \in (0,1)$ such that 
  \begin{equation}
    \label{eq:Harris-contrac}
  \Nt  \widetilde S_T f   \Nt   \leq  \gamma     \Nt  f   \Nt ,   \quad
    \text{for any } f\in \NN. 
  \end{equation}
Note that
  $\Nt \cdot \Nt$ and $\|\cdot\|_1$ are equivalent norms, with
%It is worth emphasizing that the norms $\|\cdot\|_\beta$ and
%$\|\cdot\|_V$ are equivalent with
\begin{equation*}
  (1 + \beta)^{-1}\Nt f  \Nt  \leq \| f \|_1
  \leq \beta^{-1} \Nt  f  \Nt. 
\end{equation*}
In order to establish the contraction estimate \eqref{eq:Harris-contrac},  we fix  $f \in \cN$ and estimate the norm $ \Nt  \widetilde S_T f   \Nt $ in two alternative cases:

   \medskip\noindent \textbf{First case.} \emph{Contractivity for  small $X_1$ norm.} When 
      \begin{equation}
    \label{eq:moment-cond-rev}
    \| f \|_1  < A   [ f ]_{\phi_1}, 
  \end{equation}
  the local coupling condition \eqref{eq:coupling} implies 
  \begin{align*}
    [  \widetilde S_T  f ]_{\phi_1} \le \gamma_H [  f ]_{\phi_1}.
   \end{align*}
   Together with the Lyapunov condition \eqref{eq:NEWHarris-Primal-LyapunovCondStilde}, we  have
   \begin{align*}
  \Nt   \widetilde S_T f   \Nt    
\leq (\gamma_H+\beta K) [  f ]_{\phi_1}
            + \beta \gamma_L    \|  f \|_1
            \leq
            \gamma_1 \Nt f \Nt,
   \end{align*}
   with
   $$
   \gamma_1 := \max\{   \gamma_H + \beta K,  \gamma_L \}.
   $$
Choosing $\beta > 0$ small enough such that  $\beta K <  1 - \gamma_H$, we get $\gamma_1 < 1$ and that gives the contractivity property     \eqref{eq:Harris-contrac} in this
   case.

  \medskip\noindent \textbf{Second case.} \emph{Contractivity for large $X_1$ norm.} 
     Assume on the contrary that
   \begin{equation}
    \label{eq:moment-cond}
      \| f \|_1  \ge A  [ f ]_{\phi_1}.
  \end{equation}
  Directly from \eqref{eq:NEWHarris-Primal-LyapunovCondStilde} we deduce that
  \begin{equation*}
    \| \widetilde S_T f \|_1 \leq \gamma_L \| f \|_1 + K [  f ]_{\phi_1}
    \leq
    (\gamma_L + K/A) \| f \|_1,
  \end{equation*}
  with $\gamma_L + K/A < 1$ by assumption. On the other hand, we have 
  $$
   [  \widetilde S_T f  ]_{\phi_1} \le \langle  \widetilde S_T |f| , \phi_1 \rangle =  \langle    |f| , \phi_1 \rangle  , 
   $$
   by using the positivity property of $\widetilde S_T$ and the eigenvector property of $\phi_1$. 
Using both last estimates together, we deduce
  \begin{align*}
    \Nt   \widetilde S_T f   \Nt    
    &=
      [  \widetilde S_T  f ]_{\phi_1} + \beta       \| \widetilde S_T  f \|_1
    \\
    &\leq
      [   f ]_{\phi_1} + \beta    (\gamma_L + K/A)    \|  f \|_1         
    \\
    &\leq
      (1-\beta \delta_0) [   f ]_{\phi_1} +  \beta    (\gamma_L + K/A + \delta_0 )    \|  f \|_1, 
      % \leq      \gamma_2  \Nt  S f   \Nt,
  \end{align*}
  for any $\delta_0 \ge 0$.   We thus get
  \begin{align*}
    \Nt   \widetilde S_T f   \Nt     \leq
    \gamma_2  \Nt   \widetilde S_T f   \Nt,
  \end{align*}
  with $\gamma_2 := \max(1-\beta \delta_0,\gamma_L + K/A + \delta_0)$.
  We get the contractivity property \eqref{eq:Harris-contrac} in this
  case by choosing $\delta_0 > 0$ small enough (and keeping the choice
  of $\beta > 0$ made in the previous case) so that
  $\gamma_2 \in (0,1)$.  The proof of \eqref{eq:Harris-contrac} is
  completed by setting $\gamma := \max \{ \gamma_1,\gamma_2 \}$.

\smallskip
 \noindent
{\sl  Step~6.}  In order to prove the existence and uniqueness of the eigenvector $f_1 \in X_1$, 
  we fix $g_0 \in \MM := \{ g \in X_1, g \ge 0, \, \langle g, \phi_1 \rangle = 1 \}$, and we define recursively
  $g_k := \widetilde S_T g_{k-1}$ for any $k \ge 1$.  Thanks to
  \eqref{eq:Harris-contrac}, we get
  $$
  \sum_{k=1}^\infty \Nt g_k - g_{k-1} \Nt \le \sum_{k=0}^\infty \gamma^k \Nt g_1- g_0 \Nt < \infty, 
  $$
  so that $(g_k)$ is a Cauchy sequence in $\MM$. We set
  $f_1 := \lim g_k \in \MM$ which is a stationary state for the mapping $\widetilde S_T$, as seen by
  passing to the limit in the recursive equations defining $(g_k)$. From  \eqref{eq:Harris-contrac} again, this is the unique 
  stationary state for this mapping in $\MM$. From the semigroup property, we have $ \widetilde S_t f_1 = \widetilde S_t \widetilde S_T f_1 =  \widetilde S_T (\widetilde S_t f_1)$
  for any $t > 0$, so that $ \widetilde S_t f_1$ is also a stationary state in $\MM$, and thus $\widetilde S_t f_1 = f_1$ for any $t > 0$, by uniqueness. That precisely means that
  $f_1$ is a positive eigenvector associated to $\lambda_1$ for the original problem.
  
  \smallskip
  For $f \in X$, we see that $h := f - \langle f,\phi_1 \rangle \phi_1 \in \NN$, and using recursively   \eqref{eq:Harris-contrac}, we deduce 
  $$
\Nt \widetilde S_{nT} h \Nt \le  \gamma^n \Nt   h \Nt, \quad \forall \, n \ge 0,  
$$
from what \eqref{eq:KRTh-constructiveRate} follows by standard arguments. \qed

\bigskip

\paragraph{\textbf{Acknowledgments.}}
The authors warmly thank Clément Mouhot for his PhD course {\it De Giorgi-Nash-Moser theory for kinetic equations} he taught in Université Paris Dauphine-PSL during Spring 2023 
which has been a source of inspiration for the present work and also the discussions  in many occasions about the same subject.  
The authors also warmly thank François Murat for the enlightening discussions about the parabolic equation in a $L^1$ and a $M^1$ framework. K.C.\ was partially supported by the Project CONVIVIALITY ANR-23-CE40-0003 of the French National Research Agency. 
{R.M.\ acknowledges the funding from the European Union’s Horizon 2020 research and innovation programme under the Marie Skłodowska-Curie grant agreement No~945332} \includegraphics[width = .4cm]{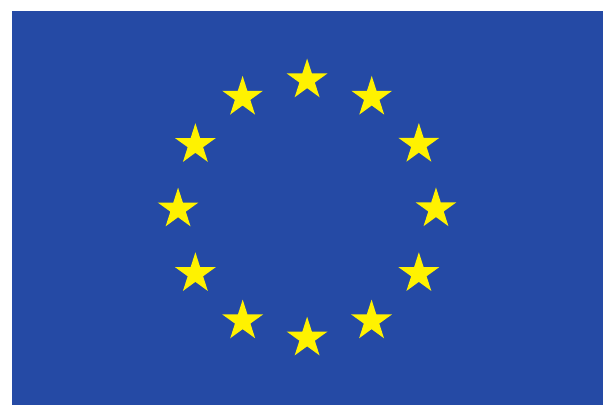}.

%%%%%%%%%%%%%%% BIBLIO
\bigskip

%\bibliographystyle{plain}
%\bibliography{bib-KFP}

\end{document}